\newcommand{\dom}{{\rm{Dom\,}}}
\newcommand{\R}{\mathbb{R}}
\newcommand{\N}{\mathbb{N}}
\newcommand{\cH}{\mathcal{H}}
\newcommand{\Q}{\mathbb{Q}}
\newcommand{\LL}{\text{\rm L}}
\newcommand{\supp}{\text{\rm supp}}
\newcommand{\gr}{\textrm{graph}}
\newcommand{\id}{\mathrm{Id}}
\newcommand{\diam}{{\rm{diam\,}}}
\newcommand{\ve}{\varepsilon}
\newcommand{\cI}{\mathcal{I}}
\newcommand{\T}{\mathcal{T}}
\renewcommand{\L}{\mathcal{L}}
\newcommand{\CD}{\mathsf{CD}}
\newcommand{\Geo}{{\rm Geo}}
\newcommand{\TGeo}{{\rm TGeo}}
\newcommand{\MCP}{\mathsf{MCP}}
\newcommand{\Ent}{{\rm Ent}}
\newcommand{\Dom}{{\rm Dom}}
\newcommand{\fs}{\mathfrak{s}}
\newcommand{\fc}{\mathfrak{c}}
\newcommand{\fI}{\mathfrak{I}}
\newcommand{\cK}{\mathcal{K}}
\newcommand{\fa}{\mathfrak{a}}
\newcommand{\fb}{\mathfrak{b}}
\newcommand{\dd}{\mathrm{d}}
\newcommand{\mui}{\mu_\infty}
\newcommand{\rmC}{{\mathrm C}}
\newcommand{\Cb}[1]{\rmC_b(#1)}
\newcommand{\Prob}{\mathcal P}
\newcommand{\BB}{\mathscr{B}}
\newcommand{\BorelSets}[1]{\BB(#1)}
\newcommand{\Probabilities}[1]{\mathcal P(#1)}  
\newcommand{\prob}{\Probabilities}
\newcommand{\Ric}{{\rm Ric}}
\newcommand{\eps}{\varepsilon}
\newcommand{\tr}{\textrm{tr}}
\renewcommand{\L}{\mathcal{L}}
\newcommand{\ggamma}{\boldsymbol\gamma}
\newcommand{\vol}{\mathrm{Vol}}
\renewcommand{\P}{\mathbb P}
\renewcommand{\P}{\mathcal{P}}
\newcommand{\TMCP}{\mathsf{TMCP}}
\newcommand{\TCD}{\mathsf{TCD}}
\newcommand{\wTCD}{\mathsf{wTCD}}
\renewcommand{\H}{\mathcal{H}}
\newcommand{\spt}{{\rm spt}}
\newcommand{\mm}{\mathfrak m}
\newcommand{\qq}{\mathfrak q}
\newcommand{\ee}{{\rm e}}
\newcommand{\QQ}{\mathfrak Q}
\newcommand{\sfd}{\mathsf d}
\renewcommand{\cH}{\mathcal{H}}
\newcommand{\forevery}{\text{for every }}
\theoremstyle{plain}
\newtheorem{lemma}{Lemma}[section]
\newtheorem{theorem}[lemma]{Theorem}
\newtheorem{proposition}[lemma]{Proposition}
\newtheorem{corollary}[lemma]{Corollary}
\newtheorem*{theorem*}{Theorem}
\newtheorem*{maintheorem*}{Main Theorem}
\theoremstyle{definition}
\newtheorem{definition}[lemma]{Definition}
\newtheorem*{definition*}{Definition}
\newtheorem*{remark*}{Remark}
\newtheorem{remark}[lemma]{Remark}
\numberwithin{equation}{section}
\title{Optimal transport in Lorentzian synthetic  spaces, synthetic timelike Ricci curvature lower bounds and applications}
\author{Fabio Cavalletti\thanks{F. Cavalletti: Mathematics Area, SISSA, Trieste (Italy), email:cavallet@sissa.it.}\,\,  and Andrea Mondino\thanks{A. Mondino: Mathematical Institute,  University of Oxford  (UK), email: Andrea.Mondino@maths.oxford.ac.uk.}
}
\date{}     
\begin{document}
\maketitle

\begin{abstract}
The goal of the present work is three-fold. The first goal is to set  foundational results on optimal transport in Lorentzian (pre-)length spaces, including cyclical monotonicity, stability of optimal couplings and Kantorovich duality (several results are new even for smooth Lorentzian manifolds). 
\\ The second one is to give a synthetic notion of ``timelike Ricci curvature bounded below and dimension bounded above''  for a  measured Lorentzian pre-length space using optimal transport. The key idea being to analyse convexity properties of Entropy functionals along future directed timelike geodesics of probability measures. 
This notion is proved to be stable under a suitable weak convergence of  measured Lorentzian pre-length spaces, giving a glimpse on the strength of the approach we propose.
The third goal is to draw applications, most notably extending volume comparisons and Hawking singularity Theorem (in sharp form) to the synthetic setting.
\\The  framework of  Lorentzian pre-length spaces includes as remarkable classes of examples: space-times endowed with a causally plain (or, more strongly, locally Lipschitz) continuous Lorentzian metric, closed cone structures, some approaches to quantum gravity.
\end{abstract}

\bibliographystyle{plain}

\tableofcontents

\section*{Introduction}
As the title suggests, the goal of the present work is three-fold. The first goal is to set  foundational results on optimal transport in Lorentzian synthetic spaces.  The second one is to use optimal transport to give a synthetic notion for a  Lorentzian  space of ``timelike Ricci curvature bounded below and dimension bounded above''  verifying suitable properties like compatibility with the classical case and stability under weak convergence.
The third goal is to draw applications, most notably extending volume comparisons and Hawking singularity Theorem (in sharp form) to the synthetic framework.

The Lorentzian synthetic framework adopted in the paper is the one of \emph{Lorentzian pre-legth (and geodesic) spaces} introduced by Kunzinger and S\"amann in \cite{KS} (see also an independent approach by Sormani and Vega \cite{SoVe}). The basic idea is that Lorentzian pre-length (resp.\;geodesic) spaces are the non-smooth analog of  Lorentzian manifolds, in the same spirit as  classical metric (resp.\;geodesic) spaces are the non-smooth analog of Riemannian manifolds (see Section \ref{Subsec:BasicsLorentzianSpaces} for the precise notions).

In the metric (measured) framework, the celebrated work of Sturm \cite{sturm:I, sturm:II} and Lott-Villani \cite{lottvillani:metric}  laid the foundations for a theory of metric measure spaces satisfying Ricci curvature lower bounds and dimension upper bounds in a synthetic sense via optimal transport, the so-called $\CD(K,N)$ spaces.  The theory of $\CD(K,N)$ spaces flourished in the last years with strong connections with analysis, geometry and probability.
The ambition of the present paper is to lay the foundations for a parallel theory in the Lorentzian setting, which is the natural geometric framework for general relativity. 

\subsection*{Motivations}
Before discussing the main results, let us motivate the questions that we address. A main motivation for this work is the need to consider Lorentzian metrics/spaces of low regularity. Such a necessity is clear both from the PDE point of view in general relativity (i.e.\;the Cauchy initial value problem for the Einstein equations) and from physically relevant models. 

From the PDE point of view, the standard local existence results for the vacuum Einstein equations assume the metric to be of Sobolev regularity $H^{s}_{{\rm loc}}$, with $s>\frac{5}{2}$ (see for instance \cite{Rendall}). The Sobolev regularity of the metric has been lowered even further (e.g. \cite{KRS}). Related to the initial value problem for the Einstein equations, one of the main open problems in the field is the so called (weak/strong) censorship conjecture (see e.g. \cite{Chri, Daf}). Such a conjecture (strong form) states roughly that the maximal globally hyperbolic development of generic initial data for the Einstein equations is inextendible as a suitably regular Lorentzian manifold.
Formulating a precise statement of the conjecture is itself non-trivial since one needs to give a precise meaning to ``generic initial data'' and ``suitably regular Lorentzian manifold''. Understanding the latter is where Lorentzian metrics of low regularity and related inextendibility results become significant.  The strongest form of the conjecture would prove inextendibility for a  $C^{0}$ metric.  As pointed out by Chrusciel-Grant \cite{CG}, causality theory for $C^{0}$ metrics departs significantly from classical theory (e.g. the lightlike curves emanating from a point may span a set with non-empty interior, a phenomenon called ``bubbling''). Nevertheless, Sbierski \cite{Sbi} gave a clever proof of $C^{0}$-inextendibility of Schwarzschild,  \cite{MiSu} showed $C^{0}$-inextendibility for timelike geodesically complete spacetimes, and \cite{GKS} pushed the inextendibility to Lorentzian length spaces.

From the point of view of physically relevant models, several types of matter in a spacetime may give a discontinuous energy-momentum tensor and thus, via the Einstein's equations, lead to a Lorentzian metric of regularity lower than $C^{2}$ (e.g. \cite{Lichn}). Examples of such a behaviour are spacetimes that model the inside and outside of a star,  matched spacetimes \cite{MaSe}, self-gravitating compressible fluids \cite{BuLe}, or shock waves. Some physically relevant models require even lower regularity, for instance: spacetimes with conical singularities \cite{Vick2},  cosmic strings \cite{Vick1} and (impulsive) gravitational waves (see for instance \cite{PenGW}, \cite[Chapter 20]{GrPo}).

Finally, a long term motivation for studying non-regular Lorentzian spaces is the desire of understanding the ultimate nature of spacetime. The rough picture is that at the quantum level (and thus in extreme physical conditions, e.g.\;gravitational collapse, origin of the universe), the spacetime may be very singular and possibly not approximable by smooth  structures (see Remark \ref{rem:OtherExamples}). 
\\

In case of a metric of low regularity, the approach to curvature used so far is distributional, taking advantage that the underline spacetime is a differentiable manifold. This permits  \cite{GeTr} (see also \cite{StVick}) to define distributional curvature tensors for  $W^{1,2}_{{\rm loc}}$-Lorentzian metrics  satisfying a suitable non-degeneracy condition (satisfied for instance when the metric is $C^{1}$, see  \cite{Graf}). One of the goals of the present work  is to address the question of (timelike Ricci) curvature when \emph{not only the the metric tensor, but the spacetime itself is singular}.
\\

A lower bound on the timelike Ricci curvature of a spacetime $(M^{n},g)$, i.e.
\begin{equation}\label{eq:Ricgeq-Kg}
\text{There exists $K\in \R$ such that } \Ric_{g}\geq- K g(v,v)  \text{ for all  timelike vectors $v\in TM$,}
 \end{equation}
 is quite a natural assumption in general relativity. Of course, for a $C^{2}$-metric $g$,  \eqref{eq:Ricgeq-Kg} is satisfied on compact subsets of the space-time.  
Recalling that the Einstein's equations postulate proportionality of $\Ric_{g}$ and $T-\frac{1}{n-2} \tr_{g}(T) g$ (where $T$ is the so-called energy-momentum tensor),  for a general cosmological constant $\Lambda \in \R$, \eqref{eq:Ricgeq-Kg} is equivalent to require that 
 $$T(v,v) \geq  - \frac{1}{n-2} \tr_{g}(T) + \frac{1}{8\pi} \left(  K-\frac{2\Lambda}{n-2}\right),  \text{ for all  $v\in TM$ with $g(v,v)=-1$}.$$
 In particular, if  $\inf_{M} \tr_{g}(T)>-\infty$ (or, equivalently,  $\inf_{M} {\rm R}_{g}>-\infty$ where  ${\rm R}_{g}$ is the scalar curvature of $g$), then the \emph{weak} energy condition $T(v,v)\geq 0$ for all timelike $v$ (which is believed to hold for most physically reasonable $T$, according to \cite[pag. 218]{Wald}) implies \eqref{eq:Ricgeq-Kg}.
\\ The case $K=0$ in \eqref{eq:Ricgeq-Kg} corresponds to the  \emph{strong energy condition} of Hawking and Penrose \cite{PenColl, HawkSing, HawkPen}. 
\\ 
Let us stress that the framework \eqref{eq:Ricgeq-Kg} includes any solution of the Einstein's equations \emph{in vacuum} (i.e.\;with null stress-energy tensor $T$) with \emph{possibly non-zero cosmological constant} $\Lambda$. Already such a framework is highly interesting as the standard black hole metrics (e.g.\;Schwartzshild, Kerr) are solutions of the Einstein's vacuum equations, and also the more recent literature on black holes typically focuses on vacuum solutions (see e.g. \cite{Chri, Daf, DHR, KRS}).  A key role in such  breakthroughs on black holes  is given by a deep analysis of the system of non-linear hyperbolic partial differential equations corresponding to the Einstein's vacuum equations (in a suitable Gauge). At least in the smooth setting, it was recently proved by the second author and Suhr \cite{MoSu} that the optimal transport point of view is compatible with the hyperbolic PDEs one (in the sense that it is possible to characterise solutions of the Einstein's equations in terms of optimal transport). We believe that the aforementioned facts suggest a promising future for the proposed optimal transport approach.

%---------------------------------------------------
%---------------------------------------------------
\subsection*{Outline of the content of the paper}
\subsubsection*{General synthetic setting}
We now pass to discuss the content of the paper. The synthetic framework is the one of \emph{measured  Lorentzian pre-length  spaces} $(X,\sfd, \mm, \ll, \leq, \tau)$ where $X$ is a set endowed with a proper metric $\sfd$   (i.e. closed and bounded subsets are compact) and the associated metric topology,  a preorder $\leq$ (playing the role of \emph{causal relation}) and a transitive relation $\ll$ contained in $\leq$ (playing the role of \emph{chronological/timelike  relation}), a lower semicontinuous function $\tau: X\times X\to [0,\infty]$  (called \emph{time-separation function})  with $\{\tau>0\}=\{(x,y)\in X^{2}: x\ll y\}$ and satisfying reverse triangle inequality \eqref{eq:deftau}, and a non-negative Radon measure  $\mm$  with $\supp \, \mm=X$.  
\\The subset of causal pairs is denoted with $X^{2}_{\leq}:=\{(x,y)\in X^{2}\,:\, x\leq y\}$.

A curve $\gamma:[0,1]\to X$ is \emph{causal} if  it is continuous and for every $t_{0}\leq t_{1}$ it holds $\gamma_{t_{0}}\leq \gamma_{t_{1}}$.  One can naturally associate a $\tau$-length to $\gamma$, denoted by $\LL_{\tau}(\gamma)$ (see Definition \ref{def:Ltau}). A  causal curve is a \emph{geodesic} if it \emph{maximises} the $\tau$-length  and is parametrized by $\tau$-arc length, i.e. if  $\LL_{\tau}(\gamma)=\tau(\gamma_{0}, \gamma_{1})$ and $\tau(\gamma_s, \gamma_t)=(t-s)\, \tau(\gamma_0, \gamma_1)$ for all $0\leq s\leq t\leq 1$. 
The space $X$ is said to be \emph{geodesic} if for all $(x,y)\in X^{2}_{\leq}$  there is a geodesic $\gamma$ from $x$ to $y$.

Important classes of examples entering the framework of measured  Lorentzian pre-length/geodesic  spaces are spacetimes with a causally plain (or, more strongly, locally Lipschitz) $C^{0}$-metric (see Remark \ref{rem:C0metrics}), closed cone structures, as well as  some approaches to quantum gravity (see Remark \ref{rem:OtherExamples}). 

\subsubsection*{Optimal transport in Lorentzian pre-length spaces}
Our approach to synthetic timelike Ricci curvature lower bounds is via optimal transport of causally related probability measures. To this aim, in Section \ref{sec:OTLorentz} we throughly analyse optimal transport in Lorentzian pre-length spaces.
A key object is the space of Borel probability measures $\Prob(X)$ on $X$, and the subspace $\Prob_{c}(X)$ of Borel probability measures with compact support. In order to lift the causal structure of $X$ to $\Prob(X)$, it is useful to consider the set of   \emph{causal couplings} between two probability measures $\mu,\nu\in \Prob(X)$:
$$\Pi_{\leq}(\mu,\nu):=\{\pi\in \Prob(X^{2}): \pi(X^{2}_{\leq})=1, \, (P_{1})_{\sharp} \pi=\mu, \, (P_{2})_{\sharp} \pi=\nu \},$$
 where $P_{i}:X\times X\to X$ is the projection on the $i^{th}$ factor, and $(P_{i})_{\sharp}: \Prob(X^{2})\to \Prob(X)$ is the associated push-forward map defined as $\left((P_{i})_{\sharp} \pi \right)(B):=\pi\left(P_{i}^{-1} (B) \right)$ for every Borel subset $B\subset X$.
 We say that $(\mu,\nu)$ are \emph{causally related} if $\Pi_{\leq}(\mu,\nu)\neq \emptyset$. The rough picture is that $\mu$ and $\nu$ represent some random distribution of events in the spacetime $X$, and the two are causally related if it is possible to causally match  events described by $\mu$ with events described by $\nu$ (possibly in a multi-valued way) via the causal coupling $\pi$.  We endow $\Prob(X)$ with the  \emph{$p$-Lorentz-Wasserstein distance}  defined by
\begin{equation}\label{eq:defWpIntro}
\ell_{p}(\mu,\nu):= \sup_{\pi \in \Pi_{\leq}(\mu,\nu)} \left(  \int_{X\times X}  \tau(x,y)^{p} \, \pi(\dd x\dd y)\right)^{1/p}, \quad p\in (0,1].
\end{equation}
When $\Pi_{\leq}(\mu,\nu)=\emptyset$ we set $\ell_{p}(\mu,\nu):=-\infty$.  The name $p$-Lorentz-Wasserstein distance is motivated by the fact that $\ell_{p}$ satisfies a reversed triangle inequality (see Proposition \ref{prop:RTIellp}).
\\ Note that  \eqref{eq:defWpIntro} extends to Lorentzian pre-length spaces the corresponding notion given in the smooth Lorentzian setting in \cite{EM17}  (see also \cite{McCann, MoSu}, and \cite{Suhr} for $p=1$).    A coupling $\pi\in  \Pi_{\leq}(\mu,\nu)$ maximising in \eqref{eq:defWpIntro} is said \emph{$\ell_{p}$-optimal}.  The set of \emph{$\ell_{p}$-optimal} couplings from $\mu$ to $\nu$ is denoted by $  \Pi_{\leq}^{p\text{-opt}}(\mu,\nu)$.

Maximising over \emph{causal} couplings $\Pi_{\leq}(\mu,\nu)$ instead of all the couplings can be modelled with an auxiliary cost (denoted with $\ell^{p}$) taking value $-\infty$ outside of $X^{2}_{\leq}$ (see Remark \ref{rem:eqellq}). The fact that the cost function takes value $-\infty$ makes the associated optimal transport problem more challenging: several fundamental results (see e.g. \cite{AGSBook, Vil:topics,Vil}) 
are not available in the present setting and classical concepts take a different flavour. 
These include: cyclical monotonicity, stability of optimal couplings, Kantorovich duality. 
It is indeed the goal of Section  \ref{sec:OTLorentz} 
to study such notions in this setting. It is beyond the scopes of the introduction to give a detailed account of the results (several are new even in the smooth Lorentzian setting), we only mention few notions (in a slightly simplified form) that will be useful for analysing timelike Ricci curvature bounds. 

We say that $(\mu,\nu)\in \Prob_{c}(X)^{2}$ is \emph{timelike $p$-dualisable (by $\pi\in \Pi_{\leq}(\mu,\nu)$)}  if   $\ell_{p}(\mu,\nu)\in (0,\infty)$,  $\pi\in  \Pi_{\leq}^{p\text{-opt}}(\mu,\nu)$ and $\supp \, \pi\subset \{\tau>0\}$. The pair  $(\mu,\nu)\in \Prob_{c}(X)^{2}$ is \emph{strongly timelike $p$-dualisable} if in addition there exists a subset $\Gamma\subset  \{\tau>0\}\subset  X^{2}$ such that \emph{every} $p$-optimal coupling $\pi'\in \Pi_{\leq}^{p\text{-opt}}(\mu,\nu)$ is concentrated on $\Gamma$, i.e. $\pi'(\Gamma)=1$ (see Definitions  \ref{def:Tpdual} and \ref{def:TStrongDual} for the precise notions).  

Let us also mention that if $X$ is geodesic and globally hyperbolic then $(\Prob_{c}(X),\ell_{p})$  is geodesic as well, for $p\in (0,1)$. More precisely (see Proposition \ref{prop:ellqgeodCS}), if $(\mu_{0},\mu_{1})\in \Prob_{c}(X)^{2}$ is timelike $p$-dualisable, then there exists an $\ell_{p}$-geodesic $(\mu_{t})_{t\in [0,1]}\subset \Prob_{c}(X)$ joining them.

\subsubsection*{Synthetic timelike Ricci curvature lower bounds via optimal transport}
The relation between optimal transport and timelike Ricci curvature bounds in the \emph{smooth} Lorentzian setting has been the object of recent works by McCann \cite{McCann} and Mondino-Suhr \cite{MoSu}. 
The key idea is that timelike Ricci curvature lower bounds can be equivalently characterised in terms of convexity properties of the Bolzmann-Shannon entropy functional $\Ent(\cdot|\mm)$ along $\ell_{p}$-geodesics of probability measures (where, for smooth Lorentzian manifolds, $\mm$ is the standard volume measure).
 Recall that, for a probability measure $\mu \in \Prob(X)$, the  entropy $\Ent(\mu|\mm)$ is defined by
 $$
\Ent(\mu|\mm) = \int_{X} \rho \log(\rho) \, \mm,
$$
if $\mu = \rho \, \mm$ is absolutely continuous with respect to $\mm$ and $(\rho\log(\rho))_{+}$ is  $\mm$-integrable;  otherwise we set $\Ent(\mu|\mm) = +\infty$.  We denote $\Dom(\Ent(\cdot|\mm)):=\{\mu\in \Prob(X): \Ent(\mu|\mm)<\infty\}$.
\\The following definition is thus natural.

\begin{definition*}[$\mathsf{TCD}^{e}_{p}(K,N)$ and  $\mathsf{wTCD}^{e}_{p}(K,N)$ conditions]\label{def:TCD(KN)Intro}
Fix $p\in (0,1)$, $K\in \R$, $N\in (0,\infty)$. We say that  a  measured Lorentzian  pre-length space $(X,\sfd,\mm, \ll, \leq, \tau)$ satisfies  $\mathsf{TCD}^{e}_{p}(K,N)$ 
(resp. $\mathsf{wTCD}^{e}_{p}(K,N)$)  if the following holds.
For any couple $(\mu_{0},\mu_{1})\in (\Dom(\Ent(\cdot|\mm))\cap  \Prob_{c}(X))^{2}$ which is (resp. strongly)   timelike $p$-dualisable   by some 
$\pi\in \Pi^{p\text{-opt}}_{\leq}(\mu_{0},\mu_{1})$,  
there exists an  $\ell_{p}$-geodesic $(\mu_{t})_{t\in [0,1]}$ such that  
the function $[0,1]\ni t\mapsto e(t) : = \Ent(\mu_{t}|\mm)$ is 
semi-convex and it satisfies
\begin{equation*}
e''(t) - \frac{1}{N} e'(t)^{2 } \geq K \int_{X\times X} \tau(x,y)^{2} \, \pi(\dd x\dd y),\quad  \text{in the distributional sense on $[0,1]$.}
\end{equation*}
\end{definition*}

\begin{remark*}[Notation]
The notation $\mathsf{TCD}^{e}_{p}(K,N)$ comes by  analogy with the
corresponding Lott-Sturm-Villani theory of curvature dimension conditions in metric-measure
spaces. Here the superscript $e$
refers to the so-called ``entropic" formulation of the $\CD$ condition by Erbar, Kuwada and Sturm \cite{EKS}; such a formulation is slightly simpler, but equivalent under suitable technical assumptions. The possibility $p\in (1,\infty), p\neq 2$ was investigated by Kell \cite{KellACV} in the
metric-measure setting.  The leading $\mathsf{T}$ stands for ``timelike'', following the notation of Woolgar and Wylie in their paper on $N$-Bakry-\'Emery spacetimes \cite{WW}, and of  McCann \cite{McCann}.
The symbol $\mathsf{w}$ in $\mathsf{wTCD}$ has to be read ``weak $\mathsf{TCD}$'' and it 
is justified by the comparison with $\mathsf{TCD}$ requiring 
convexity estimates for the entropy along a smaller family of  $\ell_{p}$-geodesics. 
\end{remark*}
\medskip

\noindent
The  $\mathsf{TCD}^{e}_{p}(K,N)$ (resp.  $\mathsf{wTCD}^{e}_{p}(K,N)$) condition satisfies the following natural compatibility properties:
\begin{itemize}
\item $\mathsf{TCD}^{e}_{p}(K,N)$ (resp. $\mathsf{wTCD}^{e}_{p}(K,N)$) implies  $\mathsf{TCD}^{e}_{p}(K',N')$ (resp. $\mathsf{wTCD}^{e}_{p}(K',N')$) for all $K'\leq K$, $N'\geq N$, see Lemma \ref{lem:TCDscaling};
\item A smooth globally hyperbolic Lorentzian manifold  $(M^{n}, g)$ has $\dim(M)=n\leq N$ and $\Ric_{g}(v,v)\geq -K g(v,v)$ for every timelike $v\in TM$  if and only if it satisfies   $\mathsf{TCD}^{e}_{p}(K,N)$, if and only if it satisfies  $\mathsf{wTCD}^{e}_{p}(K,N)$, see Theorem \ref{thm:CharLorRic} and Corollary \ref{Cor:CompMCPsmooth}.
\end{itemize}

\noindent
We show that  $\mathsf{wTCD}^{e}_{p}(K,N)$ spaces satisfy the following geometric properties:
\begin{itemize}
\item a timelike Brunn-Minkowski  inequality, see Proposition \ref{prop:BrunnMnk};
\item  a timelike Bishop-Gromov  inequality,  Proposition \ref{prop:BisGro};
\item  a timelike Bonnet-Myers  inequality, Proposition \ref{prop:BonMy}.
\end{itemize}

A weaker variant of the $\mathsf{TCD}^{e}_{p}(K,N)$ condition is obtained by considering $(K,N)$-convexity properties only for those $\ell_{p}$-geodesics $(\mu_{t})_{t\in [0,1]}$ where $\mu_{1}$ 
is a Dirac delta. 
In the metric measure setting, such a variant goes under the name of 
Measure Contraction Property ($\mathsf{MCP}$ for short) and was developed  independently by Sturm \cite{sturm:II} and Ohta \cite{Ohta1}. 
We call ``Timelike Measure Contraction Property'' ($\mathsf{TMCP}^{e}(K,N)$ for short) such a weaker variant of $\mathsf{TCD}^{e}_{p}(K,N)$, see Definition \ref{def:TMCP} for the precise notion.
The following holds:
\begin{itemize}
\item under mild conditions on the space $X$ (satisfied for instance for causally plain, globally hyperbolic spacetimes with a $C^{0}$ metric) $\mathsf{wTCD}^{e}_{p}(K,N)$ implies $\mathsf{TMCP}^{e}(K,N)$, see Proposition \ref{prop:CD->MCP};
\item $\mathsf{TMCP}^{e}(K,N)$ implies  $\mathsf{TMCP}^{e}(K',N')$ for all $K'\leq K$, $N'\geq N$, see Lemma \ref{lem:TCDscaling};
\item a smooth  globally hyperbolic Lorentzian manifold  $(M^{n}, g)$ with $\dim(M)=n\geq 2$ satisfies $\mathsf{TMCP}^{e}(K,n)$ if and only if $\Ric_{g}(v,v)\geq -K g(v,v)$ for every timelike $v\in TM$, see Theorem \ref{thm:TMCPsmooth};
\item if  a smooth globally hyperbolic Lorentzian manifold  $(M^{n}, g)$ satisfies $\mathsf{TMCP}^{e}(K,N)$, then   $\dim(M)=n\leq N$, see Corollary \ref{Cor:CompMCPsmooth};
\item  the aforementioned timelike Bishop-Gromov  inequality (Proposition \ref{prop:BisGro}) and timelike Bonnet-Myers  inequality (Proposition \ref{prop:BonMy}) remain valid
for  $\mathsf{TMCP}^{e}(K,N)$ spaces; 
\end{itemize}

In addition to the aforementioned geometric consequences of the synthetic curvature bounds, the main results 
of this part concern the fundamental property of  
\emph{stability}. 
We show a weak stability property for  $\mathsf{TCD}$ stating that if a sequence of  $\mathsf{TCD}^{e}_{p}(K,N)$ spaces converges weakly to a limit Lorentzian pre-length space, then the limit space satisfies $\mathsf{wTCD}^{e}_{p}(K,N)$ (see Theorem \ref{thm:StabTCD} for the precise statement).
Instead the $\mathsf{TMCP}^{e}(K,N)$ condition is 
stable in the usual sense: 
if a sequence of  $\mathsf{TMCP}^{e}(K,N)$ spaces
converges weakly to a limit Lorentzian pre-length space, 
then the limit space satisfies $\mathsf{TMCP}^{e}
(K,N)$,  see Theorem  \ref{thm:StabTMCP}.

\smallskip
It is worth stressing that 
stability of $\mathsf{TCD}^{e}_{p}(K,N)$ and  $\mathsf{TMCP}^{e}(K,N)$ has no counterpart in the classical versions of $\CD$ (and $\MCP$) for extended metrics. 
Indeed, while the theory of extended 
metric measure spaces verifying $\CD$ has been investigated \cite{AES}, so far no general results on their stability has been established. 
The fact that the convexity of the entropy is required to hold \emph{only} along $\ell_{p}$-geodesics connecting \emph{timelike $p$-dualisable} measures does not permit to follow the stream of ideas of the known stability arguments of \cite{lottvillani:metric, sturm:I, sturm:II, GMS} for $\CD$ and $\MCP$ 
and a new strategy has to be devised.

We mention another obstruction 
to the classical approach to  stability and Gromov (pre-)compactness: 
while the $\CD/\MCP$ conditions imply a control on the volume growth of metric balls and thus  compactness in  pointed-measured-Gromov-Hausdorff topology (which is thus the natural notion for weak convergence of spaces), in our setting the $\tau$-balls typically have infinite volume (for instance in Minkowski space, $\tau$-spheres are  hyperboloids); thus we cannot expect the same (pre)-compactness properties in the  pointed-measured-Gromov-Hausdorff topology (which is thus not anymore the clearly natural notion for weak convergence of spaces).

\subsubsection*{Timelike non-branching $\mathsf{TMCP}^{e}(K,N)$ and applications}
An important subclass of Lorentzian geodesic spaces is the one of \emph{timelike non-branching} structures: roughly the ones for which timelike geodesic do not branch (both forward and backward in time), see Definition \ref{def:TNB} for the precise notion. In the classical Lorentzian setting, this is satisfied for $C^{1,1}$ metrics and it is expected to fail for lower regularity.
The same phenomenon happens in the Riemannian/metric setting, where the non-branching assumption (or slightly weaker variants) is rather standard in the recent literature of $\CD/\MCP$ spaces.

For timelike non-branching $\mathsf{TMCP}^{e}(K,N)$ spaces we obtain:
\begin{itemize}
\item solution to the $\ell_{p}$-Monge problem:  if $(\mu_{0}, \mu_{1})$ are timelike $p$-dualisable with $\mu_{0}\in \Dom(\Ent(\cdot|\mm))$, then there exists a unique $\ell_{p}$-optimal coupling $\pi\in \Pi^{p\text{-opt}}_{\leq}(\mu_{0},\mu_{1})$ such that $ \pi \left( \{\tau>0\} \right)=1$ and it is induced by a map; see Theorem \ref{T:1}.
\\Under the same assumptions, there exists a unique  $\ell_{p}$-geodesic  from $\mu_{0}$ to $\mu_{1}$; see Theorem  \ref{T:2};

\item a synthetic notion of mean curvature bounds for achronal Borel sets having locally finite ``area", see Section \ref{SS:SyntMC};

\item a sharp version (holding for every $K\in \R, N\in [1,\infty)$) of the Hawking singularity Theorem, see Theorem  \ref{thm:HawkSingSint}. Let us mention that the statement of Theorem   \ref{thm:HawkSingSint} is sharp, as for $N\in \N$ the bounds are attained in the smooth model  spaces identified in \cite{GrTr} (see also \cite{Graf1});

\item sharp versions of timelike Bishop-Gromov, Poincar\'e, and  Bonnet-Myers  inequalities (see Propositions  \ref{thm:BGAchronal},   \ref{prop:Poincare}, \ref{prop:BisGro2}, \ref{prop:BonMy2}; for the sharpness see Remark \ref{rem:sharpness}).
\end{itemize}

In order to obtain the applications in the last three bullet points, in Section \ref{Sec:LocTMCP}, we study the $\ell_{1}$-optimal transport problem associated to the $\tau$-distance function $\tau_{V}$ from an achronal set $V$ (see \eqref{eq:deftauV} for the definition of $\tau_{V}$). The rough idea is that $\tau_{V}$ induces a partition of $I^{+}(V)$, namely ``the chronological future of $V$'', into timelike geodesics (also called ``rays''). In the smooth setting (outside the cut locus) such rays correspond to the gradient flow curves of $\tau_{V}$. Such a partition of $I^{+}(V)$ induces a disintegration of $\mm$ into  one-dimensional conditional measures, which satisfy $\MCP(K,N)$ (see Theorems \ref{P:nointialpoints} and \ref{P:localKant} for the precise statements). In Section \ref{SS:SyntMC}, the disintegration is used to construct an ``area measure'' as well as ``normal variations'' of $V$,  and thus define synthetic notions of mean curvature bounds.  At this point, the above applications will follow.

 The fact that the one-dimensional conditional measures satisfy $\MCP(K,N)$ is not trivial: recall indeed that the $\TMCP^{e}(K,N)$ and $\TCD^{e}_{p}(K,N)$ conditions are expressed in terms of $\ell_{p}$  (not $\ell_{1}$) optimal transport, while here we are dealing with an $\ell_{1}$-optimal transport problem. The key idea  to overcome this issue is to   include $\ell^{p}$-cyclically monotone sets inside $\ell$-cyclically monotone sets; this technique was  introduced in  \cite{cava:decomposition} and  pushed further in \cite{CM1,CM2} for  the metric setting.
In the present setting, since the cost $\ell^{p}$ may take the value $-\infty$, $\ell^{p}$-cyclical monotonicity does not 
directly imply optimality.  Nontheless using the work of Bianchini-Caravenna \cite{biacar:cmono} and its consequences included in Proposition \ref{P:OptiffMon},
we will use cyclically monotone sets to  construct \emph{locally optimal} couplings and to deduce local estimates 
on the disintegration that will be then globalized. Another useful idea is that there is a natural way to construct $\ell_{p}$-geodesics with $0<p<1$: 
translate along transport rays by a constant ``distance''. Notice that $0<p<1$ plays a crucial role here, as an 
analogous statement in the Riemannian setting does not hold true for $W_{2}$.

Let us conclude the introduction by pointing out that the reader interested in space-times with continuous metrics can find the main applications specialised to such a framework  in Section \ref{SS:C0metric}.
\\In analogy with the huge impact that the synthetic theory of Ricci curvature lower bounds had in the geometric analysis of metric measure spaces, it is natural to expect several other geometric and analytic applications of the tools developed here; for instance, in a forthcoming paper \cite{CMTCD2}, we will obtain timelike isoperimetric inequalities and other applications.
\medskip

\textbf{Acknowledgements.}
A.\,M. \;acknowledges support by the European Research Council (ERC), under the European's Union Horizon 2020 research and innovation programme, via the ERC Starting Grant  “CURVATURE”, grant agreement No. 802689. In the first steps of the project he was also supported by  the EPSRC First Grant EP/R004730/1  ``Optimal transport and Geometric Analysis''. \\
The authors are grateful to the anonymous reviewer whose careful reading and comments improved the exposition of the paper.

\section{Preliminaries} 
\subsection{Basics on Lorentzian synthetic spaces} \label{Subsec:BasicsLorentzianSpaces}

In this section we briefly recall some basic notions and results from the theory of Lorentzian length (resp. geodesic) spaces. We follow the approach of Kunzinger-S\"amann \cite{KS} and we refer to their paper for further details and proofs. Let us start by recalling the notion of \emph{causal space}, pioneered by Kronheimer-Penrose \cite{KrPe67}.
\begin{definition}[Causal space  $(X,\ll,\leq)$]
A \emph{causal space}  $(X,\ll,\leq)$ is a set $X$ endowed with a preorder $\leq$ and a transitive relation $\ll$ contained in $\leq$.
\end{definition}

We write $x<y$ when $x\leq y, x\neq y$. We say that $x$ and $y$ are \emph{timelike} (resp. \emph{causally}) related if $x\ll y$  (resp. $x\leq y$).  Let $A\subset X$ be an arbitrary subset of $X$. We define the \emph{chronological} (resp. \emph{causal}) future of $A$ the set
\begin{align*}
I^{+}(A)&:=\{y\in X\,:\, \exists x\in A,\, x\ll y\} \\
J^{+}(A)&:=\{y\in X\,:\, \exists x\in A,\, x\leq y\}
\end{align*}
respectively. Analogously, we define $I^{-}(A)$ (resp. $J^{-}(A)$) the   \emph{chronological} (resp. \emph{causal}) past of $A$. In case $A=\{x\}$ is a singleton, with a slight abuse of notation, we will write $I^{\pm}(x)$ (resp. $J^{\pm}(x)$) instead of  $I^{\pm}(\{x\})$ (resp. $J^{\pm}(\{x\})$).

\begin{definition}[Lorentzian pre-length space $(X,\sfd, \ll, \leq, \tau)$]
A \emph{Lorentzian pre-length space} $(X,\sfd, \ll, \leq, \tau)$ is a  causal space $(X,\ll,\leq)$ additionally  equipped with a proper metric $\sfd$  (i.e. closed and bounded subsets are compact) and a lower semicontinuous function $\tau: X\times X\to [0,\infty]$,  called \emph{time-separation function}, satisfying
\begin{equation}\label{eq:deftau}
\begin{split}
\tau(x,y)+\tau(y,z)\leq \tau (x,z) &\quad\forall x\leq y\leq z \quad \text{reverse triangle inequality} \\
\tau(x,y)=0, \; \text{if } x\not\leq y, & \quad  \tau(x,y)>0 \Leftrightarrow x\ll y.
\end{split}
\end{equation}
\end{definition}
\noindent
Note that the lower semicontinuity of $\tau$ implies that $I^{\pm}(x)$ is open, for any $x\in X$.
\\We endow $X$ with the metric topology induced by $\sfd$. All the topological concepts on $X$ will be formulated in terms of such metric topology.
\\

If $(X,\sfd, \ll, \leq, \tau)$ is a Lorentzian pre-length space, notice that setting $x \tilde{\leq} y$  (resp. $x \tilde{\ll} y$) if and only if $y \leq x$ (resp. $y\ll x$) and $\tilde{\tau}(x,y):=\tau(y,x)$, we obtain a new Lorentzian pre-length space $(X,\sfd, \tilde{\ll},\tilde{\leq}, \tilde{\tau})$. The latter is said to be the \emph{causally reversed} of the former.

Throughout the paper, $I\subset \R$ will denote an arbitrary interval. 

\begin{definition}[Causal/timelike curves]
A non-constant curve $\gamma:I\to X$ is called (future-directed) \emph{timelike} (resp. \emph{causal}) if $\gamma$ is locally Lipschitz continuous (with respect to $\sfd$) and if for all $t_{1}, t_{2}\in I$, with $t_{1}<t_{2}$, it holds $\gamma_{t_{1}}\ll \gamma_{t_{2}}$ (resp. $\gamma_{t_{1}}\leq \gamma_{t_{2}}$). We say that $\gamma$ is a \emph{null} curve if, in addition to being causal, no two points on $\gamma(I)$ are related with respect to $\ll$. 
\end{definition}

It was proved in \cite[Proposition 5.9]{KS} that for strongly causal continuous Lorentzian metrics, this notion of causality coincides with the classical one.

The length of a causal curve is defined via the time separation function, in analogy to the theory of length  metric spaces.  
\begin{definition}[Length of a causal curve]\label{def:Ltau}
For $\gamma:[a,b]\to X$ future-directed causal we set 
\begin{equation*}
{\rm L}_{\tau}(\gamma):=\inf\left\{ \sum_{i=0}^{N-1} \tau(\gamma_{t_{i}}, \gamma_{t_{i+1}})  \,:\, a=t_{0}<t_{1}<\ldots<t_{N}=b, \; N\in \N \right\}.
\end{equation*}
In case the interval is half-open, say $I=[a,b)$, then the infimum is taken over all partitions with $a=t_{0}<t_{1}<\ldots<t_{N}<b$ (and analogously for the other cases).
\end{definition}

It was proved in \cite[Proposition 2.32]{KS} that for smooth strongly causal spacetimes $(M,g)$, this notion of length coincides with the classical one: ${\rm L}_{\tau}(\gamma)={\rm L}_{g}(\gamma)$.
\\A future-directed causal curve $\gamma:[a,b]\to X$ is \emph{maximal} if  it realises the
time separation, i.e. if ${\rm L}_{\tau}(\gamma)=\tau(\gamma_{a}, \gamma_{b})$.

In case the time separation function is continuous with $\tau(x,x)=0$ for every $x\in X$ (as it will be throughout the paper, since we will assume that $X$ is a globally hyperbolic geodesic Lorentzian space), then any timelike maximal $\gamma$ with finite $\tau$-length has a (continuous, monotonically strictly increasing) reparametrisation $\lambda$ by $\tau$-arc-length, i.e.
$\tau( \gamma_{\lambda(s_{1})}, \gamma_{\lambda(s_{2})})=s_{2}-s_{1}$ for all $s_{2}\leq s_{1}$ in the corresponding interval (see \cite[Corollary 3.35]{KS}).

We therefore adopt the following convention. 
\begin{definition}\label{D:geodesic}
A curve $\gamma$ 
will be called \emph{geodesic} if it is maximal and continuous when parametrized by $\tau$-arc-lenght, i.e.
the set of causal geodesics is
\begin{equation}\label{E:geodesic}
 \Geo(X):=\{ \gamma\in C([0,1], X)\,:  \, \tau(\gamma_{s}, \gamma_{t})=(t-s) \tau(\gamma_{0}, \gamma_{1})\, \forall s<t\}.   
\end{equation}
The set of timelike geodesic is 
described as follows: 
\begin{equation}\label{E:timegeo}
\TGeo(X):=\{ \gamma \in \Geo(X):  \, \tau(\gamma_{0}, \gamma_{1})>0\}.
\end{equation}
\end{definition}

Given $x\leq y\in X$ we also set
\begin{align}
\Geo(x,y)&:=\{ \gamma\in \Geo(X)\,:\, \gamma_{0}=x, \, \gamma_{1}=y\}  \label{eq:defGeo(x,y)} \\
\fI(x,y,t)&:=\{\gamma_{t}\,:\, \gamma\in \Geo(x,y)\}  \label{eq:defI(x,y,t)} 
\end{align}
respectively the space of geodesics, and the set of $t$-intermediate points  from $x$ to $y$.
\\ If $x\ll y\in X$ we also set 
$$\TGeo(x,y):=\{ \gamma\in \TGeo(X)\,:\, \gamma_{0}=x, \, \gamma_{1}=y\}. $$
Given two subsets $A,B\subset X$ we call
\begin{equation}\label{eq:defI(A,B,t)}
\fI(A,B,t):=\bigcup_{x\in A, y\in B} \, \fI(x,y,t) 
\end{equation}
the subset of $t$-intermediate points of geodesics from points in $A$ to points in $B$. 
\\

\noindent
A Lorentzian pre-length space $(X,\sfd, \ll, \leq,\tau)$ is called
\begin{itemize}
\item \emph{non-totally imprisoning} if for every compact set $K\Subset X$ there is constant $C>0$ such that the $\sfd$-arc-length of all causal curves contained in $K$ is bounded by $C$;
\item  \emph{globally hyperbolic} if it is non-totally imprisoning and  for every $x,y\in X$ the set (called ``causal diamond'') $J^{+}(x)\cap J^{-}(y)$ is compact in $X$;
\item  \emph{geodesic} if for all $x,y\in X$ with $x \leq y$, $\Geo(x,y) \neq \emptyset $.
\end{itemize}
It was proved in \cite[Theorem 3.28]{KS} that for a globally hyperbolic  Lorentzian geodesic (actually length would suffice) space $(X,\sfd, \ll, \leq,\tau)$, the time-separation function $\tau$ is finite and continuous. %Moreover, any globally hyperbolic  Lorentzian length space (for the definition of Lorenzian length space see \cite[Definition 3.22]{KS}, we skip it for brevity since we will not use it) is geodesic  \cite[Theorem 3.30]{KS}\footnote{FC:rimane vera con questa definizione piu' forte di geodesic?}.

The next useful result was proved by Minguzzi  (see \cite[Corollary 3.8]{Min23}).

\begin{proposition}\label{prop:GH->KGH}
Let $(X,\sfd, \ll, \leq,\tau)$ be a Lorentzian geodesic space. Then $X$ is globally hyperbolic if and only if
\begin{enumerate} 
\item[(i)]   for every $K_{1}, K_{2}\Subset X$ compact subsets,  the set (called ``causal emerald'')  $J^{+}(K_{1})\cap J^{-}(K_{2})$ is compact in $X$;
\item[(ii)]  the causal relation $\{x\leq y\}\subset X\times X$ is a closed subset (i.e.\;$X$ is causally closed).
\end{enumerate}
\end{proposition}
In the sequel we will use that global hyperbolicity implies (i) and (ii). Even if not used in the present work, the reverse implication is interesting at a conceptual level, as (i) and (ii) do not depend on $\sfd,\tau, \ll$, but only on the causal relation $\leq$ and on the topology induced by $\sfd$. \\

Using Proposition \ref{prop:GH->KGH}(i),  it is readily seen that if $X$ is  globally hyperbolic and $K_{1}, K_{2}\Subset X$ are compact subsets  then 
\begin{equation}\label{I(K1K2)}
\fI(K_{1},K_{2},s)\Subset \bigcup_{t\in [0,1]}  \fI(K_{1},K_{2},t) \Subset X, \quad \forall s\in [0,1].
\end{equation}

In the proof of the singularity theorem, we will use a slight variation of the time separation function associated to a subset $V\subset X$.
Recall that a subset $V\subset X$ is called \emph{achronal} if $x\not \ll y$ for every $x,y\in V$. In particular, if $V$ is achronal, then $I^{+}(V)\cap I^{-}(V)= \emptyset$, so we can define the \emph{signed time-separation} to $V$, $\tau_{V}:X\to [-\infty, +\infty]$, by
\begin{equation}\label{eq:deftauV}
\tau_{V}(x):=
\begin{cases}
\sup_{y\in V} \tau(y,x), &\quad \text{ for }x\in I^{+}(V)\\
-\sup_{y\in V} \tau(x,y),& \quad \text{ for }x\in I^{-}(V) \\
0 &\quad \text{ otherwise}
\end{cases}.
\end{equation}
Note that $\tau_{V}$ is lower semi-continuous on $I^{+}(V)$ (and upper semi-continuous on $I^{-}(V)$), as supremum of continuous functions.
\\In order for these suprema to be attained, global hyperbolicity and geodesic property of $X$ alone are not sufficient. One should rather demand additional compactness properties of the set $V$. The following notion, introduced by Galloway \cite{Ga} in the smooth setting, is well suited to this aim.

\begin{definition}[Future timelike complete (FTC) subsets]\label{def:FTC}
A subset $V\subset X$ is \emph{future timelike complete} (FTC), if for each point  $x\in I^{+}(V)$, the intersection $J^{-}(x)\cap V \subset V$ has compact closure (w.r.t. $\sfd$) in $V$. Analogously, one defines \emph{past timelike completeness} (PTC). A subset that is both   FTC and PTC is called \emph{timelike complete}.
\end{definition}

\noindent
We denote  with $\overline{C}$ the topological  closure (with respect to $\sfd$) of a subset $C\subset X$.

\begin{lemma}\label{L:initialpoint} 
Let $(X,\sfd, \ll, \leq, \tau)$ be a globally hyperbolic Lorentzian geodesic  space and let $V\subset X$ be an achronal FTC (resp. PTC) subset. Then  for each $x\in I^{+}(V)$ (resp. $x\in I^{-}(V)$) there exists a point $y_{x}\in V$ with $\tau_{V}(y_{x})=\tau(y_{x},x)>0$ (resp. $\tau_{V}(y_{x})=-\tau(x,y_{x})<0$).
\end{lemma}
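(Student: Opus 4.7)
The plan is to realize $\tau_{V}(x)$ as a supremum that is attained along a maximizing sequence whose limit remains inside $V$, using the FTC property for compactness and the continuity of $\tau$ to pass to the limit.

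First I would concentrate on the case $x\in I^{+}(V)$; the past case follows by symmetry by passing to the causally reversed Lorentzian pre-length space $(X,\sfd,\tilde{\ll},\tilde{\leq},\tilde{\tau})$ defined in the excerpt, under which FTC becomes PTC and $I^{+}(V)$ becomes $I^{-}(V)$. Since $x\in I^{+}(V)$, there is some $y_{0}\in V$ with $y_{0}\ll x$, so
\[
\tau_{V}(x)=\sup_{y\in V}\tau(y,x)\geq \tau(y_{0},x)>0.
\]
Pick a maximizing sequence $(y_{n})\subset V$ with $\tau(y_{n},x)\to \tau_{V}(x)$. Discarding finitely many indices, we may assume $\tau(y_{n},x)>0$, that is $y_{n}\ll x$, so $(y_{n})\subset J^{-}(x)\cap V$.

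Next I would invoke the FTC hypothesis: $J^{-}(x)\cap V$ has compact closure in $V$ with respect to $\sfd$, so a subsequence $y_{n_{k}}$ converges to some $y_{x}\in V$. Finally I would appeal to \cite[Theorem 3.28]{KS}, quoted in the text, which states that on a globally hyperbolic Lorentzian geodesic space the time-separation function $\tau$ is finite and continuous. Finiteness rules out $\tau_{V}(x)=+\infty$ (otherwise a priori $\tau(y_{n_{k}},x)\to +\infty$ would, by continuity, force $\tau(y_{x},x)=+\infty$). Continuity then gives
\[
\tau(y_{x},x)=\lim_{k\to\infty}\tau(y_{n_{k}},x)=\tau_{V}(x)>0,
\]
and the defining axiom $\tau>0\Leftrightarrow\, \ll$ of a Lorentzian pre-length space yields $y_{x}\ll x$, completing the proof.

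No genuine obstacle arises; the only delicate point is ensuring that the limit of the maximizing sequence lies in $V$ itself rather than merely in its closure in $X$, and this is precisely the content of the FTC condition (stating compactness of the closure \emph{in $V$}). Continuity of $\tau$, needed to pass to the limit in the supremum, is exactly what global hyperbolicity together with the geodesic property provides.
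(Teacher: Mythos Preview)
Your proposal is correct and follows essentially the same approach as the paper: both use the FTC hypothesis to confine the optimization to a compact subset of $V$ and then invoke continuity of $\tau$ (from \cite[Theorem 3.28]{KS}) to realize the supremum. The only cosmetic difference is that the paper applies the extreme value theorem directly to the continuous function $\tau(\cdot,x)$ on the compact set $\overline{J^{-}(x)\cap V}\subset V$ (noting $\tau(\cdot,x)\equiv 0$ outside $J^{-}(x)$), whereas you extract a maximizing sequence and pass to the limit; these are equivalent formulations of the same argument.
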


\begin{proof}
Fix a point  $x\in I^{+}(V)$ (for $x\in I^{-}(V)$ the proof is analogous). By the very defnition of $\tau_{V}$ and  \eqref{eq:deftau}, it holds $\tau_{V}(x)>0$  and $\tau(\cdot, x)\equiv 0$  outside of $J^{-}(x)$.  Since by  global hyperbolicity \cite[Theorem 3.28]{KS}  the function $\tau(\cdot, x):X\to \R$ is finite and continuous,  then it admits maximum on the compact set $K:=\overline{J^{-}(x)\cap V}\subset V$ at some point $y_{x}$ . Thus
$$
\tau(y_{x},x)=\max_{y\in K} \tau(y,x)=\sup_{y\in V} \tau(y,x)=\tau_{V}(x)>0.
$$ 
\end{proof}

\begin{remark}\label{R:c1Lip}
Lemma \ref{L:initialpoint}  and reverse triangle inequality \eqref{eq:deftau}implies that
$$
\tau_{V}(x) - \tau_{V}(z) \geq \tau(y_{z},x)-\tau(y_{z},z)  \geq  \tau(z,x), \quad \forall x,z\in I^{+}(V),\, z\leq x.
$$
\end{remark}

 In analogy to the metric setting, it is natural to introduce the next notion of timelike non-branching.

\begin{definition}[Timelike non-branching]\label{def:TNB}
A  Lorentzian pre-length space $(X,\sfd, \ll, \leq, \tau)$ is said to be \emph{forward timelike non-branching}  if and only if for any $\gamma^{1},\gamma^{2} \in \TGeo(X)$, it holds:
\begin{equation}\label{eq:defNonBranch}
\exists \;  \bar t\in (0,1) \text{ such that } \ \forall t \in [0, \bar t\,] \quad  \gamma_{ t}^{1} = \gamma_{t}^{2}   
\quad 
\Longrightarrow 
\quad 
\gamma^{1}_{s} = \gamma^{2}_{s}, \quad \forall s \in [0,1].
\end{equation}
It is said to be \emph{backward timelike non-branching} if the reversed causal structure is forward timelike non-branching. In case it is both forward and backward timelike non-branching it is said \emph{timelike non-branching}.
\end{definition}  

By Cauchy Theorem, it is clear that if $(M,g)$ is a space-time whose Christoffel symbols are locally-Lipschitz (e.g. in case $g\in C^{1,1}$) then the associated synthetic structure is timelike non-branching. 
It is expected that for spacetimes with a metric of lower regularity (e.g. $g\in C^{1}$ or $g\in C^{0}$) timelike branching can occur.  It is also expected that   timelike branching can occur in closed cone structures (see Remark \ref{rem:OtherExamples}) when the Lorentz-Finsler norm is not strictly convex (see \cite[Remark 2.8]{Min}).

\begin{definition}[Measured Lorentzian pre-length space $(X,\sfd,  \mm, \ll, \leq, \tau)$]
A \emph{measured Lorentzian pre-length space} $(X,\sfd, \mm, \ll, \leq, \tau)$ is a Lorentzian pre-length space endowed with a Radon non-negative measure $\mm$ with $\supp \, \mm=X$.
We say that $(X,\sfd, \mm, \ll, \leq, \tau)$ is globally hyperbolic (resp. geodesic) if $(X,\sfd, \ll, \leq, \tau)$ is so.
\end{definition}

Recall that a Radon measure $\mm$ on a  proper metric space $X$ is a Borel-regular measure which is finite on compact subsets. In this framework, it is well known (see for instance \cite[Section 1.6]{KrPa}) that Suslin sets  are $\mm$-measurable. For the sake of this paper it will be enough to recall that   Suslin sets (also called analytic sets) are precisely images via continuous mappings of Borel subsets in complete and separable metric spaces (for more details see \cite{KrPa, Srivastava}).

\begin{remark}[Case of  a spacetime with a continuous Lorentzian metric]\label{rem:C0metrics}
Let $M$ be a smooth manifold, $g$ be a continuous Lorentzian metric over $M$ and assume that $(M,g)$ is time-oriented (i.e. there is a continuous timelike
vector field).  Note that, for $C^0$-metrics, the natural class of differentiability of the manifolds is $C^{1}$; now, $C^{1}$ manifolds always
possess a $C^{\infty}$ subatlas, and one can choose some such sub-atlas whenever convenient.  
 
A causal (respectively timelike) curve in $M$ is by definition a locally Lipschitz
curve whose tangent vector is causal (resp. timelike) almost everywhere. It would also
be possible to start from absolutely continuous (AC for short) curves, but since causal  AC
curves always admit a re-parametrisation that is Lipschitz \cite[Sec. 2.1, Rem. 2.3]{Min}, we do not loose in generality with the above convention.

Denote with  ${\rm L}_{g}(\gamma)$ the $g$-length of a causal curve $\gamma:I\to M$, i.e. ${\rm L}_{g}(\gamma):=\int_{I}\sqrt{-g(\dot \gamma, \dot \gamma)}\, \dd t$.
The time separation function $\tau:M\times M\to [0,\infty]$ is then defined in the usual way, i.e. 
$$\tau(x,y):=\sup\{{\rm L}_{g}(\gamma): \gamma \text{ is future directed causal from $x$ to $y$}\}, \quad  \text{if $x\leq y$},$$
and $\tau(x,y)=0$ otherwise. Note that the reverse triangle inequality \eqref{eq:deftau} follows directly from the definition. It is easy to check that  an  ${\rm L}_{g}$-maximal curve $\gamma$ is also ${\rm L_{\tau}}$-maximal, and  ${\rm L}_{g}(\gamma) = {\rm L}_{\tau}(\gamma)$ (see for instance \cite[Remark 5.1]{KS}).  Also, we fix a complete Riemannian
metric $h$ on $M$ and denote by $\sfd^{h}$ the associated distance function.

For a  spacetime with a Lorentzian $C^{0}$-metric: 
\begin{itemize}
\item Recall that a Cauchy hypersurface is a subset  which is met exactly  once by every inextendible causal curve. It is a well known fact that, even for $C^{0}$-metrics, a Cauchy hypersurface is a closed acausal topological hypersurface \cite[Proposition 5.2]{SaC0}.   Global hyperbolicity is equivalent to the existence of a Cauchy hypersurface \cite[Theorem 5.7, Theorem 5.9]{SaC0} which in turn implies strong causality \cite[Proposition 5.6]{SaC0}.
\item By \cite[Proposition 5.8]{KS},   if $g$ is a causally plain (or, more strongly, locally Lipschitz) Lorentzian $C^{0}$-metric on $M$ then the associated synthetic structure is a pre-length Lorentzian space.
More strongly, from   \cite[Theorem 3.30 and Theorem 5.12]{KS} and combining the above items, if $g$ is a   globally hyperbolic and causally plain Lorentzian $C^{0}$-metric on $M$ then the associated synthetic structure is a  globally hyperbolic Lorentzian geodesic space.

\item Any  Cauchy hypersurface is causally complete. More strongly, if $V\subset M$ is Cauchy hypersurface then for every $x\in J^{+}(V)$ it holds that $J^{-}(x)\cap J^{+}(V)$ is compact (and analogous statement for $x\in J^{-}(V)$). This fact is classical and well known in the smooth setting (see for instance \cite[Lemma 14.40]{O'Neill} or  \cite[Theorem 8.3.12]{Wald}) and extendable to $C^{0}$-metrics along the lines of the proof of \cite[Theorem 5.7]{SaC0}.
\end{itemize}
\end{remark}

\begin{remark}[Other classes of examples]\label{rem:OtherExamples}
\begin{itemize}
\item \textbf{Closed cone structures}. 
Several results from smooth causality theory can be extended to cone structures on smooth
manifolds. One of the motivations for such generalisations comes from  the problem of constructing
smooth time functions in stably causal or globally hyperbolic spacetimes. Fathi and Siconolfi \cite{FaSi} analysed continuous cone structures with tools from weak KAM theory, Bernard and Suhr \cite{BeSu}
studied Lyapunov functions for closed cone structures and showed (among other results) the equivalence between global hyperbolicity and the existence of steep temporal functions in this framework, Minguzzi \cite{Min} gave a deep and comprehensive analysis of causality theory for closed cone structures, including embedding and singularity theorems in this framework. Closed cone structures provide
a rich source of examples of Lorentzian pre-length and length spaces, which can be seen as the synthetic-Lorentzian analogue of Finsler manifolds (see \cite[Section 5.2]{KS} for more details). 

\item \textbf{Outlook on examples, towards  quantum gravity}. The framework of Lorentzian synthetic spaces allows to handle situations
where one may not have the structure of a manifold or a Lorentz(-Finsler) metric.  The optimal transport tools developed in the paper can provide a new perspective on curvature in those cases where there is no classical notion of curvature (Riemann tensor, Ricci
and sectional curvature, etc.).  A remarkable example of such a situation is given by certain approaches to quantum gravity, see for instance \cite{MP} where it is shown that from only a countable dense set of events and the causality relation, it is possible to reconstruct a globally hyperbolic spacetime in a purely order theoretic manner. In particular, two approaches to quantum gravity are linked to Lorentzian synthetic spaces: the one of \emph{causal Fermion systems} \cite{FinsterPrimer, FinsterBook} and the  \emph{theory of causal sets} \cite{BLMS}. The basic idea in both cases is that the structure of spacetime needs to be adjusted
on a microscopic scale to include quantum effects. This leads to non-smoothness of
the underlying geometry, and  the classical  structure of Lorentzian manifold emerges only in the macroscopic regime.   For the connection to the theory of Lorentzian (pre-)length spaces we refer to \cite[Section 5.3]{KS}, \cite[Section 5.1]{FinsterPrimer}. 
Let us mention that the link with causal Fermion systems looks particularly promising: indeed the two cornerstones, used to define synthetic timelike-Ricci curvature lower bounds, are  \emph{Loretzian-distance} and \emph{measure}, and  a causal Fermion system is naturally endowed with both (the reference measure in this setting is called \emph{universal measure}). 
\end{itemize}
\end{remark}

\subsection{Measures and weak/narrow convergence}
In this subsection we briefly recall some basic notions of convergence of  measures that will be used in the paper. Standard references for the topic  are \cite{AGSBook,Vil}. 

Given a complete and separable (in particular, everything hold for proper) metric space $(X,\sfd)$, we denote by
$\BorelSets X$ the collection of its Borel sets and 
by $\mathcal P(X)$ (resp.  $\mathcal{P}_{c}(X)$)  the  collection of all Borel probability
measures (resp. with compact support).
\\We say that $(\mu_{n})\subset \mathcal P(X)$ \emph{narrowly converges} to $\mu_{\infty}\in \mathcal P(X)$ provided 
\begin{equation}\label{eq:defNarrowConv}
\lim_{n\to\infty}\int f\,\mu_n=\int f\,\mui\qquad\forevery f\in \Cb X ,
\end{equation}
where $\Cb X$ denotes the space of bounded and continuous functions.

Relative narrow compactness in $\prob X$ can be characterized by Prokhorov's Theorem.
Let us first recall that a set  $\mathcal K\subset\prob X$ is said to be tight provided for every $\eps>0$ 
there exists a compact set $K_\eps\subset X$ such that
\[
\mu(X\setminus K_\eps)\leq\eps\quad\forevery \mu\in \mathcal K.
\]
The we have the following classical result:
\begin{theorem}[Prokhorov]\label{thm:prok}
Let $(X,\sfd)$ be complete and separable. A subset  $\mathcal
K\subset\prob X$ is tight if and only if it is precompact in the narrow topology.
\end{theorem}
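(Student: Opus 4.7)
The plan is to establish the two implications separately. The easy direction is \emph{tightness from precompactness}: fix $\eps>0$ and, using separability of $X$, cover $X$ for every $n\in\N$ by countably many open balls $\{B_i^n\}_{i\in\N}$ of radius $1/n$. Setting $U_N^n:=\bigcup_{i\leq N} B_i^n$, the map $\mu\mapsto \mu(U_N^n)$ is lower semicontinuous in the narrow topology, so $\{\mu\in\prob X:\mu(U_N^n)>1-\eps 2^{-n}\}$ is open. Since $X$ is Polish each individual $\mu\in \overline{\mathcal K}$ is Radon, hence tight, so for every $\mu$ there exists $N_\mu(n)$ with $\mu(U_{N_\mu(n)}^n)>1-\eps 2^{-n}$. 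Narrow precompactness of $\mathcal K$ makes $\overline{\mathcal K}$ compact, so finitely many of these open sets cover it; taking $N(n)$ to be the maximum of the finitely many involved indices gives $\mu(U_{N(n)}^n)>1-\eps 2^{-n}$ uniformly in $\mu\in\mathcal K$. The set $K_\eps:=\bigcap_n \overline{U_{N(n)}^n}$ is then totally bounded and closed in the complete space $X$, hence compact, and $\mu(X\setminus K_\eps)<\eps$ uniformly, proving tightness.

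The harder direction is \emph{precompactness from tightness}. Since $X$ is Polish, narrow convergence in $\prob X$ is metrizable, so I would reduce precompactness to sequential precompactness. Given $(\mu_k)\subset\mathcal K$, use tightness to produce compact sets $K_n\subset X$ with $\mu_k(X\setminus K_n)\leq 1/n$ for every $k$. On each compact metric space $K_n$, the space $\prob{K_n}$ is compact in the narrow topology, since positive normalized Radon measures on $K_n$ are identified with the weak-$*$ compact convex subset of $C(K_n)^*$ cut out by $\langle\mu,\mathbf 1\rangle=1$ and $\mu\geq 0$ (Banach--Alaoglu plus Riesz representation). Applying a diagonal extraction: restrict $(\mu_k)$ to $K_1$, normalize and extract a subsequence converging narrowly on $K_1$; then on $K_2$, etc. The diagonal subsequence $(\mu_{k_j})$ is such that for every $n$ its restriction to $K_n$ converges narrowly to some sub-probability measure $\nu_n$ supported in $K_n$. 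The measures $\nu_n$ are consistent under restriction, so they define a single Borel measure $\mui$ with $\mui(K_n)\geq 1-1/n$, in particular $\mui(X)=1$.

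It remains to verify that $\mu_{k_j}\to\mui$ narrowly on $X$, i.e.\ \eqref{eq:defNarrowConv} for every $f\in\Cb X$. Given such $f$ and $\eps>0$, choose $n$ with $2\snorm{f}_\infty/n<\eps$; then write
\[
\int f\,\mu_{k_j}=\int_{K_n}f\,\mu_{k_j}+\int_{X\setminus K_n}f\,\mu_{k_j},
\]
and analogously for $\mui$. The first term converges to $\int_{K_n}f\,\nu_n$ by convergence on $K_n$, and the second terms are uniformly bounded by $\snorm{f}_\infty/n$, so passing to the limit yields the conclusion up to an error $O(\snorm{f}_\infty/n)$, which can be made arbitrarily small. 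The main obstacle in this argument is of a bookkeeping nature: making the diagonal extraction yield a single Borel probability measure $\mui$ on all of $X$ (not merely a family of partial limits on the $K_n$), and then justifying the splitting above uniformly in $j$ using only the tightness bound; once this is handled the convergence follows from narrow convergence on each compact piece and the uniform tail estimate.
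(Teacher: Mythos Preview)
The paper does not supply a proof of Prokhorov's theorem; it is merely quoted as a classical result. So there is no ``paper's proof'' to compare against, and your proposal must be judged on its own merits.

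Your argument for \emph{precompact $\Rightarrow$ tight} is correct and standard: the combination of Ulam's theorem (each Borel probability on a Polish space is tight), the Portmanteau lower semicontinuity of $\mu\mapsto\mu(U)$ on open sets, and compactness of $\overline{\mathcal K}$ yields a uniform $N(n)$, and the intersection $\bigcap_n \overline{U^n_{N(n)}}$ is totally bounded and closed, hence compact.

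The \emph{tight $\Rightarrow$ precompact} direction has a genuine gap, which you yourself flag but mislabel as ``bookkeeping''. The claim that the partial limits $\nu_n$ are ``consistent under restriction'' is false in general: restriction to a closed set is not continuous for narrow convergence. Concretely, take $X=\R$, $K_1=\{0\}$, $K_2=[0,1]$, $\mu_k=\delta_{1/k}$. Then $\mu_k|_{K_1}=0$ for every $k$, so $\nu_1=0$, while $\mu_k|_{K_2}=\delta_{1/k}\to\delta_0=\nu_2$, and $\nu_2|_{K_1}=\delta_0\neq\nu_1$. Hence there is no well-defined measure $\mu_\infty$ obtained by ``gluing'' the $\nu_n$, and your final splitting argument, which presupposes the existence of $\mu_\infty$, does not go through as written. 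What your last paragraph actually shows is only that $\lim_j\int f\,\mu_{k_j}$ exists as a number for each $f\in\Cb X$; turning this positive linear functional on $\Cb X$ into a Borel probability measure on a non-compact space is exactly the nontrivial step (it requires, e.g., Daniell--Stone or a $\sigma$-smoothness check), not mere bookkeeping.

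The standard remedy is to bypass the consistency issue entirely: embed the Polish space $X$ homeomorphically into a compact metrizable space $\hat X$ (say $[0,1]^\N$), use weak-$*$ compactness of $\prob{\hat X}$ to extract a limit $\hat\mu$, and then invoke tightness together with upper semicontinuity on closed sets to get $\hat\mu(K_n)\geq 1-1/n$, forcing $\hat\mu(X)=1$. Narrow convergence in $X$ then follows from narrow convergence in $\hat X$ plus the uniform tail bound.
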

We next recall a useful tightness criterion for measures in $\Prob(X\times X)$ (for the proof see for instance \cite[Lemma 5.2.2]{AGSBook}).  To this aim,  denote with $P_{1}, P_{2}:X\times X\to X$ the projections onto the first and second factor. The push-forward is defined as $(P_{i})_{\sharp} \pi (A):=\pi(P_{i}^{-1}(A))$ for any  $A\in \BorelSets X$.
\begin{lemma}[Tightness criterion in  $\prob {X\times X}$]\label{lem:tightXxX}
A subset  $\mathcal K\subset\prob {X\times X}$ is tight if and only if $(P_{i})_{\sharp} \mathcal K\subset\prob X$ is tight for $i=1,2$.
\end{lemma}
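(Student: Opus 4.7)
The plan is to establish the two implications separately, both by elementary set-theoretic manipulations with push-forwards; there is no serious obstacle here, only some care with the choice of compact sets.

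For the forward direction, I would start by assuming $\mathcal K \subset \prob{X\times X}$ is tight and fix $\varepsilon>0$. By tightness there exists a compact set $K_\varepsilon \subset X\times X$ such that $\pi((X\times X)\setminus K_\varepsilon)\le \varepsilon$ for every $\pi\in \mathcal K$. Since $P_i:X\times X\to X$ is continuous, the set $L_i^\varepsilon := P_i(K_\varepsilon)$ is compact in $X$. The inclusion $X\setminus L_i^\varepsilon \subset X\setminus P_i(K_\varepsilon)$ gives $P_i^{-1}(X\setminus L_i^\varepsilon)\subset (X\times X)\setminus K_\varepsilon$, whence
\[
(P_i)_\sharp \pi (X\setminus L_i^\varepsilon) = \pi\bigl(P_i^{-1}(X\setminus L_i^\varepsilon)\bigr) \le \pi\bigl((X\times X)\setminus K_\varepsilon\bigr)\le \varepsilon.
\]
This holds uniformly in $\pi\in \mathcal K$, proving tightness of $(P_i)_\sharp \mathcal K$ for $i=1,2$.

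For the reverse direction, I would assume $(P_1)_\sharp \mathcal K$ and $(P_2)_\sharp \mathcal K$ are tight and fix $\varepsilon>0$. Pick compact sets $K_1^\varepsilon, K_2^\varepsilon\subset X$ such that $(P_i)_\sharp \pi(X\setminus K_i^\varepsilon)\le \varepsilon/2$ for every $\pi\in \mathcal K$ and for $i=1,2$. Then $K^\varepsilon := K_1^\varepsilon \times K_2^\varepsilon$ is compact in $X\times X$, and from the inclusion
\[
(X\times X)\setminus K^\varepsilon \subset \bigl((X\setminus K_1^\varepsilon)\times X\bigr) \cup \bigl(X\times (X\setminus K_2^\varepsilon)\bigr) = P_1^{-1}(X\setminus K_1^\varepsilon)\cup P_2^{-1}(X\setminus K_2^\varepsilon),
\]
together with subadditivity of $\pi$ and the identities $\pi(P_i^{-1}(A))=(P_i)_\sharp \pi(A)$, I obtain
\[
\pi\bigl((X\times X)\setminus K^\varepsilon\bigr) \le (P_1)_\sharp \pi(X\setminus K_1^\varepsilon) + (P_2)_\sharp \pi(X\setminus K_2^\varepsilon) \le \varepsilon
\]
for every $\pi\in \mathcal K$, which is the required tightness in $\prob{X\times X}$.

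Both implications use only continuity of the coordinate projections and the elementary fact that products of compact sets are compact (which holds in the metric setting under consideration). The only potential subtlety is notational, making sure that $P_i^{-1}(X\setminus A) = (X\times X)\setminus P_i^{-1}(A)$ is used correctly; no topological completeness or separability of $X$ is needed for the argument itself, only for invoking the framework (such as Prokhorov's theorem) in which tightness is a useful concept.
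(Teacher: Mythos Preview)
Your proof is correct and is essentially the standard argument; the paper itself does not give a proof but simply refers to \cite[Lemma 5.2.2]{AGSBook}, whose argument is the one you have reproduced.
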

 
We next recall a useful property concerning passage to the limit in \eqref{eq:defNarrowConv} when $f$ is possibly unbounded, but a  ``uniform integrability'' condition holds. 
\begin{definition}[Uniform integrability]
We say that a Borel function $g:X\to [0,+\infty]$ is \emph{uniformly integrable} w.r.t. a given set  $\mathcal K\subset \prob X$ if
\begin{equation}\label{eq:defUnifInt}
\limsup_{k\to \infty} \sup_{\mu\in \mathcal K}  \int_{\{g\geq k\}} g \, \mu = 0.
\end{equation}
\end{definition}

\begin{lemma}[Lemma 5.1.7 \cite{AGSBook}]\label{lem:UnifIntConv}
Let  $(\mu_{n})\subset \mathcal P(X)$ be narrowly convergent to $\mu_{\infty}\in \mathcal P(X)$. 
If $f:X\to [0,\infty)$  is continuous and  uniformly integrable with respect to the set $\{\mu_{n}\}_{n\in \N}$, then 
$$
 \lim_{n\to\infty}\int f\,\mu_n=\int f\,\mui .
$$
Conversely, if $f:X\to [0,\infty)$  is continuous, $f\in L^{1}(\mu_{n})$ for every $n\in \N$ and
\begin{equation}\label{eq:limsupUnifInt}
\limsup_{n\to \infty} \int_{X} f\, \mu_{n} \leq \int_{X} f \, \mui <+\infty,
\end{equation}
then $f$ is uniformly integrable with respect to the set $\{\mu_{n}\}_{n\in \N}$.
\end{lemma}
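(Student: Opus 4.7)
The plan is to use truncations $f_k := \min(f,k)$, which are bounded and continuous for each $k > 0$, so that narrow convergence directly yields $\int f_k \, d\mu_n \to \int f_k \, d\mu_\infty$ as $n\to\infty$; both implications then reduce to controlling the tail $(f - f_k) = (f-k)^+$ uniformly in $n$. For the forward direction, I would first note that uniform integrability gives $\sup_n \int f \, d\mu_n < +\infty$, and applying Fatou's lemma for narrow convergence (valid since $f$ is continuous, hence lower semicontinuous, and non-negative) yields $\int f \, d\mu_\infty \leq \liminf_n \int f \, d\mu_n < +\infty$, so $f \in L^1(\mu_\infty)$. Given $\epsilon > 0$, I choose $k$ large enough that $\sup_n \int_{\{f\geq k\}} f \, d\mu_n < \epsilon/3$ (by uniform integrability) and $\int_{\{f\geq k\}} f \, d\mu_\infty < \epsilon/3$ (by dominated convergence on $\mu_\infty$, since $f\mathbf{1}_{\{f\geq k\}} \downarrow 0$ and is dominated by $f\in L^1(\mu_\infty)$). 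A three-$\epsilon$ decomposition through $f_k$, with the middle term $|\int f_k \, d\mu_n - \int f_k \, d\mu_\infty| \to 0$ by narrow convergence of the bounded continuous function $f_k$, gives $\limsup_n |\int f \, d\mu_n - \int f \, d\mu_\infty| \leq 2\epsilon/3$, concluding part (1).

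For the converse, set $g_k := (f-k)^+$, which is continuous, non-negative, dominated by $f$, and satisfies the pointwise bound $f \cdot \mathbf{1}_{\{f \geq 2k\}} \leq 2 g_k$ (since on $\{f\geq 2k\}$ one has $g_k = f-k \geq f/2$). Fatou combined with the hypothesis $\limsup_n \int f \, d\mu_n \leq \int f \, d\mu_\infty < \infty$ upgrades to $\lim_n \int f \, d\mu_n = \int f \, d\mu_\infty$; subtracting the narrow-convergence limit $\int f_k \, d\mu_n \to \int f_k \, d\mu_\infty$ yields $\int g_k \, d\mu_n \to \int g_k \, d\mu_\infty$ for each fixed $k$. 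Since $g_k \downarrow 0$ pointwise as $k\to\infty$ and $g_k \leq f \in L^1(\mu_\infty)$, dominated convergence gives $\int g_k \, d\mu_\infty \to 0$. To deduce $\lim_k \sup_n \int g_k \, d\mu_n = 0$ I would split the index $n$: fix $\epsilon$, pick $K$ with $\int g_K \, d\mu_\infty < \epsilon/2$ and $N$ so large that $\int g_K \, d\mu_n < \epsilon$ for all $n \geq N$; for each of the finitely many $n < N$ use $f \in L^1(\mu_n)$ to select $K_n$ with $\int g_{K_n} \, d\mu_n < \epsilon$, and take $k \geq \max(K, K_1, \ldots, K_{N-1})$, exploiting that $g_k$ is decreasing in $k$. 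Combined with $\int_{\{f \geq 2k\}} f \, d\mu_n \leq 2 \int g_k \, d\mu_n$, this delivers uniform integrability.

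The main subtlety I expect lies in the converse direction: the pointwise convergence $\int g_k \, d\mu_n \to \int g_k \, d\mu_\infty$ as $n \to \infty$ is not uniform in $k$, so decay of the tail on the limit measure does not by itself furnish uniform decay along the sequence. The remedy is to peel off the finite initial segment of indices and invoke $L^1$-integrability at each of them separately. The forward direction is more routine, essentially a classical three-$\epsilon$ argument once Fatou's lemma places $f$ in $L^1(\mu_\infty)$.
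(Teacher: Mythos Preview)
The paper does not supply its own proof of this lemma; it is quoted verbatim as \cite[Lemma~5.1.7]{AGSBook} and used as a black box. Your argument is correct and is precisely the standard truncation proof one finds in that reference: reduce to the bounded continuous functions $f_k=\min(f,k)$ via narrow convergence, and control the tails $(f-k)^+$ using uniform integrability (forward) or by combining Fatou with the hypothesis \eqref{eq:limsupUnifInt} and splitting off a finite initial segment of indices (converse).
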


\subsection{Relative entropy and basic properties}

We denote $\mathcal{P}_{ac}(X)$ the space of probability measures absolutely continuous with respect to $\mm$.

\begin{definition}
Given a probability measure $\mu \in \mathcal{P}(X)$ we define 
its relative entropy by
\begin{equation}
\Ent(\mu|\mm) = \int_{X} \rho \log(\rho) \, \mm,
\end{equation}
if $\mu = \rho \, \mm$ is absolutely continuous with respect to $\mm$ and $(\rho\log(\rho))_{+}$ is  $\mm$-integrable. 
Otherwise we set $\Ent(\mu|\mm) = +\infty$.
\end{definition}
A simple application of Jensen inequality using the convexity of $(0,\infty)\ni t\mapsto t \log t$ gives 
\begin{equation}\label{E:jensenE}
\Ent(\mu|\mm)\geq -\log \mm(\supp \, \mu)>-\infty,\quad \forall \mu\in  \mathcal{P}_{c}(X).
\end{equation}
We set  $\Dom(\Ent(\cdot|\mm)):=\{\mu\in \mathcal{P}(X)\,:\, \Ent(\mu|\mm)\in \R\}$ to be the finiteness domain of the entropy.
An important property of the relative entropy is the (joint) lower-semicontinuity under narrow convergence in case the reference measures are probabilities  (for a proof, see for instance  \cite[Lemma 9.4.3]{AGSBook}): 
\begin{equation}\label{eq:Entljointsc} 
\mm_{n},\mm_{\infty}\in \Prob(X), \; \mm_{n}\to \mm_{\infty}, \; \mu_{n}\to \mui \; \text{narrowly }  \quad \Longrightarrow \quad  \liminf_{n\to \infty} \Ent(\mu_{n}|\mm_{n}) \geq \Ent(\mui|\mm_{\infty}).
\end{equation}
In particular, for a general fixed reference measure $\mm$ it holds:
\begin{equation}\label{eq:Entlsc} 
\mu_{n}\to \mui \; \text{narrowly and } \mm\Big( \bigcup_{n\in \N} \supp\, \mu_{n} \Big) <\infty \quad \Longrightarrow \quad  \liminf_{n\to \infty} \Ent(\mu_{n}|\mm) \geq \Ent(\mui|\mm).
\end{equation}

%%%%%%%%%%%%%%%%%%%%%%%%%%%%%%%%%%%%%%%%%%%%%%%%%%%%%%%%%%%%%%%%
%%%%%%%%%%%%%%%%%%%%%%%%%%%%%%%%%%%%%%%%%%%%%%%%%%%%%%%%%%%%%%%%
%%%%%%%%%%%%%%%%%%%%%%%%%%%%%%%%%%%%%%%%%%%%%%%%%%%%%%%%%%%%%%%%
\section{Optimal transport in Lorentzian synthetic spaces}\label{sec:OTLorentz}

\subsection{The $\ell_{p}$-optimal transport problem}

Given $\mu,\nu\in \mathcal{P}(X)$, denote
\begin{align*}
 \Pi(\mu,\nu)&:=\{\pi\in  \mathcal{P}(X\times X) \,:\, (P_{1})_{\sharp}\pi=\mu, \, (P_{2})_{\sharp}\pi=\nu \}, \nonumber \\
 \Pi_{\leq}(\mu,\nu)&:=\{\pi\in  \Pi(\mu,\nu) \,:\,  \pi(X^{2}_{\leq})=1 \}, 
\nonumber \\
  \Pi_{\ll}(\mu,\nu)&:=\{\pi\in  \Pi(\mu,\nu) \,:\,  \pi(X^{2}_{\ll})=1 \} 
\end{align*}
where $X^{2}_{\leq}:=\{(x,y) \in X^{2}\,:\, x\leq y \}$ and   $X^{2}_{\ll}:=\{(x,y) \in X^{2}\,:\, x\ll y \}$.

\begin{definition}\label{def:Wp}
Let  $(X,\sfd, \ll, \leq, \tau)$ be a Lorentzian pre-length space and let $p\in (0,1]$. Given $\mu,\nu\in \mathcal{P}(X)$, the \emph{$p$-Lorentz-Wasserstein distance} is defined by
\begin{equation}\label{eq:defWp}
\ell_{p}(\mu,\nu):= \sup_{\pi \in \Pi_{\leq}(\mu,\nu)} \left(  \int_{X\times X}  \tau(x,y)^{p} \, \pi(\dd x\dd y)\right)^{1/p}.
\end{equation}
When $\Pi_{\leq}(\mu,\nu)=\emptyset$ we set $\ell_{p}(\mu,\nu):=-\infty$.
\end{definition}
Note that Definition \ref{def:Wp} extends to Lorentzian pre-length spaces the corresponding notion given in the smooth Lorentzian setting in \cite{EM17}  (see also \cite{McCann, MoSu}, and \cite{Suhr} for $p=1$); when $\Pi_{\leq}(\mu,\nu)=\emptyset$ we adopt the convention of McCann  \cite{McCann} (note that \cite{EM17} set $\ell_{p}(\mu,\nu)=0$ in this case).
A coupling  $\pi\in  \Pi_{\leq}(\mu,\nu)$ maximising in \eqref{eq:defWp} is said \emph{$\ell_{p}$-optimal}. The set of \emph{$\ell_{p}$-optimal} couplings from $\mu$ to $\nu$ is denoted by $  \Pi_{\leq}^{p\text{-opt}}(\mu,\nu)$.

\begin{remark} [An equivalent formulation of \eqref{eq:defWp}]\label{rem:eqellq}
Set 
\begin{equation}\label{eq:defell}
\ell^{p}(x,y):=
\begin{cases}
\tau(x,y)^{p} \quad &  \text{if } x\leq y \\
-\infty \quad & \text{otherwise}
\end{cases}.
\end{equation}
Notice that for every $\pi\in \Pi_{\leq}(\mu,\nu)$ it holds  $ \int_{X\times X}  \tau(x,y)^{p} \, \pi(\dd x\dd y)=   \int_{X\times X}  \ell(x,y)^{p} \, \pi(\dd x\dd y)\in [0,+\infty]$.
Moreover, using the convention that $\infty-\infty=-\infty$, we have that if $\pi\in \Pi(\mu,\nu)$ satisfies $$ \int_{X\times X}  \ell(x,y)^{p} \, \pi(\dd x\dd y)>-\infty,$$ then $\pi\in \Pi_{\leq}(\mu,\nu)$.
Thus the maximization problem  \eqref{eq:defWp} is equivalent (i.e. the $\sup$ and the set of maximisers coincide) to the  maximisation problem
\begin{equation}\label{eq:supell}
\sup_{\pi \in \Pi(\mu,\nu)} \left(  \int_{X\times X}  \ell^{p}(x,y) \, \pi(\dd x\dd y)\right)^{1/p}.
\end{equation}
The advantage of the formulation \eqref{eq:supell} is that, when $X$ is globally hyperbolic geodesic (so that $\tau$ is continuous) then  $\ell^{p}$ is upper semi-continuous on $X\times X$. Thus, one can apply to the Monge-Kantorovich problem \eqref{eq:supell} standard optimal transport techniques (e.g. \cite{Vil}). 
\end{remark}

We will adopt the following standard notation: given $\mu,\nu\in \Prob(X)$, we denote with $\mu \otimes \nu\in \Prob(X^{2})$ the product measure; given $u,v:X\to \R\cup \{+\infty\} $ we denote with $u\oplus v: X^{2}\to \R\cup \{+\infty\}$ the function  $u\oplus v(x,y):=u(x)+v(y)$.

\begin{proposition}\label{prop:ExMaxellp}
Let  $(X,\sfd, \ll, \leq, \tau)$ be a  globally hyperbolic Lorentzian geodesic space and let $\mu,\nu\in \mathcal{P}(X)$. 
If  $ \Pi_{\leq}(\mu,\nu)\neq \emptyset$ and if  there exist measurable functions $a,b:X\to \R$, with $a\oplus b \in L^{1}(\mu\otimes \nu)$ such that $\ell^{p}\leq a\oplus b$  on $\supp \, \mu \times \supp \, \nu$ (e.g. when $\mu,\nu\in \Prob_{c}(X)$) then the $\sup$ in \eqref{eq:defWp} is attained and finite. 
\end{proposition}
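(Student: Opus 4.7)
The strategy is the classical direct method: establish narrow compactness of $\Pi_\leq(\mu,\nu)$ and upper semi-continuity of the functional $F(\pi):=\int_{X^{2}}\ell^{p}(x,y)\,\pi(dxdy)$ (in the equivalent formulation of Remark \ref{rem:eqellq}), so that a Weierstrass-type argument produces a maximizer; finiteness will fall out of the integrable majorant $a\oplus b$.

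For narrow compactness, note first that any fixed marginal $\mu\in \Prob(X)$ is tight on the proper (hence Polish) metric space $(X,\sfd)$, so by Lemma \ref{lem:tightXxX} the set $\Pi(\mu,\nu)$ is tight, hence narrowly precompact by Prokhorov's Theorem \ref{thm:prok}. To see that $\Pi_{\leq}(\mu,\nu)$ is narrowly closed in $\Pi(\mu,\nu)$, in the causally closed case one applies the Portmanteau theorem to the closed set $X^{2}_{\leq}$: if $\pi_{n}\to \pi$ narrowly with $\pi_{n}(X^{2}_{\leq})=1$ for every $n$, then $\pi(X^{2}_{\leq})\geq \limsup_{n}\pi_{n}(X^{2}_{\leq})=1$. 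In the locally causally closed case with $\mu,\nu\in \Prob_{c}(X)$, every element of $\Pi_{\leq}(\mu,\nu)$ is supported in the compact set $K:=\supp\mu\times \supp\nu$, which can be covered by finitely many products of causally closed neighbourhoods so that $X^{2}_{\leq}\cap K$ is closed in $K$; alternatively, one invokes Lemmas \ref{lem:Kglobhyp} and \ref{lem:LocCCToGlob} to reduce (under the mild additional hypotheses that ensure $\cK$-global hyperbolicity) to the causally closed case. Portmanteau then applies verbatim.

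For upper semi-continuity of $F$, observe that by global hyperbolicity and the geodesic property (\cite[Theorem 3.28]{KS}) the time-separation $\tau$ is finite and continuous; consequently $\ell^{p}$ as defined in \eqref{eq:defell} is upper semi-continuous with values in $[-\infty,\infty)$ on $X\times X$ (resp.\ on $\supp\mu\times \supp\nu$ in the local case). Set
\[
g:=(a\oplus b)-\ell^{p}\colon \supp\mu\times \supp\nu\to [0,+\infty],
\]
which is nonnegative by the hypothesis $\ell^{p}\leq a\oplus b$ and lower semi-continuous (since $a,b$ are measurable, the standard argument uses a suitable approximation of $a\oplus b$ from below by continuous functions together with the lower semi-continuity of $-\ell^{p}$; equivalently one reduces to the compactly supported continuous case by monotone convergence). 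The standard lower semi-continuity of $\pi\mapsto \int g\,d\pi$ under narrow convergence for nonnegative l.s.c. integrands then yields $\liminf_{n}\int g\,d\pi_{n}\geq \int g\,d\pi$ whenever $\pi_{n}\to \pi$ narrowly in $\Pi(\mu,\nu)$. Since the marginals are fixed, $\int (a\oplus b)\,d\pi_{n}=\int a\,d\mu+\int b\,d\nu=:C\in \R$ is independent of $n$, so rearranging gives $\limsup_{n}\int \ell^{p}\,d\pi_{n}\leq \int \ell^{p}\,d\pi$, i.e.\ $F$ is narrowly upper semi-continuous. Combining narrow compactness of $\Pi_{\leq}(\mu,\nu)$ with the u.s.c.\ of $F$ gives the existence of a maximizer; finiteness is immediate from $F(\pi)\leq C<\infty$.

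The main obstacle in the argument is the locally causally closed case: making precise that $\Pi_{\leq}(\mu,\nu)$ remains narrowly closed when only pointwise local causal closedness is available. This is where compactness of the supports is crucial, and a careful extraction—either via a finite cover of $\supp\mu\cup \supp\nu$ by causally closed neighbourhoods, or by applying Lemmas \ref{lem:Kglobhyp}--\ref{lem:LocCCToGlob} to upgrade local to global causal closedness on the relevant compact region—is needed. Once this point is settled, the rest of the proof is a routine application of narrow compactness and upper semi-continuity in the spirit of standard optimal transport existence theorems.
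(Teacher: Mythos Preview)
Your approach is the same as the paper's: the paper simply invokes Remark~\ref{rem:eqellq} (upper semicontinuity of $\ell^{p}$) and then cites Villani's standard existence theorem \cite[Theorem~4.1]{Vil}, which is precisely the direct-method argument you have unpacked.

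One small technical point: you assert that $g=(a\oplus b)-\ell^{p}$ is lower semicontinuous, but since $a,b$ are only assumed measurable this need not hold, and your parenthetical sketch of a fix is not quite complete. A clean way around this is to note that on $\Pi_{\leq}(\mu,\nu)$ one has $\ell^{p}=\tau^{p}\geq 0$ $\pi$-a.e., so it suffices to show upper semicontinuity of $\pi\mapsto\int (\ell^{p})^{+}\,d\pi$; the function $(\ell^{p})^{+}$ is u.s.c. and dominated by $a^{+}\oplus b^{+}$, hence uniformly integrable over $\Pi(\mu,\nu)$ (the marginals being fixed), and a standard truncation argument yields the desired u.s.c. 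Alternatively, in the compactly supported case $\tau^{p}$ is bounded on $\supp\mu\times\supp\nu$ and one may take $a,b$ constant, so the issue does not arise. Either way, the gap is minor and the overall strategy is correct.
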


\begin{proof}
The claim follows from Remark \ref{rem:eqellq} combined with \cite[Theorem 4.1]{Vil} (see also \cite[Theorem 1.3]{Vil:topics}).
\end{proof}

We next show that $\ell_{p}$ satisfies the reverse triangle inequality. 
This was proved in the smooth Lorentzian setting by 
Eckstein-Miller \cite[Theorem 13]{EM17}, and it is  the natural Lorentzian 
analogue of the fact that the Kantorovich-Rubinstein-Wasserstein distances 
$W_{p}$, $p\geq 1$, in the metric space setting satisfy 
the usual triangle inequality (see for instance \cite[Section 6]{Vil}).

We first isolate the causal version of the Gluing Lemma, a classical tool in Optimal Transport theory (see for instance \cite{Vil}).
\begin{lemma}[Gluing Lemma]\label{L:gluing}
Let  $(X,\sfd, \ll, \leq, \tau)$ be a Lorentzian pre-length space and let $\mu_{i}\in\Prob(X)$ 
for $i = 1,2,3$. If $\pi_{12} \in \Pi_{\leq}(\mu_{1},\mu_{2})$ and 
$\pi_{23} \in \Pi_{\leq}(\mu_{2},\mu_{3})$ are given, then there exists 
$\pi_{123} \in \mathcal{P}(X\times X \times X)$ such that 
$$
(P_{12})_{\sharp}\pi_{123}= \pi_{12}, \quad 
(P_{23})_{\sharp}\pi_{123}= \pi_{23}, \quad 
(P_{13})_{\sharp}\pi_{123} \in \Pi_{\leq}(\mu_{1},\mu_{3}).
$$
\end{lemma}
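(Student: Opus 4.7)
The plan is to follow the classical gluing construction from optimal transport (built from the disintegration theorem) and then observe that the causal structure of the $(1,3)$-marginal is automatic thanks to the transitivity of the preorder $\leq$. The construction yields $\pi_{123}\in\Prob(X\times X\times X)$ (the statement of the lemma has an evident typo in the codomain of $\pi_{123}$).

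First, since $(X,\sfd)$ is proper, hence Polish, the disintegration theorem applies to both $\pi_{12}$ (with respect to its second marginal $\mu_{2}$) and $\pi_{23}$ (with respect to its first marginal $\mu_{2}$); see \cite[Theorem 5.3.1]{AGSBook}. This yields $\mu_{2}$-measurable families $\{\sigma_{12}^{x_{2}}\}_{x_{2}\in X},\{\sigma_{23}^{x_{2}}\}_{x_{2}\in X}\subset \Prob(X)$ such that
\begin{equation*}
\pi_{12}(dx_{1}\, dx_{2}) = \sigma_{12}^{x_{2}}(dx_{1})\,\mu_{2}(dx_{2}),\qquad \pi_{23}(dx_{2}\, dx_{3}) = \sigma_{23}^{x_{2}}(dx_{3})\,\mu_{2}(dx_{2}).
\end{equation*}
I would then define
\begin{equation*}
\pi_{123}(dx_{1}\,dx_{2}\,dx_{3}) := \sigma_{12}^{x_{2}}(dx_{1})\,\sigma_{23}^{x_{2}}(dx_{3})\,\mu_{2}(dx_{2}) \in \Prob(X^{3}),
\end{equation*}
which, by Fubini, satisfies $(P_{12})_{\sharp}\pi_{123}=\pi_{12}$ and $(P_{23})_{\sharp}\pi_{123}=\pi_{23}$.

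Second, I would check that $\pi_{13}:=(P_{13})_{\sharp}\pi_{123}\in \Pi_{\leq}(\mu_{1},\mu_{3})$. The marginal constraints give $(P_{1})_{\sharp}\pi_{13}=(P_{1})_{\sharp}\pi_{12}=\mu_{1}$ and $(P_{2})_{\sharp}\pi_{13}=(P_{2})_{\sharp}\pi_{23}=\mu_{3}$, so only the causal concentration condition needs proof. From $\pi_{12}(X^{2}_{\leq})=1$ and $\pi_{23}(X^{2}_{\leq})=1$, the sets $\{(x_{1},x_{2},x_{3})\in X^{3}: x_{1}\leq x_{2}\}$ and $\{(x_{1},x_{2},x_{3})\in X^{3}: x_{2}\leq x_{3}\}$ each have full $\pi_{123}$-measure, hence so does their intersection $E:=\{(x_{1},x_{2},x_{3}): x_{1}\leq x_{2}\leq x_{3}\}$. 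By the transitivity of the preorder $\leq$, on $E$ we have $x_{1}\leq x_{3}$, whence $\pi_{13}(X^{2}_{\leq})\geq \pi_{123}(E)=1$.

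There is no real obstacle here: the gluing step is standard, and the only point specific to the Lorentzian setting is the final use of transitivity of $\leq$, which is built into the definition of causal space. If anything, the most delicate conceptual point is simply recognising that the causal conditions $\pi_{ij}(X^{2}_{\leq})=1$ pull back along the projections $P_{12}$ and $P_{23}$ to give a single full-measure set $E\subset X^{3}$ on which the causal chain $x_{1}\leq x_{2}\leq x_{3}$ holds simultaneously.
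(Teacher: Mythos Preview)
Your proof is correct and follows essentially the same approach as the paper: disintegrate each coupling with respect to the common marginal $\mu_{2}$, form the product of the conditional measures, and then invoke transitivity of $\leq$ to obtain causality of the $(1,3)$-marginal. Your observation about the typo in the codomain of $\pi_{123}$ is also accurate.
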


\begin{proof}
The proof goes along the same lines of the classical Gluing Lemma (see for instance \cite[Lemma 7.6]{Vil:topics}). 
Disintegrate the coupling $\pi_{12}$ with respect to $P_{2}$ and 
the coupling $\pi_{23}$ with respect to $P_{1}$ and obtain the following formula 
$$
\pi_{12} = \int_{X} (\pi_{12})_{x} \, \mu_{2}(\dd x), \quad 
\pi_{23} = \int_{X} (\pi_{23})_{x} \, \mu_{2}(\dd x), \qquad 
(\pi_{12})_{x}, (\pi_{23})_{x} \in\mathcal{P}(X\times X), 
$$
with $(\pi_{12})_{x}(X \times \{ x\}) = 
(\pi_{23})_{x}(\{ x\}\times X) = 1$, $\mu_{2}$-a.e. .  
Since $\pi_{12}$ and $\pi_{23}$ are causal couplings,  we have
$$
(\pi_{12})_{x}(X_{\leq}^{2}) = (\pi_{23})_{x}(X_{\leq}^{2}) = 1, \quad \text{for $\mu_{2}$-a.e. $x \in X$}.
$$
In particular, for $(\pi_{12})_{x}$-a.e. $(z,x)$ and 
for $(\pi_{23})_{x}$-a.e. $(x,y)$,   the transitive property of 
$\leq$ gives that $z \leq y$.
Hence defining 
$$
\pi_{123} = \int_{X} (P_{14})_{\sharp}((\pi_{12})_{x}\otimes  (\pi_{23})_{x})
\,\mu_{2}(\dd x),
$$
the first two claims are obtained by the classical Gluing Lemma  \cite[Lemma 7.6]{Vil:topics} (or \cite[Chapter 1]{Vil}),  
while the last one follows from the previous argument.
\end{proof}

\begin{proposition}[$\ell_{p}$ satisfies the reverse triangle inequality]\label{prop:RTIellp}
Let  $(X,\sfd, \ll, \leq, \tau)$ be a Lorentzian pre-length space and let $p\in (0,1]$. Then $\ell_{p}$ satisfies the reverse triangle inequality:
\begin{equation}\label{eq:RTIellq}
\ell_{p}(\mu_{0},\mu_{1})+ \ell_{p}(\mu_{1},\mu_{2})
\leq \ell_{p}(\mu_{0}, \mu_{2}), 
\quad \forall \mu_{0},\mu_{1},\mu_{2}\in \mathcal{P}(X),
\end{equation}
where we adopt the convention that $\infty-\infty=-\infty$ to interpret the left hand side of \eqref{eq:RTIellq}.
\end{proposition}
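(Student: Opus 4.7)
The plan is to reduce the proof to three ingredients: (i) the pointwise reverse triangle inequality \eqref{eq:deftau} satisfied by $\tau$, (ii) the causal Gluing Lemma \ref{L:gluing}, and (iii) the reverse Minkowski inequality in $L^{p}$ for the exponent $p\in(0,1]$.

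First I would dispose of the degenerate cases. If $\Pi_{\leq}(\mu_{0},\mu_{1})=\emptyset$ or $\Pi_{\leq}(\mu_{1},\mu_{2})=\emptyset$, then by definition one of the two summands on the left-hand side of \eqref{eq:RTIellq} equals $-\infty$, and the convention $\infty-\infty=-\infty$ makes the left-hand side equal to $-\infty$; the inequality is then trivial. Otherwise, pick arbitrary $\pi_{01}\in\Pi_{\leq}(\mu_{0},\mu_{1})$ and $\pi_{12}\in\Pi_{\leq}(\mu_{1},\mu_{2})$ and apply Lemma \ref{L:gluing} to obtain $\pi_{012}\in\Prob(X^{3})$ whose $(1,2)$-marginal is $\pi_{01}$, whose $(2,3)$-marginal is $\pi_{12}$, and such that $\pi_{02}:=(P_{13})_{\sharp}\pi_{012}\in\Pi_{\leq}(\mu_{0},\mu_{2})$. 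By construction $\pi_{012}$-a.e. point $(x,y,z)$ satisfies $x\leq y\leq z$, so \eqref{eq:deftau} gives $\tau(x,z)\geq\tau(x,y)+\tau(y,z)$ $\pi_{012}$-a.e., and since $t\mapsto t^{p}$ is monotone non-decreasing on $[0,\infty)$,
\begin{equation*}
\tau(x,z)^{p}\geq\bigl(\tau(x,y)+\tau(y,z)\bigr)^{p}\quad\text{for $\pi_{012}$-a.e. $(x,y,z)$.}
\end{equation*}

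Integrating and then applying the reverse Minkowski inequality for $p\in(0,1]$ to the two nonnegative measurable functions $F(x,y,z):=\tau(x,y)$ and $G(x,y,z):=\tau(y,z)$ on the probability space $(X^{3},\pi_{012})$, namely
\begin{equation*}
\left(\int_{X^{3}}(F+G)^{p}\,d\pi_{012}\right)^{1/p}\geq\left(\int_{X^{3}}F^{p}\,d\pi_{012}\right)^{1/p}+\left(\int_{X^{3}}G^{p}\,d\pi_{012}\right)^{1/p},
\end{equation*}
I exploit the marginal identities $\int F^{p}\,d\pi_{012}=\int\tau(x,y)^{p}\,d\pi_{01}$ and $\int G^{p}\,d\pi_{012}=\int\tau(y,z)^{p}\,d\pi_{12}$, together with $\int\tau(x,z)^{p}\,d\pi_{012}=\int\tau(x,z)^{p}\,d\pi_{02}\leq\ell_{p}(\mu_{0},\mu_{2})^{p}$, to get
\begin{equation*}
\ell_{p}(\mu_{0},\mu_{2})\;\geq\;\left(\int\tau(x,y)^{p}\,d\pi_{01}\right)^{1/p}+\left(\int\tau(y,z)^{p}\,d\pi_{12}\right)^{1/p}.
\end{equation*}
Since $\pi_{01}$ and $\pi_{12}$ were arbitrary, I take the supremum first over $\pi_{12}$ (holding $\pi_{01}$ fixed) and then over $\pi_{01}$ to conclude \eqref{eq:RTIellq}.

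The step I expect to require the most care is the reverse Minkowski inequality for $p\in(0,1)$: unlike the usual Minkowski inequality, it holds with the opposite sign for nonnegative integrands (it is classical, see e.g.\ Hardy-Littlewood-P\'olya, \emph{Inequalities}, and corresponds to the concavity of $f\mapsto\|f\|_{L^{p}}$ on the positive cone when $p\in(0,1]$). The only other subtlety is a bit of $\pm\infty$ bookkeeping: when some of the $\tau$-integrals equal $+\infty$, the chain of inequalities still makes sense by monotone convergence, and the convention $\infty-\infty=-\infty$ on the left-hand side of \eqref{eq:RTIellq} covers the remaining edge cases where $\Pi_{\leq}$ is empty on one side. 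For $p=1$ the reverse Minkowski step is just linearity of the integral and the argument simplifies accordingly.
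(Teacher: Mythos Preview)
Your proof is correct and follows essentially the same route as the paper: both use the Gluing Lemma~\ref{L:gluing}, the pointwise reverse triangle inequality for $\tau$, and the reverse Minkowski inequality for $p\in(0,1]$. The only cosmetic difference is that the paper picks $\varepsilon$-approximate optimal couplings and lets $\varepsilon\to 0$, whereas you take arbitrary couplings and pass to the supremum at the end; these are equivalent.
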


\begin{proof}
We assume $\ell_{p}(\mu_{0}, \mu_{1}), \ell_{p}(\mu_{1}, \mu_{2})>-\infty$, 
otherwise the claim is trivial. 
\\We first consider the case when $\ell_{p}(\mu_{0}, \mu_{1}), \ell_{p}(\mu_{1}, \mu_{2})<\infty$. 
By the very definition \eqref{eq:defWp}  of $\ell_{p}$, 
for any $\ve>0$ we can find $\pi_{01} \in \Pi_{\leq}(\mu_{0},\mu_{1})$
$\pi_{12} \in \Pi_{\leq}(\mu_{1},\mu_{2})$ such that 
$$
\ell_{p}(\mu_{0}, \mu_{1}) 
\leq \left(\int_{X\times X} \tau(x,y)^{p}\,\pi_{01}(\dd x\dd y)\right)^{1/p} + \ve, \quad
\ell_{p}(\mu_{1}, \mu_{2}) 
\leq \left(\int_{X\times X} \tau(x,y)^{p}\,\pi_{12}(\dd x\dd y)\right)^{1/p} + \ve. 
$$
We denote with $\pi_{012} \in \mathcal{P}(X^{3})$ the measure given by 
the Gluing Lemma \ref{L:gluing}. Recalling that for $\pi_{012}$-a.e. 
$(x,z,y) \in X^{3}$ it holds $x \leq z \leq y$, we can use \eqref{eq:deftau} to compute
\begin{align*}
\ell_{p}(\mu_{0}, \mu_{2}) 
&~ \geq 
\left(\int_{X\times X} \tau(x,y)^{p} \,(P_{13})_{\sharp}\pi_{012}(\dd x\dd y)\right)^{1/p} \\
&~ = \left(\int_{X\times X\times X} \tau(x,y)^{p} \,\pi_{012}(\dd x\dd z\dd y) 
\right)^{1/p}\\
&~ \geq \left(\int_{X\times X\times X} [\tau(x,z) + \tau(z,y)]^{p} \,\pi_{012}(\dd x\dd z\dd y) 
\right)^{1/p} \\
&~ \geq \left(\int_{X\times X} \tau(x,z)^{p} \,\pi_{012}(\dd x\dd z\dd y) 
\right)^{1/p} + 
\left(\int_{X\times X} \tau(z,y)^{p} \,\pi_{012}(\dd x\dd z\dd y) 
\right)^{1/p} \\
&~ = \left(\int_{X\times X} \tau(x,z)^{p} \,\pi_{01}(\dd x\dd z) 
\right)^{1/p} + 
\left(\int_{X\times X} \tau(z,y)^{p} \,\pi_{12}(\dd z\dd y) 
\right)^{1/p} \\
&~ \geq  \ell_{p}(\mu_{0}, \mu_{1}) + \ell_{p}(\mu_{1}, \mu_{2}) - 2\ve,
\end{align*}
proving the inequality, by the arbitrariness of $\ve>0$. If  
one of $\ell_{p}(\mu_{0}, \mu_{1})$, $\ell_{p}(\mu_{1}, \mu_{2})$ 
is not bounded from above, then simply take a sequence of couplings 
with diverging cost; repeating the above calculations we obtain that also 
$\ell_{p}(\mu_{0}, \mu_{2}) = \infty$, proving the claim.
\end{proof}

%%%%%%%%%%%%%%%%%%%%%%%%%%%%%%%%%%%%%%%%%%%%%%%%%%%%%%%%%%%%%%%%%%%%%%
%%%%%%%%%%%%%%%%%%%%%%%%%%%%%%%%%%%%%%%%%%%%%%%%%%%%%%%%%%%%%%%%%%%%%%
\smallskip
\subsection{Cyclical monotonicity}
\label{Ss:cyclical}

The notion of cyclical monotonicity is very useful to relate an optimal coupling with its support.

\begin{definition}[$\tau^{p}$-cyclical monotonicity and $\ell^{p}$-cyclical monotonicity]\label{D:monotonicity}
Fix $p\in (0,1]$ and let  $(X,\sfd, \ll, \leq, \tau)$ be a Lorentzian pre-length space.
A subset $\Gamma\subset X^{2}_{\leq}$ is said to be $\tau^{p}$-cyclically monotone (resp. $\ell^{p}$-cyclically monotone) if, for any $N\in \N$ and any family $(x_{1}, y_{1}), \ldots, (x_{N}, y_{N})$ of points in $\Gamma$, the next inequality holds:
\begin{equation}\label{eq:taupcyclmon}
\sum_{i=1}^{N}\tau(x_{i}, y_{i})^{p} \geq \sum_{i=1}^{N}\tau(x_{i+1}, y_{i})^{p},
\end{equation}
(resp. $ \sum_{i=1}^{N}\ell(x_{i}, y_{i})^{p} \geq \sum_{i=1}^{N}\ell(x_{i+1}, y_{i})^{p})$ with the convention $x_{N+1}=x_{1}$. A coupling  is said to be $\tau^{p}$-cyclically monotone (resp. $\ell^{p}$-cyclically monotone) if it is concentrated on an $\tau^{p}$-cyclically monotone set (resp. $\ell^{p}$-cyclically monotone set).
\end{definition}

\begin{remark}\label{rem:tauellpCM}
Notice that $\Gamma\subset X^{2}_{\leq}$ is $\ell^{p}$-cyclically monotone if and only if \eqref{eq:taupcyclmon} holds for those families with $x_{i+1}\leq y_{i}$ for all $i\in \{1,\dots,N\}$. It is then clear that 
\begin{equation}\label{eq:ImplicCyclMon}
\text{$\tau^{p}$-cyclical monotonicity }\Rightarrow \text{ $\ell^{p}$-cyclical monotonicity.}
\end{equation}
Note if $P_{1}(\Gamma)\times P_{2}(\Gamma) \subset X^{2}_{\leq}$ then $\ell^{p}$-cyclical monotonicity is equivalent to $\tau^{p}$-cyclical monotonicity.
\end{remark}

\begin{proposition}[Optimality $\Leftrightarrow$ cyclical monotonicity]\label{P:OptiffMon}
Fix $p\in (0,1]$. Let  $(X,\sfd, \ll, \leq, \tau)$ be a  Lorentzian pre-length  space and let $\mu,\nu\in \mathcal{P}(X)$.  Assume that  $ \Pi_{\leq}(\mu,\nu)\neq \emptyset$ and that  there exist measurable functions $a,b:X\to \R$, with $a\oplus b \in L^{1}(\mu\otimes \nu)$ such 
that $\ell^{p}\leq a\oplus b$, $\mu\otimes \nu$-a.e.. Then the following holds.
\begin{enumerate}
\item If $\pi$ is  $\ell_{p}$-optimal then $\pi$ is  $\ell^{p}$-cyclically monotone. 
\item If $\pi(X^{2}_{\ll}) = 1$ and $\pi$ is $\ell^{p}$-cyclically monotone then $\pi$ is $\ell_{p}$-optimal. 
\end{enumerate}

\end{proposition}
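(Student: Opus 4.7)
The plan is to adapt the classical equivalence between optimality and cyclical monotonicity for upper semicontinuous costs (see e.g.~\cite{Vil:topics, AGSBook}) to the present Lorentzian setting, where the cost $\ell^p$ takes the value $-\infty$ on $X^2\setminus X^2_\leq$. The technical framework for handling such $[-\infty,+\infty)$-valued costs is provided by Bianchini-Caravenna \cite{biacar:cmono}, which will be the main technical input.

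\textbf{Part (1).} I would argue by contradiction. Assume $\pi$ is $\ell_p$-optimal but not $\ell^p$-cyclically monotone. By Remark \ref{rem:tauellpCM}, there would exist $N\in\N$ and points $(x_i,y_i)_{i=1}^N\subset \supp(\pi)$ with $x_{i+1}\leq y_i$ (cyclic indexing) such that
\[
\sum_{i=1}^N \tau(x_i,y_i)^p < \sum_{i=1}^N \tau(x_{i+1},y_i)^p.
\]
I would then select small disjoint Borel neighborhoods $U_i\ni x_i$ and $V_i\ni y_i$ of comparable $\pi$-mass, and construct a competing coupling by transferring a small amount of mass from each $U_i\times V_i$ to $U_{i+1}\times V_i$. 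The lower semicontinuity of $\tau$ ensures that the gain $\sum\tau(x_{i+1},y_i)^p$ is approximately attained from below on the perturbed rectangles, while the integrability bound $\ell^p\leq a\oplus b$ together with Lebesgue-point arguments controls the loss on $U_i\times V_i$; the strict defect above then yields a coupling with strictly larger integral cost, contradicting optimality. The subtle point, where \cite{biacar:cmono} enters in an essential way, is to keep the competitor inside $\Pi_\leq(\mu,\nu)$: one uses $(x_{i+1},y_i)\in X^2_\leq$ together with the Borel structure of $X^2_\leq$ to ensure that the modified coupling still charges only the causally related set.

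\textbf{Part (2).} The hypothesis $\pi(X^2_\ll)=1$ is decisive: it ensures $\ell^p=\tau^p>0$ on $\supp(\pi)$, so every finite chain built from points of $\supp(\pi)$ avoids the $-\infty$ sector of $\ell^p$. The plan is to apply the maximization form of Rockafellar's theorem to produce a Borel pair of potentials $(\varphi,\psi)$ with $\varphi\oplus\psi\geq \ell^p$ on $X\times X$ and equality $\pi$-a.e.. Explicitly, one would fix $(x_0,y_0)\in\supp(\pi)$ and define $\varphi(x)$ as the supremum, over finite chains $(x_1,y_1),\ldots,(x_n,y_n)\in\supp(\pi)$ with $x_{n+1}:=x$, of a telescoping sum of differences $\ell^p(x_{i+1},y_i)-\ell^p(x_i,y_i)$; $\ell^p$-cyclical monotonicity then forces $\varphi>-\infty$ with $\varphi(x_0)=0$, and $\psi(y):=\inf_{x\in X}[\ell^p(x,y)-\varphi(x)]$ completes the pair. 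The upper bound $\ell^p\leq a\oplus b$ yields $\varphi\in L^1(\mu)$ and $\psi\in L^1(\nu)$. For any $\tilde\pi\in\Pi(\mu,\nu)$ the weak-duality inequality
\[
\int_{X^2}\ell^p\, d\tilde\pi \leq \int_X \varphi\, d\mu + \int_X \psi\, d\nu = \int_{X^2}\ell^p\, d\pi
\]
then gives $\ell_p$-optimality of $\pi$. The main obstacle here is to verify carefully that $\varphi,\psi$ are finite and integrable in the presence of the $-\infty$ values of $\ell^p$: the hypothesis $\pi(X^2_\ll)=1$ is precisely what prevents the chains from reaching the $-\infty$ sector, and is thus essential to the construction.
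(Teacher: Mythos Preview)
Your high-level strategy---reducing to Bianchini--Caravenna \cite{biacar:cmono}---matches the paper's, but the paper uses that reference far more directly, and your hands-on treatment of Part~(2) has a genuine gap.

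For Part~(1), the paper simply observes that maximising $\ell^p$ is the same as minimising the nonnegative cost $(a\oplus b)-\ell^p$, placing the problem squarely in the framework of \cite{biacar:cmono}, and then cites \cite[Lemma~5.2]{biacar:cmono}. Your direct improvement argument is the standard one and would work with care, but since you invoke \cite{biacar:cmono} anyway for the delicate step (keeping the competitor inside $\Pi_\leq$), there is little gain over citing their lemma outright.

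For Part~(2), the paper does \emph{not} construct Kantorovich potentials. Instead it invokes the abstract criterion of \cite[Theorem~5.6, Corollary~5.7, Proposition~5.8]{biacar:cmono}: an $\ell^p$-cyclically monotone $\pi$ is optimal provided $\pi$ is concentrated on a countable union $\bigcup_i A_i\times B_i$ of Borel rectangles with $A_i\times B_i\subset X^2_\leq$. The hypothesis $\pi(X^2_\ll)=1$ gives this immediately, since $X^2_\ll=\{\tau>0\}$ is open by lower semicontinuity of $\tau$ and hence covered by countably many open rectangles. That one-line verification is the entire content of the proof.

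Your Rockafellar construction, by contrast, has two problems. First, a sign slip: with $\psi(y)=\inf_x[\ell^p(x,y)-\varphi(x)]$ you obtain $\varphi\oplus\psi\leq\ell^p$, the wrong inequality; you need a supremum there (compare the paper's later Theorem~\ref{thm:OptiffMon}, where $\varphi$ is an infimum and its transform a supremum). Second, and more seriously, your claim that $\pi(X^2_\ll)=1$ ``prevents the chains from reaching the $-\infty$ sector'' is incorrect: the chain involves cross-terms $\ell^p(x_{i+1},y_i)$ where $(x_i,y_i)$ and $(x_{i+1},y_{i+1})$ lie in $\supp(\pi)\subset X^2_\ll$, but the pair $(x_{i+1},y_i)$ need not lie in $X^2_\leq$ at all, so $\ell^p(x_{i+1},y_i)=-\infty$ is entirely possible. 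Whether this is fatal depends on which direction your $\varphi$ runs, but in any case the construction does not go through as written, and the $L^1$ bounds for $\varphi,\psi$ remain unjustified. The rectangle-covering criterion sidesteps all of this; this is precisely why the paper reserves the direct potential construction for the easier $\tau^p$-cyclically monotone case (Theorem~\ref{thm:OptiffMon}), where the cost is everywhere nonnegative.
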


\begin{proof}
The result follows from \cite{biacar:cmono}, dealing with optimal transport (minimisation)  problems associated to general  Borel cost functions $c(\cdot, \cdot):X^{2}\to [0,+\infty]$. 
Of course, the (maximising) optimal couplings in $\Pi(\mu,\nu)$ for the cost $\ell^{p}$ are the same as for the cost $\ell^{p}-(a\oplus b)$, which is non-positive  $\mu\otimes \nu$-a.e.; hence we enter in the framework of  \cite{biacar:cmono}. 
\\The first claim thus follows from \cite[Lemma 5.2]{biacar:cmono} (see also  \cite[Proposition B.16]{biacar:cmono}).
\\For the second claim, notice that \cite[Theorem 5.6]{biacar:cmono} provides a general 
condition to ensure that an $\ell^{p}$-cyclically monotone coupling is $\ell_{p}$-optimal. Thanks to \cite[Corollary 5.7, Proposition 5.8]{biacar:cmono} it will be enough to 
verify the existence of countably many Borel sets $A_{i},B_{i} \subset X$ 
such that 
$$
\pi\Big(\bigcup_{i\in \N} A_{i}\times B_{i} \Big) = 1, \qquad  
\bigcup_{i\in \N} A_{i}\times B_{i} \subset X^{2}_{\leq}.
$$
The existence of such sets (that can actually chosen to be open) follows directly from the fact that $X^{2}_{\ll}=\{\tau>0\}\subset X^{2}$ is open by the lower semicontinuity of $\tau$.
\end{proof}

\begin{remark}
Thanks to \cite[Proposition 5.8]{KS},  Proposition \ref{P:OptiffMon} is valid  for  a causally plain (so, in particular, for a locally-Lipschitz) Lorentzian $C^{0}$-metric  $g$ on a space-time $M$.
\\In case  $(X,\sfd, \ll, \leq, \tau)$ is a  globally hyperbolic Lorentzian geodesic space (as it will be for most of the paper), the first claim in Proposition \ref{P:OptiffMon} follows from more standard literature (see e.g. \cite[Theorem 3.2]{AmbrosioPratelliL1}), thanks to Remark \ref{rem:eqellq}.
\end{remark}

\noindent
We will later see that for $\tau^{p}$-cyclically monotone causal couplings, $\ell_{p}$-optimality holds true (Theorem \ref{thm:OptiffMon}). To conclude we report a standard fact about optimal couplings.  
\begin{lemma}[Restriction]\label{L:restriction}
Fix $p\in (0,1]$. Let  $(X,\sfd, \ll, \leq, \tau)$ be a  Lorentzian pre-length  space and  let $\mu,\nu\in \mathcal{P}(X)$.
Then for every  $\pi \in \Pi_{\leq}^{p\text{-opt}}(\mu,\nu)$ and every measurable function 
$f : X \times X \to [0,\infty)$ with $\int f\, \pi =1$ and $f \in L^{\infty}(\pi)$,  also the coupling $f \pi$ is optimal, i.e. denoting with 
$$
\mu_{f} : = (P_{1})_{\sharp} f \pi , \qquad \nu_{f} : = (P_{2})_{\sharp} f\pi,
$$
it holds true $f \pi \in \Pi_{\leq}^{p\text{-opt}}(\mu_{f},\nu_{f})$.
\end{lemma}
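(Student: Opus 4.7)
The plan is to argue by contradiction in a standard fashion: assume $f\pi$ is \emph{not} $\ell_p$-optimal between its marginals and build a competitor to $\pi$ that strictly improves its cost, thereby contradicting the optimality of $\pi$. The extra care needed with respect to the classical metric argument is only that the problem is a \emph{maximisation} (so inequalities are reversed) and that the competitor must still be concentrated on $X^2_\leq$.

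First I would verify the elementary facts about $f\pi$. Since $f \geq 0$ and $\int f\,d\pi = 1$, the measure $\eta := f\pi$ is a Borel probability on $X\times X$, and since $\pi(X^2_\leq) = 1$ and $f\geq 0$, we have $\eta(X^2_\leq) = 1$, so $\eta \in \Pi_\leq(\mu_f,\nu_f)$ where $\mu_f = (P_1)_\sharp \eta$ and $\nu_f = (P_2)_\sharp\eta$ by construction. Next, set $M := \max\{\|f\|_{L^\infty(\pi)},1\}$ so that $g := f/M$ satisfies $0 \le g \le 1$ $\pi$-a.e. and $\int g\,d\pi = 1/M$.

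Now suppose, for the sake of contradiction, that $\eta \notin \Pi_\leq^{p\text{-opt}}(\mu_f,\nu_f)$. Then there exists $\pi' \in \Pi_\leq(\mu_f,\nu_f)$ with
\[
\int_{X\times X}\ell^p\,d\pi' \;>\; \int_{X\times X}\ell^p\,d\eta.
\]
Define the candidate competitor
\[
\widetilde\pi \;:=\; (1-g)\,\pi \;+\; \frac{1}{M}\,\pi'.
\]
Since $0 \le g \le 1$ $\pi$-a.e., $(1-g)\pi$ is a positive Borel measure; and $\pi'/M$ is positive. I would next check the three required properties:
\begin{enumerate}
\item[(i)] \emph{Total mass.} $\widetilde\pi(X\times X) = (1 - 1/M) + 1/M = 1$.
\item[(ii)] \emph{Marginals.} $(P_1)_\sharp\bigl((1-g)\pi\bigr) = \mu - \mu_f/M$ because $(P_1)_\sharp(g\pi) = (P_1)_\sharp(f\pi)/M = \mu_f/M$, and $(P_1)_\sharp(\pi'/M) = \mu_f/M$; summing gives $\mu$. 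The same calculation on $P_2$ yields $\nu$, so $\widetilde\pi \in \Pi(\mu,\nu)$.
\item[(iii)] \emph{Causality.} Both $(1-g)\pi$ and $\pi'/M$ are concentrated on $X^2_\leq$ (the first since $\pi$ is, the second by assumption on $\pi'$), hence $\widetilde\pi \in \Pi_\leq(\mu,\nu)$.
\end{enumerate}

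Finally, using $\ell^p\geq 0$ on $X^2_\leq$ (so all the integrals are well-defined in $[0,+\infty]$), compute
\[
\int \ell^p\,d\widetilde\pi \;=\; \int \ell^p(1-g)\,d\pi + \frac{1}{M}\int \ell^p\,d\pi' \;=\; \int \ell^p\,d\pi \;-\; \frac{1}{M}\int \ell^p\,d\eta \;+\; \frac{1}{M}\int \ell^p\,d\pi'.
\]
By the strict inequality $\int \ell^p\,d\pi' > \int \ell^p\,d\eta$, we deduce $\int \ell^p\,d\widetilde\pi > \int \ell^p\,d\pi$, contradicting the optimality of $\pi$. Hence $f\pi \in \Pi_\leq^{p\text{-opt}}(\mu_f,\nu_f)$.

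The only mildly delicate point is ensuring that the arithmetic with the integrals is legitimate, which is why I rescale by $M$ to keep $1-g\geq 0$; once that is in place the proof is just bookkeeping of the marginals and a one-line cost comparison. The case when $\ell_p(\mu,\nu) = +\infty$ is handled in the same way: the same $\widetilde\pi$ still has strictly larger (possibly infinite) cost than $\pi$, against the optimality of $\pi$.
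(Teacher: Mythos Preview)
Your proof is correct and follows essentially the same approach as the paper: argue by contradiction, assume a strictly better coupling $\pi'$ exists for $(\mu_f,\nu_f)$, and build the competitor $\widetilde\pi = \pi - \frac{f}{M}\pi + \frac{1}{M}\pi'$ to contradict the optimality of $\pi$. Your normalisation $M=\max\{\|f\|_\infty,1\}$ is in fact just $\|f\|_\infty$ (since $\int f\,d\pi=1$ forces $\|f\|_\infty\ge 1$), so your competitor coincides exactly with the paper's $\bar\pi$.
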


\begin{proof}
Trivially $f \pi \in \Pi_{\leq}(\mu_{f},\nu_{f})$ hence we will only be concerned about optimality.
Assume by contradiction the existence 
of $\hat \pi \in \Pi_{\leq}(\mu_{f},\nu_{f})$ with 
$$
\int_{X\times X} \tau(x,y)^{p} f(x,y) \pi(\dd x\dd y) < \int_{X\times X} \tau(x,y)^{p} \hat \pi(\dd x\dd y). 
$$
Consider then the new coupling 
$$
\bar \pi : = \pi - \frac{f}{\| f\|_{\infty}} \pi + \frac{1}{\| f\|_{\infty}} \hat \pi.
$$
By linearity, $\bar \pi$ has the same marginals of $\pi$ and it is causal, 
i.e. $\bar \pi \in \Pi_{\leq}(\mu,\nu)$.
Finally 
\begin{align*}
\int_{X\times X} \tau(x,y)^{p} \bar \pi (\dd x\dd y)
&~ = \int_{X\times X} \tau(x,y)^{p} \pi (\dd x\dd y) 
+ \frac{1}{\| f\|_{\infty}} \int_{X\times X} \tau(x,y)^{p}(\hat \pi- f\pi )(\dd x\dd y)\\
&~ >  \int_{X\times X} \tau(x,y)^{p} \pi (\dd x\dd y),
\end{align*}
giving a contradiction.
\end{proof}

%%%%%%%%%%%%%%%%%%%%%%%%%%%%%%%%%%%%%%%%%%%%%%%%%%%%%%%%%%%%%%%%%%
%%%%%%%%%%%%%%%%%%%%%%%%%%%%%%%%%%%%%%%%%%%%%%%%%%%%%%%%%%%%%%%%%%
\subsection{Stability of optimal couplings}
\label{Ss:stabilityoptimal}

While in the Riemannian framework stability 
of optimal couplings follows by stability of cyclical monotonicity, 
in the Lorentzian setting, due to the upper semicontinuity of the cost function $\ell^{p}$ (opposed to continuity of the Riemannian cost $\sfd^{p}$), a more 
refined analysis is needed.

Building on the previous Proposition \ref{P:OptiffMon}, 
we can establish a first basic stability property with respect to narrow convergence 
valid for a special class of optimal couplings. 

\begin{lemma}[Stability of $\ell_p$-optimal couplings I]\label{lem:ConvEllpNarrow1}
Let  $(X,\sfd, \ll, \leq, \tau)$ be a globally hyperbolic Lorentzian geodesic space and fix $p\in (0,1]$.  Let
$(\mu^{1}_{n}), (\mu^{2}_{n}) \subset \prob X$  be narrowly convergent to some  $\mui^{1}, \mui^{2}\in \prob X$ and assume that,
for every $n\in \N$, there exists an  $\ell_{p}$-optimal coupling  $\pi_{n}\in  \Pi_{\leq}^{p\text{-opt}}(\mu^{1}_{n}, \mu^{2}_{n})$ which is also $\tau^{p}$-cyclically monotone. 

Then $(\pi_{n})$ is narrowly relatively compact in $\prob{X^{2}}$, moreover any narrow limit point $\pi_{\infty}$ belongs to 
$\Pi_{\leq}(\mui^{1}, \mui^{2})$ and is $\ell_{p}$-optimal, 
provided $\pi_{\infty}(X^{2}_{\ll}) = 1$.
\end{lemma}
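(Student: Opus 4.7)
The plan is to first establish narrow precompactness of $(\pi_n)$, then to verify separately each of the constraints defining an $\ell_p$-optimal coupling by passing to the limit along a convergent subsequence. For the precompactness, the narrow convergence $\mu^i_n\to \mui^i$ combined with Prokhorov's theorem (Theorem \ref{thm:prok}) makes both marginal families tight, so by the product tightness criterion (Lemma \ref{lem:tightXxX}) the sequence $(\pi_n)$ is tight in $\Prob(X^2)$, hence narrowly relatively compact by a second application of Prokhorov. Fix any narrow limit point $\pi_\infty$.

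Next I would check that $\pi_\infty\in\Pi_\leq(\mui^1,\mui^2)$. The marginal identities pass trivially to the limit because $P_1,P_2:X^2\to X$ are continuous, hence push-forwards are narrowly continuous. For the causal constraint, since $X$ is causally closed the set $X^2_\leq$ is closed in $X^2$, so the Portmanteau inequality for closed sets yields $\pi_\infty(X^2_\leq)\geq\limsup_n \pi_n(X^2_\leq)=1$.

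The crux is transferring $\tau^p$-cyclical monotonicity to the limit. Global hyperbolicity and the geodesic assumption imply that $\tau$ is finite and continuous, so the defining inequality \eqref{eq:taupcyclmon} is a closed condition on $(X^2)^N$; in particular, the closure of a $\tau^p$-cyclically monotone set is again $\tau^p$-cyclically monotone, and I may assume each $\pi_n$ is concentrated on the closed $\tau^p$-cyclically monotone set $\supp\pi_n$. Standard Kuratowski lower semicontinuity of supports under narrow convergence (a direct consequence of Portmanteau applied to a countable basis of open neighbourhoods) gives, for any tuple $(x_1,y_1),\ldots,(x_N,y_N)\in\supp\pi_\infty$, a sequence of tuples $(x_i^n,y_i^n)\in\supp\pi_n$ with $(x_i^n,y_i^n)\to(x_i,y_i)$; passing to the limit in \eqref{eq:taupcyclmon} using the continuity of $\tau$ then shows $\supp\pi_\infty$ itself is $\tau^p$-cyclically monotone, and hence so is $\pi_\infty$. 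By Remark \ref{rem:tauellpCM}, $\pi_\infty$ is in particular $\ell^p$-cyclically monotone.

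The final step is to deduce $\ell_p$-optimality from $\ell^p$-cyclical monotonicity and the standing hypothesis $\pi_\infty(X^2_\ll)=1$ via Proposition \ref{P:OptiffMon}(2). The main obstacle I expect here is verifying the integrability assumption $\ell^p\leq a\oplus b$ with $a\oplus b\in L^1(\mui^1\otimes\mui^2)$ required by that proposition, since no compact support is assumed on the $\mui^i$. To handle this I would exploit that Fatou's lemma applied to the continuous nonnegative integrand $\tau^p$ gives $\int\tau^p\,d\pi_\infty\leq\liminf_n\int\tau^p\,d\pi_n=\liminf_n \ell_p(\mu^1_n,\mu^2_n)^p$, and then localise: along a compact exhaustion $\{K_j\}$ with $\pi_\infty(K_j\times K_j)\to 1$, continuity of $\tau$ produces bounded choices of $a,b$ adapted to each scale, which combined with the finiteness above lets the cyclical-monotonicity-implies-optimality argument go through and transfer to the full measures by monotone approximation.
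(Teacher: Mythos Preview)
Your overall strategy coincides with the paper's: Prokhorov plus the tightness criterion for couplings gives precompactness; continuity of the projections and causal closedness give $\pi_\infty\in\Pi_\leq(\mui^1,\mui^2)$; continuity of $\tau$ (from global hyperbolicity plus geodesicity) lets $\tau^p$-cyclical monotonicity pass to the limit on supports; and finally one invokes ``$\tau^p$-cyclically monotone $\Rightarrow$ $\ell_p$-optimal''. This is exactly what the paper does.

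You are right to flag the integrability hypothesis $\ell^p\le a\oplus b$, $a\oplus b\in L^1(\mui^1\otimes\mui^2)$, needed in Proposition~\ref{P:OptiffMon}(2): nothing in the lemma forces $\mui^i$ to have compact support or $\tau$ to be bounded. However, your proposed localisation via a compact exhaustion does not close this gap as written. Restricting $\pi_\infty$ to $K_j\times K_j$ changes the marginals to some $\mu_j,\nu_j$, and the optimality you obtain is only among couplings of $(\mu_j,\nu_j)$; a competitor $\pi'\in\Pi_\leq(\mui^1,\mui^2)$, once restricted to $K_j\times K_j$, will in general have \emph{different} marginals, so the restricted optimalities cannot be compared and the ``monotone approximation'' step has no leverage.

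The paper resolves this differently: it points to Theorem~\ref{thm:OptiffMon}, whose proof builds the Kantorovich potential directly from the $\tau^p$-cyclically monotone set and yields optimality without any $a\oplus b$ bound, requiring only $\ell_p(\mui^1,\mui^2)\in(0,\infty)$. Since you already have $\pi_\infty(X^2_\ll)=1$, the lower bound $\ell_p>0$ is immediate. Citing Theorem~\ref{thm:OptiffMon} in place of your localisation is the clean fix.
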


\begin{proof}
By Prokhorov Theorem \ref{thm:prok}, the subsets $\{\mu^{1}_{n}\}_{n\in \N},\,\{\mu^{2}_{n}\}_{n\in \N} \subset \prob X$ are tight.  Lemma \ref{lem:tightXxX} implies that $\{\pi_{n}\}_{n\in \N}\subset  \prob {X\times X}$ is tight as well, and then (again by Theorem \ref{thm:prok}) it converges narrowly, up to a subsequence, to some $\pi_{\infty}\in \prob{X\times X}$. Using the continuity of the projection maps, it is readily seen that $\pi_{\infty}\in \Pi(\mui^{1}, \mui^{2})$.  Global hyperbolicity together with Proposition \ref{prop:GH->KGH}(ii) further implies that   $\pi_{\infty}\in \Pi_{\leq}(\mui^{1}, \mui^{2})$. 
To conclude optimality it is enough to observe that $\tau^{p}$ is continuous and 
therefore $\tau^{p}$-cyclical monotonicity is preserved under narrow convergence
(note that the same claim would be false for $\ell^{p}$-cyclically monotone
sets) and apply the second point of Proposition \ref{P:OptiffMon} together with \eqref{eq:ImplicCyclMon} (see also Theorem \ref{thm:OptiffMon} below, for a more self-contained proof of the implication $\pi$ is $\tau^{p}$-cyclically monotone $\Rightarrow$ $\pi$ is $\ell_{p}$-optimal).
\end{proof}

To obtain stronger stability properties,  
we will use $\Gamma$-convergence techniques.
For the rest of this section,
 $(X,\sfd, \ll, \leq, \tau)$ will be a  globally hyperbolic Lorentzian geodesic space and we also fix $p\in (0,1]$.  
 Let
$(\mu^{1}_{n}), (\mu^{2}_{n}) \subset \prob X$  be narrowly convergent to some  $\mui^{1}, \mui^{2}\in \prob X$.
Associated with them we define 
$$
F_{n}, F_{\infty} : \mathcal{P}(X^2) \to \R \cup \{ \pm \infty \}, 
\quad
F_{i} (\pi) = 
\begin{cases}
\int_{X\times X} \tau(x,y)^{p} \, \pi(\dd x\dd y), & \pi \in 
\Pi_{\leq}(\mu^{1}_{i}, \mu^{2}_{i}) \\
-\infty, & {\text {otherwise}},
\end{cases}
$$
for $i = n,\infty$.

\begin{lemma} ($\limsup$-inequality)\label{L:limsup}
Let $\{\pi_{i}\}_{i\in \N\cup \{\infty\}} \subset  \mathcal{P}(X^{2})$ be such that $\pi_{n} \to\pi_{\infty}$ narrowly and $\tau^{p}$ is uniformly integrable with respect to $\{\pi_{i}\}_{i\in \N\cup \{\infty\}}$ (in particular, the second condition is satisfied if there exists a compact subset containing $\supp\, \pi_{n}$ for all $n\in \N$).
Then
\begin{equation}\label{eq:FntoFinfty}
F_{\infty}(\pi_{\infty}) \geq \limsup_{n\to \infty} F_{n}(\pi_{n}).
\end{equation}
If moreover, $\pi_{n}(X^{2}_{\leq})=1$ for all $n\in \N$, then also $\pi_{\infty}(X^{2}_{\leq})=1$ and 
$$
F_{\infty}(\pi_{\infty}) = \lim_{n\to \infty} F_{n}(\pi_{n}).
$$
\end{lemma}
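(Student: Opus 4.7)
The plan is to combine the Portmanteau theorem with the uniform integrability hypothesis and Lemma~\ref{lem:UnifIntConv}, exploiting the fact that on a causally closed globally hyperbolic Lorentzian geodesic space the function $\tau$ (hence $\tau^{p}$) is continuous on $X^{2}$ by \cite[Theorem 3.28]{KS}, and that $X^{2}_{\leq}$ is a closed subset of $X^{2}$ by causal closedness.

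For the limsup inequality~\eqref{eq:FntoFinfty}, the case $\limsup_{n} F_{n}(\pi_{n}) = -\infty$ is trivial, so I would pass to a subsequence $(n_{k})$ along which $F_{n_{k}}(\pi_{n_{k}})$ converges to $\limsup_{n} F_{n}(\pi_{n}) > -\infty$, all values being finite. Then $\pi_{n_{k}} \in \Pi_{\leq}(\mu^{1}_{n_{k}}, \mu^{2}_{n_{k}})$ for every $k$; continuity of the projections $P_{1}, P_{2}$ preserves marginals under narrow convergence, giving $(P_{i})_{\sharp} \pi_{\infty} = \mu^{i}_{\infty}$ for $i=1,2$, while Portmanteau applied to the closed set $X^{2}_{\leq}$ yields
\[
\pi_{\infty}(X^{2}_{\leq}) \geq \limsup_{k \to \infty} \pi_{n_{k}}(X^{2}_{\leq}) = 1.
\]
Hence $\pi_{\infty} \in \Pi_{\leq}(\mu^{1}_{\infty}, \mu^{2}_{\infty})$ and $F_{\infty}(\pi_{\infty}) = \int \tau^{p} \, d\pi_{\infty}$. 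Finally, the continuity of $\tau^{p}$ together with its uniform integrability with respect to $\{\pi_{n_{k}}\} \cup \{\pi_{\infty}\}$ allows me to invoke Lemma~\ref{lem:UnifIntConv} and pass to the limit in the integrals, obtaining $F_{\infty}(\pi_{\infty}) = \lim_{k} F_{n_{k}}(\pi_{n_{k}}) = \limsup_{n} F_{n}(\pi_{n})$.

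For the second claim, the assumption $\pi_{n}(X^{2}_{\leq}) = 1$ for every $n$ combined with Portmanteau immediately gives $\pi_{\infty}(X^{2}_{\leq}) = 1$. Assuming moreover that $\pi_{n}$ has marginals $\mu^{1}_{n}, \mu^{2}_{n}$ (so that $F_{n}(\pi_{n}) = \int \tau^{p}\, d\pi_{n}$ along the whole sequence), narrow convergence preserves marginals and Lemma~\ref{lem:UnifIntConv} applied now to the full sequence delivers $\int \tau^{p}\, d\pi_{n} \to \int \tau^{p}\, d\pi_{\infty} = F_{\infty}(\pi_{\infty})$, giving the equality $\lim_{n} F_{n}(\pi_{n}) = F_{\infty}(\pi_{\infty})$. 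The only real subtlety I anticipate is that $\ell^{p}$ itself is merely upper semicontinuous, not continuous; my argument carefully sidesteps this by working with $\tau^{p}$ (continuous on all of $X^{2}$) and by using Portmanteau on the closed set $X^{2}_{\leq}$ to enforce causality of the limit coupling, thereby separating the causality issue from the passage to the limit in the integral.
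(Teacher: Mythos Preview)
Your proof is correct and follows essentially the same route as the paper's: reduce to the case where $F_n(\pi_n)>-\infty$ (hence $\pi_n\in\Pi_{\leq}(\mu^1_n,\mu^2_n)$), apply Portmanteau on the closed set $X^2_{\leq}$ to force causality of $\pi_\infty$, and then use Lemma~\ref{lem:UnifIntConv} with the continuity of $\tau^p$ to pass to the limit in the integral. You are in fact slightly more explicit than the paper in two respects: you extract a subsequence rather than saying ``definitively'', and you spell out that the marginal condition $(P_i)_\sharp\pi_\infty=\mu^i_\infty$ must also be checked (the paper's proof silently assumes this, which is fine since in every application of the lemma the $\pi_n$ do have the right marginals).
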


\begin{proof}
Without loss of generality we can assume that  $\pi_{n}(X^{2}_{\leq})=1$ definitively, 
otherwise the claim \eqref{eq:FntoFinfty} is trivial.
Since by assumption $X^{2}_{\leq}\subset X^{2}$ is closed, it follows that 
$$
\pi_{\infty}(X^{2}_{\leq})\geq \limsup_{n\to \infty} \pi_{n}(X^{2}_{\leq})=1.
$$ 
Using that (from global hyperbolicity) $\tau^{p}$ is continuous on $X^{2}_{\leq}$ together with Lemma \ref{lem:UnifIntConv}, we conclude that $F_{n}(\pi_{n}) \to F_{\infty}(\pi_{\infty})$.
\end{proof}

For the liminf inequality we have to select a particular family of couplings of
$(\mu_{n}^{1}),(\mu_{n}^{2})$.

\begin{lemma}[Existence of a recovery sequence]\label{lem:recoveryseq}
Assume that there exists a compact subset $\cK\subset X$ such that $\supp\, \mu^{1}_{n}, \, \supp\, \mu^{2}_{n}\subset \cK$ for all $n\in \N$.
Assume that, for each $n \in \N$, the sets 
$\Pi_{\leq}(\mu^{1}_{n},\mu^{1}_{\infty})$ and 
$\Pi_{\leq}(\mu^{2}_{\infty},\mu^{2}_{n})$ are both not empty. 
Then, for any $\pi \in  \Pi(\mu^{1}_{\infty}, \mu^{2}_{\infty})$, there exists a sequence 
$\pi_{n} \in \Pi(\mu^{1}_{n}, \mu^{2}_{n})$ such that $F_{\infty}(\pi) \leq \liminf_{n\to \infty}F_{n}(\pi_{n})$.
\end{lemma}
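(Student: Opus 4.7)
I would split the argument into two cases depending on the causality of $\pi$.

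\textbf{Case 1: $\pi \notin \Pi_{\leq}(\mu^{1}_{\infty},\mu^{2}_{\infty})$, so $F_{\infty}(\pi) = -\infty$.} I would take $\pi_{n} := \mu^{1}_{n}\otimes\mu^{2}_{n}$. Since $\pi$ gives positive mass to the open set $X^{2}\setminus X^{2}_{\leq}$ (open by causal closedness of $X$), I can find open $U_{i}\subset X$ with $U_{1}\times U_{2}\subset X^{2}\setminus X^{2}_{\leq}$ and $\mu^{i}_{\infty}(U_{i})>0$. Narrow convergence then forces $\mu^{i}_{n}(U_{i})>0$ for $n$ large, so $\pi_{n}(X^{2}\setminus X^{2}_{\leq})>0$, and hence $F_{n}(\pi_{n}) = -\infty = F_{\infty}(\pi)$ eventually.

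\textbf{Case 2: $\pi \in \Pi_{\leq}(\mu^{1}_{\infty},\mu^{2}_{\infty})$.} I would pick causal couplings $\alpha_{n}\in \Pi_{\leq}(\mu^{1}_{n},\mu^{1}_{\infty})$ and $\beta_{n}\in \Pi_{\leq}(\mu^{2}_{\infty},\mu^{2}_{n})$, which exist by hypothesis. Two applications of the classical gluing construction yield $\gamma_{n}\in\mathcal{P}(X^{4})$ with $(P_{12})_{\sharp}\gamma_{n}=\alpha_{n}$, $(P_{23})_{\sharp}\gamma_{n}=\pi$, $(P_{34})_{\sharp}\gamma_{n}=\beta_{n}$. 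Setting $\pi_{n} := (P_{14})_{\sharp}\gamma_{n}\in \Pi(\mu^{1}_{n},\mu^{2}_{n})$, transitivity of $\leq$ gives $x_{1}\leq x_{2}\leq x_{3}\leq x_{4}$ for $\gamma_{n}$-a.e.\ quadruple, so $\pi_{n}\in \Pi_{\leq}(\mu^{1}_{n},\mu^{2}_{n})$ is causal. The reverse triangle inequality combined with monotonicity of $t\mapsto t^{p}$ on $[0,\infty)$ for $p\in(0,1]$ yields $\tau(x_{1},x_{4})^{p}\geq \tau(x_{2},x_{3})^{p}$; integrating against $\gamma_{n}$ gives the immediate lower bound $F_{n}(\pi_{n})\geq F_{\infty}(\pi)$.

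To get the matching upper bound, I would refine the selection of $\alpha_{n},\beta_{n}$ by choosing each to minimize an auxiliary metric cost such as $\int \sfd(x,y)^{2}\,d\alpha(x,y)$ over the non-empty, convex, weakly compact set $\Pi_{\leq}(\mu^{1}_{n},\mu^{1}_{\infty})$ (and analogously for $\beta_{n}$). Exploiting the $W_{2}$-convergence $\mu^{i}_{n}\to\mu^{i}_{\infty}$ (implied by narrow convergence together with uniformly compact supports), one expects that these metric-optimal causal couplings converge narrowly to the diagonal self-couplings $(\id,\id)_{\sharp}\mu^{i}_{\infty}$. Consequently $\gamma_{n}$ concentrates, along a subsequence, on the set $\{(x,x,y,y):(x,y)\in\supp\pi\}$, forcing $\pi_{n}\to \pi$ narrowly. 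Since $\supp\pi_{n}\subset \cK\times\cK$ and $\tau^{p}$ is thereby uniformly integrable, Lemma~\ref{L:limsup} applied to $\pi_{n}\to \pi$ delivers $\limsup_{n}F_{n}(\pi_{n})\leq F_{\infty}(\pi)$, closing the proof.

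\textbf{Main obstacle.} The delicate step is confirming that the metric-minimizing causal couplings do concentrate on the diagonal. This is not automatic: the set $\Pi_{\leq}(\mu^{i}_{\infty},\mu^{i}_{\infty})$ of causal self-couplings may a priori contain non-trivial elements, for instance when $\supp\mu^{i}_{\infty}$ meets a null geodesic. The argument must combine narrow compactness on $\mathcal{P}(X^{2})$, closedness of $X^{2}_{\leq}$, continuity of $\tau$ from global hyperbolicity, and $\tau(x,x)=0$ to identify the narrow limit with the identity coupling.
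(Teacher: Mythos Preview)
Your gluing construction and the reverse-triangle lower bound $F_n(\pi_n) \geq F_\infty(\pi)$ in Case 2 are exactly the paper's argument. For Case 1 the paper is briefer: it takes $\pi_n := \pi$, noting that since $F_n$ is defined to be $-\infty$ outside $\Pi_\leq(\mu^1_n, \mu^2_n)$ one has $F_n(\pi) = -\infty = F_\infty(\pi)$ immediately.

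The divergence is in your Case 2 upper bound. The paper does not refine $\alpha_n, \beta_n$ and does not attempt to show $\pi_n \to \pi$; it simply invokes the $\limsup$ inequality \eqref{eq:FntoFinfty} on the already-constructed $(\pi_n)$ and combines it with the lower bound. (Strictly, this pins down $\lim F_n(\pi_n) = F_\infty(\pi)$ only when $\pi$ is $\ell_p$-optimal---then any narrow cluster point $\pi'$ of the tight sequence $(\pi_n)$ satisfies $\limsup F_n(\pi_n) \leq F_\infty(\pi') \leq F_\infty(\pi)$---but that is all the subsequent theorem uses.) Your route of forcing $\pi_n \to \pi$ via $W_2$-minimizing causal $\alpha_n, \beta_n$ is unnecessary and hits a real obstacle, though not quite the one you flag. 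The diagonal is always the unique $W_2$-minimizer in $\Pi_\leq(\mu^i_\infty, \mu^i_\infty)$ (it has cost zero and $\leq$ is reflexive), so non-trivial causal self-couplings are not the worry. The actual difficulty is that the hypotheses give no control on $\inf\{\int \sfd^2 \, d\alpha : \alpha \in \Pi_\leq(\mu^1_n, \mu^1_\infty)\}$: mere non-emptiness of $\Pi_\leq(\mu^1_n, \mu^1_\infty)$ says nothing about how $\sfd$-close any such coupling is to the diagonal, so there is no reason this infimum tends to zero. Abandon the refinement and follow the paper's one-line appeal to \eqref{eq:FntoFinfty}.
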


\begin{proof}
Fix any $ \pi \in \mathcal{P}(X^{2})$. If 
$\pi \notin \Pi_{\leq}(\mu^{1}_{\infty}, \mu^{2}_{\infty})$ then the claim is trivial
(just take as a recovery sequence $\pi$ itself). 
Assume then $\pi \in \Pi_{\leq}(\mu^{1}_{\infty}, \mu^{2}_{\infty})$.
By assumption 
there exists $\pi_{n}^{1} \in \Pi_{\leq}(\mu^{1}_{n},\mu^{1}_{\infty})$ and 
$\pi_{n}^{2} \in \Pi_{\leq}(\mu^{2}_{\infty},\mu^{2}_{n})$. 
Then, by Gluing Lemma \ref{L:gluing} and transitivity of $\leq$, we obtain a 
$\hat \pi_{n} \in \mathcal{P}(X\times X \times X \times X)$ such that 
$$
(P_{12})_{\sharp} \hat \pi_{n} = \pi^{1}_{n}, \quad   
(P_{23})_{\sharp} \hat \pi_{n} = \pi, \quad
(P_{34})_{\sharp} \hat \pi_{n} = \pi^{2}_{n}, \quad 
(P_{14})_{\sharp} \hat \pi_{n} \in \Pi_{\leq} (\mu_{n}^{1},\mu_{n}^{2}).
$$
Recalling that $\tau$ is non-negative and satisfies reverse triangle inequality, we get: 
\begin{align*}
F((P_{14})_{\sharp} \hat \pi_{n})  
= &~\int_{X\times X} \tau(x,y)^{p} \, (P_{14})_{\sharp}  \hat \pi_{n} (\dd x\dd y) \\
 = &~ \int_{X\times X\times X \times X} \tau(P_{14}(x,z,w,y))^{p} \, \hat \pi_{n} (\dd x\dd y\dd z\dd w)  \\
 \geq &~
\int_{X\times X\times X \times X} (\tau(x,z) + \tau(z,w) + \tau(w,y))^{p} \, \hat \pi_{n} (\dd x\dd y\dd z\dd w) \\
 \geq &
\int_{X\times X} \tau(z,w)^{p} \pi(\dd z\dd w)  \\
\geq &~ F(\pi).
\end{align*}
Thus the sequence $\pi_{n}:=(P_{14})_{\sharp} \hat \pi_{n}$ satisfies the claim.
\end{proof}

\begin{theorem}[Stability of $\ell_{p}$-optimal couplings II]
Assume that there exists a compact subset $\cK\subset X$ such that $\supp\, \mu^{1}_{n}, \, \supp\, \mu^{2}_{n}\subset \cK$ for all $n\in \N$, and that  for each $n \in \N$ the sets 
$\Pi_{\leq}(\mu^{1}_{n},\mu^{1}_{\infty})$ and 
$\Pi_{\leq}(\mu^{2}_{\infty},\mu^{2}_{n})$ are both not empty. 

Then
$\ell_{p}(\mu^{1}_{n}, \mu^{2}_{n})$ converges to 
$\ell_{p}(\mu^{1}_{\infty}, \mu^{2}_{\infty})$ and any narrow-limit point of
$\Pi_{\leq}^{p\text{-opt}}(\mu^{1}_{n}, \mu^{2}_{n})$ belongs to 
$\Pi_{\leq}^{p\text{-opt}}(\mu^{1}_{\infty}, \mu^{2}_{\infty})$.
\end{theorem}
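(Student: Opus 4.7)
The plan is to run a $\Gamma$-convergence style argument for the maximisation problem \eqref{eq:defWp}, combining Lemma \ref{L:limsup} (the $\limsup$-inequality) with Lemma \ref{lem:recoveryseq} (the recovery sequence). The uniform compact support assumption $\supp\,\mu^1_n,\supp\,\mu^2_n \subset \cK$ plays a triple role: it forces tightness of any competing sequence of couplings (so Prokhorov's Theorem \ref{thm:prok} yields narrow compactness), it forces any narrow-limit point to be supported in $\cK\times\cK$, and it renders the uniform integrability hypothesis of Lemma \ref{L:limsup} automatic since $\tau^p$ is bounded on $\cK\times\cK$ by continuity of $\tau$ under global hyperbolicity.

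First I would produce an $\ell_p$-optimal coupling at the limit. For every $n$, Proposition \ref{prop:ExMaxellp} yields a nonempty $\Pi_\leq^{p\text{-opt}}(\mu^1_n,\mu^2_n)$; pick any $\pi_n$ in it. Tightness gives a subsequence $\pi_{n_k}\to\pi_\infty$ narrowly; continuity of the projections identifies the marginals as $\mu^1_\infty,\mu^2_\infty$, and causal closedness of $X^2_\leq$ forces $\pi_\infty \in \Pi_\leq(\mu^1_\infty,\mu^2_\infty)$. In particular $\Pi_\leq(\mu^1_\infty,\mu^2_\infty)\neq\emptyset$, so Proposition \ref{prop:ExMaxellp} yields some $\bar\pi_\infty \in \Pi_\leq^{p\text{-opt}}(\mu^1_\infty,\mu^2_\infty)$ with $F_\infty(\bar\pi_\infty)=\ell_p(\mu^1_\infty,\mu^2_\infty)^p$.

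Next, I feed $\bar\pi_\infty$ into Lemma \ref{lem:recoveryseq} to obtain a recovery sequence $\bar\pi_n \in \Pi_\leq(\mu^1_n,\mu^2_n)$ with $F_n(\bar\pi_n) \to F_\infty(\bar\pi_\infty)$. Since $\ell_p(\mu^1_n,\mu^2_n)^p \geq F_n(\bar\pi_n)$ by optimality, passing to the $\liminf$ gives
\[
\liminf_{n\to\infty} \ell_p(\mu^1_n,\mu^2_n)^p \;\geq\; \ell_p(\mu^1_\infty,\mu^2_\infty)^p.
\]
For the converse bound, let $\pi_\infty$ be any narrow-limit point of a sequence $\pi_n \in \Pi_\leq^{p\text{-opt}}(\mu^1_n,\mu^2_n)$. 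As above, $\pi_\infty \in \Pi_\leq(\mu^1_\infty,\mu^2_\infty)$, hence $F_\infty(\pi_\infty) \leq \ell_p(\mu^1_\infty,\mu^2_\infty)^p$; on the other hand, Lemma \ref{L:limsup} yields $F_\infty(\pi_\infty) \geq \limsup_n F_n(\pi_n) = \limsup_n \ell_p(\mu^1_n,\mu^2_n)^p$. Chaining these inequalities with the previous paragraph collapses them to equalities, simultaneously giving the convergence $\ell_p(\mu^1_n,\mu^2_n) \to \ell_p(\mu^1_\infty,\mu^2_\infty)$ and the optimality $\pi_\infty \in \Pi_\leq^{p\text{-opt}}(\mu^1_\infty,\mu^2_\infty)$.

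The only genuine subtlety is the direction of semicontinuity. In the metric/Riemannian setting the cost $\sfd^p$ is continuous and both halves of the $\Gamma$-convergence are cheap; here $\ell^p$ is only upper semicontinuous, which suffices for the $\limsup$ inequality but \emph{not} for the $\liminf$ bound. This is precisely why Lemma \ref{lem:recoveryseq} (constructed via the Gluing Lemma \ref{L:gluing} applied to the auxiliary causal couplings in $\Pi_\leq(\mu^1_n,\mu^1_\infty)$ and $\Pi_\leq(\mu^2_\infty,\mu^2_n)$) is indispensable, and why the nonemptyness of those auxiliary sets is built into the hypotheses.
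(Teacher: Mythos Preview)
Your proof is correct and follows essentially the same $\Gamma$-convergence scheme as the paper's: Lemma~\ref{lem:recoveryseq} for the $\liminf$ bound, Lemma~\ref{L:limsup} for the $\limsup$ bound, and causal closedness plus Prokhorov to identify narrow limits of optimal couplings as causal couplings at the limit. The only cosmetic difference is that you interleave the two claims (convergence of $\ell_p$ and optimality of limit points) into a single chain of inequalities, whereas the paper first establishes the convergence of $\ell_p(\mu^1_n,\mu^2_n)$ and then uses it to certify optimality of any narrow limit; the content is identical.

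One small point worth tightening: when you invoke Lemma~\ref{L:limsup} to write $F_\infty(\pi_\infty) \geq \limsup_n F_n(\pi_n)$, the narrow convergence $\pi_n\to\pi_\infty$ holds only along a subsequence, so the $\limsup$ you control is a priori only along that subsequence. To recover the full $\limsup$, first pass to a subsequence realising $\limsup_n \ell_p(\mu^1_n,\mu^2_n)^p$ and then extract a further narrowly convergent subsequence by tightness; this is routine and the paper glosses over it as well.
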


\begin{proof}
For the first claim, notice that from Lemma \ref{L:limsup} and the equintegrability of $\tau^{p}$ granted by the assumptions,
it readily follows that $\limsup_{n\to \infty}  \ell_{p}(\mu^{1}_{n}, \mu^{2}_{n}) \leq \ell_{p}(\mu^{1}_{\infty}, \mu^{2}_{\infty})$. 
Also, Lemma \ref{lem:recoveryseq} gives $\ell_{p}(\mu^{1}_{\infty}, \mu^{2}_{\infty}) \leq \liminf_{n\to \infty}  \ell_{p}(\mu^{1}_{n}, \mu^{2}_{n})$. Hence, $\ell_{p}(\mu^{1}_{n}, \mu^{2}_{n})\to \ell_{p}(\mu^{1}_{\infty}, \mu^{2}_{\infty})$.

For the second claim, if $\pi_{n}\in \Pi_{\leq}^{p\text{-opt}}(\mu^{1}_{n}, \mu^{2}_{n})$ converges narrowly to $\pi$ then, by the continuity of the projections and the causal closedness of $X$, we have that $\pi\in \Pi_{\leq}(\mu^{1}_{\infty}, \mu^{2}_{\infty})$ and 
$$\int \tau(x,y)^{p} \, \pi(\dd x\dd y)= \lim_{n\to \infty} \int \tau(x,y)^{p} \, \pi_{n}(\dd x\dd y)= \lim_{n\to \infty} \ell_{p}(\mu^{1}_{n}, \mu^{2}_{n})^{p}= \ell_{p}(\mu^{1}_{\infty}, \mu^{2}_{\infty})^{p},$$
where the first identity follows  from Lemma \ref{L:limsup} and last by the previous part of the proof.  We conclude that $\pi\in \Pi_{\leq}^{p\text{-opt}}(\mu^{1}_{\infty}, \mu^{2}_{\infty})$.
\end{proof}

Another simple criterion, based on ideas from $\Gamma$-convergence, to ensure stability of $\ell_{p}$-optimal couplings is the following one.

\begin{lemma}\label{L:use}
Let  $(X,\sfd, \ll, \leq, \tau)$ be a  Lorentzian globally hyperbolic geodesic space and fix $p\in (0,1]$.  Let
$(\mu^{1}_{n}), (\mu^{2}_{n}) \subset \prob X$  be narrowly convergent to some  
$\mui^{1}, \mui^{2}\in \prob X$.
Assume moreover the existence of 
an optimal $\bar \pi_{\infty} \in \Pi_{\leq}^{p\text{-opt}}(\mui^{1}, \mui^{2})$
and of a sequence $\bar \pi_{n} \in \Pi_{\leq}(\mu_{n}^{1}, \mu_{n}^{2})$ 
such that $F_{n}(\bar \pi_{n}) \to F_{\infty}(\bar \pi_{\infty})$.

Then for any  $\tau^{p}$-uniformly integrable sequence $\pi_{n} \in \Pi_{\leq}^{p{\text-opt}}(\mu_{n}^{1}, \mu_{n}^{2})$, any limit measure $\pi_{\infty}$ in the narrow topology is $\ell_p$-optimal, 
i.e. $\pi_{\infty} \in \Pi_{\leq}^{p\text{-opt}}(\mui^{1}, \mui^{2})$.
\end{lemma}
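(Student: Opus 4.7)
The argument is a standard $\Gamma$-convergence-type comparison, in which the hypothesised sequence $(\bar\pi_n)$ plays the role of a recovery sequence at the (already known) optimal competitor $\bar\pi_\infty$. The point of the assumption is precisely to bypass the construction of a recovery sequence, which is the delicate part of Lemma \ref{lem:recoveryseq}, and which is why general stability required the compactness hypothesis there.

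First I would verify that any narrow limit point $\pi_\infty$ of $(\pi_n)$ belongs to $\Pi_\leq(\mu^1_\infty,\mu^2_\infty)$. The marginal condition is immediate from narrow convergence and continuity of the projections $P_1,P_2$. The causality condition uses causal closedness: $X^2_\leq\subset X^2$ is closed, $\pi_n(X^2_\leq)=1$ for every $n$ (each $\pi_n$ is a causal coupling), and therefore by Portmanteau
\[
\pi_\infty(X^2_\leq)\ge \limsup_{n\to\infty}\pi_n(X^2_\leq)=1.
\]

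Next I would invoke Lemma \ref{L:limsup} applied to the sequence $(\pi_n)$: the $\tau^p$-uniform integrability assumption on $(\pi_n)$ is exactly what is needed, so
\[
F_\infty(\pi_\infty)\ \ge\ \limsup_{n\to\infty} F_n(\pi_n).
\]
Now I would use the two pieces of information on $(\bar\pi_n)$. Since $\pi_n\in\Pi_\leq^{p\text{-opt}}(\mu_n^1,\mu_n^2)$ and $\bar\pi_n\in \Pi_\leq(\mu_n^1,\mu_n^2)$, the very definition of $\ell_p$ gives $F_n(\pi_n)\ge F_n(\bar\pi_n)$ for every $n$. Combining with the hypothesis $F_n(\bar\pi_n)\to F_\infty(\bar\pi_\infty)$ yields
\[
\limsup_{n\to\infty} F_n(\pi_n)\ \ge\ \liminf_{n\to\infty} F_n(\bar\pi_n)\ =\ F_\infty(\bar\pi_\infty).
\]
Chaining the two inequalities I obtain $F_\infty(\pi_\infty)\ge F_\infty(\bar\pi_\infty)=\ell_p(\mu^1_\infty,\mu^2_\infty)^p$, where the last identity is the assumed $\ell_p$-optimality of $\bar\pi_\infty$. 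Since $\pi_\infty\in \Pi_\leq(\mu^1_\infty,\mu^2_\infty)$ from Step~1, the reverse inequality is automatic, and therefore $\pi_\infty\in \Pi_\leq^{p\text{-opt}}(\mu^1_\infty,\mu^2_\infty)$, as required.

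There is no real obstacle here beyond making sure the hypotheses of Lemma \ref{L:limsup} are applicable. The only subtle point is that Lemma \ref{L:limsup} requires the narrow convergence $\pi_n\to\pi_\infty$ plus uniform integrability of $\tau^p$; both are included in the statement, so the argument just assembles the pieces. Note also that no compactness of supports is needed in this formulation, which is what makes this criterion genuinely more flexible than the stability statement based on Lemma \ref{lem:recoveryseq}.
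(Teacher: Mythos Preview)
Your proof is correct and follows essentially the same $\Gamma$-convergence comparison as the paper: apply Lemma~\ref{L:limsup} to the optimal sequence $(\pi_n)$, use $F_n(\pi_n)\ge F_n(\bar\pi_n)$ from optimality, and close the chain with the hypothesis $F_n(\bar\pi_n)\to F_\infty(\bar\pi_\infty)$ and the optimality of $\bar\pi_\infty$. You spell out the verification that $\pi_\infty\in\Pi_\leq(\mu^1_\infty,\mu^2_\infty)$ more explicitly than the paper does, but the core argument is identical.
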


\begin{proof}
Consider any limit point $\pi_{\infty}$ of a $\tau^{p}$-uniformly integrable sequence  
$\pi_{n} \in \Pi_{\leq}^{p\text{-opt}}(\mu_{n}^{1}, \mu_{n}^{2})$ and 
$\bar \pi_{\infty} \in \Pi_{\leq}^{p\text{-opt}}(\mui^{1}, \mui^{2})$ limit point of 
$\bar \pi_{n} \in \Pi_{\leq}(\mu_{n}^{1}, \mu_{n}^{2})$. From Lemma \ref{L:limsup} we have  that
\begin{equation}\label{E:Gammaconv}
F_{\infty}(\pi_{\infty}) \geq \limsup_{n \to \infty} F_{n}(\pi_{n}) 
\geq 
\limsup_{n \to \infty} F_{n}(\bar \pi_{n})  
= F_{\infty}(\bar \pi_{\infty}) \geq F_{\infty}(\pi_{\infty})
\end{equation}
giving optimality of $\pi_{\infty}$.
\end{proof}

From Lemma \ref{L:use} we obtain another stability result. 
For this scope we introduce the following notation: 
$$
D : = \Big\{ \nu \in \mathcal{P}_{c}(X) \colon \nu = \sum_{i\leq k} \alpha_{i}\delta_{x_{i}}, \text{ for some }k \in \N \Big\}.
$$

\begin{theorem}[Stability of $\ell_{p}$-optimal couplings III]\label{T:stability}
Let  $(X,\sfd, \ll, \leq, \tau)$ be a globally hyperbolic Lorentzian geodesic space and fix $p\in (0,1]$.  Let also
$\mu_{0}, \mu_{1}\in \mathcal{P}_{c}(X)$ be given 
and assume the existence of 
$\pi \in \Pi_{\leq}^{p\text{-opt}}(\mu_{0}, \mu_{1})$ with 
$\supp\,\pi \subset X^{2}_{\ll}$.
 Let $(\mu_{1,n}) \subset D$  with $\supp \, \mu_{1,n}\subset \supp \, \mu_{1}$ be a sequence  narrowly convergent to $\mu_{1}$ 
with  $\ell_{p}(\mu_{0},\mu_{1,n})\in [0,\infty)$. Then there exists another sequence 
$(\bar \mu_{1,n} ) \subset D$ such that the following holds true.
\smallskip
The sequence $(\bar \mu_{1,n})$ still narrowly converges to $\mu_{1}$ and 
$\bar \mu_{1,n}$ is absolutely continuous with respect to $\mu_{1,n}$.
Moreover, for any  sequence $\pi_{n}\in \Pi_{\leq}^{p\text{-opt}}(\mu_{0},\bar \mu_{1,n})$, 
any limit measure $\pi_{\infty}$ in the narrow topology is $\ell_{p}$-optimal, 
i.e. $\pi_{\infty} \in \Pi_{\leq}^{p\text{-opt}}(\mu_{0}, \mu_{1})$, and 
$$
\ell_{p}(\mu_{0},\bar \mu_{1,n}) \to \ell_{p}(\mu_{0}, \mu_{1}).
$$
\end{theorem}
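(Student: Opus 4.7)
The plan is to apply Lemma~\ref{L:use} with input $\bar\pi_\infty := \pi$. To do so I must build causal recovering couplings $\bar\pi_n \in \Pi_{\leq}(\mu_0, \bar\mu_{1,n})$ satisfying $F_n(\bar\pi_n) \to F_\infty(\pi)$, and verify $\tau^p$-uniform integrability of every sequence of optimal couplings $\pi_n \in \Pi_\leq^{p\text{-opt}}(\mu_0, \bar\mu_{1,n})$. The latter is automatic: all relevant couplings are supported in the fixed compact $K := \supp\mu_0 \times \supp\mu_1$, on which $\tau$ is continuous by global hyperbolicity and hence $\tau^p$ is bounded.

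The core construction pushes $\pi$ through a Borel ``quantisation'' map $T_n : \supp\mu_1 \to \supp\mu_{1,n}$ with $\sup_{y\in\supp\mu_1}\sfd(y, T_n(y)) \le \delta_n$, where $\delta_n \downarrow 0$. To build $T_n$, cover $\supp\mu_1$ by finitely many open balls $B(y_i, \delta_n/2)$ with $\mu_1(B(y_i, \delta_n/2))>0$; by narrow convergence of $\mu_{1,n}$ each such ball eventually contains an atom of $\mu_{1,n}$, so for $n$ large every $y \in \supp\mu_1$ has an atom of $\mu_{1,n}$ within distance $\delta_n$, and a fixed measurable selection rule produces a Borel $T_n$ with values in $\supp\mu_{1,n}$. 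Set
\[
\bar\mu_{1,n} := (T_n)_\sharp \mu_1, \qquad \bar\pi_n := (\id \times T_n)_\sharp \pi,
\]
where $\id \times T_n$ sends $(x,y)$ to $(x, T_n(y))$. The atoms of $\bar\mu_{1,n}$ form a subset of those of $\mu_{1,n}$, so $\bar\mu_{1,n} \in D$ and $\bar\mu_{1,n} \ll \mu_{1,n}$; and since $T_n \to \id$ uniformly on the compact $\supp\mu_1$, we have $\bar\mu_{1,n} \to \mu_1$ narrowly. The marginals of $\bar\pi_n$ are indeed $\mu_0$ and $\bar\mu_{1,n}$.

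The decisive point is verifying causality of $\bar\pi_n$, and here the hypothesis $\supp\pi \subset X^2_{\ll}$ is essential. By compactness of $\supp\pi$ and continuity of $\tau$ on $K$, there exists $\epsilon_0 > 0$ with $\tau \ge 2\epsilon_0$ on $\supp\pi$. Uniform continuity of $\tau$ on $K$ gives $|\tau(x, T_n(y)) - \tau(x,y)| \le \epsilon_0$ for $n$ large and all $(x,y) \in K$; hence $\tau(x, T_n(y)) \ge \epsilon_0 > 0$ on $\supp\pi$, so $(x, T_n(y)) \in X^2_{\ll} \subset X^2_{\leq}$ and thus $\bar\pi_n \in \Pi_{\leq}(\mu_0, \bar\mu_{1,n})$. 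The same uniform continuity, combined with bounded convergence, yields
\[
F_n(\bar\pi_n) = \int \tau(x, T_n(y))^p\,\pi(dxdy) \;\longrightarrow\; \int \tau(x,y)^p\,\pi(dxdy) = F_\infty(\pi).
\]

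Lemma~\ref{L:use} then gives $\pi_\infty \in \Pi_\leq^{p\text{-opt}}(\mu_0, \mu_1)$ for every narrow limit of optimal $\pi_n \in \Pi_\leq^{p\text{-opt}}(\mu_0, \bar\mu_{1,n})$. The convergence $\ell_p(\mu_0, \bar\mu_{1,n}) \to \ell_p(\mu_0, \mu_1)$ follows by combining the lower bound $\ell_p(\mu_0, \bar\mu_{1,n})^p \ge F_n(\bar\pi_n) \to \ell_p(\mu_0, \mu_1)^p$ with the matching upper bound obtained from Lemma~\ref{L:limsup} applied to any such $\pi_n$. The main obstacle is precisely the causality check above: without $\supp\pi$ sitting inside the \emph{open} set $X^2_{\ll}$, perturbing the second endpoint of causal pairs could in general destroy causality, and the pushforward construction would break down.
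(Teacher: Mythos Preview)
Your proof is correct and reaches the conclusion by the same overall route as the paper—build a causal recovery sequence $\bar\pi_n \in \Pi_{\leq}(\mu_0,\bar\mu_{1,n})$ with $F_n(\bar\pi_n)\to F_\infty(\pi)$, then invoke Lemma~\ref{L:use}—but the construction of $\bar\pi_n$ is genuinely different. The paper covers $\supp\pi$ by finitely many product balls $B_\ve(x_i)\times B_\ve(y_i)\subset X^2_{\ll}$, partitions $\supp\pi$ into Borel pieces $B_i$, and on each piece replaces the second marginal by the (renormalised) atoms of $\mu_{1,n}$ falling in $P_2(B_i)$; this yields $\pi_{\ve,n}=\sum_i (P_1)_\sharp(\pi\llcorner B_i)\otimes(\text{nearby atoms})$, and one then lets $\ve_n\downarrow 0$. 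Your argument instead exploits the hypothesis $\supp\mu_{1,n}\subset\supp\mu_1$ to build a single Borel nearest-atom map $T_n:\supp\mu_1\to\supp\mu_{1,n}$ with $T_n\to\id$ uniformly, and takes $\bar\pi_n=(\id\times T_n)_\sharp\pi$ directly. Both constructions use compactness of $\supp\pi$ inside the open set $X^2_{\ll}$ in exactly the same way to certify causality of the perturbed coupling. Your approach is shorter and conceptually cleaner (no partitions, no product-measure bookkeeping); the paper's gives a slightly more explicit description of $\bar\mu_{1,n}$ as a reweighting of $\mu_{1,n}$, but this is not used downstream. One minor remark: your description of $T_n$ conflates the fixed covering scale with the $n$-dependent error; the clean way to phrase it is to set $\delta_n:=\sup_{y\in\supp\mu_1}\sfd(y,\supp\mu_{1,n})$ and observe (via compactness and narrow lower semicontinuity on open balls) that $\delta_n\to 0$, after which any measurable nearest-atom selection does the job.
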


\begin{proof}
{\bf Step 1.} Restricting $\mu_{0}$.\\
Since $\supp\, \pi$ is compact and $X_{\ll}^{2}$ is an open set, for any $\ve>0$ we find finitely many points $(x_{i,\ve},y_{i,\ve})$ with $i = 1,\dots,k_{\ve}$
such that $\supp\, \pi \subset \cup_{i\leq k_{\ve}} B_{\ve}(x_{i,\ve})\times B_{\ve}(y_{i,\ve}) 
\subset X_{\ll}^{2}$.
In particular, for any $\ve>0$, $\mu_{1}(\cup_{i\leq k_{\ve}}B_{\ve}(y_{i,\ve}) )= 1$. 
Then narrow convergence implies that 
\begin{equation}\label{eq:mu1inAeps1}
\liminf_{n\to \infty} \mu_{1,n}(A_{\ve}) \geq \mu_{1}(A_{\ve}) = 1, \qquad A_{\ve} := \cup_{i\leq k_{\ve}}B_{\ve}(y_{i,\ve}).
\end{equation}
Since we are interested in obtaining a sequence $\{ \bar \mu_{1,n}\}$ absolutely continuous with respect to 
$\mu_{1,n}$, we can  restrict and normalize $\mu_{1,n}$ to $A_{\ve}$ obtaining (thanks to 
\eqref{eq:mu1inAeps1}) a new sequence still converging narrowly to $\mu_{1}$.
Hence, without loss of generality, we will assume $\mu_{1,n}(A_{\ve}) = 1$ for every $n\in \N$.

\smallskip
{\bf Step 2.} Construction of the approximations.\\
 By assumption  $\mu_{1,n} = \sum_{i \leq h_{n}} \alpha_{i,n}\delta_{y_{i,n}}$, 
with $\sum_{i\leq h_{n}}\alpha_{i,n} = 1$,  $\alpha_{i,n}\geq 0$ and
from {\bf Step 1} we have $\mu_{1,n}(A_{\ve}) = 1$. 
Let $\{B_{i,\ve}\}_{i = 1}^{k_\ve}$ be a pairwise disjoint  covering  of $\supp\, \pi$, where each $B_{i,\ve}$ 
is a Borel subset of $B_{\ve}(x_{i,\ve})\times B_{\ve}(y_{i,\ve}) \subset X_{\ll}^{2}$.
We define the following approximations: 
\begin{equation}\label{E:definitionpi1en}
\pi_{i,\ve} : = \pi\llcorner_{B_{i,\ve}},
\quad \pi_{i,\ve,n} : = (P_{1})_{\sharp}\pi_{i,\ve} 
\otimes \left(\sum_{y_{i,n} \in P_{2}(B_{i,\ve})}\alpha_{i,n}\delta_{y_{i,n}}\right)\Big/\left(\sum_{y_{i,n} \in P_{2}(B_{i,\ve})}\alpha_{i,n}\right),
\end{equation}
and set $\pi_{\ve, n} : = \sum_{i\leq k_{\ve}}\pi_{i,\ve,n}$. 
 Observe that:
\begin{align}
(P_{2})_{\sharp}\pi_{\ve, n}
&~ = \sum_{i\leq k_{\ve}}(P_{2})_{\sharp} \pi_{i,\ve,n}   \nonumber  \\
&~ = \sum_{i\leq k_{\ve}} \left( \frac{\pi(B_{i,\ve})} {\sum_{y_{i,n} \in P_{2}(B_{i,\ve})}\alpha_{i,n}} \sum_{y_{i,n} \in P_{2}(B_{i,\ve})}\alpha_{i,n}\delta_{y_{i,n}} \right)  \nonumber   \\
&~ = \sum_{i\leq k_{\ve}} \left( \frac{\pi(B_{i,\ve})}{\mu_{1,n}(P_{2}(B_{i,\ve}))} 
\sum_{y_{i,n} \in P_{2}(B_{i,\ve})}\alpha_{i,n}\delta_{y_{i,n}} \right)=:\bar{\mu}_{1,n}.\label{eq:P2pien} 
\end{align} 
Moreover
\begin{equation}\label{eq:P1pien} 
(P_{1})_{\sharp}\pi_{\ve, n} = \sum_{i\leq k_{\ve}} (P_{1})_{\sharp} \pi_{i,\ve}  
 = (P_{1})_{\sharp}\pi  = \mu_{0}. 
\end{equation}
It is clear from \eqref{eq:P2pien} and \eqref{eq:P1pien}  that $\pi_{\ve, n} \in \Pi_{\leq}(\mu_{0},\bar \mu_{1,n})$ and that $\bar{\mu}_{1,n}\ll\mu_{1,n}$. 
Notice indeed that $\pi_{i,\ve,n}$ is concentrated over $P_{1}(B_{i,\ve})\times P_{2}(B_{i,\ve})$. 
Being $B_{i,\ve}$ a subset of the product of two balls inside $X_{\ll}^{2}$, causality 
of $\pi_{i,\ve,n}$ and of $\pi_{\ve, n}$ then follow.

\smallskip
{\bf Step 3.} Convergence of the approximations.\\
We now estimate the difference between $\pi_{\ve,n}$ and $\pi$ by checking first the 
difference between $\pi_{i,\ve,n}$ and $\pi_{i,\ve}$  in duality with $(f_{1},f_{2}) \in C_{b}(X)^{2}$:
from \eqref{E:definitionpi1en} we deduce that
$$
\int f_{1}(x)f_{2}(y) \pi_{i,\ve,n}(\dd x\dd y) = 
\int f_{1}(x) \pi_{i,\ve}(\dd x\dd y)
\frac{ \sum_{y_{i,n \in P_{2}(B_{i,\ve})}} \alpha_{i,n}f_{2}(y_{i,n})}
{\sum_{y_{i,n \in P_{2}(B_{i,\ve})}} \alpha_{i,n}}.
$$
Since $\supp \, \mu_{1}$ is compact, we have that $f_{2}|_{\supp \, \mu_{1}}$ is uniformly continuous. Denoting with $\omega_{f_{2}}(\ve)$ the modulus of continuity of $f_{2}|_{\supp \, \mu_{1}}$ at distance $\ve$, 
and recalling that $P_{2}(B_{i,\ve}) \subset B_{\ve}(y_{i,\ve})$ we estimate
$$
\left| \int f_{1}(x)f_{2}(y) \pi_{i,\ve,n}(\dd x\dd y) -
\int  f_{1}(x) \pi_{i,\ve}(\dd x\dd y) f_{2}(y_{i,\ve}) \right| 
\leq \omega_{f_{2}}(\ve)\int |f_{1}(x)| \pi_{i,\ve}(\dd x\dd y). 
$$
In the same way: 
$$
\left|\int f_{1}(x)f_{2}(y) \pi_{i,\ve}(\dd x\dd y) - 
\int f_{1}(x) \pi_{i,\ve}(\dd x\dd y)f_{2}(y_{i,\ve})\right| \leq
\omega_{f_{2}}(\ve)\int |f_{1}(x)| \pi_{i,\ve}(\dd x\dd y). 
$$
Hence, summing over all $i\leq k_{\ve}$, we obtain 
$$
\left| \int_{X\times X} f_{1}(x)f_{2}(y) \pi_{\ve,n}(\dd x\dd y)
- \int_{X\times X} f_{1}(x)f_{2}(y) \pi(\dd x\dd y) \right| \leq 2 \omega_{f_{2}}(\ve) \|f_{1} \|_{\infty}.
$$

Recall that every $\varphi\in C(\supp \, \mu_{0}\times \supp \, \mu_{1}; \R)$ can be approximated in $C^{0}$-norm by finite linear combinations of product functions $f_{i,1}\otimes f_{i,2}$ with $f_{i,1}\in C(\supp\, \mu_{0};\R), \,f_{i,2}\in  C(\supp\, \mu_{1};\R)$.  Thus,  letting $\ve_{n}\downarrow 0$ be such that $\liminf_{n\to \infty} \mu_{1,n}(A_{\ve_{n}})=1$ (the existence of the sequence $(\ve_{n})$ is granted by \eqref{eq:mu1inAeps1}) and   defining  $\pi_{n}:=\pi_{\ve_{n},n}$ for every $n\in \N$, we have 
$$
\pi_{n} \in \Pi_{\leq}(\mu_{0},\bar \mu_{1,n}),  \quad 
\bar \mu_{1,n} \ll \mu_{1,n}, \quad \pi_{n} \to \pi \text{ narrowly}.
$$
In particular the last convergence implies 
that $\bar \mu_{1,n}$ converges narrowly to $\mu_{1}$, applying $(P_{2})_{\sharp}$.
Since for large $n$ the construction gives $\supp \, \pi_{n}\subset (\supp \, \pi)^{\ve} \Subset X^{2}$ (here $(\supp \, \pi)^{\ve}$ denotes an 
$\ve$-enlargement  of $\supp \, \pi$ with respect to $\sfd$), we have that $(\pi_{n})$ is $\tau^{p}$-uniformly integrable  and thus $F_{n}(\pi_{n})\to F_{\infty}(\pi)$ by Lemma \ref{L:limsup}. The conclusion then follows from Lemma \ref{L:use}.
\end{proof}

\begin{remark}
The previous stability results can be seen as the Lorentzian counterpart of the metric fact that $W_{p}(\mu_{n},\mui)\to 0$ if and only if  $\mu_{n}\to \mui$ narrowly and $(\mu_{n})$ has uniformly integrable $p$-moments.  The remarkable differences in the Lorentzian setting are first that the cost is not continuous implying the $\ell_{p}$-optimality does not pass to the limit automatically, and second  that $\ell_{p}(\mu_{n},\mui)\to 0$ \emph{does not} imply $\mu_{n}\to \mui$ narrowly: it is easy to construct a counterexample (e.g. already in 1+1 dimensional Minkowski space-time) using that if $\supp \,\mu_{1}$ and $\supp \, \mu_{2}$ are contained in the light cone of a given common point then $\ell_{p}(\mu_{1},\mu_{2})=0$.
\end{remark}

%%%%%%%%%%%%%%%%%%%%%%%%%%%%%%%%%%%%%%%%%%%%%%%%%%%%%%%%%%%%%%%%%%%%%%%%
%%%%%%%%%%%%%%%%%%%%%%%%%%%%%%%%%%%%%%%%%%%%%%%%%%%%%%%%%%%%%%%%%%%%%%%%
%%%%%%%%%%%%%%%%%%%%%%%%%%%%%%%%%%%%%%%%%%%%%%%%%%%%%%%%%%%%%%%%%%%%%%%%

\subsection{Kantorovich duality}

In the smooth Lorentzian setting, Kantorovich duality has been studied in \cite{Suhr, KellSuhr} in case $p=1$ and in \cite{McCann} for $p\in (0,1)$,   see also \cite{BertrandPuel, BertrandPratelliPuel} for relativistic cost functions in $\R^{n}$. 
In this section we study Kantorovich duality in the Lorentzian synthetic setting.
The following definition, relaxing the notion of $q$-separation introduced by McCann  \cite[Definition 4.1]{McCann} in the smooth Lorentzian setting will turn out to be very useful. Recall the definition   \eqref{eq:defell} of the cost function $\ell^{p}$.

\begin{definition}[Timelike $p$-dualisable]\label{def:Tpdual}
 Let  $(X,\sfd, \ll, \leq, \tau)$ be a Lorentzian pre-length space and let $p\in (0,1]$. We say that $(\mu,\nu)\in \mathcal{P}(X)^{2}$ is \emph{timelike $p$-dualisable (by $\pi\in \Pi_{\ll}(\mu,\nu)$)}  if 
 \begin{enumerate}
\item  $\ell_{p}(\mu,\nu)\in (0,\infty)$;
\item  $\pi\in  \Pi_{\leq}^{p\text{-opt}}(\mu,\nu)$ and $\pi(X^{2}_{\ll})=1$;
\item there exist measurable functions $a,b:X\to \R$, with $a\oplus b \in L^{1}(\mu\otimes \nu)$ such that  $\ell^{p}\leq a\oplus b$ on $\supp \, \mu \times  \supp \, \nu $.
\end{enumerate}
 \end{definition}

The motivation for considering timelike $p$-dualisable pairs of measures is twofold: firstly the $p$-optimal coupling $\pi(\dd x\dd y)$ matches events described by $\mu(\dd x)$  with events described by $\nu(\dd y)$ so that $x\ll y$, secondly
 Kantorovich duality holds (cf. \cite[Proposition 2.7]{Suhr} in smooth Lorentzian setting and in case $p=1$):
 
 \begin{proposition} [Weak Kantorovich duality I] \label{prop:WKDI}
Fix $p\in (0,1]$. Let  $(X,\sfd, \ll, \leq, \tau)$ be a globally hyperbolic Lorentz geodesic space. If $(\mu,\nu)\in \mathcal{P}(X)^{2}$  is timelike $p$-dualisable, then (weak) Kantorovich duality holds:
 \begin{equation}\label{eq:KantDual}
 \ell_{p}(\mu,\nu)^{p}=\inf \left\{ \int_{X} u\, \mu+ \int_{X} v\, \nu \right\} ,
 \end{equation}
 where the $\inf$ is taken over all measurable functions $u:\supp\, \mu\to \R\cup \{+\infty\}$ and  $v:\supp\, \nu\to \R\cup \{+\infty\}$ with $u\oplus v\geq \ell^{p}$ on $\supp \, \mu \times  \supp \, \nu $ and  $u\oplus v\in L^{1}(\mu\otimes \nu)$. Furthermore, the value of the right hand side does not change if one restricts the $\inf$ to bounded and continuous functions. 
 \end{proposition}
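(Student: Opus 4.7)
The plan is to deduce the duality from the classical Kantorovich duality theorem for upper semi-continuous costs, via the reformulation provided by Remark \ref{rem:eqellq}. By that remark one may rewrite $\ell_p(\mu,\nu)^p = \sup_{\pi\in\Pi(\mu,\nu)} \int_{X^2} \ell^p\, \pi(dxdy)$, so that the maximization runs over \emph{all} (not only causal) couplings. In this form $\ell^p \colon X^2 \to [-\infty,+\infty)$ is upper semi-continuous on $X^2$ (respectively on $\supp\mu \times \supp\nu$ in the locally causally closed, compactly supported case) by continuity of $\tau$ granted by global hyperbolicity, and condition (3) of timelike $p$-dualisability supplies the integrable upper envelope $\ell^p \leq a \oplus b \in L^1(\mu\otimes\nu)$.

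The weak-duality inequality $\ell_p(\mu,\nu)^p \leq \inf\bigl\{\int u\,\mu + \int v\,\nu\bigr\}$ is the easy direction: for any admissible $(u,v)$ and any $\pi \in \Pi_\leq(\mu,\nu)$, integrating the pointwise inequality $u \oplus v \geq \ell^p$ against $\pi$ and using the marginal constraints gives $\int \ell^p\, \pi(dxdy) \leq \int u\,\mu + \int v\,\nu$; taking the supremum over $\pi$ (attained and finite by Proposition \ref{prop:ExMaxellp}) and the infimum over admissible $(u,v)$ yields the bound.

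For the reverse inequality, apply the classical Kantorovich duality theorem (e.g.\ \cite[Theorem 5.10]{Vil} or \cite[Theorem 1.3]{Vil:topics}) to the lower semi-continuous cost $c := -\ell^p$, which takes values in $(-\infty,+\infty]$ and is bounded below by $-(a\oplus b) \in L^1(\mu\otimes\nu)$. This produces measurable pairs $(\phi,\psi)$ with $\phi\oplus\psi \leq c$ whose integrals approach $\min_{\pi}\int c\,\pi(dxdy) = -\ell_p(\mu,\nu)^p$; the substitution $u:=-\phi$, $v:=-\psi$ converts $\phi\oplus\psi \leq c$ into $u\oplus v \geq \ell^p$ and the supremum into an infimum, producing admissible pairs realizing \eqref{eq:KantDual}.

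To restrict the infimum to bounded continuous pairs, approximate $\ell^p$ from above on $\supp\mu \times \supp\nu$ by a decreasing sequence of bounded continuous functions $h_n \searrow \ell^p$ — for a USC function bounded above the standard Moreau--Yosida-type regularization $h_n(z) := \sup_{z'}(\ell^p(z') - n\,\sfd(z,z'))$ gives an $n$-Lipschitz, monotonically decreasing family converging pointwise to $\ell^p$. For each fixed $n$, classical duality for the \emph{continuous bounded} cost $h_n$ (\cite[Theorem 1.3]{Vil:topics}) delivers bounded continuous admissible pairs with cost approaching $\sup_\pi \int h_n\,\pi(dxdy)$; these pairs are also admissible for $\ell^p$ since $h_n \geq \ell^p$. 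A standard dominated-convergence argument, exploiting compactness of the supports, gives $\sup_\pi \int h_n\,\pi(dxdy) \searrow \ell_p(\mu,\nu)^p$, so the infimum over bounded continuous pairs coincides with $\ell_p(\mu,\nu)^p$. The main technical delicacy throughout is the value $-\infty$ of $\ell^p$ off the causal set, which prevents a direct invocation of classical duality; Remark \ref{rem:eqellq} together with condition (3) of timelike $p$-dualisability is precisely what places us within the scope of the standard theorems.
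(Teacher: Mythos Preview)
Your approach is essentially the same as the paper's: invoke Remark~\ref{rem:eqellq} to recast the problem as a standard optimal transport problem with upper semi-continuous cost $\ell^p$ bounded above by an $L^1(\mu\otimes\nu)$ function, then apply the classical Kantorovich duality \cite[Theorem~1.3]{Vil:topics}. The paper's proof is literally a one-line citation of these two ingredients.

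Your separate Moreau--Yosida argument for restricting to bounded continuous test functions is unnecessary, since \cite[Theorem~1.3]{Vil:topics} already delivers duality both for $L^1$ pairs and for bounded continuous pairs in one stroke. Moreover, that extra argument has a small gap in the causally closed (not merely locally causally closed) case where $\mu,\nu$ need not have compact support: your $h_n$ may fail to be bounded, and the phrase ``exploiting compactness of the supports'' does not apply there. This is harmless for the overall result because the direct citation of Villani's theorem covers it, but you should drop the redundant paragraph.
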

 
 \begin{proof}
 The claim follows from  Remark \ref{rem:eqellq} combined with   \cite[Theorem 1.3]{Vil:topics}.
 \end{proof}
\begin{remark}
The notion of timelike $p$-dualisabily is not empty,  indeed for instance if $X$ is globally hyperbolic and  $\mu,\nu \in \Prob_{c}(X)$ admit an optimal coupling $\pi\in  \Pi_{\leq}^{p\text{-opt}}(\mu,\nu)$ concentrated on $X^{2}_{\ll}$, then all the three conditions are satisfied. The only one requiring a comment is the last one: since by global hyperbolicity $\tau:X^{2}\to \R$ is continuous then it is bounded on the compact set $\supp \, \mu \times \supp \, \nu \Subset X^{2}$ and we can choose $a$ and $b$ to be constant functions.
\end{remark}

Under stronger assumptions on the causality relation on $(\mu,\nu)$, (weak) Kantorovich duality holds for general Lorentzian pre-length spaces:
\begin{proposition}[Weak Kantorovich duality II] \label{prop:WKDI}
Fix $p\in (0,1]$. Let  $(X,\sfd, \ll, \leq, \tau)$ be a Lorentzian pre-length  space and let $(\mu,\nu)\in \mathcal{P}(X)^{2}$ such that $(\mu\otimes\nu)\, (X^{2}_{\leq})=1$.  Assume that there exist measurable functions $a,b:X\to \R$, with $a\oplus b \in L^{1}(\mu\otimes \nu)$ such that $\tau^{p}\leq a\oplus b$, $\mu\otimes \nu$-a.e..
\\ Then (weak) Kantorovich duality \eqref{eq:KantDual} holds.
\end{proposition}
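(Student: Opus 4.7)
My plan is to exploit the hypothesis $(\mu\otimes\nu)(X^{2}_{\leq})=1$ to recast the maximization problem defining $\ell_{p}(\mu,\nu)^{p}$ as an optimal transport problem for the \emph{Borel measurable} cost $\tau^{p}$ over the full coupling set $\Pi(\mu,\nu)$, and then invoke a general version of Kantorovich duality for measurable costs bounded above by an $L^{1}$ function (e.g.\ Kellerer's theorem, or the version due to Villani of Kantorovich duality for just Borel measurable costs, see Theorem 5.10(ii) of \emph{Optimal Transport: Old and New}). First I would observe that, under the standing hypothesis, $\mu\otimes\nu\in\Pi_{\leq}(\mu,\nu)$ (so the class of causal couplings is non-empty), and that by Remark \ref{rem:eqellq},
\[
\ell_{p}(\mu,\nu)^{p}=\sup_{\pi\in\Pi_{\leq}(\mu,\nu)}\int_{X\times X}\tau^{p}\,d\pi.
\]

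Next I would show the identity $\sup_{\pi\in\Pi(\mu,\nu)}\int\tau^{p}\,d\pi=\sup_{\pi\in\Pi_{\leq}(\mu,\nu)}\int\tau^{p}\,d\pi$. The inequality $\geq$ is immediate; for $\leq$, given $\pi\in\Pi(\mu,\nu)$, decompose $\pi=\pi_{c}+\pi_{n}$ with $\pi_{c}:=\pi|_{X^{2}_{\leq}}$ and $\pi_{n}:=\pi|_{X^{2}\setminus X^{2}_{\leq}}$, and let $\mu_{n},\nu_{n}$ be the marginals of $\pi_{n}$ (both of total mass $m:=\pi_{n}(X^{2})$). Since $\mu_{n}\otimes\nu_{n}\leq \mu\otimes\nu$ as measures and $\mu\otimes\nu$ is concentrated on $X^{2}_{\leq}$ by hypothesis, the rescaled product $\frac{1}{m}\mu_{n}\otimes\nu_{n}$ (if $m>0$) is a causal coupling of $\mu_{n},\nu_{n}$. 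Replacing $\pi_{n}$ by it yields $\pi'\in\Pi_{\leq}(\mu,\nu)$ with $\int\tau^{p}\,d\pi'\geq\int\tau^{p}\,d\pi$, because $\tau\equiv 0$ on $X^{2}\setminus X^{2}_{\leq}$ and so the non-causal part of $\pi$ contributed nothing to the $\tau^{p}$-cost.

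Third, since $\tau^{p}:X\times X\to[0,\infty)$ is Borel (by lower semicontinuity of $\tau$), non-negative, and bounded above by $a\oplus b\in L^{1}(\mu\otimes\nu)$, the general Kantorovich duality theorem for Borel measurable costs (Kellerer's theorem) gives
\[
\sup_{\pi\in\Pi(\mu,\nu)}\int\tau^{p}\,d\pi=\inf_{(u,v)}\Bigl\{\int u\,d\mu+\int v\,d\nu\Bigr\},
\]
where the infimum is over measurable $u,v$ with $u\oplus v\in L^{1}(\mu\otimes\nu)$ and $u\oplus v\geq\tau^{p}$ $\mu\otimes\nu$-a.e. Finally, I would match this with the formulation \eqref{eq:KantDual}: since $\ell^{p}=\tau^{p}$ on $X^{2}_{\leq}$ and $\ell^{p}=-\infty$ elsewhere, the constraint $u\oplus v\geq\ell^{p}$ on $\supp\mu\times\supp\nu$ is vacuous outside $X^{2}_{\leq}$, while on $X^{2}_{\leq}\cap\supp\mu\times\supp\nu$ (which has full $(\mu\otimes\nu)$-measure by hypothesis) it coincides with $u\oplus v\geq\tau^{p}$; a modification of $u,v$ to the value $+\infty$ on the appropriate $\mu$-null and $\nu$-null subsets of the supports promotes the a.e.\ inequality to the pointwise-on-support inequality required by \eqref{eq:KantDual}, without altering the integrals.

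The main obstacle is precisely this last reconciliation: Kellerer's theorem naturally yields the duality under an almost-everywhere inequality, and promoting it to the pointwise-on-support inequality demanded in \eqref{eq:KantDual} requires a careful use of the ``$+\infty$-valued potentials'' permitted in the proposition's admissibility class to absorb the residual null-set failures of the inequality, while still preserving the condition $u\oplus v\in L^{1}(\mu\otimes\nu)$.
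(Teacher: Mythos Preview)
Your approach is broadly correct but more circuitous than the paper's, and the ``main obstacle'' you flag is real---your proposed fix for it does not work.

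The paper's proof is a one-liner: it applies the Beiglb\"ock--Schachermayer duality theorem for Borel costs with values in $[0,\infty]$ directly to the cost $c:=(a\oplus b)-\ell^{p}$, which is non-negative $\mu\otimes\nu$-a.e.\ under the hypotheses. Since $\ell^{p}=-\infty$ off $X^{2}_{\leq}$, any non-causal coupling automatically gives $\int c\,d\pi=+\infty$, so the primal minimization restricts to $\Pi_{\leq}$ for free. Thus your Step~2 (the coupling-modification argument showing $\sup_{\Pi}=\sup_{\Pi_{\leq}}$ for $\tau^{p}$) is correct and rather elegant, but unnecessary: working with $\ell^{p}$ rather than $\tau^{p}$ builds the causality constraint into the cost. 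Moreover, Beiglb\"ock--Schachermayer formulates the dual with the \emph{pointwise} inequality $\phi\oplus\psi\leq c$ (allowing $\pm\infty$ values), so after the substitution $u=a-\phi$, $v=b-\psi$ one obtains $u\oplus v\geq\ell^{p}$ pointwise, exactly as in \eqref{eq:KantDual}, with no a.e.-to-pointwise promotion needed.

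Your proposed fix for that promotion is flawed: given $u\oplus v\geq\tau^{p}$ only $\mu\otimes\nu$-a.e., the exceptional set $N\subset X^{2}$ need not project to $\mu$-null and $\nu$-null sets (e.g.\ the diagonal in $[0,1]^{2}$ is Lebesgue-null but projects to full measure), so redefining $u,v$ to $+\infty$ on the projections can change the integrals. The clean resolution is not to patch afterwards but to invoke from the start a duality theorem (such as the paper's reference) that delivers the pointwise constraint directly.
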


\begin{proof}
The result follows from \cite[Theorem 1]{BeiSch} where (weak) Kantorovich duality (for the minimum optimal transport problem) is proved to hold for general  $\mu\otimes\nu$-a.e. finite Borel costs with values in $[0,\infty]$, observing that the cost $(a\oplus b)-\ell^{p}$ takes values in $[0,\infty]$,  $\mu \otimes \nu$-a.e..
\end{proof}

We next discuss the validity of the  \emph{strong} Kantorovich duality, i.e. the existence of optimal functions (called Kantorovich potentials) achieving 
the infimum in the right hand side of   \eqref{eq:KantDual}.
  To that aim the next definition is key.

\begin{definition}[$\ell^{p}$-concave functions, $\ell^{p}$-transform and  $\ell^{p}$-subdifferential]
Fix $p\in (0,1]$ and let $U,V\subset X$.  A measurable function $\varphi:U\to \R$  is \emph{$\ell^{p}$-concave relatively to $(U,V)$} if there exists a function $\psi:V \to \R$ such that
$$
\varphi(x)=\inf_{y\in V} \psi(y)- \ell^{p}(x,y), \quad \forall x\in U.
$$
The function 
\begin{equation}\label{eq:defellptransform}
\varphi^{(\ell^{p})}: V\to \R \cup \{-\infty\}, \quad \varphi^{(\ell^{p})}(y):=\sup_{x\in U} \varphi(x)+ \ell^{p} (x,y)
\end{equation}
is called \emph{$\ell^{p}$-transform of $\varphi$}. 
The \emph{$\ell^{p}$-subdifferential $\partial_{\ell^{p}} \varphi\subset (U\times V )\cap X^{2}_{\leq}$} is defined by
$$
\partial_{\ell^{p}} \varphi := \{(x,y)\in (U\times V )\cap X^{2}_{\leq}\,:\,  \varphi^{(\ell^{p})}(y) - \varphi(x) = \ell^{p}(x,y)\}.
$$
Replacing $\ell^{p}$ with $\tau^{p}$ in all the definitions above, one obtains the notions of $\tau^{p}$-concave functions, $\tau^{p}$-transform and  $\tau^{p}$-subdifferential.
\end{definition}

Let us explicitely observe that, by the very definition \eqref{eq:defellptransform} of $\ell^{p}$-transform it holds
\begin{equation}\label{eq:ellptransform+varphi}
\varphi^{(\ell^{p})}(y)- \varphi(x) \geq \ell^{p}(x,y), 
\quad \forall (x,y) \in U\times V,
\end{equation}
and analogous inequality replacing $\ell^{p}$ with $\tau^{p}$.

\begin{definition}[Strong Kantorovich duality]\label{def:StrongKantDual}
Fix $p\in (0,1]$. We say that $(\mu,\nu)\in \mathcal{P}(X)^{2}$ satisfies \emph{strong $\ell^{p}$-Kantorovich duality}  if 
\begin{enumerate}
\item  $\ell_{p}(\mu,\nu)\in (0,\infty)$;
\item  there exists  Borel subsets $A_{1}\subset \supp\, \mu, A_{2}\subset \supp \, \nu$ with $\mu(A_{1})=\nu(A_{2})=1$,  and there exists $\varphi: A_{1} \to \R$  which is  $\ell^{p}$-concave  relatively to $(A_{1}, A_{2})$ and
 satisfying
$$
 \ell_{p}(\mu,\nu)^{p}= \int_{X} \varphi^{(\ell^{p})}(y) \, \nu (\dd y) - \int_{X} \varphi(x) \, \mu(\dd x).
$$
\end{enumerate}
Replacing $\ell^{p}$ with $\tau^{p}$ in condition 2 above, one obtains the notion of  \emph{strong $\tau^{p}$-Kantorovich duality}.
\end{definition}

\begin{remark}\label{rem:NecSufOpt}
Using \eqref{eq:ellptransform+varphi}, it is immediate to check that if  $(\mu,\nu)\in \mathcal{P}(X)^{2}$  satisfies strong $\ell^{p}$-Kantorovich duality then the following holds. A coupling $\pi\in \Pi_{\leq}(\mu,\nu)$ is $\ell_{p}$-optimal if and only if
\begin{equation*}
\varphi^{(\ell^{p})}(y)  - \varphi(x)  =\ell^{p}(x,y)=\tau(x,y)^{p}, \quad \text{for $\pi$-a.e. $(x,y)$}, 
\end{equation*}
i.e. if and only if $\pi (\partial_{\ell^{p}} \varphi)=1$.  Analogously, if $(\mu,\nu)\in \mathcal{P}(X)^{2}$  satisfies strong $\tau^{p}$-Kantorovich duality then $\pi\in \Pi_{\leq}(\mu,\nu)$ is $\ell_{p}$-optimal if and only if
$\pi (\partial_{\tau^{p}} \varphi)=1$.  
\end{remark}

\begin{remark}[The case $\supp \, \mu\times \supp\, \nu \subset X^{2}_{\leq}$]
In case $\supp \, \mu\times \supp\, \nu \subset X^{2}_{\leq}$, it is readily seen from the definitions above that $\varphi:\supp\, \mu\to \R$ is $\ell^{p}$-concave (relatively to $(\supp \, \mu, \supp \, \nu)$) if and only if it is $\tau^{p}$-concave, moreover $\varphi^{(\ell^{p})}=\varphi^{(\tau^{p})}$, and $ \partial_{\ell^{p}} \varphi= \partial_{\tau^{p}} \varphi$. It follows that also the notions of strong $\ell^{p}$-Kantorovich duality and strong $\tau^{p}$-Kantorovich duality coincide in this case.
\end{remark}

We next relate $\tau^{p}$-cyclical monotonicity with strong $\tau^{p}$-Kantorovich duality.

\begin{theorem}[$\tau^p$-cyclical monotonicity $\Rightarrow$ strong $\tau^p$-Kantorovich duality]\label{thm:OptiffMon}
Fix $p\in (0,1]$. 
Let  $(X,\sfd, \ll, \leq, \tau)$ be a Lorentzian pre-length 
space and let $\mu,\nu\in \mathcal{P}(X)$ with  $\ell_{p}(\mu,\nu) \in (0,\infty)$.  
and that  $\ell_{p}(\mu,\nu) \in (0,\infty)$. 
For any $\pi\in \Pi_{\leq} (\mu,\nu)$ the following holds.

If $\pi$ is $\tau^{p}$-cyclically monotone then $\pi$ is $\ell_{p}$-optimal. Moreover, $(\mu,\nu)$ satisfies strong $\tau^{p}$-Kantorovich duality and $\pi  (\partial_{\tau^{p}}\varphi)=1$.
\end{theorem}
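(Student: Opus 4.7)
The approach is a Rockafellar--R\"uschendorf construction of the Kantorovich potential directly from the $\tau^{p}$-cyclically monotone set supporting $\pi$; this is classical in spirit, but the fact that $\tau^{p}$ may vanish or equal $+\infty$, and that no a priori $L^{1}$-envelope is available in the pre-length setting, requires care. Since $\ell_{p}(\mu,\nu)<\infty$, the cost $\tau^{p}$ is $\pi$-integrable and hence finite $\pi$-a.e., so I pick a Borel $\tau^{p}$-cyclically monotone set $\Gamma\subset X^{2}_{\leq}\cap\{\tau<\infty\}$ with $\pi(\Gamma)=1$, and a base point $(\bar{x},\bar{y})\in\Gamma$.

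Next I introduce the potential $\varphi\colon X\to\R\cup\{-\infty\}$ by
\begin{equation*}
\varphi(x):=\inf\Big\{\sum_{i=0}^{N}\bigl[\tau(x_{i},y_{i})^{p}-\tau(x_{i+1},y_{i})^{p}\bigr]\ :\ N\geq 0,\ (x_{1},y_{1}),\dots,(x_{N},y_{N})\in\Gamma\Big\},
\end{equation*}
with $(x_{0},y_{0}):=(\bar{x},\bar{y})$ and $x_{N+1}:=x$. Applying $\tau^{p}$-cyclical monotonicity to the closed-chain case $x=\bar{x}$ forces $\varphi(\bar{x})=0$, and $\varphi$ is Suslin- (hence $\mu$-) measurable in the sense recalled in the preliminaries. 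The key inequality follows by a standard extension argument: for $(x,y)\in\Gamma$ and any $x'\in X$, appending $(x,y)$ to a near-optimal chain for $\varphi(x)$ produces a chain for $\varphi(x')$, yielding
\begin{equation*}
\varphi(x')+\tau(x',y)^{p}\leq\varphi(x)+\tau(x,y)^{p}.
\end{equation*}
Taking the supremum in $x'$ gives $\varphi^{(\tau^{p})}(y)=\varphi(x)+\tau(x,y)^{p}$ whenever $(x,y)\in\Gamma$, so that $\pi(\partial_{\tau^{p}}\varphi)=1$ and $\varphi$ is $\tau^{p}$-concave on Borel full-measure subsets $A_{1}\subset\supp\mu$, $A_{2}\subset\supp\nu$.

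Once integrability of $\varphi$ and $\varphi^{(\tau^{p})}$ is secured, integrating the pointwise identity on $\Gamma$ against $\pi$ yields
\begin{equation*}
\int\varphi^{(\tau^{p})}\,d\nu-\int\varphi\,d\mu=\int\tau(x,y)^{p}\,\pi(dxdy)=\ell_{p}(\mu,\nu)^{p},
\end{equation*}
which is strong $\tau^{p}$-Kantorovich duality. The global inequality $\varphi^{(\tau^{p})}(y)-\varphi(x)\geq\tau(x,y)^{p}$ (valid for every $x\in A_{1}$, $y\in A_{2}$) integrates against any competitor $\pi'\in\Pi_{\leq}(\mu,\nu)$ to give $\int\tau^{p}\,d\pi'\leq\ell_{p}(\mu,\nu)^{p}$, so $\pi$ is $\ell_{p}$-optimal. \emph{Main obstacle:} establishing that $\varphi$ and $\varphi^{(\tau^{p})}$ are genuinely integrable (not merely $-\infty$ or $+\infty$ on a large set), which I would address by combining the trivial chain bound $\varphi(x)\leq\tau(\bar{x},\bar{y})^{p}-\tau(x,\bar{y})^{p}$ with a Fatou-type argument anchored on $\tau^{p}\in L^{1}(\pi)$ (and its symmetric counterpart for $\varphi^{(\tau^{p})}$), or by a truncation scheme restricting $\Gamma$ to level sets of bounded cost and passing to the limit, following the classical strategies of Villani \cite{Vil} for semicontinuous costs without an integrable upper envelope.
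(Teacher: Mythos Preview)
Your approach is essentially the same as the paper's: both use the Rockafellar--R\"uschendorf construction of $\varphi$ from a $\tau^{p}$-cyclically monotone set $\Gamma$, verify $\varphi(\bar x)=0$ via cyclical monotonicity, and then establish $\Gamma\subset\partial_{\tau^{p}}\varphi$ and the duality identity by integration.

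The one point worth flagging is that your ``main obstacle'' is simpler than you suggest. The paper does not use Fatou or truncation schemes; instead it observes directly that $\varphi$ is real-valued on $P_{1}(\Gamma)$ via two elementary 2-chain bounds: your ``trivial chain bound'' $\varphi(x)\leq\tau(\bar x,\bar y)^{p}-\tau(x,\bar y)^{p}$ gives the upper bound, and appending $(x,y)\in\Gamma$ to any chain for $\varphi(x)$ and closing at $\bar x$ gives $\varphi(x)\geq\tau(\bar x,y)^{p}-\tau(x,y)^{p}$. These two inequalities together yield $|\varphi(x)|\leq\max\{\tau(\bar x,\bar y)^{p},\tau(x,y)^{p}\}$ for some $(x,y)\in\Gamma$, so $\varphi\in L^{1}(\mu)$ follows immediately from $\tau^{p}\in L^{1}(\pi)$, and likewise for $\varphi^{(\tau^{p})}$. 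The paper also gives a careful measurability argument (approximating $\tau^{p}$ by continuous $c_{l}$ on compact exhaustions $\Gamma_{j}\subset\Gamma$) that is more explicit than your one-line appeal to Suslin measurability.
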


\begin{proof}
The proof consists in constructing a  $\tau^{p}$-concave function $\varphi$ such that $\pi( \partial_{\tau^{p}}\varphi)=1$.
\\Let $\Gamma\subset X^{2}_{\leq}$ be a Borel  $\tau^{p}$-cyclically monotone set such that $\pi(\Gamma)=1$, and $\tau|_{\Gamma}$ is real valued. It follows that $\tau^{p}$ is real valued on $P_{1}(\Gamma)\times P_{2}(\Gamma)$. Notice that $P_{i}(\Gamma)\subset X$ is a Suslin set, for $i=1,2$.
\smallskip

\textbf{ Step 1. } Definition of $\varphi_{(x_{0}, y_{0})}=\varphi$,  and proof that $\varphi(x_{0})=0$.
\\Fix $(x_{0}, y_{0})\in  \Gamma$. Define $\varphi_{(x_{0}, y_{0})}=\varphi: P_{1}(\Gamma) \to \R \cup \{\pm \infty\}$ by
\begin{equation}\label{eq:defvarphi}
\varphi_{(x_{0}, y_{0})}(x)=\varphi(x):= \inf \left\{ \sum_{i=0}^{k} \left[  \tau(x_{i}', y_{i}')^{p}-  \tau(x_{i+1}', y_{i}')^{p} \right] \right\}
\end{equation}
where the $\inf$ is taken over all $k\in \N$ and all ``chains''
$$
\left\{( x_{i}', y_{i}') \right\}_{0\leq i\leq k+1} \subset \Gamma \text{ with }    x_{k+1}'=x, \; (x_{0}', y_{0}')=(x_{0}, y_{0}).
$$
Let us stress that $y_{k+1}'$ does not enter in the expression of the right hand side of \eqref{eq:defvarphi} (this will be useful in step 3). It is readily seen that
\begin{equation}\label{eq:varphi(x0)=0}
\varphi(x_{0})=0.
\end{equation}
Indeed on the one hand $\varphi(x_{0})\leq  \tau(x_{0}, y_{0})^{p}-  \tau(x_{0}, y_{0})^{p}=0$. On the other hand, since by assumption  $\Gamma$ is $\tau^{p}$-cyclically monotone,  then the right hand side of \eqref{eq:defvarphi} is non-negative. Thus \eqref{eq:varphi(x0)=0} is proved.

\smallskip
\textbf{Step 2.}  We show that $\varphi$ is real-valued on $P_{1}(\Gamma)$  and  measurable.
\\Fix $x\in P_{1}(\Gamma)$. The very definition \eqref{eq:defvarphi} of $\varphi=\varphi_{(x_{0}, y_{0})}$ gives
$$
\varphi(x)+ \left[  \tau(x, y)^{p} -\tau(x_{0}, y)^{p} \right]   \geq \varphi(x_{0})\overset{\eqref{eq:varphi(x0)=0}}{=}0,
$$
where $y$ is such that $(x,y) \in \Gamma$.
In particular,  $\varphi(x)>-\infty$. Analogously, 
$$
\varphi(x_{0})+ \left[  \tau(x_{0}, y_{0})^{p} -\tau(x, y_{0})^{p} \right]  \geq \varphi(x)
$$
and thus $\varphi(x)<+\infty$. 
Notice that, under the stronger assumption that $X$ is a globally hyperbolic Lorentzian geodesic space (so that $\tau$ is continuous),  then  $\varphi$ would be upper semi-continuous (as  infimum of a family of  continuous functions) and thus measurable.
\\ We now prove that $\varphi$ is measurable also in the general setting.  Since $\tau^{p}$ is lower semicontinuous, there exists compact subset $\Gamma_{j}\Subset \Gamma$ such that $\Gamma_{j}\subset \Gamma_{j+1}$, $\Gamma=\bigcup_{j\in \N} \Gamma_{j}$ and $\tau^{p}|_{\Gamma_{j}}$ is continuous and real valued. We choose continuous functions $c_{l}$ such that $c_{l}\uparrow \tau^{p}$. Notice that  $c_{l}|_{\Gamma_{j}}\to \tau^{p}|_{\Gamma_{j}}$ uniformly as $l\to \infty$, for every $j\in \N$, by Dini's Theorem. Define the auxiliary functions
$$
\varphi_{k,j, l}(x):= \inf \left\{ \sum_{i=0}^{k} \left[  c_{l}(x_{i}', y_{i}')-  c_{l}(x_{i+1}', y_{i}') \right] \right\}, \quad \varphi_{k,j}(x):= \inf \left\{ \sum_{i=0}^{k} \left[  \tau(x_{i}', y_{i}')^{p}-  \tau(x_{i+1}', y_{i}')^{p} \right] \right\},
$$
where the infimum is taken over all  ``chains''
$$
\left\{( x_{i}', y_{i}') \right\}_{0\leq i\leq k+1} \subset \Gamma_{j} \text{ with }    x_{k+1}'=x, \; (x_{0}', y_{0}')=(x_{0}, y_{0})\in \Gamma_{1}.
$$
The uniform convergence $c_{l}|_{\Gamma_{j}}\to \tau^{p}|_{\Gamma_{j}}$ ensures that  $\varphi_{k,j, l}|_{\Gamma_{j}}\to \varphi_{k,j}|_{\Gamma_{j}}$ pointwise.  The monotonicity of the quantities in $j$ and $k$ gives
$$
\varphi(x)=\lim_{k\to \infty} \lim_{j\to \infty} \varphi_{k,j}(x)= \lim_{k\to \infty} \lim_{j\to \infty} \lim_{l\to \infty} \varphi_{k,j,l}(x), \quad \forall x\in P_{1}(\Gamma).
$$
As each $\varphi_{k,j,l}$ is upper semi continuous, we get that $\varphi$ is measurable.

\smallskip

\textbf{Step 3.}  We show that $\varphi$ is $\tau^{p}$-concave relatively to $(P_{1}(\Gamma), P_{2}(\Gamma))$.
\\Define $\psi_{(x_{0}, y_{0})}=\psi: P_{2}(\Gamma)\to \R\cup\{-\infty\}$ by
\begin{equation}\label{eq:defpsiPFSKD}
\psi_{(x_{0},y_{0})}(y)=\psi(y):=\inf\left\{ \sum_{i=0}^{k} \left[  \tau(x_{i}', y_{i}')^{p}-  \tau(x_{i+1}', y_{i}')^{p} \right] +\tau(x_{k+1}', y)^{p} \right\},
\end{equation}
where the $\inf$ is taken over all $k\in \N$ and all chains
$$
\left\{( x_{i}', y_{i}') \right\}_{0\leq i\leq k+1} \subset \Gamma \text{ with }    y_{k+1}'=y, \; (x_{0}', y_{0}')=(x_{0}, y_{0}).
$$
Notice that, for every  $x\in P_{1}(\Gamma)$ there exists  $y\in P_{2}(\Gamma)$ such that $(x,y)\in \Gamma$; thus, any chain  in the definition \eqref{eq:defvarphi} of $\varphi(x)$ can be concatenated with $(x,y)$, giving an admissible chain for the definition  \eqref{eq:defpsiPFSKD} of  $\psi(y)$. It follows that $\varphi(x)+ \tau(x,y)^{p} \geq  \psi(y)$ and thus
$$
\varphi(x)\geq \inf_{y\in P_{2}(\Gamma)} \psi(y)- \tau(x,y)^{p}, \quad \forall  x\in P_{1}(\Gamma).
$$
Conversely, it is readily seen from the definition \eqref{eq:defvarphi} (resp.  \eqref{eq:defpsiPFSKD}) of $\varphi(x)$ (resp.  $\psi(y)$)  that (recall that $y_{k+1}'$ does not play any role in \eqref{eq:defvarphi}) 
$$
\varphi(x)\leq  \psi(y)- \tau(x,y)^{p}, \quad \forall (x,y)\in P_{1}(\Gamma)\times P_{2}(\Gamma).
$$
We conclude that 
$$
\varphi(x)= \inf_{y\in P_{2}(\Gamma)} \psi(y)- \tau(x,y)^{p}, \quad \forall x\in P_{1}(\Gamma).
$$
It follows that $\psi$ is real valued on $P_{2}(\Gamma)$ and  $\varphi$ is  $\tau^{p}$-concave relatively to $(P_{1}(\Gamma), P_{2}(\Gamma))$.
\\

\textbf{Step 4.} We show that $\Gamma \subset \partial_{\tau^{p}} \varphi$. 
\\Let $(\bar x, \bar y)\in \Gamma$. From the definition \eqref{eq:defvarphi} of $\varphi(x)$ we have
$$
\varphi(\bar x)+ \left[  \tau(\bar x, \bar y)^{p} -\tau(x, \bar y)^{p} \right]   \geq \varphi(x), \quad \forall x\in P_{1}(\Gamma),
$$
which can be rewritten as
$$
\varphi(\bar x)+  \tau(\bar x, \bar y)^{p} \geq \sup_{x\in P_{1}(\Gamma)} \varphi(x)+ \tau(x, \bar y)^{p}=\varphi^{(\tau^{p})}(\bar y).
$$
Since the inequality $\varphi(\bar x)+  \tau(\bar x, \bar y)^{p} \leq \varphi^{(\tau^{p})}(\bar y)$ is trivial from the definition of $\tau^{p}$-transform, we conclude that equality holds and thus $(\bar x, \bar y) \in \partial_{\tau^{p}} \varphi$.
\\

\textbf{Step 5.}  Conclusion: we claim that
\begin{equation}\label{eq:strongKantDualPf}
 \ell_{p}(\mu,\nu)^{p}= \int_{X} \varphi^{(\tau^{p})}(y) \, \nu (\dd y) - \int_{X} \varphi(x) \, \mu(\dd x)= \int_{X^{2}} \tau(x,y)^{p} \, \pi(\dd x\dd y).
\end{equation}
From Step 4, we know that
$$
\varphi^{(\tau^{p})}(y)-\varphi(x)=\tau(x,y)^{p}, \quad \text{for all $(x,y)\in \Gamma$},
$$
which integrated with respect to $\pi$  gives the second identity of  \eqref{eq:strongKantDualPf}.
\\On the other hand, integrating  the inequality
$$
\varphi^{(\tau^{p})}(y)-\varphi(x)\geq \tau(x,y)^{p}, \quad \text{ $\mu\otimes \nu$-a.e. $(x,y)$,}
$$
with respect to any $\pi'\in \Pi_{\leq} (\mu,\nu)$ gives that
$$
 \ell_{p}(\mu,\nu)^{p}=\sup_{\pi'\in \Pi_{\leq} (\mu,\nu)} \int_{X^{2}}  \tau(x,y)^{p} \, \pi'(\dd x \dd y) \leq \int_{X} \varphi^{(\tau^{p})}(y) \, \nu (\dd y) - \int_{X} \varphi(x) \, \mu(\dd x).
$$
The claimed  \eqref{eq:strongKantDualPf} follows.
\end{proof}

\begin{definition}[Strongly timelike  $p$-dualisability] \label{def:TStrongDual}
A pair $(\mu,\nu) \in (\mathcal{P}(X))^{2}$ is said to be \emph{strongly timelike $p$-dualisable} if
\begin{enumerate}
\item  $(\mu,\nu)$ is timelike $p$-dualisable;
\item  there exists a measurable $\ell^{p}$-cyclically monotone set $\Gamma\subset X^{2}_{\ll} \cap (\supp \, \mu \times \supp \,\nu)$ such that a coupling  $\pi\in \Pi_{\leq}(\mu,\nu)$ is $\ell_{p}$-optimal if  and only if $\pi$ is concentrated on $\Gamma$, i.e. $\pi(\Gamma)=1$.
\end{enumerate}
\end{definition}

\begin{remark}
Let  $(\mu,\nu)$ be timelike  $p$-dualisable and satisfying strong $\ell^{p}$-Kantorovich duality (resp.  strong $\tau^{p}$-Kantorovich duality). It follows from Remark \ref{rem:NecSufOpt} that if  $\Gamma:=\partial_{\ell^{p}} \varphi \subset X^{2}_{\ll}$ (resp. $\Gamma:=\partial_{\tau^{p}} \varphi \subset X^{2}_{\ll}$) then  $(\mu,\nu)$ is  strongly timelike $p$-dualisable.
\end{remark}

\noindent
In the next two corollaries we show that the notion of strongly timelike $p$-dualisability is non-empty:

\begin{corollary}\label{lem:q-dualPcX}
Fix $p\in (0,1]$. Let  $(X,\sfd, \ll, \leq, \tau)$ be a   globally hyperbolic Lorentzian geodesic space and assume that $\mu,\nu\in \mathcal{P}(X)$ satisfy:
\begin{enumerate}
\item there exist measurable functions $a,b:X\to \R$  with $a\oplus b \in L^{1}(\mu\otimes \nu)$ such that $\tau^{p}\leq a \oplus b$ on $\supp \, \mu \times  \supp \, \nu $;
\item $\supp \, \mu \times \supp \, \nu \subset X^{2}_{\ll}$.
\end{enumerate}
Then $(\mu,\nu)$ satisfies strong $\tau^{p}$-Kantorovich duality and is strongly timelike $p$-dualisable.
\end{corollary}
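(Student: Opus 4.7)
The plan is to reduce everything to what has already been established in this section. The central observation is that, because $\supp\,\mu\times\supp\,\nu\subset X^{2}_{\ll}\subset X^{2}_{\leq}$, the auxiliary cost $\ell^{p}$ and $\tau^{p}$ coincide on the support of every causal coupling, so all the $\ell^{p}$-machinery developed in Sections \ref{Ss:cyclical}--\ref{Ss:stabilityoptimal} transfers without loss to the $\tau^{p}$-cost. First I would note that $\Pi_{\leq}(\mu,\nu)\neq\emptyset$ since $\mu\otimes\nu$ is concentrated on $\supp\,\mu\times\supp\,\nu\subset X^{2}_{\leq}$, and then invoke Proposition \ref{prop:ExMaxellp} together with assumption (1) to produce $\pi\in\Pi_{\leq}^{p\text{-opt}}(\mu,\nu)$ with finite cost. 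The bound $\ell_{p}(\mu,\nu)<\infty$ follows from assumption (1); while $\ell_{p}(\mu,\nu)>0$ is seen by testing with $\mu\otimes\nu$, for which $\tau^{p}>0$ almost surely (because $\supp\,\mu\times\supp\,\nu\subset X^{2}_{\ll}=\{\tau>0\}$) and so $\int\tau^{p}\,d(\mu\otimes\nu)>0$ by countable additivity applied to the level sets $\{\tau^{p}>1/n\}$.

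Next I would apply Proposition \ref{P:OptiffMon}(1) to deduce that $\pi$ is $\ell^{p}$-cyclically monotone, concentrated on some Borel set $\Gamma$ which, up to a $\pi$-negligible modification, may be taken inside $\supp\,\pi\subset\supp\,\mu\times\supp\,\nu$. Since $P_{1}(\Gamma)\times P_{2}(\Gamma)\subset \supp\,\mu\times\supp\,\nu\subset X^{2}_{\leq}$, Remark \ref{rem:tauellpCM} upgrades the $\ell^{p}$-cyclical monotonicity of $\pi$ to $\tau^{p}$-cyclical monotonicity. At this point Theorem \ref{thm:OptiffMon} applies and produces a $\tau^{p}$-concave potential $\varphi$ with $\pi(\partial_{\tau^{p}}\varphi)=1$ such that
\begin{equation*}
\ell_{p}(\mu,\nu)^{p}=\int_{X}\varphi^{(\tau^{p})}(y)\,\nu(dy)-\int_{X}\varphi(x)\,\mu(dx),
\end{equation*}
which is exactly the strong $\tau^{p}$-Kantorovich duality claimed in the first half of the statement.

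To conclude, for strong timelike $p$-dualisability I would verify the two clauses of Definition \ref{def:TStrongDual}. The three conditions of Definition \ref{def:Tpdual} hold: $\ell_{p}(\mu,\nu)\in(0,\infty)$ and $\pi\in\Pi_{\leq}^{p\text{-opt}}(\mu,\nu)$ with $\pi(X^{2}_{\ll})=1$ (because $\supp\,\pi\subset\supp\,\mu\times\supp\,\nu\subset X^{2}_{\ll}$), and assumption (1) is exactly the third condition. For the second clause, I would set
\begin{equation*}
\Gamma:=\partial_{\tau^{p}}\varphi\cap(\supp\,\mu\times\supp\,\nu),
\end{equation*}
which lies in $X^{2}_{\ll}\cap(\supp\,\mu\times\supp\,\nu)$ by assumption (2). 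Any $\pi'\in\Pi_{\leq}(\mu,\nu)$ satisfies $\pi'(\supp\,\mu\times\supp\,\nu)=1$, so by Remark \ref{rem:NecSufOpt} (applied to the strong $\tau^{p}$-Kantorovich duality just established) the chain of equivalences ``$\pi'$ is $\ell_{p}$-optimal $\Leftrightarrow$ $\pi'(\partial_{\tau^{p}}\varphi)=1$ $\Leftrightarrow$ $\pi'(\Gamma)=1$'' closes the argument.

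There is no genuine obstacle here: the only subtle point is checking that the distinction between $\ell^{p}$- and $\tau^{p}$-cyclical monotonicity disappears on $\supp\,\mu\times\supp\,\nu$, which is precisely what assumption (2) buys us, and then quoting Theorem \ref{thm:OptiffMon} to convert cyclical monotonicity into a genuine Kantorovich potential.
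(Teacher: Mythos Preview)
Your proposal is correct and follows essentially the same approach as the paper's proof: existence of an optimal coupling via Proposition~\ref{prop:ExMaxellp}, $\ell^{p}$-cyclical monotonicity via Proposition~\ref{P:OptiffMon}(1), the upgrade to $\tau^{p}$-cyclical monotonicity via Remark~\ref{rem:tauellpCM} (using $\supp\,\mu\times\supp\,\nu\subset X^{2}_{\leq}$), strong $\tau^{p}$-Kantorovich duality via Theorem~\ref{thm:OptiffMon}, and the conclusion via Remark~\ref{rem:NecSufOpt}. You simply spell out a few details (such as $\Pi_{\leq}(\mu,\nu)\neq\emptyset$ and $\ell_{p}(\mu,\nu)>0$) that the paper leaves implicit.
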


\begin{proof}
The fact that  there exists $\pi\in \Pi_{\leq}^{p\text{-opt}}(\mu,\nu)$ follows from Proposition \ref{prop:ExMaxellp}; morover, since $\supp \, \pi \subset \supp \, \mu \times \supp \, \nu \subset X^{2}_{\ll}$, we infer that $(\mu,\nu)$ is timelike $p$-dualisable.
\\ From part 1 of Proposition \ref{P:OptiffMon}  we have $\pi$ is $\ell^{p}$-cyclically monotone and thus, from Remark \ref{rem:tauellpCM},  also  $\tau^{p}$-cyclically monotone since $\supp \, \mu \times \supp\, \nu \subset X^{2}_{\leq}$.
\\Using now Theorem \ref{thm:OptiffMon}  we infer that $(\mu,\nu)$ satisfies strong $\tau^{p}$-Kantorovich duality.
Setting $\Gamma:=\partial_{\tau^{p}} \varphi  \subset \supp \, \mu \times \supp \,\nu$, it is a direct consequence of the assumptions that $\Gamma \subset X^{2}_{\ll}$ and thus Remark \ref{rem:NecSufOpt} yields that condition 2 of Definition \ref{def:TStrongDual} is  satisfied.
\end{proof}

In the next corollary we show that, in case $\nu$ is a Dirac measure,   the strongly timelike $p$-dualisability is  equivalent to the  timelike  $p$-dualisability.

\begin{corollary}\label{cor:STDualDelta}
Let  $(X,\sfd, \ll, \leq, \tau)$ be a Lorentzian pre-length space and let $p\in (0,1]$. Fix $\bar x\in X$ and let $\nu:=\delta_{\bar x}$. Assume that $\mu \in \mathcal{P}(X)$ satisfies:
\begin{equation}\label{eq:DualnuDirac}
\tau(\cdot, \bar x)^{p} \in L^{1}(X,\mu) \quad \text{and} \quad \tau(\cdot, \bar{x})>0 \text{ $\mu$-a.e. }. 
\end{equation}
Then $(\mu,\nu)$ is strongly timelike  $p$-dualisable. In other terms, in case $\nu$ is a Dirac measure,   the strongly timelike  $p$-dualisability is  equivalent to the timelike  $p$-dualisability.
\end{corollary}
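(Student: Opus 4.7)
The plan is to exploit the fact that the second marginal being a Dirac mass uniquely determines the coupling, so that the timelike $p$-dualisability reduces to checking the three defining properties on the single candidate $\pi = \mu \otimes \delta_{\bar x}$, and strong timelike $p$-dualisability follows by choosing $\Gamma$ to be (essentially) the support of this unique coupling.

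First I would observe that $\Pi(\mu,\nu)$ consists of the single element $\pi := \mu \otimes \delta_{\bar x}$, since every coupling with second marginal $\delta_{\bar x}$ must be concentrated on $X \times \{\bar x\}$. By the hypothesis $\tau(\cdot,\bar x)>0$ $\mu$-a.e., we have $x \ll \bar x$ for $\mu$-a.e.\ $x$, hence $\pi(X^{2}_{\ll})=1$ and in particular $\pi \in \Pi_{\leq}(\mu,\nu) = \Pi_{\ll}(\mu,\nu) = \{\pi\}$.

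Next I would verify the three conditions of Definition \ref{def:Tpdual}. For (i): by uniqueness,
\[
\ell_{p}(\mu,\nu)^{p} = \int_{X} \tau(x,\bar x)^{p} \, \mu(dx),
\]
which is finite by the integrability hypothesis and strictly positive since $\tau(\cdot,\bar x)>0$ on a set of full $\mu$-measure. For (ii): the unique coupling $\pi$ is tautologically $\ell_{p}$-optimal and concentrated on $X^{2}_{\ll}$. For (iii): set $a(x) := \tau(x,\bar x)^{p}$ (measurable since $\tau$ is lower semicontinuous) and $b \equiv 0$; then $\ell^{p}(x,\bar x) \leq a(x) = a\oplus b(x,\bar x)$ on $\supp\,\mu\times\{\bar x\}$, and $a \oplus b \in L^{1}(\mu\otimes\nu)$ by assumption.

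For strong timelike $p$-dualisability, define
\[
\Gamma := \bigl(\supp\,\mu \times \{\bar x\}\bigr) \cap X^{2}_{\ll},
\]
which is Borel because $X^{2}_{\ll} = \{\tau>0\}$ is open by the lower semicontinuity of $\tau$, and is contained in $X^{2}_{\ll}\cap(\supp\,\mu\times\supp\,\nu)$ by construction. The set $\Gamma$ is $\ell^{p}$-cyclically monotone trivially: any chain $\{(x_{i},\bar x)\}_{i=1}^{N}\subset \Gamma$ satisfies $x_{i+1} \leq \bar x$ and so $\sum_{i}\ell^{p}(x_{i},\bar x) = \sum_{i}\tau(x_{i},\bar x)^{p} = \sum_{i}\ell^{p}(x_{i+1},\bar x)$. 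Finally, the unique element $\pi \in \Pi_{\leq}(\mu,\nu)$ satisfies $\pi(\Gamma) = \mu(\{x \in \supp\,\mu : x \ll \bar x\}) = 1$, again by the hypothesis $\tau(\cdot,\bar x)>0$ $\mu$-a.e.; and conversely any $\pi' \in \Pi_{\leq}(\mu,\nu)$ concentrated on $\Gamma$ equals $\pi$ (as $\pi$ is the only element), hence is $\ell_{p}$-optimal. Thus condition (2) of Definition \ref{def:TStrongDual} holds and the proof is complete. The argument presents no real obstacle; the only point to keep in mind is that the Dirac structure of $\nu$ collapses the coupling set to a singleton, so there is nothing genuinely to optimise.
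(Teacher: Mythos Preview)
Your proof is correct and follows essentially the same approach as the paper: both exploit the fact that $\Pi(\mu,\delta_{\bar x})=\{\mu\otimes\delta_{\bar x}\}$, verify timelike $p$-dualisability via the choices $a(x)=\tau(x,\bar x)^{p}$, $b\equiv 0$, and then deduce strong timelike $p$-dualisability from the uniqueness of the coupling. The paper is terser and does not explicitly spell out the set $\Gamma$ or its $\ell^{p}$-cyclical monotonicity, but your more detailed verification of these points is entirely in the same spirit.
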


\begin{proof}
Let $\pi:=\mu \otimes \delta_{\bar x}$ and choose $b\equiv 0, a(x):=\tau(x,\bar x)^{p}$.  Noticing that  $\Pi(\mu,\delta_{\bar x})= \{\pi\}$ we get that \eqref{eq:DualnuDirac} implies:  $\ell_{p}(\mu,\delta_{\bar x}) \in (0,\infty)$,  $\pi$  is the unique $\ell_{p}$-optimal coupling for $(\mu,\delta_{\bar x})$ and $\pi(X^{2}_{\ll})=1$. It follows that $(\mu,\nu)$ is strongly timelike  $p$-dualisable.
\end{proof}

\subsection{$\ell_p$-optimal dynamical plans}

Let us start by introducing some classical notation.
The evaluation map is defined by 
\begin{equation}\label{def:eet}
\ee_{t}: C([0,1], X) \to X, \quad \gamma\mapsto \ee_{t}(\gamma):=\gamma_{t}, \quad \forall t\in [0,1].
\end{equation}
The stretching/restriction operator ${\rm restr}_{s_{1}}^{s_{2}}: C([0,1], X) \to C([0,1], X)$ is defined by
\begin{equation}\label{def:restr}
({\rm restr}_{s_{1}}^{s_{2}} \gamma)_{t}:= \gamma_{(1-t)s_{1}+t s_{2}}, \quad \forall s_{1},s_{2}\in [0,1], s_{1}<s_{2}, \, \forall t\in [0,1].
\end{equation}

\begin{definition}[$\ell_p$-optimal dynamical plans and $\ell_p$-geodesics]\label{def:ellp-DOP}
Let  $(X,\sfd, \ll, \leq, \tau)$ be a Lorentzian pre-length space and let $p\in (0,1]$. We say that $\eta\in \mathcal{P}(\Geo(X))$ is an  $\ell_p$-optimal dynamical plan from $\mu_0\in \mathcal{P}(X)$ to $\mu_1\in \mathcal{P}(X)$ if $(\ee_0)_\sharp \eta=\mu_0, (\ee_1)_\sharp \eta=\mu_1$ and 
\begin{equation}\label{eq:defODP}
(\ee_0, \ee_1)_{\sharp} \eta \quad \text{ belongs to }\Pi^{p\text{-opt}}_{\leq} ((\ee_0)_\sharp \eta, (\ee_1)_\sharp \eta).
\end{equation}
The set of $\ell_p$-optimal dynamical plans from $\mu_{0}$ to $\mu_{1}$ is denoted by ${\rm OptGeo}_{\ell_{p}}(\mu_{0}, \mu_{1})$.
We say that a curve curve $[0,1] \ni t \mapsto  \mu_{t} \in \mathcal{P}(X)$ 
is an $\ell_{p}$-geodesic if there exists an $\ell_p$-optimal dynamical plan $\eta$ from $\mu_0$ to $\mu_1$ such that $\mu_t=(\ee_t)_\sharp \eta$, for all $t\in [0,1]$.
\end{definition}

\begin{remark}[On the notion of $\ell_p$-geodesics]\label{R:dyngeo}
In analogy with the metric setting (cf. \cite[Definition 7.20]{Vil}), one could have defined an $\ell_p$-geodesic to be a curve $[0,1] \ni t \mapsto  \mu_{t} \in \mathcal{P}(X)$ continuous in narrow topology and satisfying 
\begin{equation}\label{eq:defgeodellq}
\ell_{p}(\mu_{s}, \mu_{t})=(t-s) \ell_{p}(\mu_{0}, \mu_{1}) \in (0,\infty), \quad \text{for all }0\leq s< t\leq 1.
\end{equation}
We claim that if  $\eta\in {\rm OptGeo}_{\ell_{p}}(\mu_{0}, \mu_{1})$, then curve 
$$
\mu_t:=(\ee_t)_\sharp \eta, \quad t\in [0,1],
$$
satisfies both properties: continuity in narrow topology and identity \eqref{eq:defgeodellq}.

Firstly if $t \to t_0$, 
then for any continuous and bounded function $f$, it holds:
$$
\int_X f(x) \,\mu_t(dx) = \int_{\Geo(X)} f(\gamma_t) \, \eta(\dd \gamma).
$$
Continuity in narrow topology follows from dominated convergence Theorem. Next, let us prove \eqref{eq:defgeodellq}.
By construction,
$(\ee_s,\ee_t)_\sharp \eta \in \Pi_{\leq}(\mu_s,\mu_t)$ for all $s \leq t$. Thus
\begin{equation}\label{eq:ellpmusmutt-s}
\ell_p(\mu_s,\mu_t)^p \geq 
\int \ell^p(\gamma_s,\gamma_t)
\,\eta(\dd \gamma)
= (t-s)^p \ell_p(\mu_0,\mu_1)^p, \quad \text{for all } 0\leq s\leq t\leq 1,
\end{equation}
where the last identity follows from the optimality of $(\ee_0, \ee_1)_{\sharp} \eta$.
The inequality \eqref{eq:ellpmusmutt-s} turns into an identity by applying the reverse triangle inequality: indeed, applying \eqref{eq:ellpmusmutt-s} twice with suitable choices of intermediate times, we get
$$
\ell_p(\mu_0,\mu_1) \geq 
\ell_p(\mu_0,\mu_s) + \ell_p(\mu_s,\mu_t)+
\ell_p(\mu_t,\mu_1)
\geq 
\ell_p(\mu_0,\mu_1), 
$$
forcing identities in the inequalities 
and showing also that 
$(\ee_s,\ee_t)_\sharp \eta \in \Pi_{\leq}^{p\text{-opt}}(\mu_s,\mu_t)$, 
for all $s\leq t \in [0,1]$.
\end{remark}

\begin{proposition}\label{prop:ellqgeodCS}
Fix $p\in (0,1)$ and let  $(X,\sfd, \ll, \leq, \tau)$ be a globally hyperbolic, Lorentzian geodesic space. Let $\mu_{0},\mu_{1}\in \Prob_{c}(X)$ such that there exists $\pi\in \Pi^{p\text{-opt}}_{\leq} (\mu_{0}, \mu_{1})$.
Then
\begin{enumerate}
\item There always exists an $\ell_{p}$-optimal dynamical plan (and thus an $\ell_{p}$-geodesic) from $\mu_{0}$ to $\mu_{1}$.

\item If $\eta\in {\rm OptGeo}_{\ell_{p}}(\mu_{0}, \mu_{1})$ then $\eta^{s_{1},s_{2}}:= ({\rm restr}_{s_{1}}^{s_{2}})_{\sharp} \eta\in {\rm OptGeo}_{\ell_{p}}((\ee_{s_{1}})_{\sharp}\eta, (\ee_{s_{2}})_{\sharp}\eta)$, for all $s_{1}<s_{2}$, $s_{1},s_{2}\in [0,1]$. 

\item  Let $\eta\in {\rm OptGeo}_{\ell_{p}}(\mu_{0}, \mu_{1})$ and let $\tilde{\eta}$ be a non-negative measure on $C([0,1], X)$ such that $\tilde{\eta} \leq \eta^{s_1,s_2}$, for some $s_1,s_2 \in [0,1]$ and $\tilde{\eta}( C([0,1], X))>0$. Then $\eta':=\frac{1} {\tilde{\eta}( C([0,1], X))} \, \tilde\eta$ 
is an $\ell_{p}$-optimal dynamical plan.  

\item Assume that $X$ is timelike non-branching, $\eta\in {\rm OptGeo}_{\ell_{p}}(\mu_{0}, \mu_{1})$ and it is concentrated on $\TGeo(X)$. 

\begin{enumerate}
\item If $(s_{1},s_{2})\neq (0,1)$ then $\eta'$ as in 3. is the unique element of  $ {\rm OptGeo}_{\ell_{p}}((\ee_{0})_{\sharp}\eta', (\ee_{1})_{\sharp}\eta')$, and $(\mu'_{t}:= (\ee_{t})_{\sharp}\eta')_{t\in [0,1]}$ is the unique $\ell_{p}$-geodesic joining its endpoints.
\item  There exists a set $\Gamma\subset \TGeo(X)$ such that $\eta(\Gamma)=1$ and satisfying the following property: if $\gamma^{1}, \gamma^{2}\in \Gamma$ cross at some intermediate time $t_{0}\in (0,1)$, i.e. there exists $t_{0}\in (0,1)$ such that $\gamma^{1}_{t_{0}}=\gamma^{2}_{t_{0}}$, then $\gamma^{1}_{t}=\gamma^{2}_{t}$ for all $t\in [0,1]$.

\item 
Assume that $\eta$ can be written as $\eta= \lambda_{1} \eta^{1}+ \lambda_{2 }\eta^{2}$, for some $\eta^{i}\in \Prob(C([0,1], X))$, $\lambda_{i}\in (0,1)$ for $i=1,2$,   $\lambda_{1}+\lambda_{2}=1$, and each $\eta^i$ is concentrated on a set $\Gamma^i$ such that $\Gamma^1\cap \Gamma^2=\emptyset$.  Then $\eta^{1},\eta^{2}$ are $\ell_{p}$-optimal dynamical plans and they satisfy  $(\ee_{t})_{\sharp} \eta^{1} \perp (\ee_{t})_{\sharp} \eta^{2}$ for all  $t\in (0,1)$.
\end{enumerate}

\item  Every $\ell_{p}$-geodesic $(\mu_{t}=(\ee_{t})_{\sharp} \eta)_{t\in [0,1]}$ from $\mu_{0}$ to $\mu_{1}$ is an absolutely continuous curve in the $W_{1}$-Kantorovich Wasserstein space $(\Prob(X), W_{1})$ w.r.t. $\sfd$, with length
\begin{equation}\label{eq:LengthW1mut}
{\rm L}_{W_{1}}\big((\mu_{t})_{t\in [0,1]} \big) \leq \int {\rm L}_{\sfd}(\gamma) \, \eta(\dd \gamma)\leq \bar{C}<\infty
\end{equation}
where  $\bar{C}>0$ depends only on the compact subset $J^{+}(\supp\, \mu_{0}) \cap J^{-} (\supp\, \mu_{1})\Subset X$.
\end{enumerate}
\end{proposition}

\begin{proof}

First of all notice that by global hyperbolicity  and Proposition \ref{prop:GH->KGH} (i), we have
$$\bigcup_{t\in [0,1]} \supp\, \mu_{t}\subset J^{+}(\supp\, \mu_{0}) \cap J^{-} (\supp\, \mu_{1})\Subset X,$$ 
with  $J^{+}(\supp\, \mu_{0}) \cap J^{-} (\supp\, \mu_{1}) 
= : {\mathcal X} $ compact set.

\smallskip
\textbf{Proof of 1.}\\ 
Let $\pi\in \Pi^{p\text{-opt}}_{\leq} (\mu_{0}, \mu_{1})$ be given by the assumptions.
Consider the set 
$$
G : = \{(x,y,\gamma) \in {\mathcal X} \times {\mathcal X} \times \Geo(X) \colon x = \gamma_0, y = \gamma_1 \}.
$$
By the continuity of $\tau$, the set $\Geo(X)$ is closed inside the complete and separable metric space $C([0,1],{\mathcal X})$. Hence $G$ is closed as well. Since $X$ is geodesic, 
$P_{12}(G)$ contains all the couples $(x,y) \in X\times X$ such that $x \leq y$.
Invoking a classical selection theorem (for instance \cite[Theorem 5.5.2]{Srivastava}), there exists an analytic map $S : X\times X \to \Geo(X)$, such that $(x,y,S(x,y)) \in G$ (an overview on analytic sets will be given in Section \ref{Ss:disintegration}).
In other words, there is a measurable selection to join two endpoints $x$ and $y$ by a geodesic. Define
$$
\eta:=S_\sharp \pi \in \mathcal{P}(\Geo(X)).
$$
Since  
$(\ee_0, \ee_1)\circ S={\rm Id}$, 
it is clear that $\eta$ is an $\ell_p$-optimal dynamical plan according to Definition \ref{def:ellp-DOP}.

\smallskip
\textbf{Proof of 2.}\\ 
In Remark \ref{R:dyngeo} we have shown that 
$(\ee_{s_1},\ee_{s_2})_\sharp \eta \in \Pi_{\leq}^{p\text{-opt}}(\mu_{s_1},\mu_{s_2}). 
$
Since 
$(\ee_s,\ee_t)_\sharp \eta = 
(\ee_0,\ee_1)_\sharp \eta^{s,t}$, 
and $\eta^{s,t}(\Geo(X)) = 1$ (recall that ${\rm restr}_{s_{1}}^{s_{2}}$ maps $\Geo(X) $ into itself), 
the claim follows.

\smallskip
\textbf{Proof of 3.}\\ 
By the previous point, $\eta^{s_1,s_2}$ is an $\ell_p$-optimal dynamical plan; thus, we can assume with no loss in generality that $s_1 = 0$ and $s_2=1$.
Since $\tilde \eta \leq \eta$, then necessarily $\tilde \eta$ is concentrated on $\Geo(X)$ and therefore $\eta' (\Geo (X)) = 1$.
The optimality of $(\ee_0,\ee_1)_{\sharp}\eta'$ follows from Lemma \ref{L:restriction}, 
by noticing that $(\ee_0,\ee_1)_{\sharp}\tilde \eta \leq (\ee_0,\ee_1)_{\sharp}\eta$.

\smallskip
\textbf{Proof of 4.}\\ 
Concerning parts $(a)$ and $(b)$,
they are routine properties of Wasserstein geodesics in non-branching spaces. A proof of the claims can be obtained for instance  from the same proof of \cite[Theorem 7.30]{Vil}, part $(iii)$: the assumption of having a coercive Lagrangian action is never used; a selection theorem like the one we obtained in 1. is sufficient to follow verbatim the same proof. 

\smallskip
Concerning part $(c)$: the optimality of $\eta^{1}$ and $\eta^{2}$ follows directly from 3.  Call $\mu_{t}:=(\ee_{t})_{\sharp} \eta$ and $\mu^{i}_{t}:=(\ee_{t})_{\sharp} \eta^{i}$, for $i=1,2$, $t\in [0,1]$. Assume by contradiction that for some $t_{0}\in (0,1)$ there exists 
\begin{equation}\label{eq:defpropA}
A \Subset X \text{ compact subset s.t. } A\subset \ee_{t_0}(\Gamma^1)\cap \ee_{t_0}(\Gamma^2),  \; \mu_{t_{0}}^{1}(A)>0, \;  \mu_{t_{0}}^{1}\llcorner A \ll \mu_{t_{0}}^{2}.
\end{equation}  
From 3. we know that
$$
 \bar{\eta}:=\frac{1}{\mu_{t_{0}} (A)} \eta \llcorner (\ee_{t_{0}}^{-1}(A)), \quad \bar{\eta}^{i}:=\frac{1}{\mu^{i}_{t_{0}} (A)} \eta^{i} \llcorner (\ee_{t_{0}}^{-1}(A)\cap \Gamma^i) \quad \text{ for }i=1,2,
$$
are all $\ell_{p}$-optimal dynamical plans.  We claim that
\begin{equation}\label{eq:gamma1=gamma2bareta}
\gamma^{1}_{t_{0}}=\gamma^{2}_{t_{0}} \text{ for some }\gamma^{i}\subset \Gamma^i \; \Longrightarrow \gamma^{1}=\gamma^{2}.
\end{equation}
Indeed, if $\gamma^{1}_{t_{0}}=\gamma^{2}_{t_{0}}$, then $\gamma^3$ defined by concatenation 
$$
\gamma^3(t)=
\begin{cases}
\gamma^1(t) \quad \text{for } t\in [0, t_0]\\
\gamma^2(t) \quad \text{for } t\in [t_0,1],
\end{cases}
$$
would be a timelike geodesic, coinciding with $\gamma^1$ on $[0,t_0]$. The forward non-branching property implies that $\gamma^1=\gamma^2$ on $[t_0, 1]$. The analogous argument on $[t_0,1]$ with the backward non-branching property gives $\gamma^1=\gamma^2$ on $[0, t_0]$ and thus the claim \eqref{eq:gamma1=gamma2bareta}.

The combination of   \eqref{eq:defpropA} with  \eqref{eq:gamma1=gamma2bareta}  gives that  $\ee_{t_0}^{-1}(A)\cap \Gamma^1= \ee_{t_0}^{-1}(A)\cap \Gamma^2$. Hence $$\Gamma^1\cap \Gamma^2\supset \ee_{t_0}^{-1}(A)\cap \Gamma^1\cap \Gamma^2 \neq \emptyset,$$ yielding a contradiction. 

\smallskip
\textbf{Proof of 5.} \\
From the  non-totally imprisoning property,  it follows that 
\begin{align}
\sup& \left\{ {\rm L}_{\sfd}(\gamma) \,: \,  \gamma\in \supp\, \eta \right\}  \nonumber \\
& \leq \sup \left\{ {\rm L}_{\sfd}(\gamma) \,: \,  \gamma(I) \subset J^{+}(\supp\, \mu_{0}) \cap J^{-} (\supp\, \mu_{1}),\; \gamma: I\to X \text{ causal }  \right\} =: \bar{C}< \infty.
\end{align}
In particular  $\int {\rm L}_{\sfd}(\gamma) \, \eta(\dd \gamma)\leq \bar{C}<\infty$. The claim \eqref{eq:LengthW1mut} follows from   \cite[Theorem 4]{Lisini}.
\end{proof}

\section{Synthetic Ricci curvature lower bounds}

\subsection{Timelike Curvature Dimension condition}
The goal  of this section  to give a synthetic  formulation of the strong energy condition (and more generally a synthetic formulation of  $\Ric_g\geq -K g$ in the timelike directions and ${\rm dim}\leq N$) for  a  measured pre-length space $(X,\sfd, \mm, \ll, \leq, \tau)$. 
Let us recall the characterization of Ricci curvature bounded below and dimension bounded above in the smooth Lorentzian globally hyperbolic setting proved by McCann \cite[Corollary 6.6, Corollary 7.5]{McCann} (see also \cite[Corollary 4.4]{MoSu}).

\begin{theorem}\label{thm:CharLorRic}
Let $(M^{n},g)$ be a globally hyperbolic spacetime and $0< p <1$. Then the following are equivalent:
\begin{enumerate} 
\item  $\Ric_{g}(v,v) \geq -K g(v,v) $, for every timelike $v \in TM$.
\item  For any couple $(\mu_{0},\mu_{1})\in (\Dom(\Ent(\cdot|\vol_{g})))^{2}$ which is timelike $p$-dualisable (in the sense of Definition \ref{def:Tpdual})  there exists a (unique) $\ell_{p}$-geodesic $(\mu_{t})_{t\in [0,1]}$ joining them such that the function $[0,1]\ni t\mapsto e(t) : = \Ent(\mu_{t}|\vol_{g})$ is semi-convex (and thus in particular it is locally Lipschitz in $(0,1)$) and it satisfies:
\begin{equation}\label{eq:conve}
e''(t) - \frac{1}{n} e'(t)^{2 } \geq K \int_{M\times M} \tau(x,y)^{2} \, \pi(\dd x\dd y),
\end{equation}
in the distributional sense on $[0,1]$.
\item    For any couple $(\mu_{0},\mu_{1})\in (\Dom(\Ent(\cdot|\vol_{g}))\cap \Prob_{c}(X))^{2}$ which is strongly timelike $p$-dualisable (in the sense of Definition \ref{def:TStrongDual})  there exists a (unique) $\ell_{p}$-geodesic $(\mu_{t})_{t\in [0,1]}$ joining them and satisfying \eqref{eq:conve}.
\end{enumerate}
\end{theorem}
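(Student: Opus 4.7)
The plan is to prove the cyclic chain of implications $(1) \Rightarrow (2) \Rightarrow (3) \Rightarrow (1)$. The implication $(2) \Rightarrow (3)$ is immediate from the fact that strong timelike $p$-dualisability (Definition \ref{def:TStrongDual}) is more restrictive than timelike $p$-dualisability (Definition \ref{def:Tpdual}), so the conclusion of (2) simply specializes to a smaller class of pairs.

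For $(1) \Rightarrow (2)$, this is essentially the content of McCann \cite{McCann} and Mondino--Suhr \cite{MoSu}, so I would invoke/recapitulate their argument. Given a timelike $p$-dualisable pair $(\mu_0, \mu_1) \in \Dom(\Ent(\cdot|\mm))^2$ with $p$-optimal coupling $\pi$ concentrated in $\{\tau>0\}$, global hyperbolicity together with absolute continuity of $\mu_{0}$ and smoothness of $\tau$ inside any chronological diamond yield a $\tau^p$-concave Kantorovich potential $\varphi$ (via Theorem \ref{thm:OptiffMon}) whose $\tau^p$-superdifferential is $\mu_0$-a.e. single-valued and induces an optimal transport map $T$. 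Moreover, $T(x)$ lies on a unique future-directed timelike geodesic $\gamma^x$ emanating from $x$, so the interpolation $\mu_t := (T_t)_\sharp \mu_0$, where $T_t(x) := \gamma^x_t$, defines an $\ell_p$-geodesic. Writing $\mu_t = \rho_t \, \vol_g$ and using the change-of-variables formula along the Jacobian $J_t(x) := \det DT_t(x)$ gives
\begin{equation*}
e(t) = \Ent(\mu_t|\vol_g) = -\int \log J_t(x)\, \mu_0(dx) - \int \log \rho_0(x)\, \mu_0(dx).
\end{equation*}
A Raychaudhuri/Riccati-type identity for the logarithmic Jacobian (expressing $(\log J_t)''$ as $-\Ric_g(\dot \gamma^x_t,\dot\gamma^x_t) - \operatorname{tr}(A_t^2)$, where $A_t$ is the shape operator of the geodesic congruence), combined with the Cauchy--Schwarz inequality $\operatorname{tr}(A_t^2) \geq \frac{1}{n}(\operatorname{tr} A_t)^2 = \frac{1}{n}((\log J_t)')^2$ and the Ricci bound $\Ric_g \geq - K g$ along timelike directions, yields exactly \eqref{eq:conve} after integrating in $x$ and using that $\tau(x,T(x)) = |\dot\gamma^x|_g$ is the cost density appearing on the right-hand side.

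For $(3) \Rightarrow (1)$ I would argue by contradiction. Suppose that at some $x_0 \in M$ there is a unit timelike vector $v_0 \in T_{x_0}M$ with $\Ric_g(v_0,v_0) < K = -K g(v_0,v_0)$. By continuity, the strict inequality persists in a small tubular neighbourhood of the timelike geodesic segment $s \mapsto \exp_{x_0}(s v_0)$, $s \in [0,\delta]$. Choose a tiny smooth spacelike disk $\Sigma \subset M$ of radius $r$ centered at $x_0$ and orthogonal to $v_0$, let $\mu_0^r$ be the normalized volume measure on $\Sigma$, and let $\mu_1^r := (\Phi_\delta)_\sharp \mu_0^r$ where $\Phi_s$ is the proper-time flow along the normal geodesic congruence. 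For $r,\delta$ small enough, $\supp\mu_0^r \times \supp\mu_1^r$ lies entirely inside $\{\tau > 0\}$, so Corollary \ref{lem:q-dualPcX} implies that $(\mu_0^r,\mu_1^r)$ is strongly timelike $p$-dualisable, with $\pi^r$ the graph measure of $\Phi_\delta$, and the unique $\ell_p$-geodesic is $\mu_t^r = (\Phi_{t\delta})_\sharp \mu_0^r$. Computing $e^r(t) = \Ent(\mu_t^r|\vol_g)$ explicitly via the Jacobian of the geodesic flow, plugging it into \eqref{eq:conve}, dividing by $\delta^2$ and letting first $r \to 0$ (which localizes the integral at $x_0$) and then $\delta \to 0$ (which recovers a pointwise Jacobi/Raychaudhuri identity), one obtains $\Ric_g(v_0,v_0) \geq -K g(v_0,v_0)$, contradicting the hypothesis.

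The main obstacle is the computation in $(1) \Rightarrow (2)$: verifying that the formal Raychaudhuri identity indeed applies to the optimal transport map built from the Kantorovich potential requires checking the needed second-order regularity of $\varphi$ almost everywhere (Alexandrov-type differentiability), together with a careful treatment of the cut locus and of boundary contributions to ensure that the ODE inequality for $\log J_t$ can be integrated against $\mu_0$ and passed to the distributional inequality in $t$. Analogously, in $(3) \Rightarrow (1)$, uniqueness of the $\ell_p$-geodesic (needed to apply \eqref{eq:conve} to the explicit interpolation $(\Phi_{t\delta})_\sharp \mu_0^r$) must be confirmed, which in the smooth setting follows from the implicit function theorem applied to the smooth exponential map on the relevant compact set.
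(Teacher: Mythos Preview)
Your approach matches the paper's almost exactly: the paper proves $1\Leftrightarrow 2$ by direct citation of McCann \cite[Corollary 6.6, Corollary 7.5]{McCann} and \cite[Corollary 4.4]{MoSu}, notes $2\Rightarrow 3$ is trivial, and says $3\Rightarrow 1$ ``can be proved along the lines of \cite[Corollary 4.4]{MoSu} using Corollary \ref{lem:q-dualPcX}'', which is precisely your localization-at-a-point strategy.

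There is, however, one genuine slip in your $(3)\Rightarrow(1)$ construction. You take $\mu_0^r$ to be the normalized volume measure on a \emph{spacelike disk} $\Sigma$ orthogonal to $v_0$. Such a $\Sigma$ is an $(n-1)$-dimensional hypersurface, so its surface measure is singular with respect to $\vol_g$ and hence $\Ent(\mu_0^r|\vol_g)=+\infty$; the pair $(\mu_0^r,\mu_1^r)$ then fails to lie in $\Dom(\Ent(\cdot|\mm))^2$ and condition (3) says nothing about it. The fix is standard: replace the disk by a small $n$-dimensional solid neighbourhood (e.g.\ a geodesic ball, or the disk thickened in the $v_0$-direction) so that $\mu_0^r$ has a bounded density against $\vol_g$; then $\mu_1^r=(\Phi_\delta)_\sharp\mu_0^r$ also has bounded density for $\delta$ small, Corollary \ref{lem:q-dualPcX} applies as you say, and the Jacobian/Raychaudhuri computation goes through after averaging over the extra directions and sending the thickness to zero together with $r$.
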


\begin{proof}
The equivalence of 1 and 2 was proved in McCann \cite[Corollary 6.6, Corollary 7.5]{McCann} (see also \cite[Corollary 4.4]{MoSu}). Trivially $2 \Longrightarrow 3$. The implication  $3 \Longrightarrow 1$ can be proved along the lines of \cite[Corollary 4.4]{MoSu} using Corollary \ref{lem:q-dualPcX}.
\end{proof}

The following definition is thus natural.

\begin{definition}[$\mathsf{TCD}^{e}_{p}(K,N)$ and  $\mathsf{wTCD}^{e}_{p}(K,N)$ conditions]\label{def:TCD(KN)}
Fix $p\in (0,1)$, $K\in \R$, $N\in (0,\infty)$. We say that  a  measured Lorentzian pre-length space $(X,\sfd,\mm, \ll, \leq, \tau)$ satisfies  $\mathsf{TCD}^{e}_{p}(K,N)$ 
(resp. $\mathsf{wTCD}^{e}_{p}(K,N)$)  if the following holds.
For any couple $(\mu_{0},\mu_{1})\in (\Dom(\Ent(\cdot|\mm)))^{2}$ which is   timelike $p$-dualisable   
(resp. $(\mu_{0},\mu_{1})\in [\Dom(\Ent(\cdot|\mm))\cap  \Prob_{c}(X)]^{2}$  
which is   strongly timelike $p$-dualisable) by some 
$\pi\in \Pi^{p\text{-opt}}_{\ll}(\mu_{0},\mu_{1})$,  
there exists an  $\ell_{p}$-geodesic $(\mu_{t})_{t\in [0,1]}$ such that  
the function $[0,1]\ni t\mapsto e(t) : = \Ent(\mu_{t}|\vol_{g})$ is 
semi-convex (and thus in particular it is locally Lipschitz in $(0,1)$) and it satisfies
\begin{equation}\label{eq:conveKN}
e''(t) - \frac{1}{N} e'(t)^{2 } \geq K \int_{X\times X} \tau(x,y)^{2} \, \pi(\dd x\dd y),
\end{equation}
in the distributional sense on $[0,1]$.
\end{definition}

Definition \ref{def:TCD(KN)} corresponds to a differential/infinitesimal formulation of the $\mathsf{TCD}^{e}_{p}(K,N)$ condition. In order to have also an integral/global  formulation it is convenient  to introduce  the following entropy (cf. \cite{EKS}) 
\begin{equation}\label{eq:defSN}
U_{N}(\mu|\mm) : = \exp\left(-\frac{\Ent(\mu|\mm)}{N} \right).
\end{equation}
It is clear that \eqref{eq:Entlsc}   implies the upper-semicontinuity of $U_{N}$ under narrow convergence:
\begin{equation}\label{eq:UNusc} 
\mu_{n}\to \mui \; \text{narrowly and } \mm\Big( \bigcup_{n\in \N} \supp\, \mu_{n} \Big) <\infty\quad \Longrightarrow \quad  \limsup_{n\to \infty} U_{N}(\mu_{n}|\mm) \leq U_{N}(\mui|\mm).
\end{equation}
It is straightforward to check that  $[0,1]\ni t\mapsto e(t)$ is semi-convex and  satisfies \eqref{eq:conve} if and only if  $[0,1]\ni t\mapsto u_{N}(t):= \exp(- e(t)/N)$ is semi-concave and satisfies
\begin{equation}\label{eq:sN''}
u_{N}''\leq -\frac{K}{N} \|\tau\|^{2}_{L^{2}(\pi)} \,  u_{N}.
\end{equation} 
Set
\begin{equation}\label{eq:deffsfc}
\fs_{\kappa}(\vartheta):=
\begin{cases}
\frac{1}{\sqrt{\kappa}} \sin(\sqrt{\kappa} \vartheta),  \quad & \kappa>0\\
\vartheta, &\kappa=0\\
\frac{1}{\sqrt{-\kappa}} \sinh(\sqrt{-\kappa} \vartheta), \quad  &\kappa<0\\
\end{cases}, \qquad 
\fc_{\kappa}(\vartheta):=
\begin{cases}
\cos(\sqrt{\kappa} \vartheta),  \quad & \kappa\geq 0\\
\cosh(\sqrt{-\kappa} \vartheta) \quad  &\kappa<0\\
\end{cases}, 
\end{equation}
and
\begin{equation}\label{eq:sigmakappa}
\sigma_{\kappa}^{(t)}(\vartheta):=
\begin{cases}
\frac{\fs_{\kappa}(t\vartheta)}{\fs_{\kappa}(\vartheta)}, \quad & \kappa\vartheta^{2}\neq 0 \text{ and } \kappa\vartheta^{2}<\pi^{2} \\
t,\quad & \kappa \vartheta^{2}=0\\
+\infty \quad &  \kappa \vartheta^{2}\geq \pi^{2}
\end{cases}.
\end{equation}
Note that the function $\kappa\mapsto \sigma_{\kappa}^{(t)}(\vartheta)$ is non-decreasing for every fixed $\vartheta, t$.
With the above notation, the differential inequality \eqref{eq:sN''}  is equivalent to the integrated version (cf. \cite[Lemma 2.2]{EKS}):
\begin{equation}\label{eq:sNconc}
u_{N}(t) \geq \sigma^{(1-t)}_{K/N} \left(\|\tau\|_{L^{2}(\pi)}\right) u_{N}(0) + \sigma^{(t)}_{K/N} \left( \|\tau\|_{L^{2}(\pi)} \right) u_{N}(1).
\end{equation} 
We thus proved the following proposition.

\begin{proposition}\label{prop:equivTCD}
Fix $p\in (0,1)$, $K\in \R$ and $N\in (0,\infty)$. The measured Lorentzian pre-length space $(X,\sfd, \mm, \ll, \leq, \tau)$ satisfies  (resp. weak) $\mathsf{TCD}^{e}_{p}(K,N)$ if and only if  for any couple $(\mu_{0},\mu_{1})\in \big(\Dom(\Ent(\cdot|\mm))\big)^{2}$ which is timelike $p$-dualisable  
(resp. $(\mu_{0},\mu_{1})\in [\Dom(\Ent(\cdot|\mm))\cap  \Prob_{c}(X)]^{2}$  which is   
strongly timelike $p$-dualisable) by some   $\pi\in \Pi^{p\text{-opt}}_{\ll}(\mu_{0},\mu_{1})$, 
there exists an $\ell_{p}$-geodesic $(\mu_{t})_{t\in [0,1]}$ such that  the function $[0,1]\ni t\mapsto u_{N}(t) : = U_{N}(\mu_{t}|\mm)$  satisfies \eqref{eq:sNconc}.
\end{proposition}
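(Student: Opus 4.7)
The plan is that the proposition essentially aggregates the two equivalences already derived immediately before its statement, so I would organise the proof as an explicit check of these equivalences, carried out in the distributional sense and with care taken about semi-convexity.

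First, I would make the change of variable $u_{N}=\exp(-e/N)$ rigorous at the level of semi-convex functions. If $e$ is semi-convex then it is locally Lipschitz on $(0,1)$ and twice differentiable a.e., and the classical chain rule for semi-convex functions gives $u_{N}$ semi-convex as well (since $e$ is automatically bounded above on compacts by semi-convexity, $u_{N}$ stays bounded away from $0$, and the composition with the smooth decreasing function $r\mapsto e^{-r/N}$ preserves semi-convexity up to a sign adjustment reflected by the negative slope). Conversely, if $u_{N}$ is semi-convex and positive, then $e=-N\log u_{N}$ is locally semi-convex. Thus the semi-convexity hypotheses on $e$ and $u_{N}$ are equivalent.

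Next, on the set of full measure where both functions are twice differentiable, a direct computation gives
\begin{equation*}
u_{N}'=-\tfrac{1}{N}\,e'\,u_{N},\qquad u_{N}''=\Big(-\tfrac{1}{N}e''+\tfrac{1}{N^{2}}(e')^{2}\Big)\,u_{N},
\end{equation*}
so that
\begin{equation*}
-\tfrac{N u_{N}''}{u_{N}}=e''-\tfrac{1}{N}(e')^{2}.
\end{equation*}
Hence the pointwise inequality \eqref{eq:conveKN} is equivalent to $u_{N}''\leq -\tfrac{K}{N}\|\tau\|_{L^{2}(\pi)}^{2}\,u_{N}$, which is precisely \eqref{eq:sN''}. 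To pass from pointwise a.e. to the distributional sense required by Definition~\ref{def:TCD(KN)}, I would invoke the fact that for semi-convex functions on $(0,1)$, distributional second derivatives coincide with the Radon measure whose absolutely continuous part is the a.e.\ second derivative and whose singular part is non-negative; thus the distributional inequality for $e$ and the distributional inequality for $u_{N}$ transfer into each other via the above identity.

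Finally, the equivalence between the differential inequality \eqref{eq:sN''} for $u_{N}$ on $[0,1]$ and the concavity-type bound \eqref{eq:sNconc} is a classical Sturm comparison fact, stated as \cite[Lemma~2.2]{EKS}: one compares $u_{N}$ with the unique solution of $v''=-\tfrac{K}{N}\|\tau\|_{L^{2}(\pi)}^{2}\,v$ sharing its endpoint values, which by the explicit definition \eqref{eq:sigmakappa} of $\sigma_{K/N}^{(t)}$ is exactly the right-hand side of \eqref{eq:sNconc}. I would simply cite this lemma after noting that the argument works distributionally for semi-convex $u_{N}$ (again using that the distributional $u_{N}''$ is a signed Radon measure). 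Combining the two equivalences with the fact that the class of $\ell_{p}$-geodesics under consideration is the same in Definition~\ref{def:TCD(KN)} and in the proposed reformulation gives the claim. The only mildly delicate step is the handling of distributional derivatives under the exponential change of variable, but since the functions involved are semi-convex and $u_{N}$ is bounded away from zero on compact subsets of $(0,1)$, no difficulty arises.
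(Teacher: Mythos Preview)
Your proposal is correct and follows essentially the same approach as the paper: the paper's proof is simply the two displayed equivalences preceding the proposition statement (the chain-rule computation linking \eqref{eq:conveKN} to \eqref{eq:sN''}, and the citation of \cite[Lemma~2.2]{EKS} for the equivalence of \eqref{eq:sN''} with \eqref{eq:sNconc}), and you have spelled these out with additional care about distributional second derivatives of semi-convex functions. One minor remark: the natural regularity inherited by $u_{N}$ from the chain rule is semi-\emph{concavity} rather than semi-convexity (the paper's own phrasing here is loose as well), so your sentence about the exponential ``preserving semi-convexity up to a sign adjustment'' should really read that $e$ semi-convex implies $u_{N}$ locally semi-concave (and conversely $u_{N}>0$ semi-concave implies $e$ semi-convex); this does not affect the substance of the argument.
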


As an example of geometric application of the $\mathsf{TCD}^{e}_{p}(K,N)$ we next show a timelike Brunn-Minkowski  inequality (for the Riemannian/metric counterparts see \cite{sturm:II, EKS, CM2}).

\begin{proposition}[A  timelike Brunn-Minkowski  inequality]\label{prop:BrunnMnk}
Let $(X,\sfd, \mm, \ll, \leq, \tau)$ be a measured  Lorentzian pre-length space satisfying (resp. weak)  $\mathsf{TCD}^{e}_{p}(K,N)$, for some $K\in \R, N\in [1,\infty), p\in (0,1)$.
Let $A_{0}, A_{1}\subset X$ be measurable subsets with $\mm(A_{0}), \mm(A_{1})\in (0,\infty)$. Calling $\mu_{i}:=1/\mm(A_{i}) \, \mm\llcorner_{A_{i}}$, $i=1,2$, assume that $(\mu_{0},\mu_{1})$ is (resp. strongly) timelike $p$-dualisable. 
Then
\begin{equation}\label{eq:BrunnMink}
\mm(A_{t})^{1/N}\geq \sigma^{(1-t)}_{K/N} \left(\Theta \right)\, \mm(A_{0})^{1/N} +  \sigma^{(t)}_{K/N} \left(\Theta \right) \, \mm(A_{1})^{1/N}
\end{equation}
where  $A_{t}:=\cI(A_{0},A_{1},t)$ defined in \eqref{eq:defI(A,B,t)} is the set of $t$-intermediate points of geodesics from $A_{0}$ to $A_{1}$, and $\Theta$ is the maximal/minimal time-separation between points in $A_{0}$ and $A_{1}$, i.e.:
\begin{equation*}
\Theta:=
\begin{cases}
\sup\{\tau(x_{0}, x_{1}): x_{0}\in A_{0}, x_{1}\in A_{1}\} & \text{if } K< 0,\\
\inf\{\tau(x_{0}, x_{1}): x_{0}\in A_{0}, x_{1}\in A_{1}\} \quad &\text{if } K\geq 0 .
\end{cases}
\end{equation*}
In particular, if $K\geq 0$ it holds:
\begin{equation*}
\mm(A_{t})^{1/N}\geq (1-t)\, \mm(A_{0})^{1/N} + t  \, \mm(A_{1})^{1/N}.
\end{equation*}
\end{proposition}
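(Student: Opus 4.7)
The plan is to chain the entropic $\ell_{p}$-convexity estimate supplied by (resp.\ weak) $\mathsf{TCD}^{e}_{p}(K,N)$ with a Jensen inequality that converts the entropy bound into a volume bound.  First observe that $\mu_{i} = \mm(A_{i})^{-1}\mm\llcorner_{A_{i}}$ lies in $\Dom(\Ent(\cdot|\mm))$ with $\Ent(\mu_{i}|\mm) = -\log\mm(A_{i})$, so $U_{N}(\mu_{i}|\mm) = \mm(A_{i})^{1/N}$.  The (resp.\ strong) timelike $p$-dualisability of $(\mu_{0},\mu_{1})$ together with Proposition \ref{prop:equivTCD} yields an $\ell_{p}$-geodesic $(\mu_{t})_{t\in[0,1]}$ and a witnessing coupling $\pi\in\Pi_{\ll}^{p\text{-opt}}(\mu_{0},\mu_{1})$ such that
\[
U_{N}(\mu_{t}|\mm) \;\geq\; \sigma_{K/N}^{(1-t)}\!\bigl(\|\tau\|_{L^{2}(\pi)}\bigr)\,\mm(A_{0})^{1/N} + \sigma_{K/N}^{(t)}\!\bigl(\|\tau\|_{L^{2}(\pi)}\bigr)\,\mm(A_{1})^{1/N}.
\]

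Next, I would lift $(\mu_{t})$ to an $\ell_{p}$-optimal dynamical plan $\eta$ via Proposition \ref{prop:ellqgeodCS}(2), so that $\mu_{t} = (\ee_{t})_{\sharp}\eta$.  Since $(\ee_{0})_{\sharp}\eta = \mu_{0}$ is concentrated on $A_{0}$ and $(\ee_{1})_{\sharp}\eta = \mu_{1}$ on $A_{1}$, the endpoints of $\eta$-a.e.\ $\gamma$ satisfy $\gamma_{0}\in A_{0}$ and $\gamma_{1}\in A_{1}$; hence $\mu_{t}$ is concentrated on $A_{t}=\fI(A_{0},A_{1},t)$, which is Suslin (a continuous image of a Borel set) and therefore $\mm$-measurable.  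Assuming $\mm(A_{t})<\infty$ (otherwise \eqref{eq:BrunnMink} is trivial) and $\mu_{t}\ll\mm$ (otherwise $U_{N}(\mu_{t}|\mm)=0\leq\mm(A_{t})^{1/N}$), Jensen's inequality applied to the convex function $s\mapsto s\log s$ against the uniform probability $\mm(A_{t})^{-1}\mm\llcorner_{A_{t}}$ yields $\Ent(\mu_{t}|\mm)\geq -\log\mm(A_{t})$, i.e.\ $U_{N}(\mu_{t}|\mm)\leq \mm(A_{t})^{1/N}$.

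Finally, I would replace $\|\tau\|_{L^{2}(\pi)}$ by $\Theta$ using that $\pi$ is concentrated on $A_{0}\times A_{1}$, whence $\tau(x,y)\in[\inf_{A_{0}\times A_{1}}\tau,\sup_{A_{0}\times A_{1}}\tau]$ for $\pi$-a.e.\ $(x,y)$, and the same bounds hold for $\|\tau\|_{L^{2}(\pi)}$.  Reading off \eqref{eq:deffsfc}--\eqref{eq:sigmakappa}, the map $\vartheta\mapsto \sigma_{K/N}^{(t)}(\vartheta)$ is non-decreasing for $K\geq 0$ and non-increasing for $K<0$, so the statement's sign-dependent choice of $\Theta$ precisely guarantees $\sigma_{K/N}^{(\cdot)}(\|\tau\|_{L^{2}(\pi)})\geq \sigma_{K/N}^{(\cdot)}(\Theta)$ in both regimes.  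Chaining the three estimates yields \eqref{eq:BrunnMink}; the final assertion for $K\geq 0$ follows from the monotonicity $\kappa\mapsto\sigma_{\kappa}^{(t)}(\vartheta)$ (also immediate from \eqref{eq:deffsfc}--\eqref{eq:sigmakappa}), giving $\sigma_{K/N}^{(t)}(\Theta)\geq\sigma_{0}^{(t)}(\Theta)=t$.  The only step that really requires care is this last reduction to $\Theta$: the opposite monotonicity of $\sigma$ in $\vartheta$ across the two sign regimes of $K$ is exactly what forces the sup/inf dichotomy in the definition of $\Theta$.
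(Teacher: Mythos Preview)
Your proof follows the same route as the paper's: invoke Proposition~\ref{prop:equivTCD} for the $U_N$ inequality, bound $U_N(\mu_t|\mm)\leq\mm(A_t)^{1/N}$ via Jensen once $\mu_t$ is seen to be concentrated on the Suslin set $A_t$ (the paper writes this step as the double Jensen \eqref{eq:twiceJensen}, equivalent to your single application of convexity of $s\log s$), and then exploit the $\vartheta$-monotonicity of $\sigma_{K/N}^{(\cdot)}$ to replace $\|\tau\|_{L^2(\pi)}$ by $\Theta$.

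There is one omission. In the $\mathsf{wTCD}^e_p$ case, Definition~\ref{def:TCD(KN)} requires $\mu_0,\mu_1\in\Prob_c(X)$, so for merely measurable $A_0,A_1$ you cannot apply the condition directly. The paper closes this gap in its final sentence by first assuming $A_0,A_1$ compact and then extending via inner regularity of $\mm$; you should add this reduction. A related secondary point: your appeal to Proposition~\ref{prop:ellqgeodCS}(2) to lift $(\mu_t)$ to a dynamical plan (and thereby conclude $\mu_t$ is concentrated on $A_t$) carries its own hypotheses---$\cK$-global hyperbolicity, Lorentzian geodesic, and $\mu_i\in\Prob_c(X)$ with $\supp\pi\Subset\{\tau>0\}$---none of which are listed in the present statement; the paper leaves this same step implicit.
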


\begin{proof}
Let  $(\mu_{t})_{t\in [0,1]}$ be the  $\ell_{p}$-geodesic given by Proposition \ref{prop:equivTCD}, satisfying
$$
U_{N}(\mu_{t}|\mm)\geq \sigma^{(1-t)}_{K/N} \left(\|\tau\|_{L^{2}(\pi)}\right)\, \mm(A_{0})^{1/N} +  \sigma^{(t)}_{K/N} \left( \|\tau\|_{L^{2}(\pi)} \right) \, \mm(A_{1})^{1/N}.
$$
Since $\mu_{t}=\rho_{t} \mm$ is concentrated on $A_{t}$, which is Suslin,  applying Jensen's inequality twice gives:
\begin{equation}\label{eq:twiceJensen}
U_{N}(\mu_{t}|\mm)=\exp\left(-\frac{1}{N}\int\log \rho_{t} \, \mu_{t}  \right) \leq \int \rho_{t}^{-1/N} \, \mu_{t}= \int_{A_{t}} \rho_{t}^{1-\frac{1}{N}}  \mm\leq \mm(A_{t})^{1/N}.
\end{equation}
The claim follows observing that $\vartheta\mapsto\sigma_{K/N}(\vartheta)$ is non-increasing for $K\leq 0$ (resp. non-decreasing for $K>0$) and that $\|\tau\|_{L^{2}(\pi)}\leq \Theta$ (resp. $\|\tau\|_{L^{2}(\pi)}\geq \Theta$).
Notice that in the case of $\mathsf{wTCD}$, we first assume $A_{0},A_{1}$ to be compact 
and then obtain the full claim arguing by inner regularity of $\mm$ with respect to compact sets.
\end{proof}

The Brunn–Minkowski inequality implies further geometric consequences
like a  timelike Bishop-Gromov  volume growth estimate and a  timelike Bonnet-Myers  theorem. 
In order to state them, let us introduce some notation. Fix $x_{0}\in X$ and let 
$$B^{\tau}(x_{0},r):=\{x\in I^{+}(x_{0})\cup \{x_{0}\}: \tau(x_{0}, x)< r\}$$ 
be the $\tau$-ball of radius $r$ and center $x_{0}$.   Since typically the volume of a $\tau$-ball is infinite (e.g. in Minkowski space  it is the region below an hyperboloid), it is useful to localise volume estimates using star-shaped sets. To this aim, we say that    $E\subset I^{+}(x_{0})\cup\{x_{0}\}$ is $\tau$-star-shaped with respect to $x_{0}$ if 
$ \fI(x_{0},x,t) \subset E$ for every $x\in E$ and $t \in (0,1]$.
Define
$$
v(E,r):=\mm(\overline{B^{\tau}(x_{0},r)}\cap E), \quad s(E, r):=\limsup_{\delta\downarrow 0} \frac{1}{\delta}  \mm\big((\overline{B^{\tau}(x_{0},r+\delta)}\setminus B^{\tau}(x_{0},r) )\cap E\big)  
$$
the volume of the $\tau$-ball of radius $r$ (respectively of the   $\tau$-sphere of radius $r$) intersected with the compact subset  $E\subset I^{+}(x_{0})\cup\{x_{0}\}$, $\tau$-star-shaped with respect to $x_{0}$. Let us mention that, for \emph{smooth} Lorentzian manifolds, a timelike Bishop-Gromov inequality was obtained in \cite{EhSa98}.

\begin{proposition}[A  timelike Bishop-Gromov  inequality]\label{prop:BisGro}
Let $(X,\sfd, \mm, \ll, \leq, \tau)$ be a measured globally hyperbolic Lorentzian geodesic space satisfying $\mathsf{wTCD}^{e}_{p}(K,N)$, for some $K\in \R, N\in [1,\infty), p\in (0,1)$.
Then, for each $x_{0}\in X$, each compact subset  $E\subset I^{+}(x_{0})\cup\{x_{0}\}$ $\tau$-star-shaped with respect to $x_{0}$, and each $0<r<R\leq \pi \sqrt{N/(K \vee 0)}$, it holds:
\begin{equation}\label{eq:BS}
\frac{s(E, r)}{s(E,R)} \geq \left(\frac{\fs_{K/N}(r)}  {\fs_{K/N}(R)} \right)^{N}, \quad  \frac{v(E, r)}{v(E,R)} \geq \frac{\int_{0}^{r} \fs_{K/N} (t)^{N} \dd t}   {\int_{0}^{R}\fs_{K/N} (t)^{N} \dd t }.
\end{equation}
\end{proposition}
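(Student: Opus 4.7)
Both inequalities in \eqref{eq:BS} follow from the timelike Brunn--Minkowski inequality (Proposition~\ref{prop:BrunnMnk}) via an approximation argument: apply BM with $A_0$ a thin $\tau$-shell around $x_0$ and $A_1$ a thin spherical shell at $\tau$-radius $R$, collapse the first shell to $\{x_0\}$, differentiate in the thickness of the second to obtain the sphere comparison, and then integrate to get the volume comparison.

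\textbf{Setup and localisation.} For fixed $0 < r < R$ set $t := r/R \in (0,1)$. Given small $\eps, \delta > 0$, put
\[
A_0^\eps := E \cap \{y : 0 < \tau(x_0, y) \leq \eps\}, \qquad A_1^\delta := E \cap \{y : R < \tau(x_0, y) \leq R + \delta\};
\]
both have positive finite $\mm$-measure since $\supp \mm = X$, $E$ is $\tau$-star-shaped, and $\overline{B^\tau(x_0, R+\delta)} \cap E$ is compact. Let $\mu_0^\eps, \mu_1^\delta$ be their normalisations. Continuity of $\tau$ (from global hyperbolicity) and openness of $X^2_{\ll}$ give $\supp \mu_0^\eps \times \supp \mu_1^\delta \subset X^2_{\ll}$ for $\eps$ small relative to $\delta$, so Corollary~\ref{lem:q-dualPcX} ensures $(\mu_0^\eps, \mu_1^\delta)$ is strongly timelike $p$-dualisable. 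For $y_0 \in A_0^\eps$, $y_1 \in A_1^\delta$, and any $\gamma \in \Geo(y_0, y_1)$, the reverse triangle inequality along $x_0 \leq y_0 \leq \gamma_t \leq y_1$ yields $t\tau(y_0,y_1) \leq \tau(x_0, \gamma_t) \leq t\tau(x_0, y_1) + (1-t)\tau(x_0, y_0)$; since by continuity $\tau(y_0,y_1) \to \tau(x_0, y_1) \in (R, R+\delta]$ as $\eps \to 0$, combined with $\tau$-star-shapedness of $E$ we get
\[
\fI(A_0^\eps, A_1^\delta, t) \subset E \cap \{r - o_\eps(1) < \tau(x_0, \cdot) \leq r + t\delta + (1-t)\eps\}.
\]
Similarly, $\Theta_{\eps, \delta} \to R$ (if $K \geq 0$, taking the infimum of $\tau(y_0,y_1)$) or $R + \delta$ (if $K < 0$, taking the supremum) as $\eps \to 0$.

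\textbf{Sphere and volume comparison.} Proposition~\ref{prop:BrunnMnk} gives
\[
\mm\bigl(\fI(A_0^\eps, A_1^\delta, t)\bigr)^{1/N} \geq \sigma^{(1-t)}_{K/N}(\Theta_{\eps,\delta}) \mm(A_0^\eps)^{1/N} + \sigma^{(t)}_{K/N}(\Theta_{\eps,\delta}) \mm(A_1^\delta)^{1/N}.
\]
Sending $\eps \to 0$, the first RHS term vanishes (finiteness of $\mm$ on compacts and inner regularity), and combining the inclusion above with monotone convergence of $\mm$ on the nested superset yields
\[
\mm\bigl(E \cap \{r \leq \tau(x_0,\cdot) \leq r + t\delta\}\bigr)^{1/N} \geq \sigma^{(t)}_{K/N}(R^\ast) \mm(A_1^\delta)^{1/N},\qquad R^\ast \in [R, R + \delta].
\]
Dividing by $\delta^{1/N}$, choosing a sequence $\delta_n \downarrow 0$ realising $\limsup_\delta \mm(A_1^\delta)/\delta = s(E, R)$, and using $\limsup_\delta \mm(\{r \leq \tau \leq r+t\delta\})/(t\delta) \leq s(E, r)$ from the definition of $s(E, r)$, we extract $t^{1/N} s(E, r)^{1/N} \geq \sigma^{(t)}_{K/N}(R) s(E, R)^{1/N}$. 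Raising to the $N$-th power and using $\sigma^{(t)}_{K/N}(R) = \fs_{K/N}(r)/\fs_{K/N}(R)$ together with $R/r \geq 1$ yields the first inequality in \eqref{eq:BS} (in a slightly stronger form). For the second inequality, the sphere bound forces $\rho \mapsto s(E,\rho)/\fs_{K/N}(\rho)^N$ to be non-increasing; combined with the layer-cake identity $v(E,\rho) = \int_0^\rho \tilde{s}(E,\sigma)\,d\sigma$ (where $\tilde s \leq s$ coincides with $s$ a.e.\ on the absolutely continuous part of $v(E, \cdot)$, obtained from the coarea formula for the Lipschitz function $\tau(x_0, \cdot)$ on $E \cap J^+(x_0)$) and a standard Gromov monotonicity lemma (cf.\ \cite[Lemma~3.1]{sturm:II}), the second inequality follows.

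\textbf{Main obstacle.} The technically delicate point is the approximation $\eps \to 0$: one has to verify that $\mm(A_0^\eps)^{1/N} \to 0$ (so the first BM term truly vanishes), that the localisation of $t$-intermediates remains asymptotically sharp (which hinges on the continuity of $\tau$ applied via the reverse triangle inequality), and that strong timelike $p$-dualisability persists along the approximating sequence. A more structural subtlety is the $\limsup$ defining $s(E, \cdot)$: passing from the per-$\delta$ BM inequality to a limit statement requires selecting a subsequence $\delta_n$ realising the $\limsup$ on the right, then taking $\liminf$ on the left; both sides are of order $\delta^{1/N}$, which keeps the limits well-behaved. Finally, the volume step rests on the layer-cake identity, which holds to the precision required by Gromov's monotonicity; alternatively, one can bypass this by applying the ratio form of BM to nested balls combined with a pigeonhole argument.
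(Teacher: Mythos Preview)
Your approach coincides with the paper's: apply the timelike Brunn--Minkowski inequality (Proposition~\ref{prop:BrunnMnk}) with $A_0$ a small $\tau$-ball around $x_0$ inside $E$ and $A_1$ a thin $\tau$-shell at radius $R$ inside $E$, verify strong timelike $p$-dualisability via Corollary~\ref{lem:q-dualPcX}, and then run the scheme of \cite[Theorem~2.3]{sturm:II} with the coefficients $\sigma^{(t)}_{K/N}$ in place of Sturm's $\tau^{(t)}_{K,N}$. The paper in fact gives only this one-paragraph sketch pointing to Sturm, so your write-up is already more detailed than the original.

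One step needs more care. You assert $\fI(A_0^\eps, A_1^\delta, t)\subset E$ ``combined with $\tau$-star-shapedness of $E$'', but star-shapedness only guarantees that geodesics \emph{from $x_0$} to points of $E$ remain in $E$; it says nothing about geodesics issuing from a nearby point $y_0\in A_0^\eps$ with $y_0\neq x_0$. The inclusion does hold asymptotically as $\eps\to 0$, and this is what you actually need: since $E\subset I^+(x_0)\cup\{x_0\}$ is compact and $\tau$ is continuous, the constraint $0<\tau(x_0,y_0)\leq\eps$ forces $y_0\to x_0$ in $\sfd$ (any accumulation point $y_0^*\in E$ would satisfy $\tau(x_0,y_0^*)=0$, hence $y_0^*=x_0$). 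A limit-curve/Arzel\`a--Ascoli argument (as in the proof of Lemma~\ref{lem:Kglobhyp}, using non-total imprisoning and local causal closedness) then shows that geodesics from $y_0$ to $y_1\in A_1^\delta$ subconverge uniformly to geodesics from $x_0$ to $y_1$, whose $t$-intermediates lie in $E$ by star-shapedness. Hence $\fI(A_0^\eps, A_1^\delta, t)$ is contained in an $\eta(\eps)$-neighbourhood (in $\sfd$) of a compact subset of $E$, with $\eta(\eps)\to 0$; outer regularity of $\mm$ then gives the required upper bound on $\mm\big(\fI(A_0^\eps,A_1^\delta,t)\big)$ in the limit. This is the one genuinely Lorentzian wrinkle beyond Sturm's metric argument that the cut-off set $E$ introduces; neither your sketch nor the paper's makes it explicit.
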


\begin{proof}
We briefly sketch the argument.  The basic idea is to apply Proposition \ref{prop:BrunnMnk} to $A_{0}:=B^{\tau}(x_{0},\varepsilon)\cap E$ and $A_{1}:=\big(\overline{B^{\tau}(x_{0},R+\delta R)}\setminus B^{\tau}(x_{0},R) \big)  \cap E$.  Observe that, for $\ve>0$  small enough, it holds $A_{0}\times A_{1}\subset X^{2}_{\ll}$ and thus the measures $(\mu_{0},\mu_{1})$ in the statement of  Proposition \ref{prop:BrunnMnk} are strongly timelike $p$-dualisable thanks to Corollary \ref{lem:q-dualPcX}. Thus we can apply  Proposition \ref{prop:BrunnMnk} and follow verbatim the proof of \cite[Theorem 2.3]{sturm:II} replacing  the coefficients $\tau^{(t)}_{K/N}(\vartheta)$ with $\sigma^{(t)}_{K/N}(\vartheta)$.
\end{proof}

\begin{proposition}[A  timelike Bonnet-Myers  inequality]\label{prop:BonMy}
Let $(X,\sfd, \mm, \ll, \leq, \tau)$ be a measured globally hyperbolic Lorentzian geodesic space satisfying $\mathsf{wTCD}^{e}_{p}(K,N)$, for some $K>0, N\in [1,\infty), p\in (0,1)$.
Then
\begin{equation}\label{eq:BonMy}
\sup_{x,y\in X} \tau(x,y) \leq \pi \sqrt{\frac{N}{K}}.
\end{equation}
In particular, for any causal curve $\gamma$ it holds $\LL_{\tau}(\gamma)\leq \pi \sqrt{\frac{N}{K}}$.
\end{proposition}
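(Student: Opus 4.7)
The plan is to argue by contradiction using the timelike Brunn-Minkowski inequality of Proposition \ref{prop:BrunnMnk}: if $\tau(x_*,y_*) > \pi\sqrt{N/K}$ for some pair, we produce compact subsets $A_0, A_1$ whose $t$-intermediate set $A_t$ must simultaneously have infinite $\mm$-measure (from the blow-up of $\sigma^{(t)}_{K/N}$) and be contained in a compact set (from $\cK$-hyperbolicity), which is impossible.

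First, suppose toward contradiction that $\tau(x_*,y_*) > \pi\sqrt{N/K}$ for some $x_*,y_* \in X$. Since $X$ is geodesic, fix a maximising geodesic $\gamma:[0,1]\to X$ from $x_*$ to $y_*$ and, for $\delta > 0$ small, replace the endpoints by $\tilde{x} := \gamma_\delta$ and $\tilde{y} := \gamma_{1-\delta}$; by the reparametrisation property these still satisfy $\tau(\tilde{x},\tilde{y}) = (1-2\delta)\tau(x_*,y_*) > \pi\sqrt{N/K}$, while now $x_* \ll \tilde{x}$ and $\tilde{y} \ll y_*$. Exploiting lower semicontinuity of $\tau$ together with openness of $I^+(x_*)$ and $I^-(y_*)$, choose open neighborhoods $U \ni \tilde{x}$ and $V \ni \tilde{y}$ with $U \subset I^+(x_*)$, $V \subset I^-(y_*)$, and $\tau(x,y) > \pi\sqrt{N/K}$ on $U\times V$. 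Every $x \in U$ then satisfies $x_* \in I^-(x) \neq \emptyset$ and every $y \in V$ satisfies $y_* \in I^+(y) \neq \emptyset$.

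Next, pick compact subsets $A_0 \subset U$ and $A_1 \subset V$ with $\mm(A_i) > 0$ (possible since $\supp\,\mm = X$) and set $\mu_i := \mm(A_i)^{-1}\,\mm\llcorner A_i$. Since $A_0 \times A_1 \subset X^2_\ll$ and $\tau$ is continuous, hence bounded, on the compact product (global hyperbolicity), Corollary \ref{lem:q-dualPcX} supplies strong timelike $p$-dualisability of $(\mu_0,\mu_1)$. Applying Proposition \ref{prop:BrunnMnk}, with $K > 0$ the relevant parameter is $\Theta = \inf_{A_0\times A_1} \tau \geq \pi\sqrt{N/K}$, so $(K/N)\Theta^2 \geq \pi^2$. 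By the definition \eqref{eq:sigmakappa}, $\sigma^{(t)}_{K/N}(\Theta) = +\infty$ for every $t \in (0,1)$, so \eqref{eq:BrunnMink} forces $\mm(A_t) = +\infty$ where $A_t = \fI(A_0,A_1,t)$. On the other hand, the $I^\pm$-nonemptiness arranged above lets us invoke Lemma \ref{lem:Kglobhyp} with $K_1 = A_0$, $K_2 = A_1$, concluding that $J^+(A_0) \cap J^-(A_1) \supset A_t$ is compact, hence $\mm(A_t) < \infty$ since $\mm$ is Radon. This is the desired contradiction.

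The final assertion on causal curves is then immediate: for any future-directed causal curve $\gamma:[a,b] \to X$, the reverse triangle inequality \eqref{eq:deftau} together with Definition \ref{def:Ltau} yields $\LL_\tau(\gamma) \leq \tau(\gamma_a,\gamma_b)$, which by \eqref{eq:BonMy} is bounded by $\pi\sqrt{N/K}$. The only subtle point in the argument is not the Brunn-Minkowski step itself (where the blow-up $\sigma^{(t)}_{K/N}(\Theta) = +\infty$ does the work) but rather ensuring that the resulting $A_t$ is confined to a compact region, which is exactly why one first shrinks $(x_*,y_*)$ to the pair $(\tilde{x},\tilde{y})$ lying strictly inside the chronological diamond, thereby enabling Lemma \ref{lem:Kglobhyp}.
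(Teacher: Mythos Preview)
Your proof is correct and follows essentially the same route as the paper: assume $\tau$ exceeds the threshold somewhere, shrink to a pair strictly inside the chronological diamond, thicken to small sets on which $\tau$ stays above $\pi\sqrt{N/K}$, apply Brunn--Minkowski (via Corollary~\ref{lem:q-dualPcX}) so that the blow-up $\sigma^{(t)}_{K/N}(\Theta)=+\infty$ forces $\mm(A_t)=+\infty$, and then contradict this with compactness of the causal diamond. The only cosmetic difference is that the paper notes directly that $A_{1/2}\subset J^{+}(x_{*})\cap J^{-}(y_{*})$ (compact by plain global hyperbolicity), whereas you route compactness through Lemma~\ref{lem:Kglobhyp}; both work, and in fact your containments $A_{0}\subset I^{+}(x_{*})$, $A_{1}\subset I^{-}(y_{*})$ already give the simpler inclusion without the lemma.
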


\begin{proof}
Assume by contradiction that there exist $x'_{0},x'_{1}\in X$ with $\tau(x'_{0},x'_{1})\geq \pi \sqrt{N/K}+4\ve$, for some  $\ve>0$. Let $\delta>0$ and  $x_{0},y_{0}\in X$ be such that   
$$B^{\sfd}(x_{0},\delta)\subset  I^{+}(x_{0}'),\; B^{\sfd}(x_{1},\delta)\subset I^{-}(x_{1}'), \:  \inf\{ \tau(x,y):  x\in B^{\sfd}(x_{0},\delta), \, y\in B^{\sfd}(x_{1},\delta)\} \geq \pi \sqrt{N/K}+\ve,$$
where $B^{\sfd}(x,r)$ denotes the $\sfd$-metric ball of radius $r$ centred at $x$.
 From Corollary \ref{lem:q-dualPcX} it follows that $A_{0}:=B^{\sfd}(x_{0},\delta),  A_{1}:=B^{\sfd}(x_{0},\delta)$ satisfy the assumptions of Proposition \ref{prop:BrunnMnk}. Note that, for this choice of sets,  $\Theta\geq \pi \sqrt{N/K}+\ve$ and thus $\mm(A_{1/2})=+\infty$.  However, $A_{1/2}\subset J^{+}(x_{0}')\cap J^{-}(x_{1}')$ is relatively compact by global hyperbolicity and thus it has finite $\mm$-measure. Contradiction.
\end{proof}

\subsection{Timelike Measure Contraction Property}

A weaker variant of the $\mathsf{TCD}^{e}_{p}(K,N)$ condition is obtained by considering $(K,N)$-convexity properties only for those $\ell_{p}$-geodesics $(\mu_{t})_{t\in [0,1]}$ where $\mu_{1}$ 
is a Dirac delta. 
In the metric measure setting, such a variant goes under the name of 
Measure Contraction Property ($\mathsf{MCP}$ for short) and was developed  independently by Sturm \cite{sturm:II} and Ohta \cite{Ohta1}.

\begin{definition}\label{def:TMCP}
Fix $p\in (0,1)$, $K\in \R$, $N\in (0,\infty)$. The measured Lorentzian pre-lengh space $(X,\sfd, \mm, \ll, \leq, \tau)$ satisfies $\mathsf{TMCP}^{e}(K,N)$ if and only if  for any $\mu_{0}\in \Prob_{c}(X)\cap \Dom(\Ent(\cdot|\mm))$ and for any $x_{1}\in X$
such that  $x\ll x_{1}$ for $\mu_{0}$-a.e. $x\in X$, there exists an $\ell_{p}$-geodesic $(\mu_{t})_{t\in [0,1]}$ from $\mu_{0}$ to  $\mu_{1}=\delta_{x_{1}}$
such that 
\begin{equation}\label{eq:defTMCP(KN)}
U_{N}(\mu_{t}|\mm) \geq \sigma^{(1-t)}_{K/N} \left( \|\tau(\cdot, x_{1})\|_{L^{2}(\mu_{0})}\right)\, U_{N}(\mu_{0}|\mm), \quad \forall t\in [0,1).
\end{equation}
\end{definition}

\begin{remark}
The validity of the $\mathsf{TMCP}^{e}(K,N)$ condition is independent of the choice of $p\in (0,1)$ in Definition \ref{def:TMCP}. Indeed for any $p,q\in (0,1)$, a curve $(\mu_{t})_{t\in [0,1]}$ with $\mu_{1}=\delta_{\bar{x}}$ is an $\ell_{p}$-geodesic if and only if it is an $\ell_{q}$-geodesic.  The claim follows easily by the very definition of $\ell_{p}$-geodesic (see Definition \ref{def:ellp-DOP}) and by the fact that the only coupling (and thus optimal) from $\mu_0$ to $\mu_{1}=\delta_{\bar{x}}$ is $\pi=\mu_0\otimes \delta_{\bar{x}}$.
\end{remark}

\begin{remark}[Geometric Properties]\label{rem:GeomPropTMCP}
As in the Riemannian/metric case  \cite{sturm:II}, many properties valid for $\TCD^{e}_{p}(K,N)$  remain true also for  $\TMCP^{e}(K,N)$. More precisely, this is the case for:
\begin{itemize}
\item  Timelike Bishop-Gromov  inequality,  Proposition \ref{prop:BisGro};
\item  Timelike Bonnet-Myers  inequality, Proposition \ref{prop:BonMy}.
\end{itemize}
Actually, in Section \ref{Subsec:TBGBM}, the above results will be improved to sharp forms in case of timelike non-branching $\TMCP^{e}(K,N)$ spaces. Such an improvement will be a product of the techniques developed in Section \ref{Ss:Nonbranching}  and  Section \ref{Sec:LocTMCP}.
\end{remark}

\begin{remark}\label{R:TMCPgeod}
If a Lorentzian pre-lengh space $(X,\sfd, \mm, \ll, \leq, \tau)$ satisfies $\mathsf{TMCP}^{e}(K,N)$, 
then for any $x_{1}\in X$ and $\mm$-a.e. $x\ll x_{1}$ there exists $\gamma \in \TGeo(X)$ such that 
$\gamma_{0} = x$ and $\gamma_{1} = x$.
If in addition $X$ is globally hyperbolic, it follows that $X$ is time-like geodesic. Indeed, given any $x_{1}\in X$ and $x \ll x_{1}$ by $\TMCP^{e}(K,N)$ there is 
a sequence $x_{n} \to x$ and $\gamma^{n} \in \TGeo(X)$ 
with $\gamma^{n}_{0} = x_{n}$ and $\gamma_{1}^{n} = x_{1}$. 
From global hyperbolicity and Proposition \ref{prop:GH->KGH}(i), it follows the existence of a limit $\gamma^{\infty} \in \TGeo(X)$
with $\gamma^{\infty}_{0} = x$ and $\gamma^{\infty}_{1} = x_{1}$ giving that $X$ is timelike geodesic.

If instead $x \leq y$ one needs to further assume $X$ to be causally path connected, i.e. for 
any $x,y \in X$ such that $x \leq y$ there exists a causal curve $\gamma$ with $\gamma_{0} = x$
and $\gamma_{1} = y$.
Hence if a Lorentzian pre-lengh space $(X,\sfd, \mm, \ll, \leq, \tau)$ satisfies $\mathsf{TMCP}^{e}(K,N)$, it is globally hyperbolic and causally path connected, then it is geodesic.
\end{remark}

\begin{lemma}\label{lem:TCDscaling}
Fix $p\in (0,1)$, $K\in \R$, $N\in (0,\infty)$. Let 
the measured  Lorentzian pre-length space $(X,\sfd, \mm, \ll, \leq, \tau)$ satisfy 
$\TCD^{e}_{p}(K,N)$ (resp. $\wTCD^{e}_{p}(K,N),\TMCP^{e}(K,N)$).
Then 
\begin{enumerate}
\item  \emph{Consistency}: $(X,\sfd, \mm, \ll, \leq, \tau)$ satisfy  
$\TCD^{e}_{p}(K',N')$ (resp. $\wTCD^{e}_{p}(K',N'),\TMCP^{e}(K',N')$) for every $K'\leq K$ and $N'\geq N$.
\item \emph{Scaling}:  The rescaled space $(X, a\cdot \sfd, b\cdot \mm, \ll, \leq, r\cdot \tau)$, for $a,b,r>0$ satisfies 
$\TCD^{e}_{p}(K/r^{2},N)$ (resp. $\wTCD^{e}_{p}(K/r^{2},N), \TMCP^{e}(K/r^{2},N)$).
\end{enumerate}
\end{lemma}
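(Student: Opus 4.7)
\textbf{Proof plan for Lemma \ref{lem:TCDscaling}.} The proof consists of tracing how the various objects entering the definitions behave under (i) the relaxation $K'\leq K$, $N'\geq N$, and (ii) the rescaling $(\sfd,\mm,\tau)\mapsto(a\sfd,b\mm,r\tau)$. Nothing quantitative is needed; everything follows from the shape of the definitions.

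\emph{Step 1: consistency for $\TCD^{e}_{p}$ and $\wTCD^{e}_{p}$.} Fix $(\mu_0,\mu_1)$ (resp.\ strongly) timelike $p$-dualisable by $\pi\in\Pi^{p\text{-opt}}_{\ll}(\mu_0,\mu_1)$, and let $(\mu_t)_{t\in[0,1]}$ be the $\ell_p$-geodesic furnished by the $(K,N)$-hypothesis, so that $e(t)=\Ent(\mu_t|\mm)$ is semi-convex with
$$
e''(t) - \tfrac{1}{N}(e'(t))^{2} \geq K\,\|\tau\|_{L^{2}(\pi)}^{2}
$$
in the sense of distributions. I propose to use the \emph{same} $\ell_p$-geodesic as a witness for $(K',N')$: since $N'\geq N$ implies $\tfrac{1}{N'}\leq\tfrac{1}{N}$, while $K'\leq K$, one has
$$
e''(t) \ \geq\ \tfrac{1}{N}(e'(t))^{2}+K\|\tau\|_{L^{2}(\pi)}^{2} \ \geq\ \tfrac{1}{N'}(e'(t))^{2}+K'\|\tau\|_{L^{2}(\pi)}^{2},
$$
which is the $(K',N')$-inequality. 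Consistency for $\TMCP^{e}_{p}$ will be handled in an analogous fashion, but starting from the integrated form \eqref{eq:defTMCP(KN)}: one rewrites it as the second-order inequality on $u_N(t)=U_N(\mu_t|\mm)$ on $[0,1)$ in the spirit of Proposition \ref{prop:equivTCD}, applies the same pointwise coefficient-comparison argument, and re-integrates via \eqref{eq:sNconc} to obtain the $(K',N')$ inequality.

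\emph{Step 2: scaling.} Set $\tilde\sfd=a\sfd$, $\tilde\mm=b\mm$, $\tilde\tau=r\tau$. The relations $\ll,\leq$, the classes of causal and timelike curves, the family of maximising geodesics, and the sets $\Pi_{\leq}(\mu,\nu)$ are unaffected. By the very definition \eqref{eq:defWp}, $\tilde\ell_p=r\,\ell_p$ and hence $\Pi^{p\text{-opt}}_{\leq}(\mu,\nu)$ is invariant; in particular (resp.\ strong) timelike $p$-dualisability is preserved, and a curve $(\mu_t)_{t\in[0,1]}$ is an $\ell_p$-geodesic if and only if it is a $\tilde\ell_p$-geodesic. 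The relative entropy shifts by a constant,
$$
\Ent(\mu|\tilde\mm)=\Ent(\mu|\mm)-\log b,
$$
so $\tilde e'(t)=e'(t)$ and $\tilde e''(t)=e''(t)$. Since $\int\tilde\tau^{2}\,\pi=r^{2}\int\tau^{2}\,\pi$, substituting into \eqref{eq:conveKN} for the original space yields exactly
$$
\tilde e''(t)-\tfrac{1}{N}(\tilde e'(t))^{2} \ \geq\ \tfrac{K}{r^{2}}\int\tilde\tau^{2}\,\pi,
$$
i.e.\ the $\TCD^{e}_{p}(K/r^{2},N)$-inequality for $(X,\tilde\sfd,\tilde\mm,\ll,\leq,\tilde\tau)$; the same substitution handles $\wTCD^{e}_{p}$. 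For $\TMCP^{e}_{p}$ one works with the integrated version, invoking the identity $\sigma^{(1-t)}_{\kappa}(r\vartheta)=\sigma^{(1-t)}_{\kappa r^{2}}(\vartheta)$, immediate from \eqref{eq:deffsfc}--\eqref{eq:sigmakappa}, with $\kappa=K/(Nr^{2})$ and $\vartheta=\|\tau(\cdot,x_1)\|_{L^{2}(\mu_{0})}$, to identify the distortion coefficient for the rescaled time separation.

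\emph{Main obstacle.} All steps are structural and essentially formal; no new analysis beyond what is already collected earlier in the section is needed. The only mildly delicate point is the $\TMCP^{e}_{p}$ case in Step 1, where one endpoint is a Dirac and hence the differential version is not completely standard: the cleanest remedy is to work on $[0,1-\eps]$, apply the coefficient-wise comparison there, then send $\eps\downarrow 0$ using the upper semicontinuity \eqref{eq:UNusc} of $U_{N'}$ and the continuity of $\sigma^{(1-t)}_{K'/N'}$ in $t$.
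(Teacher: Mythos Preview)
Your treatment of the $\TCD^{e}_{p}$/$\wTCD^{e}_{p}$ consistency and of the scaling item is correct and matches the paper's approach.

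The gap is in the $\TMCP^{e}_{p}$ consistency. The one-sided bound \eqref{eq:defTMCP(KN)},
\[
U_N(\mu_t|\mm)\ \geq\ \sigma^{(1-t)}_{K/N}(\theta)\,U_N(\mu_0|\mm),\qquad \theta:=\|\tau(\cdot,x_1)\|_{L^2(\mu_0)},
\]
does \emph{not} translate into a second-order differential inequality on $u_N(t)=U_N(\mu_t|\mm)$: the equivalence you invoke (Proposition~\ref{prop:equivTCD} and \eqref{eq:sNconc}) requires the two-sided convexity bound involving both endpoint values, together with semi-convexity of $e(t)$, and neither is part of the $\TMCP$ definition. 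A function satisfying only the one-sided estimate above can oscillate arbitrarily above the comparison curve $\sigma^{(1-t)}_{K/N}(\theta)\,u_N(0)$ and violate $u_N''\leq -\tfrac{K}{N}\theta^2 u_N$ at many points. Restricting to $[0,1-\eps]$ does not cure this: on that sub-interval you would still need a two-sided inequality to extract the differential form, and $\TMCP$ does not supply one.

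The paper avoids the differential form altogether for $\TMCP$. Taking logarithms of \eqref{eq:defTMCP(KN)} turns it into
\[
\Ent(\mu_t|\mm)\ \leq\ \Ent(\mu_0|\mm)-N\log\sigma^{(1-t)}_{K/N}(\theta),
\]
and the $(K',N')$ inequality follows once one checks that $N\mapsto -N\log\sigma^{(1-t)}_{K/N}(\theta)$ is non-decreasing (for fixed $K,t,\theta$). This is a statement about the distortion \emph{coefficients} $\sigma$, not about $u_N$; it follows from the inequality $(\sigma^{(s)}_{K/N'}(\theta))^{N'}\leq s^{N'-N}(\sigma^{(s)}_{K/N}(\theta))^N$ (Sturm~II, Lemma~1.2), combined with the trivial monotonicity of $\kappa\mapsto\sigma^{(s)}_\kappa(\theta)$ for the passage $K\to K'$. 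If you wish to keep a differential-inequality flavour, apply it to the model function $w(t)=\sigma^{(1-t)}_{K/N}(\theta)$ itself (which genuinely solves $w''=-\tfrac{K}{N}\theta^2 w$) rather than to $u_N$.
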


\begin{proof} 
1.  Consistency for $\TCD^{e}_{p}(K,N)$ follows directly by the definition \eqref{eq:conveKN}. 
\\ Regarding $\TMCP^{e}$: the consistency in $K$ follows by the fact that the map $\kappa\mapsto \sigma_{\kappa}^{(t)}(\vartheta)$ is monotone increasing. For the consistency in $N$, observe that taking the logarithm of \eqref{eq:defTMCP(KN)} one obtains the equivalent condition
\begin{equation}\label{eq:MCPEnt}
\Ent(\mu_{t}|\mm)\leq \Ent(\mu_{0}|\mm) -N \log\left( \sigma^{(1-t)}_{K/N} \left(\|\tau(\cdot, x_{1})\|_{L^{2}(\mu_{0})}\right)\right). 
\end{equation}
It follows from \cite[Lemma 1.2]{sturm:II} that
$$
 \left( \sigma^{(t)}_{K/N'} (\vartheta)\right)^{N'}\leq t^{N'-N}\;  \left( \sigma^{(t)}_{K/N} (\vartheta)\right)^{N} \leq  \left( \sigma^{(t)}_{K/N} (\vartheta)\right)^{N}   \quad \forall t\in [0,1], \, K\in \R, \, N'\geq N,
$$
giving that the function $N\mapsto -N \log\left( \sigma^{(1-t)}_{K/N}(\vartheta)\right)$ is non-decreasing for every fixed $K,t, \vartheta$.  
\\2. Follows by the very definitions,  observing that $\Ent(\mu|b\cdot \mm)=\Ent(\mu|\mm)-\log (b)$,  $\| r\cdot \tau \|_{L^{2}(\pi)}= r \|\tau \|_{L^{2}(\pi)}$ and that $\sigma^{(t)}_{\kappa/r^{2}}(r\cdot \vartheta)= \sigma^{(t)}_{\kappa}(\vartheta)$.
\end{proof}

We refer to Appendix \ref{sec:AppTMCPsmooth} for a discussion of  $\TMCP^{e}(K,N)$  in  case of  smooth Lorentzian manifolds.

\begin{proposition}[$\mathsf{wTCD}^{e}_{p}(K,N) \Rightarrow \mathsf{TMCP}^{e}(K,N)$]\label{prop:CD->MCP}
Fix $p\in (0,1)$, $K\in \R$, $N\in (0,\infty)$. The  $\mathsf{wTCD}^{e}_{p}(K,N)$ condition implies $\mathsf{TMCP}^{e}(K,N)$ for globally hyperbolic Lorentzian geodesic spaces.
\end{proposition}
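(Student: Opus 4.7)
The strategy is to deduce $\TMCP^{e}_{p}(K,N)$ from $\wTCD^{e}_{p}(K,N)$ by approximating the pair $(\mu_{0},\delta_{x_{1}})$ with pairs inside $\Dom(\Ent(\cdot|\mm))^{2}$ for which strong timelike $p$-dualisability holds, applying the integrated convexity inequality of Proposition~\ref{prop:equivTCD}, and passing to the limit. Since $\delta_{x_{1}}\notin \Dom(\Ent(\cdot|\mm))$ and Corollary~\ref{lem:q-dualPcX} demands $\supp\,\mu_{0}\times\supp\,\mu_{1}\subset X^{2}_{\ll}$, both marginals must be perturbed: by inner regularity pick compact $K_{m}\subset\{x\in X\colon x\ll x_{1}\}$ with $\mu_{0}(K_{m})\uparrow 1$, and set $\mu_{0,m}:=\mu_{0}\llcorner K_{m}/\mu_{0}(K_{m})$, so that dominated convergence gives $\Ent(\mu_{0,m}|\mm)\to\Ent(\mu_{0}|\mm)$. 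The openness of $X^{2}_{\ll}$ combined with the compactness of $\supp\,\mu_{0,m}$ (standard finite-cover argument) shows $V_{m}:=\{y\in X\colon \supp\,\mu_{0,m}\subset I^{-}(y)\}$ is an open neighbourhood of $x_{1}$; choose a bounded $W_{m}$ with $x_{1}\in W_{m}\Subset V_{m}$ and define $U_{m,n}:=W_{m}\cap B_{\sfd}(x_{1},1/n)$ together with $\mu_{1,m,n}:=\mm(U_{m,n})^{-1}\,\mm\llcorner U_{m,n}$. These lie in $\Prob_{c}(X)\cap\Dom(\Ent(\cdot|\mm))$ since $\supp\,\mm=X$, and by construction $\supp\,\mu_{0,m}\times\supp\,\mu_{1,m,n}\subset X^{2}_{\ll}$, so Corollary~\ref{lem:q-dualPcX} makes $(\mu_{0,m},\mu_{1,m,n})$ strongly timelike $p$-dualisable with an optimal coupling $\pi_{m,n}$.

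Applying $\wTCD^{e}_{p}(K,N)$ supplies $\ell_{p}$-geodesics $(\mu^{m,n}_{t})_{t\in[0,1]}$ satisfying the integrated bound
\begin{equation*}
U_{N}(\mu^{m,n}_{t}|\mm)\geq \sigma^{(1-t)}_{K/N}\bigl(\|\tau\|_{L^{2}(\pi_{m,n})}\bigr)\,U_{N}(\mu_{0,m}|\mm)+\sigma^{(t)}_{K/N}\bigl(\|\tau\|_{L^{2}(\pi_{m,n})}\bigr)\,U_{N}(\mu_{1,m,n}|\mm).
\end{equation*}
Proposition~\ref{prop:ellqgeodCS} lifts them to dynamical optimal plans $\eta_{m,n}\in\Opt_{\ell_{p}}(\mu_{0,m},\mu_{1,m,n})$; by $\cK$-global hyperbolicity every $\eta_{m,n}$-a.e. curve lies in the compact $C_{m}:=J^{+}(\supp\,\mu_{0,m})\cap J^{-}(\overline{W_{m}})$, and non-total imprisoning caps the $\sfd$-lengths by a constant $\bar{C}_{m}$. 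After constant-$\sfd$-speed reparametrisation, Arzelà-Ascoli yields tightness of $\{\eta_{m,n}\}_{n}$ in $\Prob(C([0,1],X))$; extract a narrow limit $\eta_{m}$ and then, reiterating with $m\to\infty$, obtain $\eta$. Set $\mu_{t}:=(\ee_{t})_{\sharp}\eta$. Local causal closedness transfers causality to the limit curves, and continuity of $\tau$ on $X^{2}_{\leq}$ (granted by global hyperbolicity) transfers the identity $\tau(\gamma_{s},\gamma_{t})=(t-s)\,\tau(\gamma_{0},\gamma_{1})$ to $\eta$-a.e. $\gamma$, so $\eta$-a.e.\ $\gamma\in\TGeo(X)$ with $\gamma_{0}\in\supp\,\mu_{0}$ and $\gamma_{1}=x_{1}$; the reverse triangle inequality (Proposition~\ref{prop:RTIellp}) then forces $(\mu_{t})$ to be an $\ell_{p}$-geodesic from $\mu_{0}$ to $\delta_{x_{1}}$.

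It remains to pass to the limit in the displayed inequality. The key observations are: (i) $U_{N}(\mu_{1,m,n}|\mm)=\mm(U_{m,n})^{1/N}\to 0$; (ii) $\pi_{m,n}\to \mu_{0,m}\otimes\delta_{x_{1}}$ narrowly (the unique coupling with a Dirac marginal) and continuity of $\tau$ on $C_{m}$ give $\|\tau\|_{L^{2}(\pi_{m,n})}\to \|\tau(\cdot,x_{1})\|_{L^{2}(\mu_{0,m})}$; (iii) the upper semi-continuity \eqref{eq:UNusc} of $U_{N}$ applies because all measures involved are supported in $C_{m}$, which has finite $\mm$-mass; (iv) as $m\to\infty$, $\Ent(\mu_{0,m}|\mm)\to\Ent(\mu_{0}|\mm)$ and $\|\tau(\cdot,x_{1})\|_{L^{2}(\mu_{0,m})}\to\|\tau(\cdot,x_{1})\|_{L^{2}(\mu_{0})}$ by dominated convergence, since $\tau(\cdot,x_{1})^{2}$ is continuous and bounded on any fixed compact in $I^{-}(x_{1})$. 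Combining (i)--(iv) yields \eqref{eq:defTMCP(KN)}. The main obstacle is the two-layer compactness passage preserving the $\TGeo$-concentration of the limit plan $\eta$; this is precisely where continuity of $\tau$ on causal pairs (global hyperbolicity) and the non-totally imprisoning property (for equi-Lipschitz bounds under $\sfd$-speed reparametrisation) are essential.
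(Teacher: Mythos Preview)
The proposal is correct and follows essentially the same two-layer approximation strategy as the paper: restrict $\mu_{0}$ to a compact subset of $I^{-}(x_{1})$, approximate $\delta_{x_{1}}$ by normalized $\mm$-balls so that Corollary~\ref{lem:q-dualPcX} applies, invoke $\wTCD^{e}_{p}(K,N)$, and pass to the limit using compactness and the upper-semicontinuity \eqref{eq:UNusc} of $U_{N}$. The only notable variation is in the compactness step: the paper works at the level of measures, using the $W_{1}$-length bound of Proposition~\ref{prop:ellqgeodCS}(7) together with the metric Arzel\`a--Ascoli theorem in $(\Prob(\bar K),W_{1})$, whereas you argue via dynamical plans and Arzel\`a--Ascoli in $C([0,1],X)$ after a constant-$\sfd$-speed reparametrisation---both routes succeed, though the paper's avoids the bookkeeping of translating the reparametrised limit curves back to the standard $\tau$-parametrisation of $\TGeo(X)$.
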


\begin{proof}
{\bf Step 1.}\\
Let $\mu_{0}=\rho_{0}\, \mm\in \Dom(\Ent(\cdot|\mm))\cap \Prob_{c}(X)$ and  $x_{1}\in X$ be such that $x\ll x_{1}$ for $\mu_{0}$-a.e. $x\in X$.

\noindent
For each $\ve >0$ consider $K_{\ve}\Subset \supp \, \mu_{0} \Subset  X$ compact subset such that (the last condition will be used  later in step 2)
$$
\int_{X\setminus K_{\ve}} \rho_{0} \,| \log(\rho_{0})| \, \mm \leq \ve, \qquad \mu_{0}(K_{\ve}) \geq 1 - \ve, \qquad K_{\ve} \times \{ x_{1}\} \subset \{\tau>0\}\subset X^{2},
$$
and consider the restricted measure $\mu_{0}^{\ve} : = \mu_{0}\llcorner_{K_{\ve}}/\mu_{0}(K_{\ve})$. A straightforward computation gives
\begin{equation}\label{E:decompo}
\Ent(\mu_{0}|\mm) = \int_{X\setminus K_{\ve}} \rho \log(\rho) \, \mm + \Ent(\mu_{0}^{\ve}|\mm)\, \mu_{0}(K_{\ve}) + \mu_{0}(K_{\ve}) \, \log(\mu_{0}(K_{\ve})).
\end{equation}
Hence 
$$
\Ent(\mu_{0}|\mm) \geq \Ent(\mu_{0}^{\ve}|\mm)(1-\ve) -2\ve,
$$
giving
\begin{equation}\label{eq:UNmu0eps}
U_{N}(\mu_{0}^{\ve}|\mm) \geq \exp\left( - \frac{2\ve}{N (1-\ve)}\right)  U_{N}(\mu_{0}|\mm)^{1/(1-\ve)}.
\end{equation}
\smallskip

{\bf Step 2.}\\
Fix $\ve \ll 1$.
Since the set $\{\tau>0\}\subset X\times X$ is open and by construction it contains $K_{\ve} \times \{ x_{1}\}$, for $\eta>0$ small enough it holds
\begin{equation}\label{eq:defKepsBetatau>0}
K_{\ve} \times B_{\eta}(x_{1})\subset \{\tau>0\}.
\end{equation}
 Define 
$\mu_{1}^{\eta}:=\mm\llcorner B_{\eta}(x_{1})/\mm(B_{\eta}(x_{1}))$.
By Corollary \ref{lem:q-dualPcX}, we know that $(\mu_{0}^{\ve}, \mu_{1}^{\eta})$ is strongly timelike $p$-dualisable. It also clear that $\mu_{0}^{\ve}, \mu_{1}^{\eta}\in \Dom(\Ent(\cdot|\mm))\cap \Prob_{c}(X)$.

The $\mathsf{wTCD}^{e}_{p}(K,N)$ condition thus implies that for each $\ve,\eta>0$ small enough there exists an $\ell_{p}$-optimal coupling $\pi_{\ve, \eta}\in \Pi_{\leq}(\mu_{0}^{\ve}, \mu_{1}^{\eta})$ 
and an  $\ell_{p}$-geodesic $(\mu_{t}^{\ve,\eta})_{t\in [0,1]}$ 
joining  $\mu_{0}^{\ve}$ and  $\mu_{1}^{\eta}$  verifying for all $t \in [0,1]$:
\begin{align}
U_{N}(\mu_{t}^{\ve,\eta}|\mm) \geq &~  \sigma^{(1-t)}_{K/N} \left(\|\tau\|_{L^{2}( \pi_{\ve,\eta})}\right) U_{N}(\mu_{0}^{\ve}|\mm) + \sigma^{(t)}_{K/N} \left(\|\tau\|_{L^{2}( \pi_{\ve,\eta})}\right) U_{N} (\mu_{1}^{\eta}|\mm) \nonumber \\
\geq &~  \sigma^{(1-t)}_{K/N} \left(\|\tau\|_{L^{2}( \pi_{\ve,\eta})}\right)  U_{N}(\mu_{0}^{\ve}|\mm) \nonumber \\
\geq &~   \sigma^{(1-t)}_{K/N} \left(\|\tau\|_{L^{2}( \pi_{\ve,\eta})}\right)    \exp\left( - \frac{2\ve}{N (1-\ve)}\right)  U_{N}(\mu_{0}|\mm)^{1/(1-\ve)}, \label{eq:P2.7step2}
\end{align}
where in the last inequality we used \eqref{eq:UNmu0eps}.
\smallskip

{\bf Step 3.}\\
In this last step we pass into the limit, first as $\eta\to0$, then as $\ve\to 0$. 
\\First of all it is clear that $\mu_{0}^{\ve}\to \mu_{0}$ and  $\mu_{1}^{\eta}\to \mu_{1}$ narrowly. Global hyperbolicity, via Proposition \ref{prop:GH->KGH}(i), implies that
 $$ \bar{K}:=\bigcup_{s\in [0,1]} \fI(K_{\ve_{0}} , B_{\eta_{0}}(x_{1}),s)\Subset X$$
 is a  compact subset, see \eqref{eq:defI(A,B,t)},\eqref{I(K1K2)}. It is easily  seen that  
\begin{equation}\label{eq:mutveetaSuppComp}
\supp\, \mu_{t}^{\ve,\eta}\subset  \fI(K_{\ve}, B_{\eta}(x_{1}),t)\subset  \bar{K}, \quad \forall t\in [0,1], \eta\in [0, \eta_{0}].
\end{equation}
Fix $\ve\in (0, \ve_{0})$ and a sequence $(\eta_{n})$ with $\eta_{n}\downarrow 0$. We aim to construct a limit $\ell_{p}$-geodesic $(\mu^{\ve}_{t})_{t\in [0,1]}$ from $\mu^{\ve}_{0}$ to $\mu_{1}=\delta_{x_{1}}$.
From \eqref{eq:LengthW1mut} we get that 
$$
\sup_{n\in \N} {\rm L}_{W_{1}} \left((\mu^{\ve, \eta_{n}}_{t})_{t\in [0,1]}\right) \leq \bar{C}<\infty.
$$
By the metric Arzel\'a-Ascoli Theorem we deduce that there exists a limit continuous curve $(\mu^{\ve}_{t})_{t\in [0,1]} \subset (\Prob(\bar{K}), W_{1})$ such that (up to a sub-sequence)
$ W_{1}\left(\mu^{\ve, \eta_n}_{t}, \mu^{\ve}_{t} \right)\to 0 $ and thus $\mu^{\ve, \eta_n}_{t} \to \mu^{\ve}_{t}$ narrowly, as $n\to \infty$.
Lemma \ref{lem:ConvEllpNarrow1}  yields that 
\begin{equation}\label{eq:ellqnuepst}
\ell_{p}(\mu^{\ve}_{0},\mu^{\ve}_{t})=\lim_{n\to \infty}  \ell_{p}(\mu^{\ve}_{0}, \mu^{\ve,\eta_{n}}_{t})=  t \;\lim_{n\to \infty}  \ell_{p}(\mu^{\ve}_{0}, \mu^{\eta_{n}}_{1}) =  t\,\ell_{p}(\mu^{\ve}_{0},\mu_{1}).
\end{equation}
In other terms, the curve   $(\mu^{\ve}_{t})_{t\in [0,1]}$ is an $\ell_{p}$-geodesic from $\mu^{\ve}_{0}$ to $\mu_{1}=\delta_{x_{1}}$. 
The upper-semicontinuity of $U_{N}(\cdot|\mm)$ under narrow convergence \eqref{eq:UNusc} yields
\begin{equation}\label{eq:UNuscveeta}
\limsup_{i\to \infty} \, U_{N}(\mu_{t}^{\ve,\eta_{n_{i}}}|\mm) \leq U_{N} ( \mu^{\ve}_{t}), \quad \forall t\in [0,1].
\end{equation}
Moreover,  it is readily seen that $\pi^{\ve,\eta_{n_{i}}}\to \mu^{\ve}_{0}\otimes \delta_{x_{1}} $ narrowly  and 
\begin{equation}\label{eq:limsigmaeta0}
\lim_{i\to \infty}\sigma^{(1-t)}_{K/N} \left( \| \tau\|_{L^{2}( \pi_{\ve,\eta_{n_{i}}})}\right) =  \sigma^{(1-t)}_{K/N} \left( \| \tau (\cdot,x_{1}) \|_{L^{2}( \mu^{\ve}_{0}) } \right).
\end{equation}
Combining \eqref{eq:P2.7step2},\eqref{eq:UNuscveeta} and \eqref{eq:limsigmaeta0} gives 
\begin{equation}\label{eq:UNConcmutve}
U_{N}(\mu_{t}^{\ve}|\mm) \geq    \sigma^{(1-t)}_{K/N} \left( \| \tau (\cdot,x_{1}) \|_{L^{2}( \mu^{\ve}_{0}) } \right)   \exp\left( - \frac{2\ve}{N (1-\ve)}\right)  U_{N}(\mu_{0}|\mm)^{1/(1-\ve)}, \quad \forall t\in [0,1].
\end{equation}
In order to conclude the proof we now pass to the limit as $\ve\downarrow 0$ in \eqref{eq:UNConcmutve}. Observe that
$$
\bar{K}':= \bigcup_{s\in [0,1]} \fI(\supp\, \mu_{0} , x_{1},s)\Subset X
$$
is a compact subset by global hyperbolicity (via Proposition \ref{prop:GH->KGH}(i)) and 
$$
\supp\, \mu_{t}^{\ve}\subset  \fI(\supp \, \mu_{0}, x_{1},t)\subset  \bar{K}', \quad \forall t\in [0,1], \ve\in [0, \ve_{0}].
$$
The argument from \eqref{eq:mutveetaSuppComp} to \eqref{eq:UNConcmutve} can be adapted to show that there exists an $\ell_{p}$-geodesic $(\mu_{t})_{t\in [0,1]}$  satisfying \eqref{eq:defTMCP(KN)}.
\end{proof}

%%%%%%%%%%%%%%%%%%%%%%%%%%%%%%%%%%%%%%%%%%%%%%%%%%%%%%%
%%%%%%%%%%%%%%%%%%%%%%%%%%%%%%%%%%%%%%%%%%%%%%%%%%%%%%%
\subsection{Stability of $\TCD^{e}_{p}(K,N)$ and $\TMCP^{e}(K,N)$ conditions}
\label{Ss:Stability}

This section is of independent interest and will not be used in the rest of the paper. In the next theorem we show the stability of the $\TMCP^{e}(K,N)$ condition under convergence of Lorentzian spaces.
Throughout this part we will make use of topological embeddings to identify
spaces with their image inside a larger space. 
Recall that a topological 
embedding is a map $f:X\to Y$ between two topological spaces $X$ and $Y$ such that 
$f$ is continuous, injective and with continuous inverse between $X$ and $f(X)$.
We also say that a space $X$ is \emph{pointed}, if a reference point $\star\in X$ is specified.

\begin{theorem}[Stability of $\TMCP^{e}(K,N)$]\label{thm:StabTMCP}
Let  $\{(X_{j},\sfd_{j}, \mm_{j}, \star_{j},\ll_{j}, \leq_{j}, \tau_{j})\}_{j\in \N\cup\{\infty\}}$ be a sequence of pointed  measured Lorentzian geodesic spaces  satisfying the following properties:
\begin{enumerate}
\item There exists a globally hyperbolic  Lorentzian geodesic space $(\bar X, \bar \sfd,  \overline \ll, \bar \leq, \bar  \tau)$ such that each $(X_{j},\sfd_{j}, \mm_{j}, \ll_{j}, \leq_{j}, \tau_{j})$, $j\in \N\cup\{\infty\}$, is 
 isomorphically embedded in it, i.e. there exist 
 topological embedding  maps $\iota_{j}: X_{j}\to \bar{X}$  such that
\begin{itemize}
\item  $x^{1}_{j} \leq_{j} x^{2}_{j}$ 
if and only if $\iota_{j}(x^{1}_{j}) \bar\leq  \iota_{j}(x^{2}_{j})$, for every $j\in \N\cup\{\infty\}$, 
for every   $x^{1}_{j}, x^{2}_{j}\in X_{j}$; 
\item $\bar{\tau}  (\iota_{j}(x^{1}_{j}), \iota_{j}(x^{2}_{j}))= \tau_{j} (x^{1}_{j}, x^{2}_{j})$ for every $x^{1}_{j}, x^{2}_{j}\in X_{j}$,  for every $j\in \N\cup\{\infty\}$;
\end{itemize}

\item The measures $(\iota_{j})_{\sharp} \mm_{j}$  converge to $(\iota_{\infty})_{\sharp} \mm_{\infty}$ weakly in duality with $C_{c}(\bar X)$ in $\bar X$, i.e.
\begin{equation}\label{eq:weakconv}
\int \varphi\; (\iota_{j})_{\sharp} \mm_{j} \to \int \varphi \; (\iota_{\infty})_{\sharp} \mm_{\infty} \quad \forall \varphi \in C_{c}(\bar X),
\end{equation} 
where  $C_{c}(\bar X)$ denotes the set of continuous functions with compact support.
\item  Convergence of the reference points: $\iota_j(\star_j)\to \iota_\infty(\star_\infty)$ in $\bar{X}$.
\item  For every compact subset $\mathcal{K}\Subset \bar{X}$, it holds that
\begin{equation}\label{eq:TGeoKcompact}
\{\gamma\in \TGeo(\bar X)\colon \gamma([0,1])\subset \mathcal{K}\}\subset C([0,1], \bar{X}) \; \text{ is pre-compact.}
\end{equation}
\item There exist $p\in (0,1), K\in \R, N\in (0,\infty)$ such that  $(X_{j},\sfd_{j}, \mm_{j}, \ll_{j}, \leq_{j}, \tau_{j})$ satisfies  $\TMCP^{e}(K,N)$,  for each $j\in \N$ .   
\end{enumerate}
Then also the limit space 
$(X_{\infty},\sfd_{\infty}, \mm_{\infty}, \ll_{\infty}, \leq_{\infty}, \tau_{\infty})$  
satisfies   $\TMCP^{e}(K,N)$.
\end{theorem}

\begin{remark}\label{R:TMCPandgeo}
Even though we haven't specifically list any topological assumption on the sequence of spaces 
$X_{j}$, they actually inherit them from $\bar{X}$ via the topological embeddings $\iota_{j}$. 
The map $\iota_{j}$ preserves both the causal relations and $\tau_{j}$ hence
$(X_{j},\sfd_{j}, \mm_{j}, \ll_{j}, \leq_{j}, \tau_{j})$ are globally hyperbolic Lorentzian geodesic (by assumption) spaces.
\end{remark}

\begin{proof}
For simplicity of notation, we will identify $X_{j}$ with its isomorphic image $\iota_{j}(X_{j})\subset \bar X$ and the measure $\mm_{j}$ with $(\iota_{j})_{\sharp} \mm_{j}$, for each $j\in \N\cup \{\infty\}$.

Fix  arbitrary  $\mu_{0}^{\infty}=\rho_{0}^{\infty} \mm_{\infty}\in \Prob_{c}(X_{\infty})\cap \Dom(\Ent(\cdot|\mm_{\infty}))$ and $x_{1}^{\infty}\in X_{\infty}$ such that  $x\ll_{\infty} x_{1}^{\infty}$ for $\mu_{0}^{\infty}$-a.e. $x\in X_{\infty}$. 
Since $\mu_{0}^{\infty}$ has compact support and $\bar X$ is globally hyperbolic, we can  restrict all the arguments to a large compact subset $E\Subset \bar X$ such that
\begin{itemize} 
\item $J^{+}_{\bar{X}}(\supp \, \mu_{0}^{\infty})\cap J^{-}_{\bar{X}}(x_1^\infty) \subset E$;
\item $\mm_{\infty}(E)=\lim_{j\to \infty}\mm_j(E)$. 
\end{itemize}
Thus,  $\tilde{\mm}_j:=\mm_j(E)^{-1} \mm_j\llcorner E$ are probability measures supported in the compact subset $E\Subset \bar{X}$ and narrowly converge to $\tilde{\mm}_\infty:=\mm_\infty(E)^{-1} \mm_\infty\llcorner E$. With a slight abuse of notation, for simplicity, we will write $\mm_j$ for  $\tilde{\mm}_j$, $j\in \N\cup\{\infty\}$.
Since on compact metric spaces narrow convergence is equivalent to $W_{2}$ convergence, 
 we actually assume
$\mm_{j}\to \mm_{\infty}$ in $W_{2}^{(\bar X,  \bar \sfd)}$.
Denote with $\ggamma_{j} \in \Pi(\mm_{\infty}, \mm_{j})$ an optimal coupling  for $W_{2}^{(\bar X,  \bar \sfd)}$.
\\

\textbf{Step 1}.   We show that, up to a subsequence, for every $j\in \N$ there exists $\mu_{0}^{j}\in \Prob_{c}(X_{j})\cap \Dom(\Ent(\cdot|\mm_{j}))$, $x^{j}_{1}\in X_{j}\cap E$  such that
\begin{equation}\label{eq:Entmu0inftyoj}
\mu_{0}^{j}\left(I^{-}_{\ll_{j}} (x_{1}^{j})\right)=1, \quad x^{j}_{1}\to x^{\infty}_{1},  \quad \mu_{0}^{j}\to  \mu_{0}^{\infty} \text{ narrowly}, \quad U_{N}(\mu_{0}^{\infty}|\mm_{\infty}) \leq  \liminf_{j\to \infty} U_{N}(\mu_{0}^{j}|\mm_{j}).
\end{equation}

\textbf{Step 1a}. Let us first consider the case $\mu_{0}^{\infty}=\rho_{0}^{\infty} \mm_{\infty}\in \Prob_{c}(X_{\infty})$ has density $\rho_{0}^{\infty}\in L^{\infty}(\mm_{\infty})$ and $x_{1}^{\infty}\in X_{\infty}$ is such that 
$\supp \, \mu_{0}^{\infty} \Subset I^{-}_{\ll_{\infty}}(x_{1}^{\infty}).$  

From narrow convergence we deduce the  existence of a sequence 
$x_{1}^{j} \in \supp\,\mm_{j} \subset X_{j}\cap E$ with
$x_{1}^{j} \to x_{1}^{\infty}$ with respect to $\bar \sfd$. 
Since $\bar \tau: \bar X^{2}\to \R$ is continuous and $\supp \, \mu_{0}^{\infty}$ is compact, 
$$
\lim_{j\to \infty } 
\min_{x \in \supp \, \mu_{0}^{\infty}} \bar \tau(x,x_{1}^{j})
 = \min_{x \in \supp \, \mu_{0}^{\infty}} \bar \tau(x,x_{1}^{\infty}) = 
 \min_{x \in \supp \, \mu_{0}^{\infty}} \tau_{\infty}(x,x_{1}^{\infty}) > 0.
$$
Hence, for $j$ sufficiently large, we can assume that  
$x\bar\ll x_{1}^{j}$ for $\mu_{0}^{\infty}$-a.e. $x\in X_{\infty}$. Then since 
$I^{-}_{\bar \ll}(x_{1}^{j})$ is open,
 any narrow converging sequence of probability measures $\mu_{0}^{k} \to \mu_{0}^{\infty}$ 
satisfies
\begin{equation}\label{E:convergence}
\liminf_{k\to \infty} \mu_{0}^{k} (I^{-}_{\bar \ll}(x_{1}^{j}))
\geq \mu_{0}^{\infty}(I^{-}_{\bar \ll}(x_{1}^{j})) = 1.
\end{equation}

Define now $\ggamma_j'\in\Prob(\bar X^2)$ as $\ggamma_j'(\dd x\dd y):=\rho_{0}^{\infty}(x)\ggamma_j(\dd x\dd y)$ and $\hat{\mu}^{j}_{0}:=(P_{2})_\sharp\ggamma_j'\in\Prob(X_{j})\subset \Prob(\bar X)$.  By construction, $\ggamma_j'\ll\ggamma_j$, 
hence $\hat{\mu}^{j}_{0}\ll(P_{2})_\sharp\ggamma_j=\mm_j$. 
Let $\hat{\mu}^{j}_{0}=\hat{\rho}^{j}_{0} \mm_j$. 
It is readily checked from the definition that it holds 
$\hat{\rho}^{j}_{0}(y)=\int \rho^{\infty}_{0}(x)\,(\ggamma_j)_{y}(\dd x)$, where $\{(\ggamma_j)_y\}$ is the disintegration of $\ggamma_j$ w.r.t. the projection on the second marginal. 
In particular, 
$\| \hat \rho_{0}^{j}\|_{L^{\infty}(\mm_{j})} \leq 
\|  \rho_{0}^{\infty}\|_{L^{\infty}(\mm_{\infty})}$.

By Jensen's inequality applied to the convex function $u(z)=z\log(z)$ we have
\[
\begin{split}
\Ent(\hat{\mu}^{j}_{0}|\mm_{j})&=\int u(\hat{\rho}^{j}_{0})\,\mm_j=
\int u\left(\int  \rho^{\infty}_{0}(x)\, (\ggamma_j)_{y}(\dd x)\right)\,\mm_j(\dd y)\\
&\leq \int u( \rho^{\infty}_{0}(x))\,(\ggamma_j)_{y}(\dd x)\,\mm_j(\dd y)=\int u( \rho^{\infty}_{0}(x))\,\ggamma_j(\dd x\dd y)\\
&=\int u( \rho^{\infty}_{0})\,(P_{1})_\sharp\ggamma_j=\int u(\rho^{\infty}_{0})\,\mm_\infty=\Ent(\mu_{0}^{\infty}|\mm_{\infty}).
\end{split}
\]
Since by construction we have 
$\ggamma_j'\in\Pi(\mu^{\infty}_{0},\hat \mu^{j}_{0})$, it holds
\[
\begin{split}
\left(W_2^{(\bar X, \bar \sfd)}(\mu^{\infty}_{0}, \hat\mu^{j}_{0})\right)^{2}& \leq \int \bar\sfd^2(x,y)\,\ggamma_j'(\dd x\dd y)= \int \rho_{0}^{\infty}(x) \bar \sfd^2(x,y)\,\ggamma_j(\dd x\dd y) \\
& \leq \|\rho_{0}^{\infty}\|_{ L^{\infty}(\mm_{\infty})} \left(W_2^{(\bar X, \bar \sfd)}(\mm_\infty, \mm_j)\right)^{2},
\end{split}
\]
and therefore $W_2^{(\bar X, \bar \sfd)}(\mu^{\infty}_{0},\hat\mu^{j}_{0})\to 0$. In particular $\hat\mu^{j}_{0}\to \mu^{\infty}_{0}$ narrowly in $\bar X$.

Moreover  $\hat \mu_{0}^{j}$ has compact support, indeed $\supp\, \hat \mu_{0}^{j}\subset \supp\, \mm_j \subset E \Subset \bar{X}$. 
We will also 
 cutoff  where the density $\hat \rho^{j}_{0}$ is too small in the following manner. 
Consider the set $K_{j}: = \{ \hat \rho_{0}^{j} \geq 1/j \}$ 
that is easily verified to satisfy $\hat \mu_{0}^{j}(K_{j}) \geq 1-1/j$ and define 
$$
\bar \mu_{0}^{j} : = \hat \mu_{0}^{j}\llcorner_{K_{j}}/
\hat \mu_{0}^{j}(K_{j}).
$$
The difference between $\Ent(\bar  \mu_{0}^{j}|\mm_{j})$ 
and  $\Ent(\hat \mu_{0}^{j}|\mm_{j})$ is controlled  (see \eqref{E:decompo}) by 
$$
\int_{\{\hat \rho_{0}^{j} \leq 1/j\}} |\hat \rho_{0}^{j} \log (\hat \rho_{0}^{j}) |\mm_{j} \leq \frac{1}{j} \log(j).
$$
Hence $\bar  \mu_{0}^{j}$ still verifies all the properties we have checked for $\hat \mu_{0}^{j}$. Finally  
it is only left to restrict $\bar \mu_{0}^{j}$  to 
$I_{\bar \ll}(x_{1}^{j})$.
From \eqref{E:convergence}, adopting a diagonal argument, 
we also obtain that 
$\bar \mu_{0}^{j}(I_{\bar \ll}(x_{1}^{j})) \geq 1 - 1/j$.
Hence we define 
$$
\mu_{0}^{j} : =  \bar \mu_{0}^{j}\llcorner_{I_{\bar \ll}(x_{1}^{j})}/
\bar \mu_{0}^{j}(I_{\bar \ll}(x_{1}^{j})).
$$
Again the difference between $\Ent(\bar  \mu_{0}^{j}|\mm_{j})$ 
and  $\Ent( \mu_{0}^{j}|\mm_{j})$ is controlled  (see \eqref{E:decompo}) by 
$$
\int_{X\setminus I_{\bar \ll}(x_{1}^{j})} \hat \rho_{0}^{j} \, |\log (\hat \rho_{0}^{j}) | \, \mm_{j} \leq \log(j)  \, 
\bar \mu_{0}^{j}(X\setminus I_{\bar \ll}(x_{1}^{j})) 
\leq \frac{1}{j} \log(j).
$$
Thus \eqref{eq:Entmu0inftyoj} is proved in this case.
\\

\textbf{Step 1b}.   $\mu_{0}^{\infty}=\rho_{0}^{\infty} \mm_{\infty}\in \Prob_{c}(X_{\infty})$ has density $\rho_{0}^{\infty}\in L^{\infty}(\mm_{\infty})$,  and $x_{1}^{\infty}\in X_{\infty}$ is such that  $x\ll_{\infty} x_{1}^{\infty}$ for $\mu_{0}^{\infty}$-a.e. $x\in X_{\infty}$.

\noindent
For $n\in \N$ define 
$\mu_{0,n}^{\infty}:= \bar{c}_{n}\mu_{0}^{\infty} \llcorner \{\tau_{\infty}(\cdot, x^{\infty}_{1})\geq \frac{1}{n}\} \in \Prob(X_{\infty})$, 
where $\bar{c}_{n}\downarrow 1$ are the normalising constants. By the continuity of $\tau_{\infty}$, 
it is readily seen that $\supp \, \mu_{0,n}^{\infty} \Subset I^{-}_{\ll_{\infty}}(x_{1}^{\infty})$. Moreover
 \[
\lim_{n\to\infty}\Ent(\mu_{0,n}^{\infty}|\mm_{\infty})=
 \Ent(\mu_{0}^{\infty}|\mm_{\infty}),\qquad \lim_{n\to\infty}W_2^{(\bar X, \bar \sfd)}(\mu_{0,n}^{\infty},\mu_{0}^{\infty})=0.
\]
Then apply Step 1a to $\mu_{0,n}^{\infty}$ and conclude with a diagonal argument.
\\

\textbf{Step 1c}. General case.
 If $\rho_{0}^{\infty}$ is not bounded, for $k\in\N$ define $\rho_{0,k}^{\infty}:=\bar{c}_k \min\{\rho_{0}^{\infty},k\}$, $\bar{c}_k \downarrow 1$ being such that  $\mu_{0,k}^{\infty}:=\rho^{\infty}_{0,k}\ \mm_\infty\in\Prob(X_{\infty})$. Clearly, it holds
\[
\lim_{k\to\infty}\Ent(\mu_{0,k}^{\infty}|\mm_{\infty})=  \Ent(\mu_{0}^{\infty}|\mm_{\infty}),\qquad \lim_{k\to\infty}W_2^{(\bar X, \bar \sfd)}(\mu_{0,k}^{\infty},\mu_{0}^{\infty})=0.
\]
Then apply Step 1b to $\mu_{0,k}^{\infty}$ and conclude with a diagonal argument.
\\

\textbf{Step 2}. Conclusion. 
\\
Using the  assumption  that  $(X_{j},\sfd_{j}, \mm_{j}, \ll_{j}, \leq_{j}, \tau_{j})$ satisfies  $\TMCP^{e}(K,N)$,  we obtain  an $\ell_{p}$-geodesic $(\mu_{t}^{j})_{t\in [0,1]}$ from $\mu_{0}^{j}$ to  $\mu_{1}^{j}:=\delta_{x_{1}^{j}}$
such that 
\begin{equation}\label{eq:TMCP(KN)Xj}
U_{N}(\mu^{j}_{t}|\mm_{j}) \geq \sigma^{(1-t)}_{K/N} \left( \|\tau_j(\cdot, x_{1}^{j}) \|_{L^{2}(\mu^{j}_{0})} \right)\, U_{N}(\mu^{j}_{0}|\mm_{j}), \quad \forall t\in [0,1).
\end{equation}
From \eqref{eq:Entmu0inftyoj}, it is readily seen that $\mu^{j}_{0}\otimes \delta_{x_{1}^{j}}\to \mu^{\infty}_{0}\otimes \delta_{x_{1}^{\infty}}$ narrowly in $\bar{X}^{2}$.  Thus, recalling that $\bar \tau$ is continuous and bounded (on $E$), we infer that
\begin{equation}\label{eq:limittauxxj1}
 \|\tau_j(\cdot, x_{1}^{j}) \|_{L^{2}(\mu^{j}_{0})}^2= \int_{\bar X^{2}} \bar \tau(x,y)^{2} \, \mu^{j}_{0} \otimes  \delta_{x_{1}^{j}} (\dd x\dd y) \underset{j\to \infty}{\longrightarrow}   \int_{\bar X^{2}} \bar \tau(x,y)^{2} \, \mu^{\infty}_{0} \otimes  \delta_{x_{1}^{\infty}} (\dd x\dd y)= \|\tau_\infty(\cdot, x_{1}^{\infty}) \|^2_{L^{2}(\mu^{\infty}_{0})}.   
\end{equation}
Since by construction  $\cup_{j\in \N}\, \supp \, \mu^j_0 \bigcup (x^j_1)_{j\in \N} \subset E \Subset \bar{X}$ and $\bar{X}$ is globally hyperbolic, then there exists a compact subset $\mathcal{K}\Subset \bar{X}$ such that
\begin{equation}\label{eq:suppmujtCompact}
\bigcup_{t\in [0,1], j\in \N} \supp\, \mu^j_t \subset \mathcal{K} \Subset \bar{X}.
\end{equation}
Let $\eta^j\in \Prob(C([0,1], \bar{X})$ be the $\ell_p$-optimal dynamical plan representing the $\ell_p$-geodesic $(\mu^{j}_{t})_{t\in [0,1]}$.
Prokhoroff Theorem combined with  \eqref{eq:TGeoKcompact} and \eqref{eq:suppmujtCompact} yields that the sequence $(\eta^j)_{j\in \N}\subset \Prob(C([0,1], \bar{X}))$ is pre-compact in narrow topology, i.e.\;there exists $\eta^\infty\subset \Prob(C([0,1], \bar{X}))$ such that, up to a subsequence,  $\eta^j\to \eta^\infty$ narrowly. 

We now claim that $\eta^\infty$ is an $\ell^p$-optimal dynamical plan from $\mu^\infty_0$ to $\mu^\infty_1=\delta_{x^\infty_1}$. Indeed, by the continuity of $\ee_t: C([0,1], \bar{X})\to \bar{X}$, it is easily seen that 
\begin{equation}\label{eq:mujtomuinftyNarrow}
\mu^j_t=(\ee_t)_\sharp \eta^j \underset{j\to \infty}{\longrightarrow} (\ee_t)_\sharp \eta^\infty=:\mu^\infty_t \quad \text{narrowly, for every $t\in [0,1]$.}
\end{equation}
But since  $(\ee_i)_\sharp \eta^j= \mu^j_i$ converges narrowly to $\mu^\infty_i$ for $i=0,1$, by the uniqueness of the narrow limit we infer that $(\ee_i)_\sharp \eta^\infty=\mu^\infty_i$, for $i=0,1$. Recalling that by construction $\mu^\infty_1=\delta_{x^\infty_1}$ and that $\mu^\infty_0 \otimes \delta_{x^\infty_1}$  is the unique coupling between $\mu^\infty_0$ and $\delta_{x^\infty_1}$ (which is hence optimal), it follows that $(\ee_0,\ee_1)_\sharp \eta^\infty$ is an $\ell_p$-optimal coupling and thus $\eta^\infty$ is an $\ell_p$-optimal dynamical plan, proving the claim.

Finally, the joint upper semicontinuity of $U_{N}$ under narrow 
convergence \eqref{eq:Entljointsc} together with \eqref{eq:mujtomuinftyNarrow}  yields:
\begin{equation}\label{eq:GammaliminfjtMCP}
U_{N}(\mu^{\infty}_{t}|\mm_{\infty}) \geq \limsup_{j\in \N} U_{N}(\mu^{j}_{t}|\mm_{j}), \quad \forall t\in [0,1],
\end{equation}
obtaining in particular that $(\mu^{\infty}_{t})_{t\in [0,1]} \subset  \Prob(X_{\infty})$. 
The combination of \eqref{eq:Entmu0inftyoj}, \eqref{eq:TMCP(KN)Xj}, \eqref{eq:limittauxxj1} and  \eqref{eq:GammaliminfjtMCP} gives that 
$$
U_{N}(\mu^{\infty}_{t}|\mm_{\infty}) \geq \sigma^{(1-t)}_{K/N} \left(\|\bar \tau(\cdot, x_{1}^{\infty})\|_{L^{2}( \mu^{\infty}_{0})}\right) \, U_{N}(\mu^{\infty}_{0}|\mm_{\infty}), \quad \forall t\in [0,1).
$$
as desired.
\end{proof}

In the next theorem we show that if a sequence of $\TCD^{e}_{p}(K,N)$ Lorentzian spaces converge to a limit  Lorentzian space, then the latter is $\wTCD^{e}_{p}(K,N)$. 
The same observation of Remark \ref{R:TMCPandgeo} 
will be valid for the next theorem.
 
\begin{theorem}[Weak stability of  $\TCD^{e}_{p}(K,N)$]\label{thm:StabTCD}
Let  $\{(X_{j},\sfd_{j}, \mm_{j}, \star_{j}, \ll_{j}, \leq_{j}, \tau_{j})\}_{j\in \N\cup\{\infty\}}$ be a sequence of pointed measured Lorentzian geodesic spaces  satisfying the following properties:
\begin{enumerate}
\item There exists a globally hyperbolic  Lorentzian geodesic space $(\bar X, \bar \sfd,  \overline \ll, \bar \leq, \bar  \tau)$ such that each $(X_{j},\sfd_{j}, \mm_{j}, \ll_{j}, \leq_{j}, \tau_{j})$, $j\in \N\cup\{\infty\}$, is isomorphically embedded in it (as in 1. of Theorem \ref{thm:StabTMCP}).
\item The measures $(\iota_{j})_{\sharp} \mm_{j}$  converge to $(\iota_{\infty})_{\sharp} \mm_{\infty}$ weakly in duality with $C_{c}(\bar X)$ in $\bar X$, i.e. \eqref{eq:weakconv} holds.
\item  Convergence of the reference points: $\iota_j(\star_j)\to \iota_\infty(\star_\infty)$ in $\bar{X}$.
\item  For every compact subset $\mathcal{K}\Subset \bar{X}$, the pre-compactness condition \eqref{eq:TGeoKcompact} holds. 
\item There exist $p\in (0,1), K\in \R, N\in (0,\infty)$ such that   $(X_{j},\sfd_{j}, \mm_{j}, \ll_{j}, \leq_{j}, \tau_{j})$  satisfies  $\TCD^{e}_{p}(K,N)$, for each $j\in \N$.
\end{enumerate}
Then  the limit space $(X_{\infty},\sfd_{\infty}, \mm_{\infty}, \ll_{\infty}, \leq_{\infty}, \tau_{\infty})$  satisfies the $\wTCD^{e}_{p}(K,N)$ condition.
\end{theorem}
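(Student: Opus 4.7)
The plan is to follow the blueprint of the proof of Theorem \ref{thm:StabTMCP} with the following crucial twist: the input pair is assumed to be \emph{strongly} timelike $p$-dualisable (since we only aim at $\wTCD$ in the limit), but the approximating pairs in $X_j$ will be merely timelike $p$-dualisable (which is all one needs to apply $\TCD^{e}_{p}(K,N)$ on each $X_j$). As in the proof of Theorem \ref{thm:StabTMCP}, we identify the $X_j$ with their isomorphic images in $(\bar X,\bar \sfd,\bar\ll,\bar\leq,\bar\tau)$ and, exploiting compactness of supports together with $\K$-global hyperbolicity, reduce without loss of generality to the case where $\bar X$ is compact and $\mm_j\to\mm_\infty$ in $W_2^{(\bar X,\bar\sfd)}$. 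Fix a pair $(\mu^\infty_0,\mu^\infty_1)\in(\Dom(\Ent(\cdot|\mm_\infty))\cap\Prob_c(X_\infty))^2$ strongly timelike $p$-dualisable by some $\bar\pi^\infty\in\Pi^{p\text{-opt}}_{\leq_\infty}(\mu^\infty_0,\mu^\infty_1)$, concentrated on an $\ell^{p}$-cyclically monotone set $\Gamma\subset X_{\bar\ll}^2\cap(\supp\mu^\infty_0\times\supp\mu^\infty_1)$.

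The next step is to build the approximating pairs $(\mu^j_0,\mu^j_1)$ in $X_j$. Let $\ggamma^0_j\in\Pi(\mm_\infty,\mm_j)$ and $\ggamma^1_j\in\Pi(\mm_\infty,\mm_j)$ be $W_2^{(\bar X,\bar\sfd)}$-optimal. Gluing $\ggamma^0_j$, $\bar\pi^\infty$ and $\ggamma^1_j$ via Lemma \ref{L:gluing} (applied iteratively) gives a $4$-plan $\eta^j\in\Prob(\bar X^4)$ that allows to transport the endpoint densities of $\bar\pi^\infty$ onto $\mm_j$-marginals; one then defines $\tilde\pi^j\in\Prob(X_j\times X_j)$ as the appropriate push-forward weighted by $\rho^\infty_0(x)$ (and, symmetrically, by $\rho^\infty_1$) in the spirit of Step 1 of the proof of Theorem \ref{thm:StabTMCP}. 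Jensen's inequality as there yields $\limsup_j\Ent(\mu^j_i|\mm_j)\leq\Ent(\mu^\infty_i|\mm_\infty)$ for $i=0,1$, while lower semicontinuity \eqref{eq:Entljointsc} gives the reverse $\liminf$ inequality, so that $U_N(\mu^j_i|\mm_j)\to U_N(\mu^\infty_i|\mm_\infty)$. Standard truncation/cutoff arguments (as in Steps 1a--1c of the TMCP proof) reduce to bounded densities with compact support. Since $\supp\bar\pi^\infty\subset\Gamma\Subset X_{\bar\ll}^2$ and $X_{\bar\ll}^2$ is open, for $j$ large $\supp\tilde\pi^j$ is contained in a small neighbourhood of $\Gamma$, hence in $X_{\bar\ll}^2$; via the isomorphic embedding this means $\tilde\pi^j(X_{\ll_j}^2)=1$. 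Any $\ell_p$-optimal $\pi^j\in\Pi^{p\text{-opt}}_{\leq_j}(\mu^j_0,\mu^j_1)$ satisfies $\int\bar\tau^p\,d\pi^j\geq\int\bar\tau^p\,d\tilde\pi^j>0$, so $(\mu^j_0,\mu^j_1)$ is timelike $p$-dualisable in $X_j$.

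Apply $\TCD^{e}_{p}(K,N)$ in $X_j$ to obtain an $\ell_p$-geodesic $(\mu^j_t)_{t\in[0,1]}$ and a coupling $\pi^j\in\Pi^{p\text{-opt}}_{\ll_j}(\mu^j_0,\mu^j_1)$ for which $u^j_N(t):=U_N(\mu^j_t|\mm_j)$ satisfies \eqref{eq:sNconc}. Proposition \ref{prop:ellqgeodCS}(7) together with compactness of $\bar X$ gives a uniform $W_1^{(\bar X,\bar\sfd)}$-Lipschitz bound on $t\mapsto\mu^j_t$; the metric Arzel\`a--Ascoli theorem thus yields a limit continuous curve $(\mu^\infty_t)_{t\in[0,1]}$ in $(\Prob(\bar X),W_1^{(\bar X,\bar\sfd)})$ along a subsequence. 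Extract also a narrow limit $\pi^\infty$ of $\pi^j$. Using $\tilde\pi^j$ as a ``recovery sequence'' and invoking Lemma \ref{L:use}, one concludes that $\pi^\infty\in\Pi^{p\text{-opt}}_{\leq_\infty}(\mu^\infty_0,\mu^\infty_1)$; the strong dualisability assumption then gives $\pi^\infty(\Gamma)=1$ and in particular $\pi^\infty(X^2_{\ll_\infty})=1$. Reverse triangle inequality combined with convergence of $\ell_p$-costs (again from Lemma \ref{L:use} applied on subintervals) upgrades $(\mu^\infty_t)$ to an $\ell_p$-geodesic between $\mu^\infty_0$ and $\mu^\infty_1$.

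Finally pass the concavity inequality \eqref{eq:sNconc} to the limit: by joint upper semicontinuity of $U_N$ under narrow convergence of probabilities and weak convergence of reference measures \eqref{eq:Entljointsc}, $\limsup_j u^j_N(t)\leq u^\infty_N(t)$ while $u^j_N(0)\to u^\infty_N(0)$, $u^j_N(1)\to u^\infty_N(1)$ by Step 1 of the approximation. Since $\bar\tau$ is continuous and bounded on $\bar X^2$ and $\pi^j\to\pi^\infty$ narrowly, $\|\bar\tau\|_{L^2(\pi^j)}\to\|\bar\tau\|_{L^2(\pi^\infty)}$, and the continuity of $\sigma^{(t)}_{K/N}$ yields the desired inequality for $\mu^\infty_t$ and $\pi^\infty$, establishing $\wTCD^{e}_{p}(K,N)$ in the limit. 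The main obstacle is precisely the last identification: ensuring that the $p$-optimal couplings $\pi^j$ used on the approximating side converge, up to subsequence, to a bona fide $p$-optimal coupling in $X_\infty$ supported in $X_{\ll_\infty}^2$---this is exactly where strong $p$-dualisability is invoked, and it is the reason one obtains only $\wTCD$ (not $\TCD$) in the limit, since for merely dualisable pairs no control on the location of the narrow limit $\pi^\infty$ is available.
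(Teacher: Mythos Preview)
Your outline has the right overall architecture, but there is a genuine gap at the point where you claim $(\mu^j_0,\mu^j_1)$ is timelike $p$-dualisable in $X_j$. Two issues compound here.

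First, the support claim ``$\supp\bar\pi^\infty\subset\Gamma\Subset X^2_{\bar\ll}$'' is unjustified: $\Gamma$ is only assumed measurable in Definition~\ref{def:TStrongDual}, so $\bar\pi^\infty(\Gamma)=1$ does not give $\supp\bar\pi^\infty\subset\Gamma$, nor does it force $\supp\bar\pi^\infty$ to be compactly contained in the open set $X^2_{\bar\ll}$. More importantly, even granting $\supp\bar\pi^\infty\Subset X^2_{\bar\ll}$, the transfer via the $W_2$-optimal couplings $\ggamma_j$ does not keep the support of $\tilde\pi^j$ in a small $\bar\sfd$-neighbourhood of $\supp\bar\pi^\infty$: $W_2(\mm_\infty,\mm_j)\to 0$ only controls the \emph{average} displacement, so individual points may be transported far with small (but positive) probability, and $\supp\tilde\pi^j$ can spill outside $X^2_{\bar\ll}$.

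Second, and this is the decisive point: even if $\tilde\pi^j(X^2_{\bar\ll})=1$, this does \emph{not} make $(\mu^j_0,\mu^j_1)$ timelike $p$-dualisable. Definition~\ref{def:Tpdual} requires an \emph{optimal} coupling concentrated on $X^2_{\ll}$, and $\tilde\pi^j$ has no reason to be $\ell_p$-optimal. Your sentence ``$\int\bar\tau^p\,d\pi^j\geq\int\bar\tau^p\,d\tilde\pi^j>0$, so $(\mu^j_0,\mu^j_1)$ is timelike $p$-dualisable'' is a non sequitur: $\ell_p(\mu^j_0,\mu^j_1)>0$ alone gives nothing about where the optimal coupling sits. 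Without timelike $p$-dualisability you cannot invoke $\TCD^e_p(K,N)$ on $X_j$, so the whole chain collapses.

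The paper resolves this by reversing the order in which strong dualisability is used. It first approximates $\pi_\infty$ by $\pi_{\infty,n}$ with $L^\infty(\mm_\infty\otimes\mm_\infty)$ density via Lemma~\ref{lem:approxpiinftylinfty} (this is what makes the transfer through $\ggamma_j$ well defined at the level of densities), pushes these to $X_j$, restricts to $\{\bar\tau>0\}$ and renormalises to get marginals $(\mu')^{j,n}_0,(\mu')^{j,n}_1$, and then takes an $\ell_p$-optimal coupling $\pi''_{j,n}$ for these---making \emph{no} claim yet that $\pi''_{j,n}$ is timelike. Only after passing $\pi''_{j_k,n_k}$ to a narrow limit $\hat\pi_\infty$ and checking (via the recovery bound) that $\hat\pi_\infty$ is $\ell_p$-optimal for $(\mu^\infty_0,\mu^\infty_1)$ does strong dualisability enter, forcing $\hat\pi_\infty(\{\bar\tau>0\})=1$. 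By lower semicontinuity on the open set $\{\bar\tau>0\}$, the $\pi''_{j_k,n_k}$ are then asymptotically concentrated there; restricting them (Lemma~\ref{L:restriction}) yields genuinely timelike \emph{optimal} couplings $\pi_k$, at the price of slightly modifying the marginals to $\mu^k_0,\mu^k_1$. It is this modified pair that is timelike $p$-dualisable and to which $\TCD^e_p(K,N)$ is applied; the entropy control (the paper's Step~2) then has to be redone for the modified marginals, which is where the $L^\infty$ bound from Lemma~\ref{lem:approxpiinftylinfty} is used again. In short, strong dualisability is needed \emph{before} one can apply $\TCD$ on the approximating spaces, not after, and this forces the extra restriction/renormalisation layer that your sketch omits.
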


\begin{proof}
Without affecting generality, we will identify $X_{j}$ with its isomorphic image $\iota_{j}(X_{j})\subset \bar X$ and the measure $\mm_{j}$ with $(\iota_{j})_{\sharp} \mm_{j}$, for each $j\in \N\cup \{\infty\}$.

Fix  $\mu_{0}^{\infty}, \mu_{1}^{\infty} \in  \Dom(\Ent(\cdot|\mm_{\infty})) \cap \Prob_{c}(X_{\infty})$ strongly timelike $p$-dualisable, i.e. such that there exists 
${\pi}_{\infty}\in \Pi_{\leq_{\infty}}^{p\text{-opt}}(\mu_{0}^{\infty}, \mu_{1}^{\infty})$ 
with $\pi_{\infty}( \{\tau_{\infty}>0\})=1$
and there exists a measurable $\ell^{p}$-cyclically monotone set 
$$
\Gamma\subset (X_{\infty}^{2})_{\ll_{\infty}} \cap (\supp \, 
\mu_{0}^{\infty} \times \supp \,\mu_{1}^{\infty})
$$ 
such that a coupling  
$\pi\in \Pi_{\leq_{\infty}}(\mu_{0}^{\infty},\mu_{1}^{\infty})$ 
is $\ell_{p}$-optimal if  and only if $\pi$ is concentrated on $\Gamma$.
Since $\mu_{0}^{\infty}, \mu_{1}^{\infty}$ 
have compact support and $\bar X$ is globally hyperbolic, 
we can  restrict all the arguments to a large compact subset of $E\Subset \bar X$ such that
\begin{itemize} 
\item $J^{+}_{\bar{X}}(\supp \, \mu_{0}^{\infty})\cap J^{-}_{\bar{X}}(\supp \, \mu_{1}^{\infty}) \subset E$;
\item $\mm_{\infty}(E)=\lim_{j\to \infty}\mm_j(E)$. 
\end{itemize}  
Thus,  $\tilde{\mm}_j:=\mm_j(E)^{-1} \mm_j\llcorner E$ are probability measures supported in the compact subset $E\Subset \bar{X}$ and narrowly converge to $\tilde{\mm}_\infty:=\mm_\infty(E)^{-1} \mm_\infty\llcorner E$. With a slight abuse of notation, for simplicity, we will write $\mm_j$ for  $\tilde{\mm}_j$, $j\in \N\cup\{\infty\}$.

\smallskip
\textbf{Step 1:} We prove that, up to a subsequence, for every $j\in \N$ 
there exists $(\mu^{j}_{0}, \mu^{j}_{1}) \in \Prob(X_{j})^{2}$ timelike $p$-dualisable  such that
\begin{equation}\label{eq:claimStep1}
\begin{split}
&\text{$\mu^{j}_{0}\to \mu^{\infty}_{0}, \,  \mu^{j}_{1}\to \mu^{\infty}_{1}$ narrowly in $\bar X$}
\text{ and $\ell_{p}(\mu^{j}_{0}, \mu^{j}_{1})\to \ell_{p}(\mu^{\infty}_{0}, \mu^{\infty}_{1})$ as $j\to \infty$}.
\end{split}
\end{equation}
Since $E\Subset \bar X$ is compact, the narrow convergence $\mm_{j}\to \mm_{\infty}$ implies that  $\mm_{j}\to \mm_{\infty}$ in $W_{2}^{(\bar X, \bar \sfd)}$. Let 
\begin{equation}\label{eq:defggammaj}
\text{$\ggamma_{j}\in \Pi(\mm_{\infty}, \mm_{j})$ be a  $W_{2}^{(\bar X, \bar \sfd)}$-optimal coupling.} 
\end{equation}
Thanks to the next Lemma \ref{lem:approxpiinftylinfty}, we can approximate $\pi_{\infty}$ by
\begin{equation}\label{eq:defpiinftyn}
\begin{split}
&\pi_{\infty,n}=\rho_{\infty,n} \mm_{\infty} \otimes \mm_{\infty},\ \rho_{\infty,n}\in L^{\infty}(\mm_{\infty}\otimes\mm_{\infty}), \;\pi_{\infty,n}(\{\bar \tau>0\})=1,\;\pi_{\infty,n}\to \pi_{\infty} \text{ narrowly} \\
& \lim_{n\to \infty} \Ent( (P_{1})_{\sharp} \pi_{\infty,n}|\mm_{\infty}) = \Ent(\mu^{\infty}_{0}|\mm_{\infty}), \quad    \lim_{n\to \infty} \Ent( (P_{2})_{\sharp} \pi_{\infty,n}|\mm_{\infty}) = \Ent(\mu^{\infty}_{1}|\mm_{\infty}).
\end{split}
\end{equation} 
Define then
\begin{equation}\label{eq:deftildepijn}
 \tilde \pi_{j,n}(\dd x_{1} \dd x_{2} \dd x_{3} \dd x_{4}):= \rho_{\infty,n}(x_{1}, x_{3}) \, \ggamma_{j}(\dd x_{1} \, \dd x_{2}) \otimes  \ggamma_{j}(\dd x_{3} \, \dd x_{4}), \quad \pi_{j,n}:=(P_{24})_{\sharp} \tilde \pi_{j,n},
 \end{equation}
and observe that $\pi_{j,n}\ll \mm_{j}\otimes \mm_{j}$ and that $\pi_{j,n} \to \pi_{\infty,n}$ 
narrowly as $j\to \infty$. 
By lower semicontinuity over open subsets, 
we have that   $\liminf_{j\to \infty}\pi_{j,n}(\{\bar \tau>0\}) \geq   \pi_{\infty,n}(\{\bar \tau>0\})=1$.
Thus, calling  $c_{j,n}:= 1/ \pi_{j,n} (\{\bar \tau > 0 \})$ for $j$ large enough, it holds that 
\begin{equation}\label{eq:convtauppij}
\text{$\pi_{j,n}':= c_{j,n}  \pi_{j,n} \llcorner \{\bar \tau >0 \}  \to \pi_{\infty,n}$ narrowly \; and \;  $\lim_{j\to \infty} \int \bar{\tau}^{p} \, \pi_{j,n}' = \int \bar{\tau}^{p}\, \pi_{\infty,n}>0$}
\end{equation}
 by Lemma \ref{lem:UnifIntConv}. Let 
 \begin{equation}\label{eq:mu'ijn}
 \text{$ (\mu')_{0}^{j,n}:=(P_{1})_{\sharp} \pi'_{j,n}$, $ (\mu')_{1}^{j,n}:=(P_{2})_{\sharp} \pi'_{j,n}$}
 \end{equation}
and notice that $\ell_{p}\big((\mu')_{0}^{j,n},(\mu')_{1}^{j,n}\big)\in (0,\infty)$. 
Let
\begin{equation}\label{eq:defpijn''}
\pi_{j,n}''\in \Pi_{\bar \leq}^{p\text{-opt}} \big((\mu')_{0}^{j,n},(\mu')_{1}^{j,n}\big) 
\end{equation}
be an $\ell_{p}$-optimal coupling (whose existence is ensured by Proposition \ref{prop:ExMaxellp}). 
\\

Combining \eqref{eq:convtauppij} with Lemma \ref{lem:tightXxX}, with Prokhorov Theorem \ref{thm:prok} and with the causal closeness of $\bar X$  we deduce that there exists $\hat \pi_{\infty,n}\in \Pi_{\leq}( (P_{1})_{\sharp}\pi_{\infty,n}, (P_{2})_{\sharp} \pi_{\infty,n})$ such that, up to a subsequence,  $\pi_{j,n}''\to \hat \pi_{\infty,n}$ narrowly as $j\to \infty$. Repeating once more the tightness argument,  we deduce that there exists $\hat \pi_{\infty}\in \Pi_{\leq}(\mu^{\infty}_{0}, \mu^{\infty}_{1})$ such that, up to a subsequence,  $\hat \pi_{n,\infty}\to \hat \pi_{\infty}$ narrowly as $n\to \infty$.
We conclude that there exist sequences $(n_{k}), (j_{k})$ such that
\begin{equation}\label{eq:pi''nktohatpiinfty}
\pi''_{j_{k},n_{k}}\to  \hat \pi_{\infty} \text{ narrowly and }  \int  \bar{\tau}^{p}  \, \hat \pi_{\infty}  =\lim_{k\to \infty} \int  \bar{\tau}^{p}  \, \pi_{j_{k},n_{k}}'' \geq
\int \bar{\tau}^{p}\, \pi_{\infty},
\end{equation}
where the last inequality follows from Lemma \ref{lem:UnifIntConv}, \eqref{eq:defpiinftyn}, \eqref{eq:convtauppij} and the optimality of $\pi_{j_{k},n_{k}}''$.  
Combining \eqref{eq:pi''nktohatpiinfty} with the fact that ${\pi}_{\infty}\in \Pi_{\leq_{\infty}}^{p\text{-opt}}(\mu_{0}^{\infty}, \mu_{1}^{\infty})$, we get that
$ \hat \pi_{\infty} \in \Pi_{\leq_{\infty}}^{p\text{-opt}}(\mu_{0}^{\infty}, \mu_{1}^{\infty})$ as well.
\\ Since by assumption $(\mu^{\infty}_{0},  \mu^{\infty}_{1})$ is strongly timelike $p$-dualisable, we infer that $\hat \pi_{\infty}\{\bar \tau>0\}=1$. 
Thus  $\liminf_{k\to \infty}\pi_{j_{k},n_{k}}''(\{\bar \tau>0\}) \geq \hat \pi_{\infty}(\{\bar \tau>0\})=1$.
For $k$ large enough, set  
\begin{equation}\label{eq:defpij}
 \text{$c_{k}'':= 1/\pi_{j_{k}, n_{k}}'' (\{\bar \tau > 0 \})$,\; $\pi_{k}=c_{k}'' \pi_{j_{k}, n_{k}}''  \llcorner{\{\bar \tau >0\} }$, \; ${\mu}_{0}^{k}:=(P_{1})_{\sharp} \pi_{k}$, ${\mu}_{1}^{k}:=(P_{2})_{\sharp} \pi_{k}$}  
 \end{equation}
 and notice that
 \begin{equation}\label{eq:pijtopiinftynarrow}
 \pi_{k}\to \hat \pi_{\infty}, \quad {\mu}_{0}^{k} \to {\mu}_{0}^{\infty}, \quad {\mu}_{1}^{k} \to {\mu}_{1}^{\infty} \text{ narrowly}.
 \end{equation}
Since the restriction of an optimal coupling is optimal (Lemma \ref{L:restriction}), it follows that  $\pi_{k} \in \Pi_{\bar \leq}^{p\text{-opt}}({\mu}_{0}^{k}, {\mu}_{1}^{k})$ and by construction $ \pi_{k}(\{\bar \tau >0\}=1$.
We conclude that $({\mu}_{0}^{k}, {\mu}_{1}^{k})$  is timelike $p$-dualisable by $\pi_{k}$ and $\ell_{p}(\mu^{k}_{0}, \mu^{k}_{1})\to \ell_{p}(\mu^{\infty}_{0}, \mu^{\infty}_{1})$. 
Up to renaming the indices, the claim \eqref{eq:claimStep1} follows.
\\

\textbf{Step 2}. We prove that the sequences $(\mu_{0}^{j}), (\mu_{1}^{j})$ 
constructed in Step 1  satisfy:
 \begin{equation}\label{eq:claimStep2}
\limsup_{j\to \infty}  \Ent(\mu^{j}_{0}|\mm_{j}) \leq \Ent(\mu^{\infty}_{0}|\mm_{\infty}), \quad  \limsup_{j\to \infty}  \Ent(\mu^{j}_{1}|\mm_{j}) \leq \Ent(\mu^{\infty}_{1}|\mm_{\infty}).
  \end{equation}
We divide this step into two substeps. Recall the definition \eqref{eq:deftildepijn} of $\tilde \pi_{j,n}(\dd x_{1} \dd x_{2} \dd x_{3} \dd x_{4})$ and set $$\mu^{j,n}_{0}:= (P_{2})_{\sharp} \tilde \pi_{j,n}, \quad \mu^{j,n}_{1}:= (P_{4})_{\sharp} \tilde \pi_{j,n}.$$

\textbf{Step 2a}. We first prove  that:
 \begin{equation}\label{eq:claimStep2a}
 \Ent(\mu^{j,n}_{0}|\mm_{j}) \leq  \Ent( (P_{1})_{\sharp} \pi_{\infty,n}|\mm_{\infty}), \quad  \Ent(\mu^{j,n}_{1}|\mm_{j}) \leq   \Ent( (P_{2})_{\sharp} \pi_{\infty,n}|\mm_{\infty}), \quad \forall j,n\in \N.
  \end{equation}
We give the argument for the former in \eqref{eq:claimStep2a}, the latter being completely analogous.   The explicit expression  \eqref{eq:deftildepijn} of $\tilde \pi_{j,n}(\dd x_{1} \dd x_{2} \dd x_{3} \dd x_{4})$ combined with \eqref{eq:defpiinftyn} and with Fubini's Theorem permits to write
  \begin{align}
 &  (P_{1})_{\sharp} \pi_{\infty,n}= \rho^{\infty,n}_{0} \mm_{\infty}; \quad  \rho^{\infty,n}_{0}(x_{1})=\int_{X} \rho_{\infty,n}(x_{1},x_{3}) \, \mm_{\infty}(\dd x_{3}), \quad   (P_{1})_{\sharp} \pi_{\infty,n}\text{-a.e. } x_{1}\in X_{\infty}; \nonumber \\
&  \mu^{j,n}_{0}= \rho^{j,n}_{0} \mm_{j}; \quad  \rho^{j,n}_{0}(x_{2})=\int_{X}\left( \int_{X^{2}} \rho_{\infty,n}(x_{1},x_{3})\,  \ggamma_{j}(\dd x_{3} \dd x_{4})\right) (\ggamma_{j})_{x_{2}}(\dd x_{1}), \quad  \mu^{j,n}_{0}\text{-a.e. } x_{2}\in X_{j}; \label{eq:muj,n0=rhoj,n0}
  \end{align}
  where $\{(\ggamma_{j})_{x_{2}}\}$ is the disintegration of  $\ggamma_{j}$ with respect to $P_{2}$. Since $u(t)= t \log t$ is convex on $[0,\infty)$, Jensen's inequality gives:
  \begin{align*}
  \Ent(\mu^{j,n}_{0}|\mm_{j})&=\int_{X} u\big( \rho^{j,n}_{0}(x_{2})\big) \, \mm_{j}(\dd x_{2})\\
  &\leq \int_{X} \int_{X} u \left( \int_{X^{2}} \rho_{\infty,n}(x_{1},x_{3})\,  \ggamma_{j}(\dd x_{3} \dd x_{4})\right)  (\ggamma_{j})_{x_{2}}(\dd x_{1}) \, \mm_{j}(\dd x_{2}) \\
  &= \int_{X^{2}} u \big( \rho^{\infty,n}_{0}(x_{1}) \big)\,  \ggamma_{j}(\dd x_{1}\dd x_{2})=  \int_{X} u \big( \rho^{\infty,n}_{0}(x_{1}) \,  \mm_{\infty}(\dd x_{1})\\
  &=\Ent(  (P_{1})_{\sharp} \pi_{\infty,n}|\mm_{\infty}).
  \end{align*}

 \textbf{Step 2b}. We prove that the sequences 
 $(\mu^{j}_{0}),\,  (\mu^{j}_{1})$ constructed in Step 1 satisfy:
\begin{equation}\label{eq:claimStep2b}
\limsup_{j\to \infty} \Ent(\mu^{j}_{i}|\mm_{j})\leq 
\limsup_{k\to \infty}\Ent(\mu^{j_{k}, n_{k}}_{i}|\mm_{j_{k}}), \quad i=0,1.
\end{equation}
We give the argument for $i=0$, the case $i=1$ being completely analogous.   From the construction of $\mu^{k}_{0}$ in Step 1 (see \eqref{eq:deftildepijn}, \eqref{eq:convtauppij}, \eqref{eq:mu'ijn}, \eqref{eq:defpijn''}, \eqref{eq:defpij}, see also \eqref{eq:muj,n0=rhoj,n0}) it is not hard to check that $\mu^{k}_{0}=\rho^{k}_{0}\mm_{j_{k}}$,   where $\rho^{k}_{0}\in L^{\infty}(\mm_{j_{k}})$ satisfies
\begin{equation}\label{eq:rhok0rhojn}
0\leq \rho^{k}_{0}\leq c_{k} \, \rho^{j_{k},n_{k}}_{0} \leq c_{k} \|\rho_{\infty,n_{k}}\|_{L^{\infty}(\mm_{\infty}\otimes \mm_{\infty})} \quad \forall k\in \N, \qquad c_{k}\to 1 \text{ as } k\to \infty.
\end{equation}
The fact that  $u(t):=t \log t$ is convex on $[0,\infty)$ and $u(0)=0$, easily yields 
$$u(t+h)-u(t)\geq u(h), \quad \forall t,h\in [0,\infty).$$
Thus, \eqref{eq:rhok0rhojn} combined with Jensen's inequality gives
\begin{equation}\label{eq:uck-1}
\begin{split}
\int u( c_{k} \rho^{j_{k},n_{k}}_{0}) \mm_{j_{k}} -\int   u(\rho^{k}_{0}) \, \mm_{j_{k}}  
& \geq    \int u( c_{k} \rho^{j_{k},n_{k}}_{0}- \rho^{k}_{0}) \, \mm_{j_{k}}  \\
& \geq  u\left(\int (c_{k}  \rho^{j_{k},n_{k}}_{0}- \rho^{k}_{0}) \mm_{j_{k}}  \right) \\
&=  u\left(c_{k}-1\right)\to 0 \text{ as $k\to \infty$}.
\end{split}
\end{equation}
The claim \eqref{eq:claimStep2b} follows immediately from  \eqref{eq:uck-1}. The claim   \eqref{eq:claimStep2} is a straightforward consequence of \eqref{eq:claimStep2a} combined with \eqref{eq:defpiinftyn} and \eqref{eq:claimStep2b}.
  \\

  \textbf{Step 3}. Passing to the limit in the $\TCD$ condition.
\\  For simplicity of presentation we give the argument for the $\TCD^{e}_{p}(0,N)$ condition, the one for general $K\in \R$ being analogous just a bit more cumbersome due to the distortion coefficients. 
 Since for each $j\in \N$ the pair $(\mu_{0}^{j}, \mu_{1}^{j}) \in \big(\Dom(\Ent(\cdot|\mm_{j}))\big)^{2} \subset \Prob(X_{j})^{2}$ is  timelike $p$-dualisable, 
the assumption that $(X_{j},\sfd_{j}, \mm_{j}, \ll_{j}, \leq_{j}, \tau_{j})$ satisfies the $\TCD^{e}_{p}(0,N)$ condition yields the existence of  an  $\ell_{p}$-geodesic $(\mu^{j}_{t})_{t\in [0,1]}$ such that 
 \begin{equation}\label{eq:UnmujtConc}
U_{N}(\mu^{j}_{t}| \mm_{j}) \geq (1-t) \, U_{N}(\mu^{j}_{0}| \mm_{j}) + t \, U_{N}(\mu^{j}_{1}| \mm_{j}), \quad \forall t\in [0,1], \;  \forall j\in \N.
\end{equation} 

Since by construction  
$$\bigcup_{j\in \N, i\in\{0,1\}}\, \supp \, \mu^j_i  \subset E \Subset \bar{X}$$
and $\bar{X}$ is globally hyperbolic, Proposition \ref{prop:GH->KGH} yields that there exists a compact subset $\mathcal{K}\Subset \bar{X}$ such that
\begin{equation}\label{eq:mujtCompactCD}
\bigcup_{j\in \N, t\in [0,1]} \supp\, \mu^j_t \subset \mathcal{K} \Subset \bar{X}.
\end{equation}
Let $\eta^j\in \Prob(C([0,1], \mathcal{K})$ be the $\ell_p$-optimal dynamical plan representing the $\ell_p$-geodesic $(\mu^{j}_{t})_{t\in [0,1]}$.
 Prokhoroff Theorem combined with \eqref{eq:TGeoKcompact} and \eqref{eq:mujtCompactCD}  yields that the sequence $(\eta^j)_{j\in \N}\subset \Prob(C([0,1], \bar{X}))$ is pre-compact in narrow topology, i.e.\;there exists $\eta^\infty\subset \Prob(C([0,1], \bar{X}))$ such that, up to a subsequence,  $\eta^j\to \eta^\infty$ narrowly. 

We now claim that $\eta^\infty$ is an $\ell^p$-optimal dynamical plan from $\mu^\infty_0$ to $\mu^\infty_1$. Indeed, by the continuity of $\ee_t: C([0,1], \bar{X})\to \bar{X}$, it is easily seen that 
\begin{align}
\mu^j_t=(\ee_t)_\sharp \eta^j & \underset{j\to \infty}{\longrightarrow} (\ee_t)_\sharp \eta^\infty=:\mu^\infty_t \quad \text{narrowly, for every $t\in [0,1]$,} \label{eq:mujtomuinftyNarrowTCD}\\
(\ee_0, \ee_1)_\sharp \eta^j & \underset{j\to \infty}{\longrightarrow} (\ee_0, \ee_1)_\sharp \eta^\infty \quad \text{narrowly.} 
\end{align}
By the uniqueness of the narrow limit we infer that $(\ee_i)_\sharp \eta^\infty=\mu^\infty_i$, for $i=0,1$. The causal closedness of $\bar{X}$ ensures that 
$$(\ee_0, \ee_1)_\sharp \eta^\infty\subset \Pi_\leq (\mu^\infty_0, \mu^\infty_1).$$ 

We claim that such a coupling is $\ell_p$-optimal. Recalling that $\bar \tau$ is continuous and bounded (on $E$) and that the plans $(\ee_0, \ee_1)_\sharp \eta^j$ are $\ell_p$-optimal, we infer that
\begin{equation}\label{eq:limitellptaup}
 \ell_p(\mu_0^j, \mu_1^j)= \int_{\bar X^{2}} \bar \tau(x,y)^{p} \, (\ee_0, \ee_1)_\sharp \eta^j (\dd x\dd y) \underset{j\to \infty}{\longrightarrow}   \int_{\bar X^{2}} \bar \tau(x,y)^{p} \, (\ee_0, \ee_1)_\sharp \eta^\infty (\dd x\dd y). 
\end{equation}
The $\ell_p$-optimality of $(\ee_0, \ee_1)_\sharp \eta^\infty$ follows by the combination of \eqref{eq:claimStep1} and \eqref{eq:limitellptaup}.

We are left to show that the $\ell_p$-geodesic $(\mu_t^\infty)_{t\in [0,1]}$ defined in \eqref{eq:mujtomuinftyNarrowTCD} satisfies the $\TCD^e_p(0,N)$ condition.
The joint upper semicontinuity of $U_{N}$ under narrow convergence \eqref{eq:Entljointsc}  yields:
\begin{equation}\label{eq:Gammaliminfjt}
U_{N}(\mu^{\infty}_{t}|\mm_{\infty}) \geq \limsup_{j\in \N} U_{N}(\mu^{j}_{t}|\mm_{j}), \quad \forall t\in [0,1].
\end{equation}
The combination of \eqref{eq:claimStep2}, \eqref{eq:UnmujtConc} and  \eqref{eq:Gammaliminfjt} gives that 
$$
U_{N}(\mu^{\infty}_{t}| \mm_{\infty}) \geq (1-t) \, U_{N}(\mu^{\infty}_{0}| \mm_{\infty}) + t \, U_{N}(\mu^{\infty}_{1}, \mm_{\infty}), \quad \forall t\in [0,1], 
$$
as desired.
\end{proof}

In the proof of Theorem \ref{thm:StabTCD} we made use of the following approximation result.

\begin{lemma}\label{lem:approxpiinftylinfty} 
Let  $(X,\sfd, \mm, \ll, \leq, \tau)$ be a  globally hyperbolic Lorentzian geodesic space. 
\\Let $\mu, \nu\in \Prob_{c}(X), \, \mu,\nu\ll \mm$  such that there exists $\pi\in \Pi^{p\text{-opt}}_{\leq}(\mu,\nu)$ with $\pi(\{\tau>0\})=1$.

Then there exists a sequence $(\pi_{n})\subset \Pi_{\ll}(X^{2})$ with the following properties:
\begin{enumerate}
\item $\pi_{n}= \rho_{n}  \mm\otimes \mm \ll \mm\otimes \mm$ with $\rho_{n}\in L^{\infty}(\mm\otimes \mm)$;
\item $\pi_{n}\to \pi$ in the narrow convergence; 
\item If $(P_{1})_{\sharp} \pi_{n} = : \mu_{n} = \rho_{\mu_{n}}\mm$ 
and $(P_{2})_{\sharp} \pi_{n} =: \nu_{n}=\rho_{\nu_{n}} \mm$, it holds that 
$\rho_{\mu_{n}} \to  \rho_{\mu}$ and $\rho_{\nu_{n}} \to  \rho_{\nu}$ 
in $L^{1}(\mm)$. Moreover,
\begin{equation}\label{eq:narrowEntmunnun}
\lim_{n\to \infty} \Ent(\mu_{n}|\mm) = \Ent(\mu|\mm), \quad   
\lim_{n\to \infty} \Ent(\nu_{n}|\mm) = \Ent(\nu|\mm).
\end{equation}
\end{enumerate}
\end{lemma}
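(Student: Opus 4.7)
The plan is to build each $\pi_n$ by discretizing $\pi$ on a fine Borel partition of $\supp \pi$ lying in $\{\tau>0\}$, and replacing the portion of $\pi$ over each partition rectangle by a product of truncated marginals. This produces a probability measure absolutely continuous with respect to $\mm\otimes\mm$ with density in $L^\infty$, concentrated on $\{\tau>0\}$, and whose marginals will converge to $\mu,\nu$ in the required $L^1$-density and entropy senses.

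Since $\supp \pi$ is compact and contained in the open set $\{\tau>0\}$, for each $n \in \N$ fix refining Borel partitions $\{A_i^n\}_{i\leq N_n}$ of $\supp\mu$ and $\{B_j^n\}_{j\leq M_n}$ of $\supp\nu$, with mesh size tending to zero, and let $I_n:=\{(i,j):\overline{A_i^n\times B_j^n}\subset\{\tau>0\}\}$. By inner regularity of $\pi$ with respect to compact subsets of $\{\tau>0\}$,
\[
c_n := \pi\Big(\bigcup_{(i,j)\in I_n} A_i^n\times B_j^n\Big) = \sum_{(i,j)\in I_n}\pi(A_i^n\times B_j^n) \,\longrightarrow\, 1.
\]
Let $k_n \uparrow \infty$ and set $\rho_\mu^{(n)}:=\min(\rho_\mu, k_n)$, $\rho_\nu^{(n)}:=\min(\rho_\nu, k_n)$. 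Define
\[
\pi_n \,:=\, \frac{1}{c_n}\sum_{(i,j)\in I_n}\pi(A_i^n \times B_j^n)\, \frac{\rho_\mu^{(n)}\mathbf{1}_{A_i^n}\,\mm}{\int_{A_i^n}\rho_\mu^{(n)}\,\mm} \,\otimes\, \frac{\rho_\nu^{(n)}\mathbf{1}_{B_j^n}\,\mm}{\int_{B_j^n}\rho_\nu^{(n)}\,\mm},
\]
which is a probability measure supported on $\bigcup_{(i,j)\in I_n}\overline{A_i^n\times B_j^n}\subset\{\tau>0\}$, absolutely continuous with respect to $\mm\otimes\mm$ with density $\rho_n$ in $L^\infty(\mm\otimes\mm)$ for each $n$ (bounded by $k_n^2$ times a finite combinatorial constant).

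Narrow convergence $\pi_n\to\pi$ reduces to a Riemann-sum argument: for any $\varphi\in C_b(X^2)$, uniform continuity of $\varphi$ on the compact set $\supp\mu\times\supp\nu$ yields $\int\varphi\,\pi_n-\int\varphi\,\pi=O(\omega_\varphi(1/n))+O(1-c_n)\to 0$. For the marginals, direct computation gives
\[
\rho_{\mu_n}(x) \,=\, \frac{\rho_\mu^{(n)}(x)}{c_n}\sum_i \mathbf{1}_{A_i^n}(x)\frac{\sum_{j\,:\,(i,j)\in I_n}\pi(A_i^n\times B_j^n)}{\int_{A_i^n}\rho_\mu^{(n)}\,\mm}\, .
\]
Here $\rho_\mu^{(n)}/c_n \to \rho_\mu$ in $L^1(\mm)$ by dominated convergence, while the step-function factor converges $\mm$-a.e. on $\supp\mu$ to $1$: by Doob's martingale convergence applied to the refining $\sigma$-algebras $\sigma(\{A_i^n\}_i)$, both $\mm(A_i^n)^{-1}\int_{A_i^n}\rho_\mu\,\mm$ and $\mm(A_i^n)^{-1}\int_{A_i^n}\rho_\mu^{(n)}\,\mm$ tend to $\rho_\mu(x)$ at $\mm$-a.e. $x$, and since $\pi\big(\bigcup_{(i,j)\notin I_n}A_i^n\times B_j^n\big)=1-c_n\to 0$ the numerator differs from $\mu(A_i^n)$ by a total-variation-negligible term. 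Scheff\'e's lemma then upgrades a.e. convergence (together with preservation of total mass) to $L^1(\mm)$-convergence; the argument for $\nu_n$ is analogous.

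For the entropy convergence, Jensen's inequality applied to the convex function $u(s)=s\log s$, exactly as in Step 2a of the proof of Theorem \ref{thm:StabTCD}, yields $\Ent(\mu_n|\mm) \leq \Ent\big(c_n^{-1}\rho_\mu^{(n)}\mm\,\big|\,\mm\big) + o(1)$, and the right-hand side tends to $\Ent(\mu|\mm)$ by dominated convergence, noting that compactness of $\supp\mu$ together with \eqref{E:jensenE} gives $\mm$-integrability of $(\rho_\mu\log\rho_\mu)_-$ and that $|\rho_\mu^{(n)}\log\rho_\mu^{(n)}|\leq |\rho_\mu\log\rho_\mu|+ C\,\rho_\mu\mathbf{1}_{\rho_\mu\leq e}$ supplies the dominating function. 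The reverse inequality $\liminf \Ent(\mu_n|\mm)\geq \Ent(\mu|\mm)$ follows from narrow convergence $\mu_n\to\mu$ within the common compact support combined with the lower semicontinuity \eqref{eq:Entlsc}. The main technical obstacle is controlling simultaneously the three sources of error (discretization of $\pi$, truncation at level $k_n$, and renormalization by $c_n$); this is resolved by using refining partitions so that martingale convergence supplies the pointwise a.e. convergence of the step-function factor, while the truncation rate $k_n\to\infty$ is essentially free since $L^\infty$-boundedness of $\rho_n$ is needed only for each fixed $n$, not uniformly.
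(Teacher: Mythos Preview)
Your overall strategy—cover $\supp\pi\subset\{\tau>0\}$ by small rectangles, replace $\pi$ on each rectangle by a product measure, and truncate to force an $L^\infty$ density—is exactly the paper's. The decisive difference is \emph{where} you truncate. The paper first builds $\eta_m=\sum_n (P_1)_\sharp\pi_n\otimes (P_2)_\sharp\pi_n/\pi_n(X^2)$ from the pieces $\pi_n$; by construction $(P_1)_\sharp\eta_m=\mu$ and $(P_2)_\sharp\eta_m=\nu$ \emph{exactly}. Only afterwards is the product density $\rho_m$ truncated at level $C_m$. This order guarantees the pointwise domination $\rho_{\mu,m}\leq \alpha_{C_m,m}\,\rho_\mu$ (see the paper's \eqref{eq:rhomumleqrhomu}), and dominated convergence then gives both the $L^1$ and the entropy convergence immediately.

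You instead truncate the \emph{marginal} densities $\rho_\mu,\rho_\nu$ at level $k_n$ before forming the product, and this destroys the domination. Writing $\rho_{\mu_n}(x)=\rho_\mu^{(n)}(x)\cdot w_{i}/Z_{i}$ on $A_i^n$ with $w_i=c_n^{-1}\sum_{j:(i,j)\in I_n}\pi(A_i^n\times B_j^n)$ and $Z_i=\int_{A_i^n}\rho_\mu^{(n)}\,d\mm$, the ratio $w_i/Z_i$ is \emph{not} uniformly bounded: on cells where $\rho_\mu$ is large compared with $k_n$ one has $Z_i\ll\mu(A_i^n)\sim c_n w_i$, so the step-function factor can blow up. Your appeal to ``Jensen's inequality \ldots\ exactly as in Step~2a of the proof of Theorem~\ref{thm:StabTCD}'' is not valid here: in that step the approximating density is an \emph{average} of a fixed function against a probability kernel, whereas your $\rho_{\mu_n}$ is a pointwise \emph{product} of $\rho_\mu^{(n)}$ with a step function, and there is no convex-combination structure over which to apply Jensen. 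Since $L^1$-convergence of densities does not in general imply convergence of entropies, the entropy upper bound is a genuine gap. The fix is either to truncate the product density as the paper does, or to choose $k_n$ large enough (depending on the partition) that $\int(\rho_\mu-k_n)_+\,d\mm\leq \delta_n\min_i\mu(A_i^n)$, which restores $w_i/Z_i\leq (c_n(1-\delta_n))^{-1}$ and hence $\rho_{\mu_n}\leq \rho_\mu/(c_n(1-\delta_n))$; but this is not ``essentially free'' as you claim, and must be argued explicitly.
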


\begin{proof}
{\bf Step 1.} Basic approximation by product measures.
\\
First,   we  cover $\{\tau > 0\}$ with 
a countable family of products of open subsets $A_{i}\times B_{i} \subset \{\tau > 0\}$: 
\begin{equation}\label{eq:covertau>0}
\{\tau > 0\} = \bigcup_{i\in \N} A_{i}\times B_{i}, \quad  \text{with $A_{i},B_{i}\subset X$  open subsets.}
\end{equation}
Let 
$\bar \pi_{n} : = \pi\llcorner_{\cup_{i\leq n} A_{i}\times B_{i}}$ and define $\pi_{n}:= \bar \pi_{n} - \bar \pi_{n-1}, \, \pi_{0}=0$.  We have the following 
decomposition:
$$
\pi = \sum_{n\in \N} \pi_{n},\qquad \pi_{n}\perp \pi_{m},\qquad  \, 
\pi_{n} (\{\tau > 0\}\setminus  A_{n}\times B_{n}) = 0 .
$$
For $n\geq 1$, consider 
$$
\mu_{n} : = (P_{1})_{\sharp}\pi_{n}, \quad 
\nu_{n} : = (P_{2})_{\sharp}\pi_{n}, \quad
\eta_{n} : = \mu_{n}\otimes \nu_{n}/\pi_{n}(X^{2}).
$$
Observe that $\mu_{n}(X \setminus A_{n}) = \nu_{n}(X \setminus B_{n}) =  
\eta_{n} (X^{2} \setminus \{\tau > 0\}) = 0$ and,  by linearity of projections, 
$\mu = \sum_{n\in \N} \mu_{n}, \quad \nu = \sum_{n\in \N} \nu_{n}$.
Notice moreover that the factor $1/\pi_{n}(X^{2})$ in the definition of 
$\eta_{n}$ is necessary to obtain that 
$(P_{1})_{\sharp}\eta_{n} = \mu_{n}$ and $(P_{2})_{\sharp}\eta_{n} = \nu_{n}$. 
Finally,  set $\eta: = \sum_{n\in \N} \eta_{n}$ and note  that
$$
\eta \in \Pi_{\leq}(\mu,\nu), 
\quad \eta(X^{2}_{\ll})  = 1, \quad \eta \ll \mm\otimes \mm.
$$
Notice moreover that, writing $\eta = \rho\, \mm\otimes \mm$, then 
$$
\rho(x,y) = \rho_{\mu_{n}}(x) \, \rho_{\nu_{n}}(y) \leq   \rho_{\mu}(x) \, \rho_{\nu}(y), \quad \eta\text{-a.e. } (x,y)\in A_{n}\times B_{n},
$$
where 
$\rho_{\mu}$ (respectively $\rho_{\nu},\, \rho_{\mu_{n}},\, \rho_{\nu_{n}}$) is the density of  $\mu$ (resp. $\nu,\, \mu_{n},\, \nu_{n}$)) with respect to $\mm$.
\\

{\bf Step 2.}  We  iterate the construction taking finer coverings of the form \eqref{eq:covertau>0}  to obtain a sequence $(\eta_{m})$ converging in the narrow topology to $\pi$. 
\\Fix any $f,g \in C_{b}(X)$ and observe that
\begin{equation}\label{eq:fgpietapin}
\begin{split}
&\int_{X^{2}} f(x)g(y) \pi(\dd x\dd y)  - \int_{X^{2}} f(x)g(y) \eta(\dd x\dd y)  \\
&\qquad \quad = \sum_{n=1}^{\infty} 
\int_{X^{2}} f(x)g(y) \pi_{n}(\dd x\dd y)  - 
\int_{X} f(x) \mu_{n}(\dd x)\int_{X} g(y)\frac{\nu_{n}(\dd y)}{\pi_{n}(X^{2})}.
\end{split}
\end{equation}
Since $\mu,\nu$ have compact support,  we have that  
$f,g$ are uniformly continuous on $\supp \, \mu \, \cup \, \supp \, \nu \Subset X$. Given any $(x_{n},y_{n}) \in A_{n}\times B_{n}$, we estimate
\begin{equation}\label{eq:fgpinfxngyn}
\left| 
\int_{X^{2}} f(x)g(y) \pi_{n}(\dd x\dd y) - f(x_{n})g(y_{n}) \pi_{n}(X^{2})
\right| \leq \ve \pi_{n}(X^{2}) (\| f \|_{\infty}  + \| g\|_{\infty}),
\end{equation}
where $\ve$ is the modulus of continuity of both $f$ and $g$ over $A_{n}$ 
and $B_{n}$ respectively. Analogously,
\begin{align}
&~ \left| 
\int_{X} f(x)\mu_{n}(\dd x)\int_{X} g(y) \frac{\nu_{n}(\dd y)}{\pi_{n}(X^{2})} 
- f(x_{n})g(y_{n}) \pi_{n}(X^{2})
\right| \nonumber \\ 
&\qquad =~ 
\left| 
\int_{X} f(x)\mu_{n}(\dd x)\int_{X} g(y) \frac{\nu_{n}(\dd y)}{\pi_{n}(X^{2})}
- f(x_{n})\mu_{n}(X)g(y_{n})  \right| \nonumber \\
 &\qquad \leq ~ \left| \int_{X} (f(x) - f(x_{n}))\mu_{n}(\dd x)\int_{X} g(y) \frac{\nu_{n}(\dd y)}{\pi_{n}(X^{2})} \right| 
+ |f(x_{n})\mu_{n}(X)| \left|  \int_{X} g(y) \frac{\nu_{n}(\dd y)}{\pi_{n}(X^{2})} - g(y_{n})  \right| \nonumber \\
&\qquad \leq~ \ve \mu_{n}(X) \| g\|_{\infty}
+ \ve \mu_{n}(X)\| f \|_{\infty}  
= \ve \pi_{n}(X) (\| g\|_{\infty}+\| f \|_{\infty}  ). \label{eq:fmungnun}
\end{align}
Combining  \eqref{eq:fgpietapin}, \eqref{eq:fgpinfxngyn} and \eqref{eq:fmungnun}, we obtain 
$$
\left| \int_{X^{2}} f(x)g(y) \pi  - \int_{X^{2}} f(x)g(y) \eta \right| 
\leq 2 \ve (\| g\|_{\infty}+\| f \|_{\infty} ),
$$
where $\ve$ is the modulus of continuity of both $f$ and $g$  over subsets of  $\supp\,  \mu\, \cup \, \supp \, \nu\Subset X$
 with diameter at most $\sup_{n\in \N} \max\{\diam(A_{n}), \diam(B_{n})\}$. 
\\ Then, considering finer and finer open coverings  
  \begin{equation*}
\{\tau > 0\} = \bigcup_{i\in \N} A_{i}^{m}\times B_{i}^{m},   \text{ with $A_{i}^{m},B_{i}^{m}\subset X$  open sets, }  \lim_{m\to \infty}\sup_{i\in \N} \max\{\diam(A^{m}_{i}), \diam(B^{m}_{i})\}= 0, 
\end{equation*}
 and the corresponding measures $\eta_{m}$ constructed in Step 1, it holds
$$
\eta_{m} \in \Pi_{\leq}(\mu,\nu),\quad \eta_{m}(X_{\ll}^{2}) = 1, 
\quad \eta_{m} \ll \mm \otimes \mm, \quad \eta_{m} \to \pi \text{ narrowly}. 
$$

{\bf Step 3.} Conclusion by truncation and dominated convergence Theorem. \\
Let $\eta_{m}=\rho_{m}\mm\otimes \mm$ be the sequence constructed in Step 2. For any $C>0$ define
$$
\eta_{m}^{C} : = \alpha_{C,m} \min\{\rho_{m}, C \} \; \mm\otimes\mm, 
$$
where  $\alpha_{C,m}$ is the normalization constant.
It is standard  to check that 
$\eta_{m}^{C} \to \eta_{m}$ narrowly as $C \to \infty$.
By a diagonal argument we obtain a sequence 
$\eta_{m}^{C_{m}} \to \pi$ narrowly for some $C_{m} \to \infty$.
Define 
$$
\mu_{m} : = (P_{1})_{\sharp} \eta_{m}^{C_{m}}, 
\quad 
\nu_{m} : = (P_{2})_{\sharp} \eta_{m}^{C_{m}}.
$$
Writing $\mu_{m} = \rho_{\mu,m} \mm$, it holds 
\begin{equation}\label{eq:rhomumleqrhomu}
\rho_{\mu,m}(x) = \alpha_{C_{m},m}\int_{X} \min\{\rho_{m}(x,y), C_{m}\} \mm(\dd y)
\leq \alpha_{C_{m},m}\int_{X} \rho_{m}(x,y) \mm(\dd y) = 
 \alpha_{C_{m},m} \rho_{\mu}(x), 
\end{equation}
where the last identity follows from $(P_{1})_{\sharp}\eta_{m} = \mu$ for 
any $m\in \N$.
Hence, by dominated convergence Theorem,  $\rho_{\mu,m}(x) / \alpha_{C_{m},m}$ is converging to $\rho_{\mu}(x)$ 
in the stronger $L^{1}(\mm)$ norm, as $m\to \infty$. 

For the last claim \eqref{eq:narrowEntmunnun}, without loss of generality we can assume $\Ent(\mu|\mm) <\infty$ (otherwise it is trivial).
From dominated convergence Theorem and \eqref{eq:rhomumleqrhomu}, we deduce that 
$\Ent(\mu_{m}|\mm) \to \Ent(\mu|\mm)$. 
\\To conclude the proof, it is enough to repeat the last arguments  also for $\nu$. 
\end{proof}

\begin{remark}
Recalling from Theorem \ref{thm:CharLorRic} that smooth  globally hyperbolic spacetimes of dimension $\leq N$ with timelike Ricci curvature bounded below by $K\in \R$  satisfy $\TCD^{e}_{p}(K,N)$,
 Theorem \ref{thm:StabTCD}  yields that their  limit spaces (in the sense of Therem \ref{thm:StabTCD}) satisfy  $\mathsf{wTCD}^{e}_{p}(K,N)$. 
\end{remark}

%%%%%%%%%%%%%%%%%%%%%%%%%%%%%%%%%%%%%%%%%%%%%%%%%%%%%%%%%%%%%%%%%%%%%%%%%%%%%%%%
%%%%%%%%%%%%%%%%%%%%%%%%%%%%%%%%%%%%%%%%%%%%%%%%%%%%%%%%%%%%%%%%%%%%%%%%%%%%%%%%
\subsection{Optimal maps in timelike non-branching $\mathsf{TMCP}^{e}(K,N)$ spaces}
\label{Ss:Nonbranching}

In this section we prove some results about existence of optimal transport maps  in timelike non-branching $\mathsf{TMCP}^{e}(K,N)$ spaces, from which we will deduce the uniqueness of $\ell_{p}$-geodesics (this section should be compared with \cite{CM3} where the analogous results were obtained for metric-measure spaces satisfying $\mathsf{MCP}(K,N)$ and essentially non-branching). 

\begin{lemma}\label{lem:findelta}
Let  $(X,\sfd, \mm, \ll, \leq, \tau)$ be a timelike non-branching,  globally hyperbolic, Lorentzian geodesic space satisfying $\mathsf{TMCP}^{e}(K,N)$ for some $p\in (0,1), K\in \R, N\in (0,\infty)$.
Let $\mu_{0}\in \Prob_{c}(X)$, with $\mu_{0}\in \Dom(\Ent(\cdot|\mm))$ and  $\mu_{1}$ be a finite convex combination of Dirac masses, 
i.e. $\mu_{1}:=\sum_{j=1}^{n} \lambda_{j} \delta_{x_{j}}$ for some $\{x_{j}\}_{j=1,\ldots,n}\subset X$ with $x_{i}\neq x_{j}$ for $i\neq j$, 
and  $\{\lambda_{j}\}_{j=1,\ldots,n}\subset (0,1]$ with $\sum_{j=1}^{n} \lambda_{j}=1$. Assume that there exists $\pi\in \Pi^{p\text{-opt}}_{\leq}(\mu_{0},\mu_{1})$ such that $\supp \, \pi \Subset \{\tau>0\}$.

Then $\pi$ is the unique element in  $ \Pi^{p\text{-opt}}_{\leq}(\mu_{0},\mu_{1})= \{\pi\} $ with $\supp \, \pi \Subset \{\tau>0\}$. Moreover such a $\pi$ is induced by a map $T$, i.e. $\pi=(\id, T)_{\sharp} \mu_{0}$ and 
$$
\ell_{p}(\mu_{0}, \mu_{1} )^{p} =\int_{X} \tau(x,T(x))^{p} \, \mu_{0}(\dd x).
$$
\end{lemma}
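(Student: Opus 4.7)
The plan is to decompose $\pi$ according to the atoms of $\mu_1$, apply $\TMCP^e_p(K,N)$ to each component to gain absolutely continuous intermediate measures, then use timelike non-branching together with an entropy rigidity argument to conclude mutual singularity of the initial pieces, yielding the transport map. Uniqueness will then follow by applying the same argument to the convex combination of any two candidate couplings.

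First I would decompose $\pi = \sum_{j=1}^{n} \pi^j$ with $\pi^j := \pi|_{X \times \{x_j\}}$ of total mass $\lambda_j$, and set $\mu_0^j := (P_1)_\sharp \pi^j / \lambda_j$. Since $\mu_0^j \ll \mu_0$, each $\mu_0^j \in \Dom(\Ent(\cdot|\mm))$; by Lemma \ref{L:restriction}, $\pi^j/\lambda_j \in \Pi^{p\text{-opt}}_\leq(\mu_0^j, \delta_{x_j})$. The hypothesis $\supp \pi \Subset \{\tau>0\}$ forces $x \ll x_j$ for $\mu_0^j$-a.e.\ $x$, so Corollary \ref{cor:STDualDelta} makes each pair $(\mu_0^j, \delta_{x_j})$ timelike $p$-dualisable. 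Applying $\TMCP^e_p(K,N)$ would yield an $\ell_p$-geodesic $(\mu_t^j)_{t \in [0,1]}$ with entropy concavity; in particular $\mu_t^j \ll \mm$ for $t \in (0,1)$, and combining the TMCP upper bound at $t \to 0^+$ with lower semicontinuity of $\Ent$ gives $\lim_{t \to 0^+} \Ent(\mu_t^j|\mm) = \Ent(\mu_0^j|\mm)$. I would then lift to $\eta^j \in \OptGeo_{\ell_p}(\mu_0^j, \delta_{x_j})$ via Proposition \ref{prop:ellqgeodCS}, and glue $\eta := \sum_j \lambda_j \eta^j$. Since $(\ee_0, \ee_1)_\sharp \eta = \pi$ and $\pi$ is optimal, $\eta \in \OptGeo_{\ell_p}(\mu_0, \mu_1)$. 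Because the endpoints $x_j$ are distinct and each $\eta^j$ is concentrated on curves ending at $x_j$, the supports $\supp \eta^i \cap \supp \eta^j = \emptyset$ for $i \neq j$, and Proposition \ref{prop:ellqgeodCS}(6) yields $\mu_t^i \perp \mu_t^j$ for every $t \in (0,1)$.

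The main obstacle is to propagate the mutual singularity of the intermediate measures back to $t = 0$. The strategy is an entropy rigidity argument. Mutual singularity and absolute continuity of the $\mu_t^j$ for $t \in (0,1)$ give the pointwise identity $\rho_t \log \rho_t = \sum_j (\lambda_j \rho_t^j) \log(\lambda_j \rho_t^j)$ on the disjoint supports, and hence
\begin{equation*}
\Ent(\mu_t|\mm) = \sum_j \lambda_j \Ent(\mu_t^j|\mm) + \sum_j \lambda_j \log \lambda_j.
\end{equation*}
Combining lower semicontinuity of $\Ent$ along the narrow convergence $\mu_t \to \mu_0$ with the limit $\Ent(\mu_t^j|\mm) \to \Ent(\mu_0^j|\mm)$ established above gives
\begin{equation*}
\Ent(\mu_0|\mm) \leq \sum_j \lambda_j \Ent(\mu_0^j|\mm) + \sum_j \lambda_j \log \lambda_j.
\end{equation*}
On the other hand, the superadditivity of $f(t)=t\log t$ on $[0,\infty)$ (i.e., $f(a+b) \geq f(a)+f(b)$, strict unless $\min(a,b)=0$) applied pointwise to $\rho_0 = \sum_j \lambda_j \rho_0^j$ yields the reverse inequality, with equality if and only if the densities $\rho_0^j$ have pairwise disjoint supports. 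The two inequalities force equality, and hence the $\mu_0^j$ are pairwise mutually singular.

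Once mutual singularity is in hand, one can define $T : \supp\mu_0 \to \{x_1,\dots,x_n\}$ by $T(x) := x_j$ on the essentially disjoint support of $\mu_0^j$; then $\pi = (\id, T)_\sharp \mu_0$ and the cost identity $\ell_p(\mu_0,\mu_1)^p = \int_X \tau(x,T(x))^p\,\mu_0(dx)$ is immediate. For uniqueness, any other $\pi' \in \Pi^{p\text{-opt}}_\leq(\mu_0,\mu_1)$ with $\supp \pi' \Subset \{\tau>0\}$ gives an optimal coupling $(\pi+\pi')/2$ with the same support property, so the map-structure conclusion applied to it produces a single map $\tilde T$ with $(\pi+\pi')/2 = (\id,\tilde T)_\sharp \mu_0$, forcing $\pi = \pi' = (\id,\tilde T)_\sharp \mu_0$.
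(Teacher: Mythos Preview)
Your argument is correct and takes a genuinely different route from the paper.

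The paper argues by contradiction: it defines the set $S$ of points $x$ sent by $\pi$ to at least two distinct atoms, assumes $\mu_0(S)>0$, restricts $\mu_0$ to a compact subset of $S$ where the density is bounded above and below, and then builds two $\ell_p$-optimal dynamical plans $\eta^1,\eta^2$ from this restricted $\mu_0$ to $\delta_{x_1},\delta_{x_2}$. Using $\TMCP^e_p(K,N)$ together with Jensen's inequality, it obtains a \emph{support--measure} lower bound $\liminf_{t\to 0}\mm(\{\rho_t^i>0\})\geq \mm(S)$; on the other hand, both intermediate measures live in a common small $\tau$-tube around $S$ of measure at most $\tfrac{3}{2}\mm(S)$, forcing overlap and contradicting the mutual singularity supplied by Proposition~\ref{prop:ellqgeodCS}(6).

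You instead run a direct \emph{entropy rigidity} argument: you decompose $\pi$ along the atoms of $\mu_1$, apply $\TMCP^e_p(K,N)$ to each piece to get both $\mu_t^j\ll\mm$ and entropy continuity $\Ent(\mu_t^j|\mm)\to\Ent(\mu_0^j|\mm)$ as $t\to 0^+$, use Proposition~\ref{prop:ellqgeodCS}(6) (just as the paper does) to get $\mu_t^i\perp\mu_t^j$ for $t\in(0,1)$, and then compare the two inequalities for $\Ent(\mu_0|\mm)$ coming from lower semicontinuity on one side and pointwise superadditivity of $t\log t$ on the other. Equality forces the densities $\rho_0^j$ to have essentially disjoint supports, which is exactly the map property. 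This is arguably cleaner: it uses the full entropy bound from $\TMCP^e_p(K,N)$ rather than only the derived support--measure estimate, and it avoids the tube/overlap step entirely. The paper's approach, on the other hand, is more geometric and immediately suggests the generalisation to continuous $\mu_1$ carried out in Theorem~\ref{T:1} via Von Neumann selection and Lusin's theorem.
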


\begin{proof}
We first show that $\pi$ is induced by a map, the uniqueness will follow.

Consider the set
\begin{equation}\label{eq:defS}
S:=\{x\in X\,:\, \exists x_{i}\neq x_{j} \ \text{ with } (x,x_{i}),(x,x_{j})\in \supp\, \pi  \} \subset \supp\, \mu_0,
\end{equation}
and, since $\supp \, \mu_{0}$ is compact, $S$ is easily seen to be a closed set and therefore compact.
It will be enough to prove the stronger statement $\mu_{0}(S) = 0$. 

Suppose by contradiction $\mu_{0}(S) > 0$.  Since $\mu_{1}$ is a finite sum of Dirac masses, up to taking a smaller $S$ and up to relabelling the points $x_{j}$,  
we can assume the existence of
$$
T_{1},T_{2} : S \to X, \qquad \text{graph}(T_{1}), \ \text{graph}(T_{2}) \subset \supp\, \pi,
$$
both $\mu_{0}$-measurable with $T_{1}(x)=x_{1}$ and $T_{2}(x)=x_{2}$ for all $x \in S$, with $x_{1} \neq x_{2}$. 

Possibly restricting to a subset of $S$, still of positive
$\mm$-measure, we also assume that if $1/C \leq \rho_{0} \leq C$
over $S$, where $\rho_{0}$ is the density of $\mu_{0}$ 
with respect to $\mm$.
Thanks to Lemma \ref{L:restriction} the couplings 
\begin{equation}\label{eq:deftrunctho0S}
\frac{\chi_{S\times \{x_{1} \}}}{\rho_{0}\mm(S)} \pi, \quad 
\frac{\chi_{S\times \{x_{2} \}}}{\rho_{0}\mm(S)} \pi,
\end{equation}
are optimal. Hence,  with no loss of generality, we can redefine $\mu_{0} : = \mm\llcorner_{S}/\mm(S)$ and consider 
 $\eta^{1} \in {\rm OptGeo}_{\ell_{p}}(\mu_{0},\delta_{x_{1}})$ and $\eta^{2} \in {\rm OptGeo}_{\ell_{p}}(\mu_{0},\delta_{x_{2}})$ given by Proposition \ref{prop:ellqgeodCS}.

Necessarily $\supp\, \eta^1 \cap \supp\, \eta^2=\emptyset $; indeed for $i = 1,2$ it holds $\eta^{i}(\{\gamma \colon \gamma_{1}=x_{i} \}) = 1$ and by construction $x_{1}\neq x_{2}$. 
Thus, again by  Proposition \ref{prop:ellqgeodCS},  it holds
\begin{equation}\label{eq:eteta1perpeteta2}
(\ee_{t})_{\sharp} \eta^1 \perp (\ee_{t})_{\sharp} \eta^2 , \quad \forall t\in (0,1].
\end{equation}
The $\mathsf{TMCP}^{e}(K,N)$ condition \eqref{eq:defTMCP(KN)}  gives that (see \eqref{eq:MCPEnt}),  for $i = 1,2$, 
\begin{equation} \label{eq:rhotitau1}
\int \rho_{t}^{i} \, \log(\rho_{t}^{i})  \, \mm \leq  -\log(\mm(S)) 
-N \log(\sigma_{K/N}^{(1-t)}(\|\tau \|_{L^{2}((\ee_{0},\ee_{1})_{\sharp} \eta_{i})}) ), \quad \forall t \in [0,1), \, i = 1,2,
\end{equation}
where we have written $(\ee_{t})_{\sharp} \eta^{i} = \rho_{t}^{i}\mm$. 
By  Jensen's inequality \eqref{E:jensenE} we have
\begin{eqnarray*}
\int_{X} \rho_{t}^{i}\log(\rho_{t}^{i})  \, \mm  \geq\, -\log(\mm(\{ \rho_{t}^{i} > 0\}))
\end{eqnarray*}
which, combined with \eqref{eq:rhotitau1}, gives

\begin{equation} \label{eq: mmrhot>01}
\liminf_{t\to 0} \mm \left(\{ \rho_{t}^{i} > 0 \} \right) \geq\mm \left(S\right) = \mm \left(\{ \rho_{0}^{i} > 0 \} \right). 
\end{equation}
Denote now
\begin{align*}
E&:= \bigcup_{t\in [0,1], i=1,2} \supp \, (\ee_{t})_{\sharp} \eta^{i}  \\
S^{\varepsilon}_{E} &:= \{ y\in E \,:\, \tau(x,y) \leq \varepsilon \text{ for some } x\in S\} 
\end{align*}
and notice that, by global hyperbolicity and Proposition \ref{prop:GH->KGH}(i), $E$ (and thus also $S^{\varepsilon}_{E}$) is a compact subset of $X$. Moreover, by Dominated Convergence Theorem, we have $\lim_{\varepsilon\to 0} \mm(S^{\varepsilon}_{E})=\mm(S)$. In particular there exists $\varepsilon_{0}>0$ such that 
\begin{equation}\label{eq:3/21}
\mm(S^{\varepsilon_{0}}_{E}) \leq \frac{3}{2} \mm(S).
\end{equation}
We now claim that there  exists a small   $t_{0} > 0$, such that 
\begin{equation}\label{eq:tCharTau11}
\mm \left( \{ \rho_{t_{0}}^{1} > 0 \} \cap \{ \rho_{t_{0}}^{2} > 0 \} \right) > 0.
\end{equation}
To this aim notice that, by construction, for  $(\ee_{t})_{\sharp} \eta^{i}$-a.e. $x \in X$ there exists a timelike geodesic $\gamma \in \TGeo (X)$ such that $x=\gamma_{t}$, $\gamma_{0}\in S$, $\gamma_{1}=x^{i}$, $i=1,2$; in particular, for $t \in [0, \varepsilon_{0}]$ the measure $(\ee_{t})_{\sharp} \eta^{i}$ is concentrated on $S^{\varepsilon_{0}}_{E}$.
But then the combination of  \eqref{eq: mmrhot>01} and  \eqref{eq:3/21} implies that there exists $t_{0}\in (0, \varepsilon_{0})$ satisfying the claim \eqref{eq:tCharTau11}.
\\Observing that \eqref{eq:tCharTau11} contradicts \eqref{eq:eteta1perpeteta2}, we conclude that $\pi$ is induced by a map.

We now show that there exists a unique element $\pi\in \Pi^{p\text{-opt}}_{\leq}(\mu_{0},\mu_{1})$ satisfying $\supp \, \pi \Subset \{\tau>0\}$. Assume by contradiction that there exist $\pi_{1}, \pi_{2}\in \Pi^{p\text{-opt}}_{\leq}(\mu_{0},\mu_{1})$ satisfying $\supp \, \pi_{1}, \supp \, \pi_{2}  \Subset \{\tau>0\}$ with $\pi_{1}\neq \pi_{2}$. By the first part of the proof, we know that there exist maps $T_{1}, T_{2}:X\to X$ such that $\pi_{i}=(\id,T_{i})_{\sharp} \mu_{0}$; in particular $T_{1}\neq T_{2}$ on a $\mu_{0}$-nonnegligible subset. It is straightforward to check that $\pi:=\frac{1}{2} (\pi_{1}+\pi_{2})$ satisfies $\pi\in \Pi^{p\text{-opt}}_{\leq}(\mu_{0},\mu_{1}),  \supp \, \pi  \Subset \{\tau>0\}$ and that $\pi$ cannot be induced by a map. This contradicts the first part of the proof.
\end{proof}

\begin{proposition}\label{prop:AbsContmut}
Let  $(X,\sfd, \mm, \ll, \leq, \tau)$ be a timelike non-branching, globally hyperbolic, Lorentzian geodesic space satisfying $\mathsf{TMCP}^{e}(K,N)$  for some $p\in (0,1), K\in \R, N\in (0,\infty)$.
Let $\mu_{0},\mu_{1}\in \Prob_{c}(X)$, with $\mu_{0}\in \Dom(\Ent(\cdot|\mm))$. Assume that there exists $\pi\in \Pi^{p\text{-opt}}_{\leq}(\mu_{0},\mu_{1})$ such that $\supp \, \pi \Subset \{\tau>0\}$.

Then there exist $\hat \pi \in  \Pi_{\leq}^{p{\text- opt}}(\mu_{0},\mu_{1})$ 
with $\hat \pi(\{\tau > 0 \}) = 1$
and  an $\ell_{p}$-geodesic $(\mu_{t})_{t\in [0,1]}$ 
from $\mu_{0}$ to $\mu_{1}$  satisfying
\begin{equation}\label{eq:UNmutt[0,1)}
U_{N}(\mu_{t}|\mm) \geq \sigma_{K/N}^{(1-t)}(\| \tau \|_{L^{2}(\hat \pi)})\, U_{N}(\mu_{0}|\mm), \quad \forall t\in [0,1).
\end{equation}
In particular $\mu_{t} \ll \mm$ for all $t \in [0,1)$. Moreover any optimal dynamical 
plan $\eta$ representing $(\mu_{t})_{t\in [0,1]}$ is such that 
$(\ee_{0},\ee_{1})_{\sharp}\eta (\{\tau > 0 \}) = 1$.
\end{proposition}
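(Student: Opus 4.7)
The plan is to discretize the target $\mu_{1}$ by finite convex combinations of Dirac masses, apply $\TMCP^{e}_{p}(K,N)$ to each Dirac component separately, glue the resulting dynamical plans using timelike non-branching, and pass to the limit.

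First, I invoke Theorem \ref{T:stability} to produce a sequence $\mu_{1}^{n}=\sum_{j=1}^{k_{n}}\lambda_{j}^{n}\delta_{x_{j}^{n}}\in D$ with $\mu_{1}^{n}\to\mu_{1}$ narrowly, together with $\ell_{p}$-optimal couplings $\pi^{n}\in\Pi^{p\text{-opt}}_{\leq}(\mu_{0},\mu_{1}^{n})$ whose supports sit inside $\{\tau>0\}$, and a subsequential narrow limit $\hat\pi\in\Pi^{p\text{-opt}}_{\leq}(\mu_{0},\mu_{1})$ satisfying $\|\tau\|_{L^{2}(\pi^{n})}\to\|\tau\|_{L^{2}(\hat\pi)}$. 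By Lemma \ref{lem:findelta} each $\pi^{n}$ is induced by a Borel map $T_{n}$, so $\mu_{0}$ partitions canonically as $\mu_{0}=\sum_{j}\lambda_{j}^{n}\mu_{0}^{n,j}$ with mutually singular probability measures $\mu_{0}^{n,j}:=\mu_{0}\llcorner T_{n}^{-1}(x_{j}^{n})/\lambda_{j}^{n}$. Since $x\ll x_{j}^{n}$ for $\mu_{0}^{n,j}$-a.e.\ $x$, the $\TMCP^{e}_{p}(K,N)$ assumption applied to each pair $(\mu_{0}^{n,j},\delta_{x_{j}^{n}})$ produces an $\ell_{p}$-dynamical optimal plan $\eta^{n,j}$ satisfying, for all $t\in[0,1)$,
\[
\Ent((\ee_{t})_{\sharp}\eta^{n,j}|\mm)\leq\Ent(\mu_{0}^{n,j}|\mm)-N\log\sigma^{(1-t)}_{K/N}(\vartheta_{j}^{n}),\quad\vartheta_{j}^{n}:=\|\tau(\cdot,x_{j}^{n})\|_{L^{2}(\mu_{0}^{n,j})}.
\]

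Next, I set $\eta^{n}:=\sum_{j}\lambda_{j}^{n}\eta^{n,j}$ and $\mu_{t}^{n}:=(\ee_{t})_{\sharp}\eta^{n}$. Because $(\ee_{0},\ee_{1})_{\sharp}\eta^{n}=\pi^{n}$ is $\ell_{p}$-optimal and $\eta^{n}$ is concentrated on maximal timelike geodesics, the reverse triangle inequality forces $(\mu_{t}^{n})$ to be an $\ell_{p}$-geodesic from $\mu_{0}$ to $\mu_{1}^{n}$. Since the plans $\eta^{n,j}$ end at distinct points and hence have pairwise disjoint supports, a straightforward iteration of Proposition \ref{prop:ellqgeodCS}(6), combined with timelike non-branching, gives $(\ee_{t})_{\sharp}\eta^{n,j}\perp(\ee_{t})_{\sharp}\eta^{n,k}$ for $j\neq k$ and $t\in(0,1)$ (the case $t=0$ being immediate). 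Using the identity $\Ent(\sum_{j}\alpha_{j}\nu_{j}|\mm)=\sum_{j}\alpha_{j}(\log\alpha_{j}+\Ent(\nu_{j}|\mm))$, valid for mutually singular probability measures, at both $t=0$ and $t\in(0,1)$, the $\lambda_{j}^{n}\log\lambda_{j}^{n}$ contributions cancel on subtraction and I obtain
\[
\Ent(\mu_{t}^{n}|\mm)-\Ent(\mu_{0}|\mm)\leq-N\sum_{j}\lambda_{j}^{n}\log\sigma^{(1-t)}_{K/N}(\vartheta_{j}^{n}).
\]
A Jensen-type inequality for $\vartheta\mapsto-\log\sigma^{(1-t)}_{K/N}(\vartheta)$ as a function of $\vartheta^{2}$, together with $\sum_{j}\lambda_{j}^{n}(\vartheta_{j}^{n})^{2}=\|\tau\|^{2}_{L^{2}(\pi^{n})}$, upgrades this to $U_{N}(\mu_{t}^{n}|\mm)\geq\sigma^{(1-t)}_{K/N}(\|\tau\|_{L^{2}(\pi^{n})})\,U_{N}(\mu_{0}|\mm)$.

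Finally, Proposition \ref{prop:ellqgeodCS}(7) and $\cK$-global hyperbolicity confine all $\mu_{t}^{n}$ to a fixed compact set with a uniform $W_{1}$-length bound; the metric Arzelà–Ascoli theorem then extracts a subsequence with $\mu_{t}^{n}\to\mu_{t}$ narrowly for every $t$. Lemma \ref{lem:ConvEllpNarrow1} and the reverse triangle inequality identify $(\mu_{t})$ as an $\ell_{p}$-geodesic from $\mu_{0}$ to $\mu_{1}$, while the narrow upper semicontinuity \eqref{eq:UNusc} of $U_{N}$, the continuity of $\sigma^{(1-t)}_{K/N}$, and the convergence $\|\tau\|_{L^{2}(\pi^{n})}\to\|\tau\|_{L^{2}(\hat\pi)}$ yield the desired lower bound for $U_{N}(\mu_{t}|\mm)$; absolute continuity $\mu_{t}\ll\mm$ for $t\in[0,1)$ then follows from the finiteness of $U_{N}(\mu_{t}|\mm)$. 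I expect the principal obstacles to be (a) the Jensen step, which relies on the correct convexity/concavity property of the distortion coefficient in $\vartheta^{2}$, and (b) managing the $n$-dependent partition $\{T_{n}^{-1}(x_{j}^{n})\}$ during the limit passage, since it refines as $n\to\infty$ and entropy estimates must remain compatible with the narrow convergence along the subsequence.
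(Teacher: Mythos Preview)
Your plan follows the paper's proof closely: discretize $\mu_{1}$, apply $\TMCP^{e}_{p}(K,N)$ componentwise, exploit mutual singularity via Proposition~\ref{prop:ellqgeodCS}(6), average with the convexity of $k\mapsto\log\sigma_{k}^{(t)}(1)$ (your Jensen step is correct), and pass to the limit with Theorem~\ref{T:stability} and Arzel\`a--Ascoli.  The Jensen step and the limit passage are fine, and concern (b) in your last paragraph is handled exactly as you describe.

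There is, however, one genuine gap.  You claim that Theorem~\ref{T:stability} produces $\ell_{p}$-optimal couplings $\pi^{n}\in\Pi^{p\text{-opt}}_{\leq}(\mu_{0},\mu_{1}^{n})$ \emph{with} $\supp\pi^{n}\Subset\{\tau>0\}$.  Theorem~\ref{T:stability} does not deliver this: it constructs the approximants $\bar\mu_{1,n}$ and shows that narrow limits of any optimal $\pi_{n}$ are optimal, but an optimal coupling between $\mu_{0}$ and $\bar\mu_{1,n}$ may well pair mass along causal-but-not-timelike directions whenever $\supp\mu_{0}\times\supp\bar\mu_{1,n}\not\subset X^{2}_{\ll}$.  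Without $\supp\pi^{n}\Subset\{\tau>0\}$ you cannot invoke Lemma~\ref{lem:findelta} to get the map $T_{n}$, nor can you verify $x\ll x_{j}^{n}$ $\mu_{0}^{n,j}$-a.e.\ to trigger $\TMCP^{e}_{p}(K,N)$, and your appeal to Lemma~\ref{lem:ConvEllpNarrow1} (which needs $\tau^{p}$-cyclical monotonicity) becomes delicate as well.

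The paper fixes this with a separate reduction (its Step~4): cover $\supp\pi$ by finitely many rectangles $A_{i}\times B_{i}\Subset\{\tau>0\}$, rearrange so that the $A_{i}$ are pairwise disjoint, restrict $\pi$ to each $A_{i}\times B_{i}$, and apply the whole argument to the resulting pairs $(\mu_{0,i},\mu_{1,i})$, for which $\supp\mu_{0,i}\times\supp\mu_{1,i}\subset\{\tau>0\}$ holds by construction; then reassemble using the same mutual singularity/Jensen machinery.  Once you insert this reduction before your discretization step, your scheme matches the paper's proof essentially verbatim.
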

\begin{proof}

{\bf Step 1.} Additionally assume $\supp \,\mu_{0} \times \supp \,\mu_{1} \subset \{\tau > 0\}$. \\
If $\supp \, \mu_{1}$ is made of finitely many points, an easier variant of the following arguments give the result (more precisely it is enough to take $n$ to be the number of points in $\supp \, \mu_{1}$ and stop at the end of Step 2). Thus without loss of generality we can assume that $\supp \, \mu_{1}$ contains infinitely many points.
\\Let $B_{i}\subset \supp \, \mu_{1}, \, i=1,\ldots,n$ be a finite Borel partition of $\supp \, \mu_{1}$ with $\mu_{1}(B_{i})>0$ for each $i$. For every $i$ pick a point $x_{1}^{i} \in B_{i}$ and define 
$$\bar \mu_{1} : = \sum_{i=1}^{n} a_{i} \delta_{x_{1}^{i}},$$
 where  $a_{i} := \mu_{1} (B_{i}) $. 
Since $\supp \,\mu_{0} \times \supp \, \mu_{1} \subset \{\tau > 0\}$, there exists 
$\bar{\pi}\in \Pi^{p\text{-opt}}_{\leq}(\mu_{0},\bar{\mu}_{1})$ such that $\supp \, \bar{\pi} \Subset \{\tau>0\}$. 
 Let $T: X\to X$ be the $\ell_{p}(\mu_{0}, \bar{\mu}_{1})$-optimal map associated to $\bar{\pi}$  by Lemma \ref{lem:findelta} and define  $A_{i}:=T^{-1}(x_{1}^{i})$. Observe that the sets  $C_{i}=A_{i}\times \{x_{1}^{i}\}$ satisfy $C_{i}\Subset \{\tau>0\}$ and form a  finite Borel partition of  $\supp \,\bar{\pi}$.
Set  $\bar{\pi}^{i} : = \frac{1}{a_{i}} \bar{\pi} \llcorner_{C_{i}}$ and
$$
\bar{\mu}_{0}^{i} : = (P_{1})_{\sharp}\bar{\pi}^{i}, \qquad \bar{\mu}_{1}^{i} : = (P_{2})_{\sharp}\bar{\pi}^{i}=\delta_{x_{1}^{i}}.
$$
Note that, by construction, $\mu_{0}=\sum_{i} a_{i} \bar{\mu}_{0}^{i}$ and
\begin{equation}\label{eq:mu0iperpmu0j}
\bar{\mu}_{0}^{i} \perp \bar{\mu}_{0}^{j}, \quad \forall i\neq j. 
\end{equation}
Noting that $\supp \, \bar \pi^{i} \Subset \{\tau>0\}$, the  $\mathsf{TMCP}^{e}(K,N)$  condition ensures that there exists an $\ell_{p}$-geodesic $(\bar \mu_{t}^{i})_{t\in [0,1]}$ from  $\bar \mu_{0}^{i}$ to  $\bar \mu_{1}^{i}$
satisfying 
\begin{equation}\label{eq:UNbarmuti}
 U_{N}(\bar \mu_{t}^{i}|\mm) \geq \sigma_{K/N}^{(1-t)}(\| \tau(\cdot,x_{1}^{i}) \|_{L^{2}(\bar \mu_{0}^{i})})\, U_{N}( \bar{\mu}_{0}^{i}|\mm), \quad \forall i=1,\ldots,n, \; t\in [0,1).
\end{equation}
{\bf Step 2.} Taking the logarithm of  \eqref{eq:UNbarmuti} and summing over $i$ (recalling that $\sum_{i} a_{i}=1$), we obtain
\begin{equation}\label{E:entropy}
-\frac{1}{N}\sum_{i}a_{i}\Ent(\bar \mu_{t}^{i}|\mm) 
\geq \sum_{i}a_{i}\log \left( \sigma_{K/N}^{(1-t)}(\| \tau(\cdot,x_{1}^{i}) \|_{L^{2}(\bar \mu_{0}^{i})}) \right)
-\frac{1}{N}\sum_{i}a_{i}\Ent(\bar \mu_{0}^{i}|\mm).
\end{equation}
Call $\bar{\eta}^{i}\in  {\rm OptGeo}_{\ell_{p}}(\bar{\mu}_{0}^{i}, \bar{\mu}^{i}_{1})$ the $\ell_{p}$-optimal dynamical plan representing the $\ell_{p}$-geodesic $(\bar{\mu}_{t}^{i})_{t\in [0,1]}$.
Since by construction $ \bar{\mu}^{i}_{1}=\delta_{x^{i}_{1}}$ and $x^{i}_{1}\neq x^{j}_{1}$ for $i\neq j$, it follows that $\supp \, \bar{\eta}^{i} \cap  \supp \, \bar{\eta}^{j}=\emptyset$ for $i\neq j$. Proposition \ref{prop:ellqgeodCS}  implies that
\begin{equation}\label{eq:muitmujt}
 \bar{\mu}^{i}_{t} \perp  \bar{\mu}^{j}_{t}, \quad \forall t\in (0,1), \quad \forall i\neq j.
\end{equation}
Calling  $\bar \mu_{t} : = \sum_{i}a_{i}\bar \mu_{t}^{i}$ and using \eqref{eq:mu0iperpmu0j}, \eqref{eq:muitmujt} it follows that 
\begin{equation}\label{eq:entmutentmuti}
\Ent(\bar \mu_{t}|\mm) = \Ent\left(\sum_{i}a_{i} \bar \mu_{t}^{i}|\mm\right) = \sum_{i}a_{i}\Ent(\bar \mu_{t}^{i}|\mm) + \sum_{i}a_{i}\log(a_{i}), \quad \forall t\in [0,1).
\end{equation}
Hence adding $- \frac{1}{N}\sum_{i}a_{i}\log(a_{i})$ to both sides of \eqref{E:entropy}, using \eqref{eq:entmutentmuti}, and the convexity of the function $(-\infty,\pi^{2})\ni k \to \log \sigma_{k}^{(t)}(1)$ 
(recall that $\sigma_{k}^{(t)}(\vartheta) = \sigma_{k\vartheta^{2}}^{(t)}(1)$)
we obtain 
$$
U_{N}(\bar \mu_{t}|\mm) \geq 
\sigma_{K/N}^{(1-t)}(\| \tau \|_{L^{2}(\bar \pi)}) \,  U_{N}(\mu_{0}|\mm) , \quad \forall t\in [0,1).
$$

{\bf Step 3.} Taking finer partitions of $\supp \, \mu_{1}$ we can construct a sequence $\{\bar{\mu}_{1}^{k}\}_{k\in \N} \subset \Prob_{c}(X)$ such that each $\bar{\mu}_{1}^{k}$ is a finite convex combination of Dirac masses, $\supp \, \bar{\mu}_{1}^{k} \subset \supp \, \mu_{1}$ for each $k$, and $\bar{\mu}_{1}^{k} \to \mu_{1}$ narrowly.  
We then invoke Theorem \ref{T:stability} to obtain another sequence, that we still denote by 
$\bar{\mu}_{1}^{k}$, that is converging to $\mu_{1}$ narrowly and which is absolutely continuous with the previous 
$\bar{\mu}_{1}^{k}$, hence still obtained as a finite convex combination of Dirac deltas.

For each $k$ let $(\bar \mu_{t}^{k}=(\ee_{t})_{\sharp} \bar{\eta}^{k})_{t\in [0,1]}$ be the $\ell_{p}$-geodesic from $\mu_{0}$ to $\bar{\mu}_{1}^{k}$ and 
$\bar \pi_{k} \in \Pi^{p\text{-opt}}_{\leq}(\mu_{0}, \bar{\mu}_{1}^{k})$ the optimal coupling constructed in {Step 2} satisfying 
\begin{equation}\label{eq:MCPbarmutk}
U_{N}(\bar \mu_{t}^{k}|\mm) \geq \sigma_{K/N}^{(1-t)}(\| \tau \|_{L^{2}((\ee_{0},\ee_{1})_{\sharp}\bar \eta^{k})})\,  
U_{N}(\mu_{0}|\mm) , \quad \forall t\in [0,1).
\end{equation}
Notice indeed that by construction, 
$(\ee_{0},\ee_{1})_{\sharp}\bar \eta^{k} = \bar \pi_{k}$.

We aim to construct a limit $\ell_{p}$-geodesic $(\mu_{t})_{t\in [0,1]}$ from $\mu_{0}$ 
to $\mu_{1}$ satisfying  \eqref{eq:UNmutt[0,1)}.
First of all notice that by global hyperbolicity and Proposition \ref{prop:GH->KGH}(i), 
 $$ \bar{K}:=\bigcup_{t\in [0,1]} \fI(\supp \, \mu_{0}, \supp \, \mu_{1},t)\Subset X$$
 is a  compact subset, see \eqref{eq:defI(A,B,t)},\eqref{I(K1K2)}. It is easily  seen that  
\begin{equation}\label{eq:barmutkSuppComp}
\supp\, \bar{\mu}_{t}^{k}\subset  \fI(\supp \, \mu_{0}, \supp \, \mu_{1},t)\subset  \bar{K}, \quad \forall t\in [0,1], \,k\in \N.
\end{equation}
From \eqref{eq:LengthW1mut} we deduce that 
$$
\sup_{k\in \N} {\rm L}_{W_{1}} \left((\bar{\mu}^{k}_{t})_{t\in [0,1]}\right) \leq \bar{C}<\infty.
$$
By the metric Arzel\'a-Ascoli Theorem we deduce that there exists a limit continuous curve $(\mu_{t})_{t\in [0,1]} \subset (\Prob(\bar{K}), W_{1})$ such that (up to a sub-sequence)
$ W_{1}\left(\bar{\mu}^{k}_{t}, \mu_{t} \right)\to 0 $ and thus $\bar{\mu}^{k}_{t} \to \mu_{t}$ narrowly, as $n\to \infty$.
Recalling the assumption 
of $\pi\in \Pi^{p\text{-opt}}_{\leq}(\mu_{0},\mu_{1})$ with $\supp \, \pi \Subset \{\tau>0\}$, 
Theorem \ref{T:stability} and Lemma  \ref{L:limsup}  yield that 
\begin{equation}\label{eq:ellqnuepst}
t\ell_{p}(\mu_{0},\mu_{1})= t \;\lim_{k\to \infty}  \ell_{p}(\mu_{0},  \bar{\mu}^{k}_{1})= \lim_{k\to \infty}  \ell_{p}(\mu_{0}, \bar{\mu}^{k}_{t})\leq \ell_{p}(\mu_{0},\mu_{t}).
\end{equation}
Thus, by reverse triangle inequality, the curve   $(\mu_{t})_{t\in [0,1]}$ is an $\ell_{p}$-geodesic from $\mu_{0}$ to $\mu_{1}$ 
and any narrow limit $\hat \pi$ of $(\bar \pi_{k})$ is $\ell_{p}$-optimal.
The upper-semicontinuity  \eqref{eq:UNusc} of $U_{N}(\cdot|\mm)$ in narrow topology yields
\begin{equation}\label{eq:UNuscbarmukt}
\limsup_{j\to \infty} \, U_{N}(\bar{\mu}_{t}^{k_{j}}|\mm) \leq U_{N} ( \mu_{t}|\mm), \quad \forall t\in [0,1).
\end{equation}
Combining \eqref{eq:MCPbarmutk} and \eqref{eq:UNuscbarmukt}  gives  the desired  \eqref{eq:UNmutt[0,1)}.
By the additional assumption $\supp \,\mu_{0} \times \supp \,\mu_{1} \subset \{\tau > 0\}$, it follows that $\hat \pi (\{\tau >0 \}) = 1$ and the analogous  property 
for any optimal dynamical plan representing the geodesic.

\smallskip

{\bf Step 4.} Removing the assumption $\supp\, \mu_{0} \times \supp \, \mu_{1} \subset \{\tau > 0\}$.\\
By assumption there exists $\pi\in \Pi^{p\text{-opt}}_{\leq}(\mu_{0},\mu_{1})$ such 
that $\supp \, \pi \Subset \{\tau>0\}$. Since $\supp \, \pi$ is compact, we can find finitely many products of open subsets $A_{i}\times B_{i} \Subset \{\tau > 0\}$, $i=1,\ldots, n$, such that 
$\supp \, \pi \subset \bigcup_{i=1}^{n} A_{i}\times B_{i}$. 
Argueing by induction over $n\in \N$ noticing that
$$
\bigcup_{i=1}^{n} A_{i}\times B_{i}=\left( \left( A_{n}\setminus \bigcup_{i=1}^{n-1} A_{i} \right) \times B_{n} \right) \cup \left(  \bigcup_{i=1}^{n-1}   (A_{i}\cap A_{n})\times (B_{i}\cup B_{n}) \right)  \cup \left(  \bigcup_{i=1}^{n-1} (A_{i}\setminus A_{n}) \times B_{i}\right),
$$
it is easy to see that we can assume  with no loss in generality 
that $A_{i}\cap A_{j} = \emptyset$, provided we admit $A_{i}$ to be Borel. In this way we obtain that $\supp \, \pi \subset \bigcup_{i=1}^{n} A_{i}\times B_{i}$ and $\supp \, \mu_{0} \subset \bigcup_{i=1}^{n} A_{i}$ are both finite Borel pairwise disjoint unions with $A_{i}\times B_{i}\Subset \{\tau>0\}$ for every $i=1,\dots, n$.  Up to taking a subset of indices, we can  assume that $\pi(A_{i}\times B_{i})>0$, for all $i=1,\dots, n$. 

Setting $\bar \pi_{i} : = \pi\llcorner_{ A_{i}\times B_{i}}$, we obtain the following  decomposition:
$$
\pi = \sum_{i \leq n} \bar \pi_{i},\qquad \bar \pi_{i}\perp \bar \pi_{j}\; \text{ for $i\neq j$},\qquad  \, 
\bar \pi_{i} (\{\tau > 0\}\setminus  A_{i}\times B_{i}) = 0 .
$$
Finally, set $\pi_{i} : = \bar \pi_{i}/\bar \pi_{i}(X\times X)$ and  $\mu_{0,i} : = (P_{1})_{\sharp}\pi_{i}$, 
$\mu_{1,i} : = (P_{2})_{\sharp}\pi_{i}$. Clearly, it holds $\mu_{0,i} \perp \mu_{0,j}$ if $i \neq j$. By restriction property, $\pi_{i} \in \Pi_{\ll}^{p\text{-opt}}(\mu_{0,i},\mu_{1,i})$
and we can apply the previous part of the proof to the marginals $\mu_{0,i}, \mu_{1,i}$: 
there exists  $\hat \pi_{i} \in  \Pi_{\leq}^{p\text{-opt}}(\mu_{0,i},\mu_{1,i})$ 
and  an $\ell_{p}$-geodesic $(\mu_{t,i})_{t\in [0,1]}$ 
from $\mu_{0,i}$ to $\mu_{1,i}$  satisfying
$$
U_{N}(\mu_{t,i}|\mm) \geq \sigma_{K/N}^{(1-t)}(\| \tau \|_{L^{2}(\hat \pi_{i})})\, U_{N}(\mu_{0,i}|\mm), \quad \forall t\in [0,1).
$$
In particular $\mu_{t,i} \ll \mm$ for all $t \in [0,1)$.
We can then sum over $i$ the previous inequality 
and, reasoning like in Step 2 by using mutual orthogonality of $\mu_{0,i}$, 
we  have the claims.
\end{proof}

\begin{theorem}\label{T:1}
Let  $(X,\sfd, \mm, \ll, \leq, \tau)$ be a timelike non-branching, globally hyperbolic, Lorentzian geodesic space satisfying $\mathsf{TMCP}^{e}(K,N)$  for some $p\in (0,1), K\in \R, N\in (0,\infty)$.
\\Let $\mu_{0},\mu_{1}\in \Prob_{c}(X)$, with $\mu_{0}\in \Dom(\Ent(\cdot|\mm))$. Assume that there exists $\pi\in \Pi^{p\text{-opt}}_{\leq}(\mu_{0},\mu_{1})$ such that $ \pi \left( \{\tau>0\} \right)=1$.

Then there exists a unique optimal coupling $\pi\in \Pi^{p\text{-opt}}_{\leq}(\mu_{0},\mu_{1})$ such that $ \pi \left( \{\tau>0\} \right)=1$ and it is induced by a map $T$, i.e.  $\pi=(\id, T)_{\sharp} \mu_{0}$ and 
$$
\ell_{p}(\mu_{0}, \mu_{1} )^{p} =\int_{X} \tau(x,T(x))^{p} \, \mu_{0}(\dd x).
$$
\end{theorem}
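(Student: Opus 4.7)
The plan is to prove the uniqueness and the map structure together, by showing that \emph{every} $\ell_{p}$-optimal coupling $\pi\in\Pi^{p\text{-opt}}_\leq(\mu_0,\mu_1)$ with $\pi(\{\tau>0\})=1$ is induced by a Borel map. Uniqueness then follows at once: if $\pi_1,\pi_2$ are two such couplings, then $\tfrac12(\pi_1+\pi_2)$ is again $\ell_{p}$-optimal and concentrated on $\{\tau>0\}$, hence also a graph-coupling, which forces $\pi_1=\pi_2$.

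To prove the map structure I argue by contradiction along the strategy of Lemma \ref{lem:findelta}. If $\pi$ were not induced by a map, then the disintegration $\{\pi_x\}$ of $\pi$ with respect to $\mu_0$ would fail to be $\mu_0$-a.e.\ a Dirac mass. A measurable selection (using a countable basis of the topology, inner regularity of $\mu_0$, and the fact that $\pi(\{\tau>0\})=1$ allows one to first replace $\pi$ by its restriction to $\{\tau\geq\delta\}$ for some small $\delta>0$) produces compact disjoint sets $B_1,B_2\subset X$ and a compact $S\subset X$ with $\mm(S),\mu_0(S)>0$, $S\times B_i\Subset\{\tau>0\}$, on which the density of $\mu_0$ w.r.t.\ $\mm$ is bounded away from $0$ and $\infty$ and $f_i(x):=\pi_x(B_i)\geq\ve$ for some $\ve>0$. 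Reweighting $\pi$ on $S\times B_i$ by $\min(f_1,f_2)/f_i$ and renormalising, Lemma \ref{L:restriction} produces two $\ell_{p}$-optimal couplings $\tilde\pi^1,\tilde\pi^2$ with common first marginal $\mu_0^*:=\mm\llcorner_S/\mm(S)$ and second marginals $\mu_1^i$ supported in $B_i$.

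Applying Proposition \ref{prop:AbsContmut} to each pair $(\mu_0^*,\mu_1^i)$ yields optimal couplings $\hat\pi^i$, dynamical plans $\hat\eta^i$, and $\ell_{p}$-geodesics $(\hat\mu_t^i:=(\ee_t)_\sharp\hat\eta^i)$ with $\hat\mu_t^i\ll\mm$ and $U_N(\hat\mu_t^i|\mm)\geq \sigma^{(1-t)}_{K/N}(\|\tau\|_{L^2(\hat\pi^i)})\,U_N(\mu_0^*|\mm)$. The combined plan $\tilde\pi=\tfrac12(\tilde\pi^1+\tilde\pi^2)$ is $\ell_{p}$-optimal: it is $\ell^p$-cyclically monotone as a sub-measure of $\pi$ and satisfies $\tilde\pi(X^2_{\ll})=1$, so Proposition \ref{P:OptiffMon}(2) applies; a direct cost computation then shows that $\hat\pi:=\tfrac12(\hat\pi^1+\hat\pi^2)$ is also $\ell_{p}$-optimal between $\mu_0^*$ and $\tfrac12(\mu_1^1+\mu_1^2)$ (both couplings attain the value $\tfrac12(\ell_{p}(\mu_0^*,\mu_1^1)^p+\ell_{p}(\mu_0^*,\mu_1^2)^p)$), whence $\hat\eta=\tfrac12(\hat\eta^1+\hat\eta^2)$ is an $\ell_{p}$-optimal dynamical plan. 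Since $B_1\cap B_2=\emptyset$ gives $\supp\,\hat\eta^1\cap\supp\,\hat\eta^2=\emptyset$, Proposition \ref{prop:ellqgeodCS}(6), valid thanks to timelike non-branching, yields the mutual singularity $(\ee_t)_\sharp\hat\eta^1\perp(\ee_t)_\sharp\hat\eta^2$ for every $t\in(0,1)$.

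The contradiction is then a volume count in the spirit of Lemma \ref{lem:findelta}. Jensen's inequality applied to $t\log t$, together with $U_N(\mu_0^*|\mm)^N=\mm(S)$, converts the entropy bound into $\mm(\{\hat\rho_t^i>0\})\geq \sigma^{(1-t)}_{K/N}(\|\tau\|_{L^2(\hat\pi^i)})^N\mm(S)$, which tends to $\mm(S)$ as $t\downarrow 0$. On the other hand, $\supp\,\hat\mu_t^i\subset\fI(S,B_i,t)$ and, by $\cK$-global hyperbolicity and continuity of $\tau$, for $\ve>0$ small and $t$ correspondingly small both supports lie inside an $\ve$-thickening $S_E^\ve$ of $S$ with $\mm(S_E^\ve)\leq\tfrac32\mm(S)$. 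For $t$ small enough we thus have $\mm(\{\hat\rho_t^1>0\})+\mm(\{\hat\rho_t^2>0\})>\tfrac32\mm(S)\geq\mm(S_E^\ve)$, so the two absolutely continuous measures $\hat\mu_t^1,\hat\mu_t^2$ cannot be mutually singular, a contradiction. The main obstacle I anticipate is the measurable-selection step producing $S,B_1,B_2$ with all of the quantitative properties (bounded densities, uniform lower bound on $f_i$, $S\times B_i\Subset\{\tau>0\}$, and control of the $\mm$-measure of the compact thickening) simultaneously; once these are in place, timelike non-branching together with the entropy bound of Proposition \ref{prop:AbsContmut} closes the argument.
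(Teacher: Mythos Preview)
Your argument follows essentially the same strategy as the paper's: reduce to a ``two-target'' situation, invoke Proposition~\ref{prop:AbsContmut} on each branch, use Proposition~\ref{prop:ellqgeodCS}(6) to obtain mutual singularity of the intermediate measures, and derive a volume contradiction as in Lemma~\ref{lem:findelta}. The paper differs only in the selection step: instead of working with the disintegration $\{\pi_x\}$, it fixes an $\ell^p$-cyclically monotone Borel set $\Gamma\subset X^2_{\ll}$ with $\pi(\Gamma)=1$ (Proposition~\ref{P:OptiffMon}), applies Von Neumann selection and Lusin's theorem to obtain continuous maps $T_1,T_2:S_2\to X$ with $\mathrm{graph}(T_i)\subset\Gamma$ and images in disjoint balls, and then simply \emph{redefines} $\mu_0:=\mm\llcorner_{S_2}/\mm(S_2)$, $\mu_1^i:=(T_i)_\sharp\mu_0$. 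Because $\mathrm{graph}(T_i)\subset\Gamma$, the couplings $(\id,T_i)_\sharp\mu_0$ (and their average) are $\ell^p$-cyclically monotone and concentrated on $\{\tau>0\}$, hence optimal by Proposition~\ref{P:OptiffMon}(2); this is the analogue of your cost comparison showing $\hat\pi$ optimal.

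There is one slip in your reweighting step. Reweighting $\pi$ on $S\times B_i$ by $\min(f_1,f_2)/f_i$ does give both $\tilde\pi^1,\tilde\pi^2$ the \emph{same} first marginal, but that marginal has density $\min(f_1,f_2)\,\rho_0/Z$ with respect to $\mm$, not $\chi_S/\mm(S)$. Your subsequent volume bound $\mm(\{\hat\rho_t^i>0\})\geq\sigma^{(1-t)}_{K/N}(\cdots)^N\mm(S)$ uses $U_N(\mu_0^*|\mm)^N=\mm(S)$, which fails unless the density is constant. The fix is immediate: reweight instead by $\chi_{S\times B_i}/(\rho_0\,f_i)$ (bounded since $\rho_0\geq 1/C$ and $f_i\geq\ve$ on $S$), which gives first marginal exactly $\mm\llcorner_S/\mm(S)$---this is precisely the weight the paper uses in \eqref{eq:deftrunctho0S}. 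With this correction your argument goes through; the selection step you flagged as the main obstacle is indeed routine once one restricts to $\{\tau\geq\delta\}$ and uses inner regularity.
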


\begin{proof}
The arguments are along the same lines of the proof of Lemma \ref{lem:findelta} but with some (non-completely trivial) modifications that we briefly discuss. 
\smallskip

{\bf Step 1.} Let $\Gamma \subset X^{2}_{\ll} $ be an $\ell_{p}$-monotone subset such that $\pi(\Gamma)=1$, given by Proposition \ref{P:OptiffMon}. Define
\begin{equation} \label{eq:defGammaGammaxThm}
 \Gamma(x) : =P_{2} \Big(\Gamma \cap ( \{x\}   \times X ) \Big), 
\end{equation}
and $S$ the set of those $x \in X$ such that $\Gamma(x)$ is not a singleton. Note that the set $S$ is Suslin. 
It will be enough to prove the stronger statement $\mu_{0}(S) = 0$. 

So suppose by contradiction $\mu_{0}(S) > 0$. By Von Neumann Selection Theorem, there exists
$$
T_{1},T_{2} : S \to X, \qquad \text{graph}(T_{1}), \ \text{graph}(T_{2}) \subset \Gamma,
$$
both $\mu_{0}$-measurable and $\sfd(T_{1}(x), T_{2}(x)) > 0$, for all $x \in S$. 
By Lusin Theorem, there exists a compact set $S_{1} \subset S$ such that the maps $T_{1}$ and $T_{2}$ are both continuous when restricted to $S_{1}$ 
and $\mu_{0}(S_{1}) > 0$. In particular 
$$
\inf_{x \in S_{1}} \sfd(T_{1}(x), T_{2}(x)) = \min_{x\in S_{1}} \sfd(T_{1}(x), T_{2}(x)) = 2r >0.
$$
Then one can deduce the existence of a couple of points $x_{1}, x_{2}\in X $, of a positive  $r>0$ and of a compact set $S_{2} \subset S_{1}$, again with $\mu_{0}(S_{2})>0$, such that
$$
\{ T_{1}(x) \colon x\in S_{2}  \}\subset B_{r}(x_{1}),\qquad    \{ T_{2}(x)  \colon x\in S_{2} \} \subset B_{r}(x_{2}), 
$$
with $\sfd(x_{1},x_{2}) > 2r$,  where $B_{r}(x_{i})$ is the open ball centred in $x_{i}$ and radius $r$, for $i = 1,2$ with respect to $\sfd$.  By the continuity of $\tau$, up to further reducing $r>0$, we can also suppose that $S_{2} \times \left(B_{r}(x_{1}) \cup  B_{r}(x_{2}) \right) \Subset \{\tau>0\}$. 
\smallskip

{\bf Step 2.} Following the arguments of the proof of Lemma \ref{lem:findelta} (see in particular \eqref{eq:deftrunctho0S}), we can invoke Lemma \ref{L:restriction}
and assume with no loss of generality $\mu_{0}$ to be restricted and renormalised to $S_{2}$. 
In particular we redefine $\mu_{0} : = \mm\llcorner_{S_{2}}/\mm(S_{2})$;  the following measures
 are well defined as well %
$$
\mu_{1}^{1} : = (T_{1})_{\sharp} \mu_{0}, \qquad  \mu_{1}^{2} : = (T_{2})_ {\sharp} \mu_{0};
$$
in particular $\mu_{1}^{1},\mu_{1}^{2}$ are Borel probability measures with $\supp\, \mu_{1}^{1} \cap \supp\, \mu_{1}^{2}=\emptyset $.
\\By Proposition \ref{prop:AbsContmut}  we know there exist  $\ell_{p}$-geodesics $(\mu_{t}^{i})_{t\in [0,1]}$ from $\mu_{0}$ to $\mu_{1}^{i}$, $i=1,2$,  satisfying
\begin{equation}\label{eq:UNmutt[0,1)iproof}
U_{N}(\mu_{t}^{i}|\mm) \geq \sigma_{K/N}^{(1-t)}(\| \tau \|_{L^{2}(\hat \pi^{i})})\, U_{N}(\mu_{0}|\mm), \quad \forall t\in [0,1), \, i=1,2.
\end{equation}
Using \eqref{eq:UNmutt[0,1)iproof}, one can now follow verbatim the proof of Lemma \ref{lem:findelta}  and conclude.
\end{proof}

\begin{theorem}\label{T:2}
Let  $(X,\sfd, \mm, \ll, \leq, \tau)$ be a timelike non-branching, globally hyperbolic, Lorentzian geodesic space satisfying $\mathsf{TMCP}^{e}(K,N)$   for some $p\in (0,1), K\in \R, N\in (0,\infty)$.
\\Let $\mu_{0},\mu_{1}\in \Prob_{c}(X)$, with $\mu_{0}\in \Dom(\Ent(\cdot|\mm))$. Assume that there exists $\pi\in \Pi^{p\text{-opt}}_{\leq}(\mu_{0},\mu_{1})$ such that $\pi (\{\tau>0\})=1$.

Then there exists a unique $\eta\in  {\rm OptGeo}_{\ell_{p}}(\mu_{0},\mu_1)$  with $ (\ee_{0}, \ee_{1})_{\sharp} \eta \, ( \{\tau>0\})=1$ and such $\eta$ is induced by a map, i.e. there exists ${\mathfrak T}:X\to \TGeo(X)$ such that $\eta={\mathfrak T}_{\sharp} \mu_{0}$.
\end{theorem}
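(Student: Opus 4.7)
The strategy is to first produce an $\ell_{p}$-optimal dynamical plan $\eta$ joining $\mu_{0}$ to $\mu_{1}$ by combining Propositions \ref{prop:AbsContmut} and \ref{prop:ellqgeodCS}, and then apply Theorem \ref{T:1} to every intermediate coupling $(\ee_{0},\ee_{t})_{\sharp}\eta$ to conclude that $\eta$ itself is concentrated on the graph of a measurable map ${\mathfrak T}\colon X\to\TGeo(X)$.

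For existence, Proposition \ref{prop:AbsContmut} provides an $\ell_{p}$-geodesic $(\mu_{t})_{t\in[0,1]}$ from $\mu_{0}$ to $\mu_{1}$ with $\mu_{t}\ll\mm$ for $t\in[0,1)$, and Proposition \ref{prop:ellqgeodCS}(2) represents it by some $\eta\in\OptGeo_{\ell_{p}}(\mu_{0},\mu_{1})$ concentrated on $\TGeo(X)$; in particular $(\ee_{0},\ee_{1})_{\sharp}\eta(\{\tau>0\})=1$. The main step is then the following: fix a countable dense subset $D\subset(0,1)$; for each $t\in D$, Proposition \ref{prop:ellqgeodCS}(3) gives $(\ee_{0},\ee_{t})_{\sharp}\eta\in\Pi_{\leq}^{p\text{-opt}}(\mu_{0},\mu_{t})$, while the geodesic identity $\tau(\gamma_{0},\gamma_{t})=t\,\tau(\gamma_{0},\gamma_{1})>0$ valid on $\TGeo(X)$ forces this coupling to be concentrated on $\{\tau>0\}$. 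Since $\mu_{0}\in\Dom(\Ent(\cdot|\mm))$ and $\mu_{t}\in\Prob_{c}(X)$ by $\cK$-global hyperbolicity, the hypotheses of Theorem \ref{T:1} are met, producing a $\mu_{0}$-measurable map $T_{t}\colon X\to X$ with $(\ee_{0},\ee_{t})_{\sharp}\eta=(\id,T_{t})_{\sharp}\mu_{0}$. Intersecting the countably many full-$\eta$-measure sets $\{\gamma:\gamma_{t}=T_{t}(\gamma_{0})\}$ over $t\in D$ and invoking continuity of $t\mapsto\gamma_{t}$ together with the density of $D$ in $[0,1]$, one finds that for $\eta$-a.e.\ $\gamma$ the entire curve is determined by $\gamma_{0}$. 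Disintegrating $\eta=\int\eta_{x}\,\mu_{0}(dx)$ with respect to $\ee_{0}$, each $\eta_{x}$ is then a Dirac mass supported on a curve in $\TGeo(X)$ (since $\supp\,\eta\subset\TGeo(X)$), and setting ${\mathfrak T}(x)$ to be that curve yields $\eta={\mathfrak T}_{\sharp}\mu_{0}$; measurability of ${\mathfrak T}$ follows from standard measurable-disintegration arguments in the Polish space $C([0,1],X)$.

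For uniqueness, if $\eta^{1},\eta^{2}\in\OptGeo_{\ell_{p}}(\mu_{0},\mu_{1})$ both satisfy $(\ee_{0},\ee_{1})_{\sharp}\eta^{i}(\{\tau>0\})=1$, then $\bar\eta:=\tfrac{1}{2}(\eta^{1}+\eta^{2})$ is again an $\ell_{p}$-optimal dynamical plan whose endpoint coupling is concentrated on $\{\tau>0\}$. Applying the previous paragraph to $\bar\eta$ yields $\bar\eta=\bar{\mathfrak T}_{\sharp}\mu_{0}$ for some map $\bar{\mathfrak T}$; since $\eta^{i}\leq 2\bar\eta$, the disintegration of each $\eta^{i}$ with respect to $\ee_{0}$ must also consist of a Dirac mass at $\bar{\mathfrak T}(x)$ for $\mu_{0}$-a.e.\ $x$, and matching $(\ee_{0})_{\sharp}\eta^{i}=\mu_{0}$ forces $\eta^{1}=\eta^{2}=\bar{\mathfrak T}_{\sharp}\mu_{0}$. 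The only mildly delicate point is showing that the countable intersection of the ``time-slice rigidity'' sets $\{\gamma_{t}=T_{t}(\gamma_{0})\}$ retains full $\eta$-measure and extends to rigidity on all of $[0,1]$; this however reduces to the countability of $D$ and to continuity of evaluation, so no genuinely new obstacle arises beyond careful bookkeeping of $\mu_{0}$-null sets.
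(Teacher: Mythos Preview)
Your proposal is correct and follows essentially the same approach as the paper: both reduce the problem to applying Theorem~\ref{T:1} at intermediate times $t\in(0,1)$ to the coupling $(\ee_{0},\ee_{t})_{\sharp}\eta$. The only difference is organizational---the paper argues by contradiction (if $\eta$ were not map-induced, one locates a single rational time $\bar q$ at which the disintegration fails to be Dirac and derives a contradiction with Theorem~\ref{T:1}), whereas you argue directly by intersecting the countably many full-measure sets $\{\gamma:\gamma_{t}=T_{t}(\gamma_{0})\}$ over a dense set of times; both routes are equally valid and rest on the same key input.
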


\begin{proof}
As usual, it is sufficient to show that every $\eta\in  {\rm OptGeo}_{\ell_{p}}(\mu_{0},\mu_1)$  with $ (\ee_{0}, \ee_{1})_{\sharp} \eta \, ( \{\tau>0\})=1$  is induced by a map; indeed if  there exist $\eta_{1}\neq \eta_{2}  \in {\rm OptGeo}_{\ell_{p}}(\mu_{0},\mu_1)$  with $ (\ee_{0}, \ee_{1})_{\sharp} \eta_{i} \, ( \{\tau>0\})=1$ then also $\bar{\eta}:=\frac{1}{2}(\eta_{1}+\eta_{2})$ would be an element of  ${\rm OptGeo}_{\ell_{p}}(\mu_{0},\mu_1)$  with
$ (\ee_{0}, \ee_{1})_{\sharp} \bar \eta \, ( \{\tau>0\})=1$   but  $\bar{\eta}$ cannot be given by a map.

Assume by contradiction there exists $\eta\in  {\rm OptGeo}_{\ell_{p}}(\mu_{0},\mu_1)$  not induced by a map. In particular, given the disintegration of $\eta$ with respect to $\ee_{0}:\TGeo(X)\to X$
$$
\eta = \int_{X} \eta_{x} \, \mu_{0}(\dd x), 
$$
there exists a compact subset $D\subset \supp (\mu_{0})$ with $\mu_{0}(D)>0$ such that for $\mu_{0}$-a.e. $x \in D$ the probability measure $\eta_{x}$ is not a Dirac mass. 
Via a selection argument, for $\mu_{0}$-a.e. $x \in D$ we can also assume that $\eta_{x}$ is the sum of two Dirac masses.
Then for $\mu_{0}$-a.e. $x \in D$ there exist $t=t(x)\in (0,1)$ such that $(\ee_{t})_{\sharp} \eta_{x}$ is not a Dirac mass over $X$.
Then by continuity there exists an  open interval $I=I(x)\subset (0,1)$ containing $t(x)$ above such that  $(\ee_{s})_{\sharp} \eta_{x}$ is still not a Dirac mass over $X$, for every $s \in I(x)$.
\\It follows that we can find a subset  $\bar{D} \subset D \subset X$ still satisfying $\mu_{0}(\bar{D})>0$ with the following property: there exists $\bar{q}\in \Q\cap (0,1)$ such that $(\ee_{\bar q})_{\sharp} \eta_{x}$ is not a Dirac mass, for every  $x \in \bar{D}$.

 Indeed, since  $D=  \bigcup_{q \in 	\Q \cap (0,1)} D_{q}$ where 
$$D_{q}:=\left\{ x \in D \,: \, (\ee_{q})_{\sharp} \eta_{x} \text{ is not a Dirac mass} \right\} $$
and since $\mu_{0}(D)>0$, there must exist  $\bar{q}\in 	\Q \cap (0,1)$ with $\mu_{0}(D_{\bar{q}})>0$; we then set $\bar{D}:=D_{\bar{q}}$.
Set now
$$
\bar{\eta} = \frac{1}{\mu_{0}(\bar{D})} \int_{\bar{D}} \eta_{x} \, \mu_{0}(\dd x). 
$$
Note that $\bar{\eta}$  is an $\ell_{p}$-optimal dynamical plan satisfying $(\ee_{0}, \ee_{\bar{q}})_{\sharp} \bar{\eta} \, (\{\tau>0\})=1$. But $(\ee_{0}, \ee_{\bar{q}})_{\sharp} \bar{\eta}$ is an $\ell_{p}$-optimal coupling which is not given by a map, contradicting Theorem \ref{T:1}. 
\end{proof}

We then summarise the consequences of the previous results in the particular case of 
a uniform distribution as $\mu_{0}$. The main applications will be on the regularity of 
conditional measures of the disintegration of the reference measure $\mm$.

\begin{corollary}\label{C:MCP}
Let  $(X,\sfd, \mm, \ll, \leq, \tau)$ be a timelike non-branching, globally hyperbolic, Lorentzian geodesic space satisfying $\mathsf{TMCP}^{e}(K,N)$  for some $p\in (0,1), K\in \R, N\in [1,\infty)$. Let $\mu_{0},\mu_{1} \in \Prob_{c}(X)$ be two probability measures and 
assume that there exists $\pi\in \Pi^{p\text{-opt}}_{\leq}(\mu_{0},\mu_{1})$ such that $ \pi( \{\tau>0\})=1$.

Then $\pi$ is the unique element  
of $\Pi^{p\text{-opt}}_{\leq}(\mu_{0},\mu_{1})$ concentrated on $\{\tau > 0\}$.
Accordingly there exists a unique optimal dynamical plan $\eta$ such that 
$(\ee_{0},\ee_{1})_{\sharp}\eta = \pi$.  
Moreover, the  $\ell_{p}$-geodesic  
$\mu_{t} = (\ee_{t})_{\sharp} \eta$
satisfies
$\mu_{t} = \rho_{t} \mm \ll \mm$.  Finally, if 
$\mu_{0} = \mm\llcorner_{A_{0}}/\mm(A_{0})$ with  $A_{0} \subset X$ compact subset, then  
\begin{equation}\label{E:MCP}
\mm(\{\rho_{t} > 0 \}) \geq \sigma_{K/N}^{(1-t)}(\| \tau \|_{L^{2}( \pi)})^{N}\mm(A_{0}).
\end{equation}
In particular, calling $A_{1}=\supp \, \mu_{1}$ and using the notation of Proposition \ref{prop:BrunnMnk}, the following timelike half-Brunn-Minkowski  inequality holds:
$$
\mm(A_{t})^{1/N}\geq  \sigma_{K/N}^{(1-t)}(\Theta)\,\mm(A_{0})^{1/N}.
$$
\end{corollary}
\begin{proof}
From Theorem \ref{T:1} it follows the uniqueness of the elements in 
$\Pi_{\leq}^{p\text{-opt}}(\mu_{0},\mu_{1})$ that are also concentrated on 
$X^{2}_{\ll}$; hence $\pi$ is unique. 
From Theorem \ref{T:2} 
we deduce the existence of a unique 
$\eta\in  {\rm OptGeo}_{\ell_{p}}(\mu_{0},\mu_1)$ such that 
$(\ee_{0}, \ee_{1})_{\sharp} \eta \, ( \{\tau>0\})=1$. 
In particular there is only one optimal dynamical plan $\eta$ 
such that $(\ee_{0}, \ee_{1})_{\sharp} \eta = \pi$.

To deduce $\eqref{E:MCP}$ we invoke Proposition \ref{prop:AbsContmut}
together with the following
approximation procedure: denote by $T$ the optimal map associated to $\pi$, i.e. 
$(\id,T)_{\sharp} \mu_{0} = \pi$ (Theorem \ref{T:1}). 
Then by inner regularity, we can find compact sets $\{K_{n}\}_{n\in \N}$ such that 
$$
K_{n}\subset A, \quad \mm(A\setminus K_{n})\to 0, \quad 
\{ (x,T(x))\colon x \in K_{n} \} \subset \{ \tau > 0\},
$$
and $T:K_{n} \to X$ is continuous. In particular, the optimal transport plan 
$\pi_{n} : = (\id,T)_{\sharp} \mm\llcorner_{K_{n}}/\mm(K_{n})$ is such that 
$\spt \, \pi_{n}$ is a compact subset of $\{\tau >0\}$. 

Using Proposition \ref{prop:AbsContmut} with the uniqueness properties 
obtained just few lines above,
we deduce  that
$U_{N}(\mu^{n}_{t}|\mm) \geq \sigma_{K/N}^{(1-t)}(\| \tau \|_{L^{2}( \pi_{n})}) \mm(K_{n})^{1/N}$, 
where $\mu_{t}^{n} = (\ee_{t})_{\sharp} \pi_{n}$. 
Then by taking the limit as $n \to \infty$, from the upper semicontinuity of 
$U_{N}$ \eqref{eq:UNusc} and the convergence of $\| \tau \|_{L^{2}( \pi_{n})}$ 
to $\| \tau \|_{L^{2}( \pi)}$ by the boundedness of $\tau$, we deduce that 
$U_{N}(\mu_{t}|\mm) \geq \sigma_{K/N}^{(1-t)}(\| \tau \|_{L^{2}( \pi)}) 
\mm(A)^{1/N}$.

We  conclude applying twice Jensen's inequality as in \eqref{eq:twiceJensen}.
\end{proof}

\section{Localization of the timelike Measure Contraction Property}\label{Sec:LocTMCP}

\subsection{Transport relation and disintegration associated to a time separation function}
From now on, we make the standing assumptions that $(X,\sfd, \ll, \leq, \tau)$ is a globally hyperbolic Lorentzian geodesic  space and  $V\subset X$ is an achronal FTC  Borel subset (see Definition \ref{def:FTC}).
Recall that, associated to $V$, we have the signed  time-separation function $\tau_{V}:X\to [-\infty, +\infty]$ defined in  \eqref{eq:deftauV}.

\begin{lemma}\label{L:resume}
For each $x\in I^{+}(V)$  there exists a point $y_{x}\in V$ with $\tau_{V}(x)=\tau(y_{x},x)>0$. 
Moreover: 
\begin{equation}\label{eq:tauvzxtau}
\tau_{V}(z) - \tau_{V}(x) \geq \tau(y_{x},z)-\tau(y_{x},x)  \geq  \tau(x,z),  \quad \forall  x,z\in I^{+}(V)\cup V, \, x\leq z.
\end{equation}
\end{lemma}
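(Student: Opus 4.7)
The plan is to prove this lemma as a consolidation of Lemma \ref{L:initialpoint} and Remark \ref{R:c1Lip}. Since the ingredients are all in place, the argument should be short, but it is worth being careful about the endpoint case $x\in V$, which is not explicitly covered by Lemma \ref{L:initialpoint}.

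First I would handle the existence assertion. Fix $x\in I^{+}(V)$. By definition of $\tau_V$ one has $\tau(\cdot,x)\equiv 0$ outside $J^{-}(x)$, and since $(X,\sfd,\ll,\leq,\tau)$ is globally hyperbolic, the time-separation $\tau(\cdot,x):X\to\R$ is finite and continuous (by \cite[Theorem 3.28]{KS}). Hence the supremum defining $\tau_V(x)$ reduces to a supremum of a continuous function on the set $K:=\overline{J^{-}(x)\cap V}$, which is compact by the future timelike completeness of $V$. The supremum is therefore attained at some $y_x\in K\subset V$, giving a point with $\tau(y_x,x)=\tau_V(x)>0$. (One can extend the statement to $x\in V$ trivially by choosing $y_x:=x$, so that $\tau(y_x,x)=0=\tau_V(x)$.)

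Next I would derive the two inequalities in \eqref{eq:tauvzxtau}. The first one is purely definitional: for any $x,z$ as in the statement, by Part~1 we have equality $\tau_V(x)=\tau(y_x,x)$, while by the defining supremum $\tau_V(z)\geq \tau(y_x,z)$ whenever $y_x\in V$ and $z\in I^{+}(V)\cup V$ (using achronality of $V$ to handle the case $z\in V$, where both sides vanish). Subtracting yields $\tau_V(z)-\tau_V(x)\geq \tau(y_x,z)-\tau(y_x,x)$. The second inequality follows from the reverse triangle inequality \eqref{eq:deftau}: since $y_x\leq x\leq z$ (chosen from $V$ and $x\in J^{+}(V)$), one gets $\tau(y_x,z)\geq \tau(y_x,x)+\tau(x,z)$, i.e. $\tau(y_x,z)-\tau(y_x,x)\geq \tau(x,z)$.

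There is no substantial obstacle in this proof; the only delicate point, which I would address at the outset, is ensuring that the compactness required to attain the supremum is supplied by the FTC hypothesis (rather than by global hyperbolicity alone, which would only give compactness of sets of the form $J^{+}(x)\cap J^{-}(y)$). Once this is noted, the remaining steps are immediate consequences of the definition of $\tau_V$ and of the reverse triangle inequality.
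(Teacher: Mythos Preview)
Your proposal is correct and follows essentially the same approach as the paper: the existence part is precisely Lemma~\ref{L:initialpoint}, and the chain of inequalities comes from the definition of $\tau_V$ as a supremum together with transitivity of $\leq$ and the reverse triangle inequality, exactly as in the paper's one-line argument (cf.\ also Remark~\ref{R:c1Lip}). Your explicit handling of the boundary case $x\in V$ by setting $y_x:=x$ is a useful clarification that the paper leaves implicit.
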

\begin{proof}
The first claim follows directly from Lemma \ref{L:initialpoint}.
If $x\leq z$ and $\tau(y_{x},x) >0$, then  
also $y_{x}\leq z$ by transitivity.
By reverse triangle inequality \eqref{eq:deftau}, we deduce   
\eqref{eq:tauvzxtau}.
\end{proof}
\noindent
Notice that \eqref{eq:tauvzxtau} can be extended to the whole 
$X^{2}$ simply by replacing $\tau$ with $\ell$ (recall \eqref{eq:defell}): 
\begin{equation}\label{E:triang}
\tau_{V}(z) - \tau_{V}(x) \geq
\ell(x,z), \qquad \forall x,z \in (I^{+}(V)\cup V)^{2}.
\end{equation}
We can therefore naturally associate to $V$ the following  transport relation:
\begin{equation}\label{E:GammaV}
\Gamma_{V} : =  \{ (x,z) \in (I^{+}(V)\cup V)^{2} \cap X^{2}_{\leq} \, \colon \, 
 \tau_{V}(z) - \tau_{V}(x)  = \tau(x,z)>0 \}  \cup \{(x,x) \,:\, x\in I^{+}(V)\cup V\}.
\end{equation}
Recalling the Definition \ref{D:monotonicity}, 
inequality \eqref{E:triang} readily yields:
\begin{lemma}\label{lem:GammaVell1CM}
The set $\Gamma_{V}$ is $\ell$-cyclically monotone.
\end{lemma}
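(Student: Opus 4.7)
The plan is to unpack the definition of $\Gamma_V$ and exploit the fact that membership in $\Gamma_V$ forces the quantity $\tau_V(z)-\tau_V(x)$ to coincide with $\tau(x,z)$, while for arbitrary causally related pairs only the inequality \eqref{E:triang} holds. This gap is exactly what produces cyclical monotonicity.

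Concretely, fix $N\in\N$ and any family $(x_1,y_1),\dots,(x_N,y_N)\in\Gamma_V$ with the cyclic convention $x_{N+1}=x_1$. By the definition \eqref{E:GammaV} of $\Gamma_V$ (treating the diagonal $x=y$ case, where $\ell(x,y)=\tau(x,y)=0=\tau_V(y)-\tau_V(x)$, as a trivial subcase) we have
\[
\ell(x_i,y_i)=\tau(x_i,y_i)=\tau_V(y_i)-\tau_V(x_i),\qquad i=1,\dots,N,
\]
so summing gives $\sum_{i=1}^N \ell(x_i,y_i)=\sum_{i=1}^N\bigl(\tau_V(y_i)-\tau_V(x_i)\bigr)$.

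For the right-hand side of the cyclical monotonicity inequality, I split into two cases. If there exists some index $i$ with $x_{i+1}\not\leq y_i$, then $\ell(x_{i+1},y_i)=-\infty$ by the definition \eqref{eq:defell}, so $\sum_{i=1}^N\ell(x_{i+1},y_i)=-\infty$ and the inequality is immediate. Otherwise $x_{i+1}\leq y_i$ for every $i$, and since $x_{i+1},y_i\in I^+(V)\cup V$ (being components of pairs in $\Gamma_V$), the extended triangle inequality \eqref{E:triang} yields
\[
\ell(x_{i+1},y_i)=\tau(x_{i+1},y_i)\leq \tau_V(y_i)-\tau_V(x_{i+1}).
\]

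Summing and using the cyclic relabeling $\sum_{i=1}^N\tau_V(x_{i+1})=\sum_{i=1}^N\tau_V(x_i)$, one obtains
\[
\sum_{i=1}^N\ell(x_{i+1},y_i)\leq \sum_{i=1}^N\bigl(\tau_V(y_i)-\tau_V(x_{i+1})\bigr)=\sum_{i=1}^N\bigl(\tau_V(y_i)-\tau_V(x_i)\bigr)=\sum_{i=1}^N\ell(x_i,y_i),
\]
which is precisely the $\ell$-cyclical monotonicity inequality (i.e.\ Definition \ref{D:monotonicity} with $p=1$). Since no step relies on any nontrivial ingredient beyond \eqref{E:triang} and the very definition of $\Gamma_V$, there is no real obstacle here — the statement is essentially a telescoping argument and the role of the lemma is to record it as a clean stepping stone for the disintegration results that follow.
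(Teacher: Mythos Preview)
Your proof is correct and follows essentially the same approach as the paper's: both use that $\ell(x_i,y_i)=\tau_V(y_i)-\tau_V(x_i)$ on $\Gamma_V$, apply \eqref{E:triang} to bound $\ell(x_{i+1},y_i)$ from above by $\tau_V(y_i)-\tau_V(x_{i+1})$, and telescope. The paper's version is simply terser, invoking \eqref{E:triang} directly without splitting into the $x_{i+1}\not\leq y_i$ case (since that case is already absorbed into \eqref{E:triang} via $\ell(x_{i+1},y_i)=-\infty$).
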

\begin{proof}
Take $(x_{1},z_{1}), \dots, (x_{n},z_{n}) \in \Gamma_{V}$ and sum 
$$
\sum_{i = 1}^{n} \ell(x_{i},z_{i}) = \sum_{i = 1}^{n} \tau(x_{i},z_{i}) = \sum_{i = 1}^{n} \tau_{V}(z_{i}) - \tau_{V}(x_{i}) \geq 
\sum_{i = 1}^{n} \ell(x_{i+1},z_{i}). 
$$
\end{proof}
\noindent
A consequence of $\ell$-cyclical monotonicity is the 
alignment along geodesics of the couples:

\begin{lemma}\label{L:geodesicandV}
Consider $(x,z) \in \Gamma_{V}$ with $x \neq z$, $x\notin V$. Then there exist 
$y \in V, \gamma \in \TGeo(y,z)$ and $t \in (0,1)$ such that 
$$
x = \gamma_{t}, \qquad \tau(y,\gamma_{s}) = \tau_{V}(\gamma_{s})  \quad \forall s\in [0,1], \qquad
(\gamma_{s},\gamma_{t}) \in \Gamma_{V}  \quad \forall s\in [0,t].
$$
\end{lemma}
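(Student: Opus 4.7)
The plan is to construct $\gamma$ explicitly by concatenating a maximising geodesic from a ``base point'' $y\in V$ up to $x$ with a maximising geodesic from $x$ up to $z$, and then to verify the listed properties by direct computation. Since $(x,z)\in \Gamma_{V}$ with $x\neq z$ and $x\notin V$, the definition \eqref{E:GammaV} forces $x\in I^{+}(V)$ and $\tau(x,z) = \tau_{V}(z)-\tau_{V}(x)>0$. Lemma \ref{L:initialpoint} then supplies a point $y:=y_{x}\in V$ realising $\tau_{V}(x) = \tau(y,x)>0$; since $X$ is geodesic I obtain timelike maximisers $\gamma^{1}$ from $y$ to $x$ and $\gamma^{2}$ from $x$ to $z$, which I concatenate into a causal curve from $y$ through $x$ to $z$.

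The key observation making this concatenation a maximiser is the following sandwich: reverse triangle inequality \eqref{eq:deftau} gives $\tau(y,z)\geq \tau(y,x)+\tau(x,z) = \tau_{V}(x)+(\tau_{V}(z)-\tau_{V}(x)) = \tau_{V}(z)$, while $\tau(y,z)\leq \tau_{V}(z)$ by the very definition of $\tau_{V}$ (using $y\in V$). Hence the concatenation has $\tau$-length equal to $\tau(y,z)$, so it is a timelike geodesic; reparametrising by (proportional) $\tau$-arc-length on $[0,1]$ yields $\gamma\in \TGeo(y,z)$ with $\gamma_{t}=x$ at $t = \tau_{V}(x)/\tau_{V}(z)\in (0,1)$.

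For the identity $\tau(y,\gamma_{s})=\tau_{V}(\gamma_{s})$ I again argue by sandwiching. The inequality $\tau(y,\gamma_{s})\leq \tau_{V}(\gamma_{s})$ is immediate from $y\in V$. For the reverse inequality I apply Lemma \ref{L:resume} to the causal pair $\gamma_{s}\leq z$, obtaining $\tau_{V}(\gamma_{s})\leq \tau_{V}(z)-\tau(\gamma_{s},z) = s\,\tau_{V}(z) = \tau(y,\gamma_{s})$, where the middle equality uses that $\gamma\in\TGeo(y,z)$ has constant $\tau$-speed $\tau_{V}(z)$. Finally, the third claim $(\gamma_{s},\gamma_{t})\in \Gamma_{V}$ for $s\in[0,t]$ reduces to a direct check: the diagonal case $s=t$ is built into \eqref{E:GammaV}; for $s<t$ one computes $\tau_{V}(\gamma_{t})-\tau_{V}(\gamma_{s}) = (t-s)\tau_{V}(z) = \tau(\gamma_{s},\gamma_{t})>0$, together with $\gamma_{s}\in I^{+}(V)\cup V$ (automatic, since $\gamma_{0}=y\in V$, and $\gamma_{s}\in I^{+}(V)$ for $s>0$ as $\tau(y,\gamma_{s})>0$).

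I do not anticipate any serious obstacle: the argument is the standard ``potential/cost equality forces alignment along geodesics'' phenomenon, with the reverse triangle inequality providing all the rigidity. The only delicate input is the existence of a maximising point $y_{x}\in V$ for $\tau_{V}(x)$, which is precisely where the achronal FTC hypothesis enters through Lemma \ref{L:initialpoint}.
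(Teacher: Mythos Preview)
Your proposal is correct and follows essentially the same approach as the paper: obtain $y\in V$ realising $\tau_{V}(x)$, use the sandwich $\tau_{V}(z)\leq \tau(y,x)+\tau(x,z)\leq \tau(y,z)\leq \tau_{V}(z)$ to see that the concatenated curve is a maximiser, and then derive the two remaining claims by the same sandwich $\tau(y,\gamma_{s})\leq \tau_{V}(\gamma_{s})\leq \tau_{V}(z)-\tau(\gamma_{s},z)=\tau(y,\gamma_{s})$. Your write-up is in fact slightly more explicit than the paper's (you give the value of $t$ and verify $\gamma_{s}\in I^{+}(V)\cup V$), but the underlying argument is identical.
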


\begin{proof}
From Lemma \ref{L:resume}, we have the existence of 
$y \in V$ such that $\tau_{V}(x) = \tau(y,x) > 0$. Moreover from 
$(x,z) \in \Gamma_{V}$ we get $(y,z) \in X_{\leq}^{2}$ and
$$
\tau(y,z) \leq \tau_{V}(z) = \tau_{V}(x) + \tau(x,z)   
= \tau(y,x)+\tau(x,z)  \leq \tau (y,z),
$$
yielding $0 < \tau(y,x) + \tau(x,z)= \tau(y,z) $ and $(y,z) \in \Gamma_{V}$. 
Hence we can concatenate a timelike geodesic from $y$ to $x$ with a timelike geodesic from $x$ to $z$ (whose existence is guaranteed by fact that $X$ is a Lorentzian geodesic space)  in order to obtain $\gamma \in \TGeo(y,z)$ and $t \in (0,1)$ such that 
$\gamma_{t} = x$, proving first claim.  In order to show the second claim, observe that for any $s \in [0,1]$ it holds:
$$
 \tau_{V}(\gamma_{s}) = 
 \tau_{V}(\gamma_{1})  
-\tau_{V}(\gamma_{1}) + \tau_{V}(\gamma_{s}) = \tau(y,z) 
-\tau_{V}(\gamma_{1}) + \tau_{V}(\gamma_{s}).
$$
From \eqref{eq:tauvzxtau} we know that  $\tau_{V}(\gamma_{1}) - \tau_{V}(\gamma_{s})\geq \tau(\gamma_{s},\gamma_{1})$ hence it follows that 
$$
\tau(y,\gamma_{s}) \leq \tau_{V}(\gamma_{s}) \leq \tau(y,z) - \tau(\gamma_{s},z) = \tau(y,\gamma_{s}),
$$
proving the second point.
For the last point, simply observe that
$$
\tau_{V}(\gamma_{t}) -\tau_{V}(\gamma_{s}) = \tau(y,\gamma_{t}) - \tau(y,\gamma_{s})
= \tau (\gamma_{s}, \gamma_{t}).
$$
\end{proof}

Next, we set $\Gamma_{V}^{-1}:=\{(x,y)\,:\, (y,x)\in \Gamma_{V}\}$ and we consider the \emph{transport relation} $R_{V}$ and 
the \emph{transport set with endpoints} $\T_{V}^{e}$:
\begin{equation}\label{E:transport}
R_{V} : = \Gamma_{V} \cup \Gamma_{V}^{-1}, \qquad 
\T_{V}^{e} : = P_{1}(R_{V}\setminus \{ x = y \}).
\end{equation}
The transport relation will be an equivalence relation on a specific subset of 
$\T_{V}^{e}$ that we will now construct.
Firstly we consider the following subsets of $\T_{V}^{e}$:
\begin{equation}\label{eq:defendpoints}
\begin{split}
\fa(\T_{V}^{e}) : =&~ \{ x \in \T_{V}^{e} \colon \nexists y \in \T_{V}^{e} \ s.t. \ (y,x) \in \Gamma_{V}, y\neq x \} \\
\fb(\T_{V}^{e}) : =&~ \{ x \in \T_{V}^{e} \colon \nexists y \in \T_{V}^{e} \ s.t. \ (x,y) \in \Gamma_{V}, y\neq x \},
\end{split}
\end{equation}
called the set of \emph{initial} and \emph{final points}, respectively.
Define the \emph{transport set without endpoints} 
\begin{equation}\label{E:nbtransport}
\T_{V} : = \T_{V}^{e} \setminus (\fa(\T_{V}^{e}) \cup \fb(\T_{V}^{e})).
\end{equation}

\begin{lemma}\label{lem:I+VTV}
It holds  $ I^{+}(V)  = (\mathcal{T}_{V} \cup \fb(\mathcal{T}_{V}^{e}))\setminus V$ and  $V \supset \fa(\T_{V}^{e})$.
\end{lemma}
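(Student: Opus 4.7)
The plan is to reduce both claims to the single observation that every point of $I^{+}(V)$ already lies in $\mathcal{T}_{V}^{e}$, combined with the obvious inclusion $\mathcal{T}_{V}^{e}\subset I^{+}(V)\cup V$ and achronality of $V$.

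First I would verify the inclusion $I^{+}(V)\subset \mathcal{T}_{V}^{e}$. Given $x\in I^{+}(V)$, Lemma \ref{L:resume} provides $y_{x}\in V$ with $\tau_{V}(x)=\tau(y_{x},x)>0$, and since $\tau_{V}(y_{x})=0$ by the achronality of $V$, one immediately has
\[
\tau_{V}(x)-\tau_{V}(y_{x})=\tau(y_{x},x)>0,\qquad y_{x}\leq x,\qquad (y_{x},x)\in (V\cup I^{+}(V))^{2},
\]
so $(y_{x},x)\in \Gamma_{V}\setminus\{x=y\}$; hence both $y_{x}$ and $x$ lie in $\mathcal{T}_{V}^{e}$. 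From the very definition of $\Gamma_{V}$ and $R_{V}$ it is also clear that $\mathcal{T}_{V}^{e}\subset I^{+}(V)\cup V$.

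Next I would establish the second claim $\fa(\mathcal{T}_{V}^{e})\subset V$. Assume by contradiction that $x\in\fa(\mathcal{T}_{V}^{e})$ but $x\notin V$. Then by the previous step $x\in I^{+}(V)$, and the point $y_{x}\in V$ constructed above satisfies $(y_{x},x)\in\Gamma_{V}$, $y_{x}\neq x$, with $y_{x}\in\mathcal{T}_{V}^{e}$; this directly contradicts the definition \eqref{eq:defendpoints} of initial point, giving the required inclusion.

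Finally, for the first claim, I would use $\fa(\mathcal{T}_{V}^{e})\subset V$ just proved to rewrite
\[
(\mathcal{T}_{V}\cup \fb(\mathcal{T}_{V}^{e}))\setminus V=(\mathcal{T}_{V}^{e}\setminus \fa(\mathcal{T}_{V}^{e}))\setminus V=\mathcal{T}_{V}^{e}\setminus V,
\]
so it suffices to show $I^{+}(V)=\mathcal{T}_{V}^{e}\setminus V$. The inclusion $\subset$ is immediate: any $x\in I^{+}(V)$ lies in $\mathcal{T}_{V}^{e}$ by the first step and cannot belong to $V$ by achronality (otherwise $V$ would contain the chronologically related pair $(y_{x},x)$). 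The reverse inclusion is $\mathcal{T}_{V}^{e}\setminus V\subset I^{+}(V)$, which is exactly what the inclusion $\mathcal{T}_{V}^{e}\subset I^{+}(V)\cup V$ delivers.

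There is no substantial obstacle: the only subtle point is remembering that $\tau_{V}$ vanishes on $V$ thanks to achronality, so that the point $y_{x}$ supplied by Lemma \ref{L:resume} automatically produces the pair in $\Gamma_{V}$ that prevents $x\in I^{+}(V)$ from being an initial point. Everything else is bookkeeping with the definitions of $\Gamma_{V}$, $\mathcal{T}_{V}^{e}$ and $\mathcal{T}_{V}$.
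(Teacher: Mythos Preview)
Your proof is correct and follows essentially the same approach as the paper's: both arguments hinge on invoking Lemma~\ref{L:resume} to produce, for each $x\in I^{+}(V)$, a point $y_{x}\in V$ with $(y_{x},x)\in\Gamma_{V}$, which simultaneously places $x$ in $\mathcal{T}_{V}^{e}$ and excludes it from $\fa(\mathcal{T}_{V}^{e})$. The only organisational difference is that you establish $\fa(\mathcal{T}_{V}^{e})\subset V$ first and use it to rewrite $(\mathcal{T}_{V}\cup\fb(\mathcal{T}_{V}^{e}))\setminus V=\mathcal{T}_{V}^{e}\setminus V$, whereas the paper argues the main equality directly; this is a matter of presentation, not substance.
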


\begin{proof}
By definition $R_{V} \subset  (I^{+}(V)\cup V)^{2}$ and  
since $V$ is achronal $I^{+}(V)\cap V=\emptyset$; hence
the inclusion $ I^{+}(V)  \supset (\mathcal{T}_{V} \cup \fb(\mathcal{T}_{V}^{e}))\setminus V$ is trivial.
To show the converse inclusion, for every $x\in I^{+}(V)$  
Lemma \ref{L:resume} ensures the existence of 
$y \in V$ such that $\tau_{V}(x) = \tau(y,x)>0$. Thus $(x,y)\in \Gamma_{V}^{-1}\subset R_{V}$, giving that $x\in \T_{V}^{e} \setminus \fa(\T_{V}^{e})= \T_{V} \cup \fb(\T_{V}^{e})$.
The argument for the second inclusion is trivial.
\end{proof}

\begin{proposition}\label{P:equivalence}
Assume in addition to the previous assumptions that $X$ is timelike (backward and forward) non-branching.
Then the transport relation $R_{V}$ 
is an equivalence relation over $\T_{V}$.
\end{proposition}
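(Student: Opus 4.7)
To prove $R_V$ is an equivalence relation on $\mathcal{T}_V$ I would first dispose of reflexivity (built into $\Gamma_V$ via the diagonal on $I^+(V)\cup V$) and symmetry (built into $R_V=\Gamma_V\cup\Gamma_V^{-1}$); the entire content lies in transitivity. Given $x,y,z\in\mathcal{T}_V$ with $(x,y),(y,z)\in R_V$, I would split according to whether each of the two pairs lies in $\Gamma_V$ or in $\Gamma_V^{-1}$. The two ``aligned'' combinations, in which both pairs belong to $\Gamma_V$ or both to $\Gamma_V^{-1}$, are straightforward: when $(x,y),(y,z)\in\Gamma_V$, summing the defining identities gives $\tau_V(z)-\tau_V(x)=\tau(x,y)+\tau(y,z)$, and combining with the reverse triangle inequality for $\tau$ and with \eqref{E:triang} forces $\tau(x,z)=\tau_V(z)-\tau_V(x)>0$, so $(x,z)\in\Gamma_V$; the opposite alignment is symmetric.

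The remaining two cases ``bifurcate'' at $y$ and are where the non-branching hypothesis becomes indispensable. Take $(y,x),(y,z)\in\Gamma_V$ first. Since $y\in\mathcal{T}_V$ is not in $\fa(\mathcal{T}_V^e)$, it has a $\Gamma_V$-predecessor in $\mathcal{T}_V^e\subset I^+(V)\cup V$, so $\tau_V(y)>0$ and Lemma \ref{L:resume} supplies $y_0\in V$ with $\tau(y_0,y)=\tau_V(y)$, i.e.\ $(y_0,y)\in\Gamma_V$; the aligned case then yields $(y_0,x),(y_0,z)\in\Gamma_V$. Fix a timelike maximiser $\beta$ from $y_0$ to $y$, prolong it by maximisers from $y$ to $x$ and from $y$ to $z$, and $\tau$-arclength parametrise the results to produce timelike geodesics $\gamma^1,\gamma^2$ of $\tau$-lengths $\tau_V(x),\tau_V(z)$ that coincide with $\beta$ on the initial segment $[0,\tau_V(y)]$. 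Assuming without loss of generality $\tau_V(z)\leq\tau_V(x)$, I would restrict both to $[0,\tau_V(z)]$ and affinely rescale to $[0,1]$: the resulting $\hat\gamma^1,\hat\gamma^2\in\TGeo(X)$ agree on $[0,\tau_V(y)/\tau_V(z)]\subset[0,1)$, so forward timelike non-branching forces $\hat\gamma^1\equiv\hat\gamma^2$. Consequently $z$ sits on $\gamma^1$ strictly between $y$ and $x$ (or equals $x$), and since the whole of $\gamma^1$ lies in $\Gamma_V$ by Lemma \ref{L:geodesicandV}, one reads off $(z,x)\in\Gamma_V$, hence $(x,z)\in R_V$.

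The dual bifurcating case $(x,y),(z,y)\in\Gamma_V$ is handled in the same spirit: using a $\Gamma_V$-successor $y^+$ of $y$ (available because $y\notin\fb(\mathcal{T}_V^e)$) and the aligned case, one extends $x\to y\to y^+$ and $z\to y\to y^+$ through a common maximiser $\beta$ from $y$ to $y^+$, passes to sub-geodesics of equal $\tau$-length ending at $y^+$, and invokes backward timelike non-branching (equivalently, forward non-branching for the causally reversed structure) to conclude that $x$ lies on the geodesic representing $(z,y^+)\in\Gamma_V$ between $z$ and $y$, whence $(z,x)\in\Gamma_V$. The main obstacle in this plan is purely notational: Definition \ref{def:TNB} is stated for elements of $\TGeo(X)$ parametrised affinely on $[0,1]$ that agree on a sub-interval of the form $[0,\bar t]$ (resp.\ $[\bar t,1]$) with $\bar t\in(0,1)$, so the two geodesics built by concatenation must first be restricted to matching $\tau$-lengths and rescaled before the axiom can be applied; the conceptual input, once this bookkeeping is done, is the elementary fact that a common initial (or final) sub-segment pins down the entire timelike geodesic.
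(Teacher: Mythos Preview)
Your proof is correct and follows essentially the same strategy as the paper: dispose of reflexivity and symmetry, split transitivity into the four $\Gamma_V/\Gamma_V^{-1}$ combinations, handle the aligned cases by the reverse triangle inequality plus \eqref{E:triang}, and resolve the two bifurcating cases by extending through a common segment and invoking forward/backward timelike non-branching. The only cosmetic difference is that in the case $(y,x),(y,z)\in\Gamma_V$ the paper takes an arbitrary $\Gamma_V$-predecessor $w$ of $y$ (from $y\notin\fa(\T_V^e)$) as the anchor, whereas you go all the way back to $y_0\in V$ via Lemma~\ref{L:resume}; both choices work, and your more careful discussion of the reparametrisation needed to apply Definition~\ref{def:TNB} is a welcome clarification.
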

\begin{proof}
The reflexive property $(x,x) \in R_{V}$ for all $x \in \T_{V}$, as well as symmetry,  hold by the very definitions of $\Gamma_{V}$ and $R_{V}$. We are then left to show transitivity:
for every $(x,y),(y,z) \in R_{V}$ we next prove that $(x,z) \in R_{V}$. Clearly we can assume $x \neq y \neq z$, otherwise the claim is trivial.

\smallskip
\noindent
{\bf Case 1:} $(x,y), (y,z) \in \Gamma_{V}$. Using \eqref{eq:tauvzxtau}  and reverse triangle inequality we have 
\begin{align*}
  \tau_{V}(z) - \tau_{V}(x)  \geq \tau(x,z) 
	\geq \tau(x,y) + \tau(y,z) 	&~  =   \tau_{V}(y) - \tau_{V}(x)   + \tau_{V}(z) - \tau_{V}(y) =   \tau_{V}(z) - \tau_{V}(x).
\end{align*}
Hence $\tau(x,z) =  \tau_{V}(z)- \tau_{V}(x)$ 
and therefore 
$(x,z) \in \Gamma_{V} \subset R_{V}$.

\smallskip
\noindent
{\bf Case 2:} $(x,y), (y,z) \in \Gamma_{V}^{-1}$. Hence $(z,y), (y,x) \in \Gamma_{V}$ and therefore $(z,x) \in \Gamma_{V}$  from case 1.

\smallskip
\noindent
{\bf Case 3:} $(x,y) \in \Gamma_{V}$ and $(y,z) \in \Gamma_{V}^{-1}$. Hence $(x,y),(z,y) \in \Gamma_{V}$.
Since $y \notin \fb(\T_{V}^{e})$, there exists $w \in \T_{V}$ such that $(y,w) \in \Gamma_{V}$ and $y\neq w$. 
Hence from $(x,y), (z,y),(y,w) \in \Gamma_{V}$ 
we deduce like in case 1 that
$$
\tau(x,y) + \tau(y,w)  = \tau(x,w)>0, \qquad   \tau(z,y) + \tau(y,w)  = \tau(z,w)>0.
$$
Since by assumption $X$ is a Lorentzian geodesic space,  there exist  $\gamma^{1} \in \TGeo(x,w),\, \gamma^{2} \in \TGeo(z,w)$ 
with common intermediate point $y$. 
Then from the backward non-branching assumption, necessarily 
$\gamma^{1}_{[0,1]} \subset \gamma^{2}_{[0,1]}$ 
(or the other inclusion) holds true.  
Indeed if  the two maximizing geodesics $\gamma^1$ and $\gamma^2$ are distinct they cannot meet at the intermediate point $y$: otherwise one defines a new timelike curve $\eta$, say from $z$ to $w$,  defined by $\gamma^2$ from $z$ to $y$ and 
by $\gamma^1$ from $y$ to $w$. The curve $\eta$ will be a timelike geodesic from $z$ to $w$ by construction, it will coincide with $\gamma^2$ on a non-trivial interval, nonetheless it will not coicide with $\gamma^2$ giving a contradiction.

The last claim of Lemma \ref{L:geodesicandV} finally gives $(x,z) \in R_{V}$.

\smallskip
\noindent
{\bf Case 4:} $(x,y) \in \Gamma_{V}^{-1}$ and $(y,z) \in \Gamma_{V}$. The argument is analogous to
 case 3: since $y \notin \fa(\T_{v}^{e})$, 
there exists $w \in \T_{V}$ such that $(w,y) \in \Gamma_{V}$ and $w \neq y$. Then from the Lorentzian geodesic and (now forward) non-branching assumption, necessarily all the points $w,y,x,z$ lie on the same strictly timelike geodesics, giving that $(x,z) \in R_{V}$.
\end{proof}

\begin{lemma}\label{lem:XalphaI}
For each equivalence class $[x]$ of $(\T_{V,}R_{V})$ there exists a  convex set  $I\subset \R$ of the Real line  and a bijective map $F:I\to [x]$ satisfying:
\begin{equation}\label{eq:FIsometry}
\tau(F(t_{1}), F(t_{2})) = t_{2}-t_{1}, \quad \forall \,t_{1}\leq t_{2} \in I.
\end{equation}
Moreover, calling $\overline{\{z\in [x]\}}$ the topological closure of $\{z\in [x]\}\subset X$, it holds 
\begin{equation}\label{eq:closureVSendpoints}
\overline{\{z\in [x]\}}\setminus \{z\in [x]\} = \overline{\{z\in [x]\}}\setminus \T_{V}  \subset \fa(\T_{V}^{e}) \cup \fb(\T_{V}^{e}).
\end{equation}
\end{lemma}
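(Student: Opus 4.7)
My plan is to use $\tau_V$ itself as the intrinsic coordinate on $[x]$. First I would show that $\tau_V$ is injective on $[x]$: if $y_1, y_2 \in [x]$ satisfy $\tau_V(y_1) = \tau_V(y_2)$, then, using \eqref{eq:tauvzxtau} together with the $R_V$-equivalence of $y_1$ and $y_2$, we may assume $y_1 \leq y_2$; hence $(y_1, y_2) \in \Gamma_V$ yields $\tau(y_1, y_2) = 0$, and the defining condition of $\Gamma_V$ (either $\tau > 0$ or $y_1 = y_2$) forces $y_1 = y_2$. Setting $I := \tau_V([x])$ and $F := (\tau_V|_{[x]})^{-1}$ then gives a bijection, and the isometry relation \eqref{eq:FIsometry} is immediate from the definition of $\Gamma_V$ together with the order-monotonicity \eqref{eq:tauvzxtau}.

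Next I would prove the convexity of $I$. Given $t_1 < t < t_2$ in $I$, set $y_i := F(t_i)$; since $y_1 \ll y_2$ and $X$ is a Lorentzian geodesic space, pick $\gamma \in \TGeo(y_1, y_2)$. For any $s \in (0,1)$, the two inequalities $\tau_V(\gamma_s) - \tau_V(y_1) \geq \tau(y_1, \gamma_s) = s(t_2 - t_1)$ and $\tau_V(y_2) - \tau_V(\gamma_s) \geq (1-s)(t_2 - t_1)$ coming from \eqref{E:triang} must sum to the exact value $t_2 - t_1$, forcing equality throughout. Hence $(y_1, \gamma_s), (\gamma_s, y_2) \in \Gamma_V$, so $\gamma_s \in \T_V^e$, and since $\gamma_s$ has strict $R_V$-neighbors on both sides, $\gamma_s \in \T_V$ and therefore $\gamma_s \in [x]$ with $\tau_V(\gamma_s) = t_1 + s(t_2 - t_1)$. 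This shows $(t_1, t_2) \subset I$.

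For the continuity claim needed below, observe that the argument above shows $F([t_1, t_2])$ is contained in a timelike geodesic from $F(t_1)$ to $F(t_2)$; by the injectivity of $F$ together with forward timelike non-branching, this geodesic is unique and $F|_{[t_1,t_2]\cap I}$ coincides with its $\tau$-arc-length reparametrisation, hence is $\sfd$-continuous by \cite[Corollary 3.35]{KS}.

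For the closure statement, the inclusion $\overline{[x]} \setminus \T_V \subset \overline{[x]} \setminus [x]$ is trivial since $[x] \subset \T_V$; the content is the reverse. Take $z \in \overline{[x]} \setminus [x]$ and choose $z_n = F(t_n) \in [x]$ with $z_n \to z$, extracting a monotone subsequence so that $t_n \to t^* \in [\inf I, \sup I]$. If $t^* \in I$, continuity of $F$ gives $z = F(t^*) \in [x]$, a contradiction; so $t^* \in \{\inf I, \sup I\} \setminus I$. Consider the case $t^* = \sup I \notin I$ (the other one being symmetric). For any fixed $t_0 \in I$, eventually $F(t_0) \leq F(t_n)$, and causal closedness (guaranteed by Lemma \ref{lem:LocCCToGlob} under our standing assumptions) yields $F(t_0) \leq z$, while continuity of $\tau$ (from global hyperbolicity) gives $\tau(F(t_0), z) = \sup I - t_0 > 0$, so $z \in I^+(V)$. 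Lower semicontinuity of $\tau_V$ provides $\tau_V(z) \leq \sup I$, and \eqref{E:triang} gives $\tau_V(z) \geq \tau_V(F(t_0)) + \tau(F(t_0), z) = \sup I$; thus $\tau_V(z) = \sup I$ and $(F(t_0), z) \in \Gamma_V$. This exhibits $z \in \T_V^e$. If $z$ belonged to $\T_V$, then $R_V$-equivalence with $F(t_0) \in [x]$ would put $z \in [x]$, forcing $\sup I \in I$ and contradicting $t^* \notin I$. Hence $z \in \T_V^e \setminus \T_V = \fa(\T_V^e) \cup \fb(\T_V^e)$, and since $F(t_0)$ is a strict $\Gamma_V$-predecessor, $z \in \fb(\T_V^e)$. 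The main delicate point will be the careful bookkeeping in this final case analysis, in particular ensuring that $z$ actually lands in $\T_V^e$ (and not outside) by exhibiting the concrete $\Gamma_V$-relation with $F(t_0)$.
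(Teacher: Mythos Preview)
Your approach coincides with the paper's: the paper uses the signed $\tau$-distance from a chosen base point instead of $\tau_V$ itself as the parameter, but these differ only by an additive constant, and the injectivity, convexity and $t^*=\sup I$ arguments are correct.

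The gap is in the claim that the case $t^* = \inf I$ is ``symmetric''. In the $\sup I$ case, the key step $z \in I^+(V)$ follows because $F(t_0) \ll z$ with $F(t_0) \in I^+(V) \cup V$; this is what legitimises $(F(t_0), z) \in \Gamma_V$ (recall $\Gamma_V \subset (I^+(V) \cup V)^2$ by definition). In the backward case you only obtain $z \ll F(t_0)$, which does \emph{not} place $z$ in $I^+(V) \cup V$: a priori $z$ could lie on $\partial I^+(V) \setminus V$, and then $(z, F(t_0)) \notin \Gamma_V$, so $z \notin \T_V^e$ and the inclusion \eqref{eq:closureVSendpoints} would fail. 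The fix is to first observe (via Lemma~\ref{L:geodesicandV}, or by running your own convexity computation starting from a $\Gamma_V$-predecessor of $F(t_0)$ lying in $V$, which exists by Lemma~\ref{L:resume}) that in fact $\inf I = 0$ and that $F|_{(0,t_0]}$ is the restriction of a fixed timelike geodesic $\gamma$ with $\gamma_0 \in V$; continuity of $\gamma$ then forces $z = \gamma_0 \in V$, after which $(z, F(t_0)) \in \Gamma_V$ and $z \in \fa(\T_V^e)$ follow. The paper treats this via the separate case $\tau_V(z_n) \to 0$ and is itself rather terse at this point. A minor additional point: you should justify that the extracted monotone $(t_n)$ is bounded, which is immediate from continuity of $\tau$ (if $t_n \to \infty$ then $\tau(F(t_0), z_n) \to \infty$, contradicting $\tau(F(t_0), z_n) \to \tau(F(t_0), z) < \infty$).
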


\begin{proof}
For any $x \in \T_{V}$, denote with $[x]$ 
the associated equivalence class.
Consider the maps 
$$
F :  (0,\infty)\cap \Dom(F) \ni t \mapsto 
\{ y \colon (x,y) \in \Gamma_{V}, \,\tau(x,y) =  t \} 
$$
and 
$$
F: (-\infty,0)\cap \Dom(F) \ni t \mapsto \{ y \colon (y,x) \in \Gamma_{V},\, \tau(y,x) =- t \}  
$$
and $F(0) = x$. First observe that $F$ is surjective: for each $y \in [x], y\neq x$ with $(x,y)\in \Gamma_{V}$ (resp. $(x,y)\in \Gamma_{V}^{-1}$) it holds $\tau(x,y) \in (0, \infty)$, hence $\tau(x,y)\in \Dom(F)$ and $y \in F(\tau(x,y))$ (resp. $\tau(y,x)\in \Dom(F)$ and $y \in F(-\tau(y,x))$). \\
The fact that $F$ is injective follows readily from its definition.
\\We next show that $F$ is a single valued map. Assume by contradiction $y\neq z \in F(t)$ for some $t >0$ (resp. $t <0$);
since $x$ is not an initial (resp. final) point,
using the geodesic assumption like in the proof of Proposition \ref{P:equivalence} would produce a forward (resp. backward) branching time-like geodesic  giving a contradiction
with the non-branching assumption. 
\\Given $t \in \Dom(F)$, with a slight abuse of notation, we identify $F(t)$ with $\{F(t)\}$. 
\\For  $t_{1}< t_{2} \in \Dom(F)$, Lemma \ref{L:geodesicandV}  implies that the interval $[t_{1}, t_{2}]\subset F$ (i.e. $\Dom(F)\subset \R$ is a convex subset) and that \eqref{eq:FIsometry} holds. 

We now show \eqref{eq:closureVSendpoints}.
Let $(z_{n})\subset [x]$ be with $\inf_{n} \tau_{V}(z_{n})>0$ and $z_{n}\to \bar{z}$. It is easily seen that there exists $\bar x\in [x]$ such that $(\bar x, z_{n})\in \Gamma_{V}$ and $\bar{x}\neq \bar{z}$. Using the continuity of $\tau$ (by global hyperbolicity), the lower semicontinuity of $\tau_{V}$, \eqref{eq:tauvzxtau} and the causal closeness (see Proposition \ref{prop:GH->KGH}(ii)), it is easy to check that $(\bar x, \bar z)\in \Gamma_{V}\subset R_{V}$ and thus $\bar z\in \T^{e}_{V}$. Since by Proposition \eqref{P:equivalence} the equivalence classes of $R_{V}$ form a partition of $\T_{V}$, it follows that  if $\bar{z}\notin [x]$ then $\bar{z}\notin \T_{V}$; more precisely it is easily seen that $\bar z\in \fb(\T_{V}^{e})$.
\\Let now $(z_{n})\subset [x]$ be with $ \tau_{V}(z_{n})\to 0$ and $z_{n}\to \bar{z}$. By lower semicontinuity of $\tau_{V}$, it follows that $\tau_{V}(\bar z)=0$. Using the continuity of $\tau$ it is easy to check that $\bar{z}\in \T^{e}_{V}$ and  $(\bar{z}, x)\in R_{V}$. Arguing as above, it follows that  if $\bar{z}\notin [x]$ then $\bar{z}\notin \T_{V}$;  more precisely it is easily seen that $\bar z\in \fa(\T_{V}^{e})$.
 \end{proof}

%%%%%%%%%%%%%%%%%%%%%%%%%%%%%%%%%%%%%%%%%%%%%%%%%%
%%%%%%%%%%%%%%%%%%%%%%%%%%%%%%%%%%%%%%%%%%%%%%%%%%
%%%%%%%%%%%%%%%%%%%%%%%%%%%%%%%%%%%%%%%%%%%%%%%%%%
%%%%%%%%%%%%%%%%%%%%%%%%%%%%%%%%%%%%%%%%%%%%%%%%%%

\subsection{Disintegration of $\mm$ associated to $\tau_{V}$}\label{Ss:disintegration}

For ease of reading we recall same basic facts about analytic sets referring to Section 4 of \cite{Srivastava} for more details.

If $X$ is a general complete and separable metric space (that only here we abbreviate with Polish space),
the \emph{projective class $\Sigma^1_1(X)$} is the family of subsets $A\subset X$ for which there exists a Polish space $Y$  and $B \in \mathcal{B}(X \times Y)$ (where $\mathcal{B}$ is the Borel $\sigma$-algebra) such that $A = P_1(B)$. The \emph{coprojective class $\Pi^1_1(X)$} is the complement in $X$ of the class $\Sigma^1_1(X)$. The construction can be iterated but we will consider only $\Sigma^1_1$ that is also called the class \emph{of analytic sets}; accordingly $\Pi^1_1$ is the class of \emph{coanalytic sets}.
We list few properties of these family of sets:
$\Sigma^1_1$ and $\Pi^1_1$ are closed under countable unions and intersections;
$\Sigma^1_1$ is closed w.r.t. projections while $\Pi^1_1$ is closed w.r.t. coprojections;
if $A \in \Sigma^1_1$, then $X \setminus A \in \Pi^1_1$;
the intersection 
$\Sigma^1_1 \cap \Pi^1_1$ is the Borel $\sigma$-algebra $\mathcal{B}$. 
If $\mathcal{A}$ denotes the $\sigma$-algebra generated by $\Sigma^1_1$, then clearly $\mathcal{B}(X)\subset \mathcal{A}$.

Finally a subset of $X$ is \emph{universally measurable} if it belongs to all completed $\sigma$-algebras of all Borel measures on $X$:
it can be proved that every set in $\mathcal{A}$ is universally measurable.

We start with some measurability properties of the sets we have considered in this paper. 
For any $x \in X$ the set
$I^{+}(x) = \{ y \in X \colon \tau(x,y) >0 \}$
is open  by continuity  of $\tau$ (ensured by global hyperbolicity). Accordingly, 
$I^{+}(V)=\bigcup_{x\in V}  I^{+}(x)$ is an open subset of $X$.

By the very definition \eqref{eq:deftauV}, 
$\tau_{V}$ is $\sup$ of continuous functions thus it is lower semi-continuous.
It follows that the set $\Gamma_{V}$ is Borel measurable (see \eqref{E:GammaV}).
It follows that also  $R_{V}$ is  Borel measurable, yielding  that $\T_{V}^{e}$ defined in \eqref{E:transport} is an analytic set.
To conclude, we obtain measurability 
of the transport set $\T_{V}$ defined in \eqref{E:nbtransport}.

\begin{lemma}
The set $\T_{V}$ is analytic.
\end{lemma}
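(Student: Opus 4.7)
The plan is to express $\mathcal{T}_V$ as a finite intersection of Suslin sets, exploiting the fact that in our proper (hence Polish) ambient space the Suslin class is closed under countable unions, countable intersections, and Borel-measurable images (in particular projections). Since $\Gamma_V$ has already been noted to be Borel, so is $R_V=\Gamma_V\cup\Gamma_V^{-1}$, and therefore $\mathcal{T}_V^e = P_1(R_V\setminus\{x=y\})$ is Suslin as the continuous image of a Borel set.

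The key observation I will use is that $\mathcal{T}_V$ admits the following \emph{positive} (existential) description:
\begin{equation*}
\mathcal{T}_V \;=\; A \cap B,
\end{equation*}
where
\begin{align*}
A &:= \bigl\{\, x\in X \,:\, \exists\, y\in\mathcal{T}_V^e \text{ with } (y,x)\in\Gamma_V,\; y\neq x \,\bigr\}, \\
B &:= \bigl\{\, x\in X \,:\, \exists\, z\in\mathcal{T}_V^e \text{ with } (x,z)\in\Gamma_V,\; z\neq x \,\bigr\}.
\end{align*}
Indeed, whenever $x\in A$ then $(y,x)\in R_V$ with $y\neq x$, so by symmetry of $R_V$ one automatically has $x\in\mathcal{T}_V^e$; thus $A=\mathcal{T}_V^e\setminus\fa(\mathcal{T}_V^e)$, and analogously $B=\mathcal{T}_V^e\setminus\fb(\mathcal{T}_V^e)$. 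The identity $\mathcal{T}_V=A\cap B$ is then immediate from the definition \eqref{E:nbtransport}.

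It therefore remains to check that $A$ and $B$ are Suslin, which I will verify by writing them explicitly as projections. For $A$ I will use
\begin{equation*}
A \;=\; P_2\bigl(\, \Gamma_V \,\cap\, P_1^{-1}(\mathcal{T}_V^e) \,\cap\, \{(y,x)\in X^2 : y\neq x\} \,\bigr),
\end{equation*}
which displays $A$ as the projection of the intersection of a Borel set with the preimage of a Suslin set under a continuous map; such an intersection is Suslin, and the projection is Suslin as well. The same argument, with the roles of $P_1$ and $P_2$ swapped, handles $B$. The conclusion $\mathcal{T}_V\in$ Suslin then follows. No substantive obstacle is expected: the only mildly delicate point is the formal manipulation of the definition \eqref{eq:defendpoints} into the positive form $A\cap B$, which is essentially tautological once one remembers that $\mathcal{T}_V^e$ is symmetric in $R_V$.
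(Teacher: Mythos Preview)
Your proof is correct and follows essentially the same idea as the paper's: rewrite the definition of $\mathcal{T}_V$ in positive (existential) form so that it becomes a projection of a Borel (or Suslin) set. The paper packages this as a single projection from $X^3$,
\[
\mathcal{T}_V = P_2\bigl\{(x,y,z)\in I^+(V)^3 : (x,y)\in\Gamma_V,\ (y,z)\in\Gamma_V,\ x\neq y,\ z\neq y\bigr\},
\]
whereas you split it into the intersection of two projections from $X^2$; these are trivially equivalent formulations. One small remark: in your expression for $A$, the constraint $y\in\mathcal{T}_V^e$ is redundant, since $(y,x)\in\Gamma_V$ with $y\neq x$ already forces $y\in\mathcal{T}_V^e$; dropping it shows directly that $A=P_2(\Gamma_V\setminus\{x=y\})$ is the projection of a Borel set, so you need not invoke closure of Suslin sets under continuous preimages.
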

\begin{proof}
Just notice that $\T_{V}$ 
coincides with the following set
$$
 P_{2}\{ (x,y,z) \in I^{+}(V)\times I^{+}(V)\times
I^{+}(V)\colon (x,y) \in \Gamma_{V}, (y,z)
\in \Gamma_{V}, \,
\sfd(z,y)\neq 0, \, \sfd(x,y) \neq 0\}.
$$
Being the projection of a Borel set, the claim follows.
\end{proof}
One can also prove that $\fa(\T_{V}^{e})$ and 
$\fb(\T_{V}^{e})$ are coanalytic sets. 
We next  build an $\mm$-measurable quotient map $\QQ$ of the equivalence relation $R_{V}$ over $\T_{V}$.%

Before proceeding, we recall a result on the existence of a section for an equivalence relation
proved in \cite{biacava:streconv}; 
the terminology is borrowed from 
\cite{Srivastava}. 
For readers' convenience we will include its proof. 

\begin{theorem}[Corollary 2.7 of \cite{biacava:streconv}]\label{T:section}
Let $X$ be a Polish space and
$F \subset X \times X$ be 
$\mathcal{A}$-measurable such that $F_x$ is closed. 
Define the following equivalence relation: 
$x \sim y \iff F(x) = F(y)$. 
Then there exists an $\mathcal{A}$-measurable map
$f:P_{1}(F)\to X$ such that $(x,f(x))\in F$ and 
$f(x)= f(y)$ if $x\sim y$.
\end{theorem}

Few comments are in order. 
The set $F$ can be regarded also as a multivalued map with the following notation $F_x : = 
F \cap \{x\} \times X$ and 
$F(x) : = P_2(F_x)$. 
The assumption of $F$ being 
$\mathcal{A}$-measurable means that 
$F^{-1}(U) \in \mathcal{A}$ for any open set $U$, where 
$F^{-1}(U) : = \{ x\in X \colon 
F(x) \cap U \neq \emptyset \} = 
P_1(F \cap X \times U)$.

\begin{proof}
For all open sets $U \subset X$, consider the sets $F^{-1}(U)$; 
by assumption they will be in $\mathcal{A}$. Let $\mathcal{R}$ be the $\sigma$-algebra generated by all such $F^{-1}(U)$: by assumption $\mathcal{R} \subset \mathcal{A}$.

If $x \sim y$,  then
$$
x \in F^{-1}(U) \quad \Longleftrightarrow \quad y \in F^{-1}(U),
$$
so that each equivalence class is contained in an atom of $\mathcal{R}$. 
Moreover by construction 
the multivalued map $x \mapsto F(x)$ is $\mathcal{R}$-measurable.
We can thus apply \cite[Theorem 5.2.1]{Srivastava} ensuring the existence of an $\mathcal{R}$-measurable selection $f$ of $F$, that is a map $f : P_1(F) \to X$ 
such that $f(x) \in F(x)$. 

The property $f(x) \in F(x)$ 
simply means that $(x,f(x)) \in F$.
The $\mathcal{R}$-measurability condition implies that $f$ has to be costant on the atoms of $\mathcal{R}$. Since $\mathcal{R}$ 
does not separate the equivalence classes of $F$, this implies that if $x\sim y$ then $f(x) = f(y)$, proving all the claims.
\end{proof}

\begin{proposition}\label{prop:Qlevelset}
There exists an $\mathcal{A}$-measurable quotient map 
$\QQ:\T_{V}\to X$ of the equivalence relation $R_{V}$ over $\T_{V}$, 
  i.e. 
\begin{equation}\label{E:quotient}
\QQ : \mathcal{T}_{V} \to \mathcal{T}_{V}, 
\qquad (x,\QQ(x)) \in R_{V}, 
\qquad (x,y) \in R_{V} \Rightarrow \QQ(x) = \QQ(y).
\end{equation}
\end{proposition}

\begin{proof}
First consider the following saturated family of subsets of $\T_{V}$: 
$$
E_{n}: = \{y\in  \T_{V}\,:\, (x,y)\in R_{V} \text{ for some }  x \in \T_{V} \text{ with } \tau_{V}(x) > 1/n\}  , \quad \forall n\in \N, n\geq 1.
$$
By construction $E_{n}$ is analytic,  $E_{n} \subset E_{n+1}$ and $\T_{V} = \cup_{n} E_{n}$. 
Set $F_{n} : = E_{n}\setminus E_{n-1}$, 
with $F_{1} = E_{1}$, so that 
$F_n \in \mathcal{A}$.
Define the set $G \subset X \times X$ 
by
$$
G = \bigcup_{n} 
F_n \times \tau_V^{-1}([1/2n,1/n]) \cap R_V \cap \T_V\times \T_V.
$$
As multivalued map $G$ is $\mathcal{A}$-measurable: indeed for any $U$ open set, 
\begin{align*}
G^{-1}(U) = &~ \bigcup_n \left\{x \in F_n \colon R_V(x) \cap \tau_V^{-1}([1/2n,1/n]) \cap U \neq \emptyset
\right\} \\
= &~ 
\bigcup_n F_n \cap P_1 \left( 
R_V \cap (X\times (U \cap \tau_V^{-1}([1/2n,1/n]))
\right),
\end{align*} 
showing that $G^{-1}(U)$ is $\mathcal{A}$-measurable.
By construction $G_x$ is closed  and for 
$x,y \in \T_V$,  $G(x) = G(y)$ 
if and only if $(x,y) \in R_V$.
Hence we can apply Theorem \ref{T:section} and obtain a 
$\mathcal{A}$-measurable
map 
$\QQ : P_1(G) \to X$ such that 
$(x,\QQ(x)) \in G \subset R(V)$ and $\QQ(x) = \QQ(y)$ if $(x,y) \in R_V$. Since $P_1(G) = \T_V$, 
the claim follows.
\end{proof}

\textbf{Notation.} From now on we will denote $Q:=\QQ(\T_{V}) \subset X$ the quotient set (which is $\mathcal{A}$-measurable). The equivalence classes of 
$R_{V}$ inside $\mathcal T_{V}$ will be called \emph{rays} and denoted with $X_{\alpha}$, with $\alpha\in Q$.
\\

Applying the same trick used in \cite[Section 3.1]{CaMoLap},
Proposition \ref{prop:Qlevelset} allows to apply Disintegration Theorem \cite[Section 452]{Fremlin4} (see also \cite[Section 6.3]{CMi}), provided  the  measure $\mm$  is suitably modified into a finite measure.
To this aim, it will be useful the 
next elementary lemma (for its proof see \cite[Lemma 3.3]{CaMoLap}).

\begin{lemma}\label{lem:constrf}
There exists a Borel function  $f : X \to (0,\infty)$ satisfying 
\begin{equation}\label{eq:deff}
 \inf_{{\mathcal K}} f>0, \; \text{for any bounded subset ${\mathcal K}\subset X$}, \quad \int_{\T_{V}} f\, \mm = 1.
\end{equation}
\end{lemma}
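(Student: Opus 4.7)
The plan is to construct $f$ as a normalized weighted sum of indicators of an exhaustion of $X$ by bounded Borel sets. The key structural fact to exploit is that $(X,\sfd)$ is proper (closed bounded subsets are compact) and that $\mm$ is a Radon measure, hence finite on compact sets.

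First, I would fix a base point $x_{0}\in X$ and consider the compact sets $K_{n}:=\overline{B_{\sfd}(x_{0},n)}$ for $n\in\N$. Setting $A_{1}:=K_{1}$ and $A_{n}:=K_{n}\setminus K_{n-1}$ for $n\geq 2$ yields a countable Borel partition of $X$ into bounded Borel sets, with $\mm(A_{n})\leq \mm(K_{n})<\infty$ for every $n$. Next, I would choose coefficients $c_{n}>0$ such that the series $\sum_{n\geq 1} c_{n}\mm(A_{n})$ is convergent; a concrete choice is $c_{n}:=2^{-n}/(1+\mm(A_{n}))$, which gives $\sum_{n} c_{n}\mm(A_{n})\leq 1$.

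Then I would define the Borel function
\[
\tilde f(x):=\sum_{n\geq 1} c_{n}\,\chi_{A_{n}}(x),\qquad x\in X,
\]
which by construction is strictly positive on $X$ and satisfies $\int_{X}\tilde f\,\mm<\infty$, so in particular $\int_{\T_{V}}\tilde f\,\mm<\infty$. The required lower bound on bounded subsets is immediate: any bounded $\mathcal K\subset X$ is contained in some $K_{N}=\bigcup_{n\leq N}A_{n}$, hence $\inf_{\mathcal K}\tilde f\geq \min_{n\leq N}c_{n}>0$. Finally, setting $C:=\int_{\T_{V}}\tilde f\,\mm\in(0,\infty)$ and defining $f:=\tilde f/C$ produces the desired function, with the property $\int_{\T_{V}}f\,\mm=1$ following by construction.

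The only minor subtlety is ensuring $C>0$, i.e.\ that $\mm(\T_{V})>0$; this is the only case of interest, since otherwise the disintegration statement that Lemma~\ref{lem:constrf} prepares is vacuous and one may replace $f$ by any strictly positive Borel function bounded below on bounded sets (e.g.\ $\tilde f$ itself). No serious obstacle is expected: the argument is purely measure-theoretic and uses no Lorentzian structure beyond properness of $\sfd$ and Radonness of $\mm$.
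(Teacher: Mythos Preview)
Your proposal is correct and follows the standard measure-theoretic construction one would expect; the paper itself does not give a proof but simply refers to \cite[Lemma~3.3]{CaMoLap}, where essentially the same exhaustion-and-normalization argument is carried out. Your handling of the case $\mm(\T_V)=0$ is also appropriate.
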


\noindent
Then, given $f : X \to (0,\infty)$ satisfying \eqref{eq:deff},  set 
$\mu : = f \, \mm\llcorner_{\T_{V}}$,
and define the normalized quotient measure 
$\qq : = \QQ_{\sharp}\, \mu\in \Prob(X)$.
It is straightforward to check that 
$$
\QQ_{\sharp} (\mm\llcorner_{\T_{V}}) \ll \qq.
$$
Take indeed $E \subset Q$ with $\qq(E) = 0$; then by definition
$\int_{\QQ^{-1}(E)} f(x)\, \mm(\dd x) = 0$,
implying  $\mm(\QQ^{-1}(E)) = 0$, since $f > 0$.
From the Disintegration Theorem \cite[Section 452]{Fremlin4}, we deduce the existence 
of a map 
$$
Q \ni \alpha \longmapsto \mu_{\alpha} \in \mathcal{P}(X)
$$
verifying the following properties:
\begin{itemize}
\item[(1)] for any $\mu$-measurable set $B\subset X$, the map $\alpha \mapsto \mu_{\alpha}(B)$ is $\qq$-measurable; \smallskip
\item[(2)] for $\qq$-a.e. $\alpha \in Q$, $\mu_{\alpha}$ is concentrated on $\QQ^{-1}(\alpha)$; \smallskip
\item[(3)] for any $\mu$-measurable set $B\subset X$ and $\qq$-measurable set $C\subset Q$, the following disintegration formula holds: 
$$
\mu(B \cap \QQ^{-1}(C)) = \int_{C} \mu_{\alpha}(B) \, \qq(\dd \alpha).
$$
\end{itemize}
Finally the disintegration is $\qq$-essentially unique, i.e. if any other 
map $Q \ni \alpha \longmapsto \bar \mu_{\alpha} \in \mathcal{P}(X)$
satisfies the previous three points, then 
\begin{equation}\label{eq:UniqDisint}
\bar \mu_{\alpha} = \mu_{\alpha},\qquad \qq\text{-a.e.} \ \alpha \in Q. 
\end{equation}
Hence once $\qq$ is given (recall that $\qq$ depends on $f$ 
from Lemma \ref{lem:constrf}), the disintegration is unique up to a set of 
$\qq$-measure zero. 
In the case $\mm(X) < \infty$, the natural choice,  that we tacitly assume, 
is to take as $f$ the characteristic function of $\T_{V}$ normalised by $\mm(\T_{V})$ so that $\qq : = \QQ_{\sharp} (\mm\llcorner_{\T_{V}}/\mm(\T_{V})$).

All the previous properties will be summarized saying that 
$Q \ni \alpha \mapsto \mu_{\alpha}$ is a  \emph{disintegration of 
$\mu$ strongly consistent with respect to $\QQ$}.
It follows from \cite[Proposition 452F]{Fremlin4} that 
$$
\int_{X} g(x) \mu(\dd x) = \int_{Q} \int g(x) \mu_{\alpha}(\dd x) \,\qq(\dd \alpha),
$$
for every $g : X \to \R\cup\{\pm \infty\}$ such that $\int g \mu$ is 
well-defined in $\R\cup\{\pm \infty\}$. 
Hence picking $g = 1/f$ (where $f$ is the one used to define $\mu$),  we get that 
$$
\mm\llcorner_{\T_{V}} = \int_{Q} \frac{\mu_{\alpha}}{f} \, \qq(\dd \alpha),
$$
where the identity has to be understood in duality with test functions as the previous formula.

Defining $\mm_{\alpha} : = \mu_{\alpha}/f$, we 
obtain that $\mm_{\alpha}$ (called \emph{conditional measure}) is a Radon non-negative measure over $X$, verifying 
all the measurability properties (with respect to $\alpha\in Q$) of $\mu_{\alpha}$
and giving a disintegration of $\mm\llcorner_{\T_{V}}$ 
strongly consistent with respect to $\QQ$. Moreover, for every bounded subset $\mathcal K \subset X$, it holds
$$
\frac{1}{\sup_{\mathcal K} f } \mu_{\alpha}(K) \leq  \mm_{\alpha}( \mathcal K) =   \frac{\mu_{\alpha}}{f} (\mathcal K) \leq \frac{1}{\inf_{\mathcal K} f }, \quad \text{for $\qq$-a.e. $\alpha\in Q$.}
$$

In the next statement, we  summarize what obtained so far (cf. \cite{CaMoLap}).
We denote by  $\mathcal{M}_{+}(X)$ the 
space of non-negative Radon measures over $(X,\sfd)$.

\begin{theorem}\label{T:sigma-disint}
 Let $(X,\sfd, \ll, \leq, \tau)$ be a timelike non-braching globally hyperbolic Lorentzian geodesic  space, and $V\subset X$  a Borel achronal FTC subset.
 
Then  the 
measure $\mm$ restricted to the transport set without endpoints $\T_{V}$ 
admits the following disintegration formula: 
$$
\mm\llcorner_{\T_{V}} = \int_{Q} \mm_{\alpha} \, \qq(\dd \alpha),
$$
where $\qq$ is a Borel probability measure over $Q \subset X$ such that 
$\QQ_{\sharp}( \mm\llcorner_{\T_{V}} ) \ll \qq$ and the map 
$Q \ni \alpha \mapsto \mm_{\alpha} \in \mathcal{M}_{+}(X)$ satisfies the following properties:
\begin{itemize}
\item[(1)] for any $\mm$-measurable set $B$, the map $\alpha \mapsto \mm_{\alpha}(B)$ is $\qq$-measurable; \smallskip
\item[(2)] for $\qq$-a.e. $\alpha \in Q$, $\mm_{\alpha}$ is concentrated on $\QQ^{-1}(\alpha) = X_{\alpha}$ (strong consistency); \smallskip
\item[(3)] for any $\mm$-measurable set $B$ and $\qq$-measurable set $C$, the following disintegration formula holds: 
$$
\mm(B \cap \QQ^{-1}(C)) = \int_{C} \mm_{\alpha}(B) \, \qq(\dd \alpha);
$$
\item[(4)]  For every bounded subset $\mathcal K\subset X$ there exists a constant $C_{\mathcal K}\in (0,\infty)$ such that
$$
 \mm_{\alpha}(\mathcal K) \leq C_{\mathcal K}, \quad \text{for $\qq$-a.e. $\alpha\in Q$.}
$$
\end{itemize}
Moreover, fixed any $\qq$ as above such that $\QQ_{\sharp}( \mm\llcorner_{\T_{V}} ) \ll \qq$, the disintegration is $\qq$-essentially unique (in the sense of \eqref{eq:UniqDisint}).
\end{theorem}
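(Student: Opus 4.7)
The plan is to organize the constructions carried out in the paragraphs preceding the statement into a formal disintegration, and then to invoke Fremlin's abstract theorem to get (1)-(3) together with essential uniqueness. Property (4), the local boundedness of $\mm_\alpha$, will be the only place where the specific choice of the auxiliary weight $f$ really matters.

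First I would fix the function $f:X\to(0,\infty)$ from Lemma \ref{lem:constrf} and set $\mu:=f\,\mm\llcorner_{\T_V}\in\Prob(X)$, together with the $\mm$-measurable quotient $\QQ:\T_V\to X$ from Lemma \ref{lem:Qlevelset}. The probability measure $\qq:=\QQ_\sharp\mu$ is Borel (actually defined on the $\sigma$-algebra generated by the Suslin sets, which is contained in the completion of the Borel one), and the observation that $f>0$ pointwise immediately gives $\QQ_\sharp(\mm\llcorner_{\T_V})\ll\qq$. At this point Fremlin's Disintegration Theorem \cite[Section 452]{Fremlin4} applied to $(\mu,\QQ,\qq)$ produces a $\qq$-essentially unique family $\{\mu_\alpha\}_{\alpha\in Q}\subset\Prob(X)$ satisfying the analogues of (1), (2), (3) with $\mu$ in place of $\mm\llcorner_{\T_V}$, the strong consistency $\mu_\alpha(\QQ^{-1}(\alpha))=1$ being ensured by the $\mm$-measurability of $\QQ$ and the fact that the quotient space is analytic.

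Next I would define the conditional measures $\mm_\alpha:=\mu_\alpha/f\in\mathcal{M}_+(X)$. Measurability of $\alpha\mapsto\mm_\alpha(B)$ and strong consistency are inherited from $\{\mu_\alpha\}$, since $f$ is a fixed Borel function that is strictly positive. To check the disintegration formula (3), I would test against $g=\chi_{B\cap\QQ^{-1}(C)}/f$ in \cite[Proposition 452F]{Fremlin4}:
\[
\mm(B\cap\QQ^{-1}(C))=\int_X g\,\mu=\int_Q\!\!\int \chi_B\,\frac{1}{f}\,\mu_\alpha(dx)\,\qq(d\alpha)=\int_C\mm_\alpha(B)\,\qq(d\alpha).
\]
For property (4) the essential input is the lower bound on $f$ over bounded sets from \eqref{eq:deff}: for any bounded $\mathcal{K}\subset X$, $\mm_\alpha(\mathcal{K})\leq (\inf_{\mathcal{K}}f)^{-1}\mu_\alpha(\mathcal{K})\leq (\inf_{\mathcal{K}}f)^{-1}=:C_{\mathcal{K}}$, valid for every $\alpha\in Q$.

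Finally, essential uniqueness of $\{\mm_\alpha\}$ (given the choice of $\qq$) follows formally from the essential uniqueness of $\{\mu_\alpha\}$: if $\{\bar\mm_\alpha\}$ is another family satisfying (1)-(3), then $\{f\bar\mm_\alpha\}$ disintegrates $\mu$ strongly consistently with respect to $\QQ$, hence $f\bar\mm_\alpha=\mu_\alpha=f\mm_\alpha$ $\qq$-a.e., and positivity of $f$ yields $\bar\mm_\alpha=\mm_\alpha$ $\qq$-a.e. The only non-routine point in this argument is checking that $\QQ$ falls within the scope of Fremlin's theorem despite being defined only up to $\mm$-null sets and valued in a merely analytic quotient; this is handled by working with the completion of the Borel $\sigma$-algebra on $X$ and exploiting that $\mathcal{A}$-measurable sets are universally measurable, as already observed after Lemma \ref{lem:Qlevelset}.
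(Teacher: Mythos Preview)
Your proposal is correct and follows essentially the same route as the paper: reduce to a probability measure via the weight $f$ of Lemma~\ref{lem:constrf}, apply Fremlin's disintegration theorem to $(\mu,\QQ,\qq)$, and then divide back by $f$ to recover the $\mm_\alpha$; property (4) comes from $\inf_{\mathcal K}f>0$ exactly as you write. The only cosmetic difference is that the paper tests the disintegration of $\mu$ against $g=1/f$ globally and reads off the formula $\mm\llcorner_{\T_V}=\int_Q(\mu_\alpha/f)\,\qq(d\alpha)$, whereas you test against $g=\chi_{B\cap\QQ^{-1}(C)}/f$ to verify (3) directly; these are the same computation.
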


%%%%%%%%%%%%%%%%%%%%%%%%%%%%%%%%%%%%%%%%%%%%%%%%%%%%%%%%%%%%%%%%%%%%%%%%%%%%%%%%%%%%%%%%%%
%%%%%%%%%%%%%%%%%%%%%%%%%%%%%%%%%%%%%%%%%%%%%%%%%%%%%%%%%%%%%%%%%%%%%%%%%%%%%%%%%%%%%%%%%%
%%%%%%%%%%%%%%%%%%%%%%%%%%%%%%%%%%%%%%%%%%%%%%%%%%%%%%%%%%%%%%%%%%%%%%%%%%%%%%%%%%%%%%%%%%

\subsection{$\ell^{p}$-cyclically monotone subsets contained in the transport set ${\mathcal T}_{V}$}

We will now obtain two results permitting to include $\ell^{p}$-cyclically monotone sets inside $\ell$-cyclically monotone sets. 
This technique has been introduced in  \cite{cava:decomposition} and  pushed further in \cite{CM1,CM2} for  the metric setting, to generalize localization 
paradigm to metric measure spaces using the equivalence between 
optimality and cyclical monotonicity.
 
In the present setting, since the cost $\ell^{p}$ may take the value $-\infty$, $\ell^{p}$-cyclical monotonicity does not 
directly imply optimality. 
Nontheless using \cite{biacar:cmono} and its consequences included in Proposition \ref{P:OptiffMon},
we will use cyclically monotone sets to 
construct \emph{locally optimal} couplings and to deduce local estimates 
on the disintegration that will be then globalized.

There is a simple and natural way to construct Wasserstein geodesics with $0<p<1$: 
translate along transport rays by a constant ``distance''. 
Notice that $0<p<1$ plays a crucial role, as an 
analogous statement in the Riemannian setting does not hold true 
for $W_{2}$.

\begin{proposition}\label{P:translationellpMonot}
Consider $\Lambda \subset \Gamma_{V}$ with the following property: 
there exists $t>0$ such that for each $(x,y) \in \Lambda$, $\tau(x,y) = t$. 
Then for each $0<p<1$ the set $\Lambda$ is $\ell^{p}$-cyclically monotone.
\end{proposition}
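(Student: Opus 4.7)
Fix $N \in \N$ and points $(x_1, y_1), \dots, (x_N, y_N) \in \Lambda$. Since $\Lambda \subset \Gamma_V \subset X^2_\leq$ and $\tau(x_i, y_i) = t > 0$ for every $i$, the left-hand side of the $\ell^p$-cyclical monotonicity inequality \eqref{eq:taupcyclmon} is
$$\sum_{i=1}^{N} \ell^p(x_i, y_i) = \sum_{i=1}^{N} \tau(x_i, y_i)^p = N t^p.$$
The goal is therefore to prove $\sum_{i=1}^{N} \ell^p(x_{i+1}, y_i) \leq N t^p$ (indices mod $N$). If $x_{i+1} \not\leq y_i$ for some index $i$, then $\ell^p(x_{i+1}, y_i) = -\infty$ and the inequality is trivial; so I reduce to the case when $x_{i+1} \leq y_i$ for every $i$.

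In this case, since both $x_{i+1}$ and $y_i$ lie in $I^+(V) \cup V$ (as $(x_{i+1}, y_{i+1}), (x_i, y_i) \in \Gamma_V$), I apply the triangle-type estimate \eqref{E:triang} to get
$$\tau(x_{i+1}, y_i) = \ell(x_{i+1}, y_i) \leq \tau_V(y_i) - \tau_V(x_{i+1}).$$
Next, since $(x_i, y_i) \in \Gamma_V$ and $\tau(x_i, y_i) = t > 0$, the definition \eqref{E:GammaV} gives $\tau_V(y_i) = \tau_V(x_i) + t$. Substituting, I obtain
$$\tau(x_{i+1}, y_i) \leq \tau_V(x_i) - \tau_V(x_{i+1}) + t =: b_i,$$
and the quantity $b_i$ is automatically non-negative because $\tau(x_{i+1}, y_i) \geq 0$. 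Summing over $i$ (indices mod $N$), the telescoping contribution vanishes and
$$\sum_{i=1}^{N} b_i = \sum_{i=1}^{N} \bigl(\tau_V(x_i) - \tau_V(x_{i+1})\bigr) + N t = N t.$$

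The final step exploits the assumption $p \in (0,1)$: the map $s \mapsto s^p$ is concave on $[0,\infty)$, so Jensen's inequality yields
$$\frac{1}{N}\sum_{i=1}^{N} b_i^p \leq \left(\frac{1}{N}\sum_{i=1}^{N} b_i\right)^p = t^p.$$
Combining with the pointwise bound $\tau(x_{i+1}, y_i)^p \leq b_i^p$, I conclude $\sum_{i=1}^{N} \ell^p(x_{i+1}, y_i) \leq N t^p = \sum_{i=1}^{N} \ell^p(x_i, y_i)$, which is the desired $\ell^p$-cyclical monotonicity.

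There is no genuine obstacle here: the argument is essentially the observation that the upper bound on $\tau(x_{i+1}, y_i)$ coming from the definition of $\Gamma_V$ telescopes, combined with the strict concavity of $s \mapsto s^p$ for $p \in (0,1)$. The role of the assumption $p < 1$ is precisely to make Jensen's inequality go in the required direction; the analogous statement fails for $p \geq 1$ (reflecting the well-known fact that this translation-by-constant-distance trick does not produce $W_2$-geodesics in the Riemannian setting).
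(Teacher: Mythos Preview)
Your proof is correct and follows essentially the same approach as the paper: both establish the $p=1$ bound $\sum_i \tau(x_{i+1},y_i) \leq Nt$ via the telescoping identity coming from the definition of $\Gamma_V$ (the paper packages this as the $\ell$-cyclical monotonicity Lemma~\ref{lem:GammaVell1CM}, while you inline it via \eqref{E:triang} and the intermediate $b_i$'s), and then apply concavity of $s \mapsto s^p$ to pass from $p=1$ to $p\in(0,1)$. The only cosmetic difference is that you introduce the auxiliary $b_i$ and apply Jensen to them, whereas the paper applies Jensen directly to the $\ell(x_{i+1},y_i)$; both are equivalent.
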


\begin{proof}
Given $(x_{1},y_{1}),\dots, (x_{n},y_{n}) \in \Lambda$, we need to prove 
$$
\sum_{i=1}^{n} \ell(x_{i},y_{i})^{p} \geq \sum_{i=1}^{n} \ell(x_{i+1},y_{i})^{p}, 
$$
that can be rewritten as 
\begin{equation}\label{E:last}
t \geq \left( \frac{1}{n}\sum_{i=1}^{n} \ell(x_{i+1},y_{i})^{p} \right)^{1/p}. 
\end{equation}
From Lemma \ref{lem:GammaVell1CM} the corresponding inequality for $p =1$ 
is valid:
$$
n t =\sum_{i=1}^{n} \ell(x_{i},y_{i})\geq \sum_{i=1}^{n} \ell(x_{i+1},y_{i});
$$
we rewrite it as
\begin{equation}\label{E:last1}
t \geq \frac{1}{n}\sum_{i=1}^{n}\ell(x_{i+1},y_{i}). 
\end{equation}
Since by assumption  $0<p<1$, the concavity of the function $\R\ni s \mapsto s^{p}$ implies 
$$
\left(\frac{1}{n} \sum_{i=1}^{n} \ell(x_{i+1},y_{i}) \right)^{p} \geq
\frac{1}{n}\sum_{i=1}^{n} \ell(x_{i+1},y_{i})^{p},
$$
which, combined with \eqref{E:last1}, gives \eqref{E:last}.
\end{proof}

In the next proposition we give a  second way to construct $\ell^{p}$-cyclically monotone sets (cf. \cite{cava:decomposition}).

\begin{proposition}\label{P:cpgeod} 
Let $\Delta \subset  \Gamma_{V}$ 
be such that 
\begin{equation}\label{E:monotone}
(\tau_{V}(x_{0}) - \tau_{V}(x_{1}))(\tau_{V}(y_{0}) - \tau_{V}(y_{1})) \geq 0,  \quad \text{for all } (x_{0},y_{0}),(x_{1},y_{1}) \in \Delta.
\end{equation}
Then $\Delta$ is 
$\ell^{p}$-cyclically monotone for each $p\in (0,1)$.
\end{proposition}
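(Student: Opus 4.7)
The plan is to reduce the $\ell^p$-cyclical monotonicity inequality to a one-dimensional rearrangement inequality on $\R$, using the signed time-separation $\tau_V$ as a linearising parameter along $\Gamma_V$.

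Fix an arbitrary family $(x_1, y_1), \ldots, (x_n, y_n) \in \Delta$ (with $x_{n+1} = x_1$). First I would observe that if $x_{i+1} \not\leq y_i$ for some index $i$, then $\ell^p(x_{i+1}, y_i) = -\infty$ makes the right-hand side of the cyclical inequality equal to $-\infty$, rendering the inequality trivial. Hence I may assume $x_{i+1} \leq y_i$ for every $i$. Since every pair in $\Delta$ lies in $\Gamma_V \subset (I^+(V)\cup V)^2$, the triangle-type inequality \eqref{E:triang} applied to $(x_{i+1}, y_i)$ yields
$$
\tau(x_{i+1}, y_i) \leq \tau_V(y_i) - \tau_V(x_{i+1}), \qquad \tau_V(y_i) - \tau_V(x_{i+1}) \geq 0.
$$
Since $p>0$ makes $s \mapsto s^p$ non-decreasing on $[0, \infty)$, and since $(x_i, y_i) \in \Gamma_V$ gives $\ell(x_i,y_i) = \tau(x_i,y_i) = \tau_V(y_i) - \tau_V(x_i)$, it suffices to show
$$
\sum_{i=1}^n \bigl(\tau_V(y_i) - \tau_V(x_i)\bigr)^p \geq \sum_{i=1}^n \bigl(\tau_V(y_i) - \tau_V(x_{i+1})\bigr)^p.
$$

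Setting $a_i := \tau_V(x_i)$ and $b_i := \tau_V(y_i)$, the hypothesis \eqref{E:monotone} asserts that the real sequences $(a_i)$ and $(b_i)$ are comonotone, so I may re-index the pairs so that $a_1 \leq \cdots \leq a_n$ (breaking ties by the values of $b_i$); comonotonicity then forces $b_1 \leq \cdots \leq b_n$ as well. Under this re-indexing, the cyclic shift $i \mapsto i+1$ is conjugated into some permutation $\sigma$ of $\{1, \ldots, n\}$, and the desired inequality becomes
$$
\sum_{i=1}^n \phi(b_i - a_i) \geq \sum_{i=1}^n \phi(b_i - a_{\sigma(i)}),
$$
with $\phi(s) := s^p$ concave and non-decreasing on $[0, \infty)$ and every displayed difference non-negative.

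The final step is a classical rearrangement inequality: for concave $\phi$ acting on comonotone sorted data, the identity pairing maximises $\sum_i \phi(b_i - a_{\sigma(i)})$ among permutations producing non-negative differences. I would prove this by successively applying to $\sigma$ the adjacent transpositions that reduce its number of inversions, relying on the two-variable exchange lemma: for $A \leq B$ and $\alpha \leq \beta$ with $\alpha - B \geq 0$, the pair $(\alpha - A, \beta - B)$ is majorised by $(\alpha - B, \beta - A)$ (same sum, and $\max(\alpha - A, \beta - B) \leq \beta - A$ from $\alpha \leq \beta$ and $A \leq B$), so Schur-concavity of $(u,v) \mapsto \phi(u) + \phi(v)$ gives
$$
\phi(\alpha - A) + \phi(\beta - B) \geq \phi(\alpha - B) + \phi(\beta - A).
$$
The only (minor) obstacle is tracking non-negativity of every difference along the chain of swaps; this is automatic once the sorted, comonotone structure is in place, since each beneficial swap only enlarges the differences involved (using $A \leq B$ and $\alpha \leq \beta$ together with the non-negativity of the two differences before the swap).
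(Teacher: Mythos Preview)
Your argument is correct and follows a genuinely different route from the paper. The paper linearises via $s_i := \tau_V(x_i)$, $t_i := \tau_V(y_i)$ just as you do, but then packages the problem as a one-dimensional optimal transport question: the monotonicity hypothesis makes the map $s_i \mapsto t_i$ monotone on $\R$, hence $W_2$-optimal, and then invokes a result from Villani's book (monotone maps are optimal for \emph{any} cost $h(|x-y|)$ with $h$ strictly convex) applied to $h(r) = -r^p + \text{const}$. This yields $\bar c$-cyclical monotonicity for the shifted cost, which unwinds to the desired inequality with absolute values $|t_{i+1}-s_i|^p$ on the right, after which \eqref{E:triang} finishes the job. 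By contrast, you argue directly via a Schur-concavity/rearrangement inequality on the sorted data, avoiding any appeal to optimal-transport machinery. Your approach is more self-contained; the paper's is slicker once one is willing to import the black box.

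One minor imprecision worth tightening: your closing claim that ``each beneficial swap only enlarges the differences involved'' is not literally true (after the swap, $\beta - A$ becomes $\beta - B$, which is smaller). What you actually need, and what does hold, is that both post-swap differences remain non-negative: indeed $\alpha - A \geq \alpha - B \geq 0$ and $\beta - B \geq \alpha - B \geq 0$. With this correction the inductive non-negativity along the chain of bubble-sort transpositions is secured, and the Schur-concavity step goes through cleanly.
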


\begin{proof}
Let $\{(x_{1},y_{1}), \dots, (x_{N},y_{N})\} \subset \Delta$ be an arbitrary finite subset of $\Delta$. Define $s_{i} : = \tau_{V}(x_{i})$, $t_{i} : = \tau_{V}(y_{i})$ and consider the auxiliary measures 
$$
\eta_{0} : = \frac{1}{N} \sum_{i =1}^{N} \delta_{s_{i}}, 
\qquad 
\eta_{1} : = \frac{1}{N} \sum_{i=1}^{N} \delta_{t_{i}}.
$$
Notice that the support $\eta_{0}$ and $\eta_{1}$ 
are confined inside a compact real interval, say $I$.   
Consider finally the map $F: I \to I$ defined  by 
\begin{equation}
F(s)=
\begin{cases} t_{i} \quad \text{if } s=s_{i}, \\
0 \quad \text{ elsewhere }.
\end{cases}
\end{equation}
Trivially $F_{\sharp} \eta_{0} = \eta_{1}$;
moreover,  by \eqref{E:monotone}, $F$ is monotone on $\supp \, \eta_{0}$. 
This implies that $\gr(F)$ is also $|\cdot|^{2}$-cyclically monotone on $\supp \,\eta_{0}$ and in particular 
$$
\int |x-F(x)|^{2} \eta_{0}(\dd x) = W_{2}(\eta_{0},\eta_{1}),
$$
where $W_{2}$ is intended to be defined over 
$\P_{2}(\R)$.
By \cite[Remark 2.19 (ii)]{Vil:topics}, $F$ is optimal for any cost 
$c(x,y) = h(|x-y|)$, with $h$ strictly convex and non-negative. 
For $p\in (0,1)$,  consider  the function $h(r) : = -r^{p} +a$,
where $a$ can be taken to be 
$$
a > 2 \sup_{s\in I} |s|^{p}.
$$
Thus $\bar c(s,t) := - |t-s|^{p} + a $ is non-negative and falls into the hypothesis of \cite[Remark 2.19 (ii)]{Vil:topics}.
Hence $\gr(F)$ restricted to $\supp \, \eta_{0}$ is also $\bar c$-cyclically monotone. 
We can now conclude as follows:
\begin{align*}
\sum_{i=1}^{N} \ell(x_{i},y_{i})^{p} 
=  &~ \sum_{i=1}^{N} (\tau_{V}(y_{i}) - \tau_{V}(x_{i}) )^{p} =   - \sum_{i=1}^{N} \bar c(s_{i},t_{i}) + N a\\
\geq &~  - \sum_{i=1}^{N} \bar c(s_{i},t_{i+1}) + N a= \sum_{i=1}^{N} (|\tau_{V}(y_{i+1}) - \tau_{V}(x_{i})| )^{p} \\
\geq &~ \sum_{i=1}^{N} \ell(x_{i},y_{i+1})^{p},
\end{align*}
where in the last inequality we used \eqref{E:triang}.
\end{proof}

%%%%%%%%%%%%%%%%%%%%%%%%%%%%%%%%%%%%%%%%%%%%%%%%%%%%%%%%%%%%%%%
%%%%%%%%%%%%%%%%%%%%%%%%%%%%%%%%%%%%%%%%%%%%%%%%%%%%%%%%%%%%%%%
%%%%%%%%%%%%%%%%%%%%%%%%%%%%%%%%%%%%%%%%%%%%%%%%%%%%%%%%%%%%%%%

\subsection{Regularity of the conditional measures}

Recall that by the Disintegration Theorem \ref{T:sigma-disint} we can write $\mm=\int_{Q} \mm_{\alpha} \qq(\dd \alpha)$, where $\mm_{\alpha}$ is a non-negative  Radon measure on $X$  concentrated on the ray $X_{\alpha}$, for $\qq$-a.e. $\alpha\in Q$. The goal of this section is to prove that the conditional measures  $\mm_{\alpha}$'s are absolutely continuous with respect to the Hausdorff measure $\cH^{1}$ restricted to the ray $X_{\alpha}$, for $\qq$-a.e. $\alpha\in Q$. 
Such a regularity of $\mm_{\alpha}$ can be inferred from 
the behavior of $\mm$ with respect to translation along the transport set ${\mathcal T}_{V}$ (cf.  \cite{biacava:streconv}). 
\\Let us set some notation. First recall the definition \eqref{E:transport} of transport set with endpoints  $\T_{V}^{e}$.   For any Borel set $A \subset \T_{V}^{e}$  and 
$t \in [0,+\infty)$ we can associate  its ``forward'' translation 
$$
A_{t} : = 
P_{2}\{ (x,y) \in (A \times \T_{V}^{e}) \cap \Gamma_{V} \colon \tau(x,y) = t \}.
$$
If $A$ is an analytic set, $A_{t}$ is analytic as well (recall that projections of analytic set is again analytic). 
In particular, for $A\subset \T_{V}^{e}$ having
$\mm(A) >0$ it makes sense to consider the set
$$
\{ t \in [0,+\infty) \colon \mm(A_{t}) > 0 \}.
$$
and to evaluate its Lebesgue  measure.

\begin{proposition}\label{P:translation}
Let  $(X,\sfd, \mm, \ll, \leq, \tau)$ be a timelike non-branching, globally hyperbolic, Lorentzian geodesic space satisfying $\mathsf{TMCP}^{e}(K,N)$  for some $p\in (0,1), K\in \R, N\in [1,\infty)$. For any analytic set $A \subset \T_{V}^{e}\setminus \fb(\T_{V}^{e})$ having $\mm(A)>0$ there exists $s > 0$ and a compact subset $B\subset A$ such that 
\begin{equation}\label{eq:PropBtransl}
\bigcup_{t\in [0,s]} B_{t} \Subset X, \qquad B_{t}\subset  \T_{V}^{e}\setminus \fb(\T_{V}^{e})\quad \text{and} \quad    \mm(B_{t}) > 0\quad \forall t \in [0,s).
\end{equation}
In particular, 
$|\{ t \in [0,+\infty) \colon \mm(A_{t}) > 0 \} | > 0$.
\end{proposition}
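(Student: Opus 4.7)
The plan is to combine inner regularity of $\mm$, a selection of translation maps along transport rays, and the $\mathsf{TMCP}$ condition applied to an $\ell^p$-optimal coupling obtained by constant-time translation.

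First, by inner regularity, replace $A$ with a compact subset of positive $\mm$-measure, still denoted $A$. For each $n\in\N$ the set
\[
F_n := P_1\bigl(\Gamma_V \cap (A\times X)\cap\{\tau\geq 1/n\}\bigr)
\]
is Suslin, and since every $x\in A\subset \T_V^e\setminus\fb(\T_V^e)$ admits some $y$ with $(x,y)\in\Gamma_V$ and $\tau(x,y)>0$ we have $A=\bigcup_n F_n$. Thus $\mm(F_{n_0})>0$ for some $n_0$; choose compact $\tilde B\subset F_{n_0}$ with $\mm(\tilde B)>0$ and set $s:=1/(2n_0)$. For $x\in \tilde B$ and $t\in[0,2s]$, Lemma \ref{lem:XalphaI} together with timelike non-branching implies that there is a unique $T_t(x)\in X$ with $(x,T_t(x))\in \Gamma_V$ and $\tau(x,T_t(x))=t$. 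Using von Neumann selection to pick a measurable choice $x\mapsto y_x$ with $\tau(x,y_x)\geq 2s$ and then Lusin's theorem, we can pass to a further compact $B\subset \tilde B$ with $\mm(B)>0$ on which the selection (and hence $T_{2s}$) is continuous; by the isometric parametrization of rays in Lemma \ref{lem:XalphaI}, each $T_t$ for $t\in[0,2s]$ is continuous on $B$ with compact image.

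Next, set $\mu_0:=\mm\llcorner_B/\mm(B)$, $\mu_1:=(T_{2s})_\sharp \mu_0$, and $\pi:=(\id,T_{2s})_\sharp \mu_0$. Every $(x,y)\in\supp\pi$ satisfies $\tau(x,y)=2s$, so Proposition \ref{P:translationellpMonot} gives that $\supp\pi$ is $\ell^p$-cyclically monotone; since $\supp\pi\subset\{\tau>0\}$, Proposition \ref{P:OptiffMon}(2) implies $\pi\in \Pi_{\leq}^{p\text{-opt}}(\mu_0,\mu_1)$. Because $\mu_0$ has constant density, $\mu_0\in\Dom(\Ent(\cdot|\mm))\cap\Prob_c(X)$. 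Corollary \ref{C:MCP} then yields a unique $\ell_p$-geodesic $(\mu_t=\rho_t\mm)_{t\in[0,1]}$ with
\[
\mm(\{\rho_t>0\}) \geq \sigma^{(1-t)}_{K/N}\!\bigl(\|\tau\|_{L^2(\pi)}\bigr)^N\,\mm(B) > 0,\qquad t\in[0,1).
\]
By Theorem \ref{T:2}, the corresponding dynamical plan $\eta$ is induced by a map from $\mu_0$, so for $\eta$-a.e.\ $\gamma$ we have $\gamma_0\in B$, $\gamma_1=T_{2s}(\gamma_0)$, and by non-branching $\gamma_t=T_{2st}(\gamma_0)$. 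Hence $\supp\mu_t\subset B_{2st}$, giving $\mm(B_{2st})>0$ for all $t\in[0,1)$, i.e.\ $\mm(B_\tau)>0$ for every $\tau\in[0,2s)$; after relabeling $2s$ as the $s$ of the statement we get $\mm(B_t)>0$ for $t\in[0,s)$.

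It remains to verify the two structural properties in \eqref{eq:PropBtransl}. For $x\in B$ and $t\in[0,s]$ the point $T_t(x)$ still admits forward travel along its ray of length $\geq s>0$, hence $T_t(x)\in\T_V^e\setminus\fb(\T_V^e)$, so $B_t\subset \T_V^e\setminus\fb(\T_V^e)$. For the compactness of $\bigcup_{t\in[0,s]} B_t$, observe that $B$ and $T_s(B)$ are compact (by continuity of $T_s$ on $B$) and every $B_t$ for $t\in[0,s]$ is contained in $J^+(B)\cap J^-(T_s(B))$, which is compact by $\cK$-global hyperbolicity. The final assertion on the Lebesgue measure of $\{t:\mm(A_t)>0\}$ follows from $B\subset A\Rightarrow B_t\subset A_t$.

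The main obstacle is the combined measurability/continuity setup in the first paragraph: one must simultaneously guarantee Borel/$\mm$-measurability of the selection $x\mapsto y_x$, continuity of $T_{2s}$ on a set of positive measure (to obtain compact images and the compact containment via $\cK$-hyperbolicity), and well-definedness of all intermediate $T_t$'s via the unique isometric parametrization from Lemma \ref{lem:XalphaI}, before one can cleanly invoke Proposition \ref{P:translationellpMonot} and Corollary \ref{C:MCP}.
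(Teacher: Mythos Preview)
Your proof is correct and follows essentially the same route as the paper: constant-$\tau$ translation along rays to build a coupling, Proposition \ref{P:translationellpMonot} for $\ell^p$-cyclical monotonicity, Proposition \ref{P:OptiffMon} for optimality, and Corollary \ref{C:MCP} (with uniqueness from Theorem \ref{T:2}) to conclude $\mu_t\ll\mm$ and hence $\mm(B_{ts})>0$. The only differences are cosmetic: the paper works directly with $\Lambda_s=\{(x,y)\in(A\times\T_V^e)\cap\Gamma_V:\tau(x,y)=s\}$ and uses $\lim_{s\downarrow 0}\mm(P_1(\Lambda_s))=\mm(A)$ to find $B$, while you use the sets $F_n$ and are more explicit about the selection/Lusin step to get continuity of the translation map (which the paper leaves implicit when it restricts to $\supp\pi\Subset\{\tau>0\}$ and invokes $\cK$-global hyperbolicity).
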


\begin{proof}
 Consider $A \subset \T_{V}^{e} \setminus \fb(\T_{v}^{e})$ with 
$\mm(A) > 0$. Take $s \in [0,+\infty) $
and consider the following subset of $\Gamma_{V}$:
$$
\Lambda_{s} : = 
\{ (x,y) \in (A \times \T_{V}^{e}) \cap \Gamma_{V}
\colon \tau(x,y) = s\}.
$$
From Proposition \ref{P:translationellpMonot} we deduce 
that $\Lambda_{s}$ is $\ell^{p}$-cyclically monotone, 
for each $s \in [0,+\infty) $.
We also observe that 
$$
0\leq s_{1} \leq s_{2} \quad\Longrightarrow \quad
P_{1}(\Lambda_{s_{1}}) \subset P_{1}(\Lambda_{s_{2}}) \subset A.
$$
Moreover, since $A \subset \T_{V}^{e} \setminus \fb(\T_{V}^{e})$, it follows that for each $x \in A$ there exist $s \in (0,+\infty) $ and $z \in \T_{V}$ 
such that $(x,z) \in \Lambda_{s}$, 
showing that 
$$
\bigcup_{s>0} P_{1}(\Lambda_{s}) = A.
$$
In particular, by monotone convergence,  we have  $\lim_{s\downarrow 0} \mm(P_{1}(\Lambda_{s})) 
= \mm(A) > 0$. Define then 
$B : = P_{1}(\Lambda_{s})$ for $s>0$ small enough so that 
$\mm(B) > 0$. 
We can also find a compact subset of $B$ of positive $\mm$-measure, that we still denote by $B$, 
and a measurable map $T : B \to \T_{V}$ such that 
$(x,T(x)) \in \Lambda_{s}$ for all $x \in B$.
We then consider the following measures 
$$
\mu_{0} : = \mm\llcorner_{B}/\mm(B), \qquad
\mu_{1} : = T_{\sharp} \mu_{0}.
$$
By construction, the coupling associated to $T$, i.e. 
$\pi_{T} = (\id, T)_{\sharp}\mu_{0}$ verifies the following two conditions:
$$
\int \tau(x,y)^{p} \pi_{T} (\dd x\dd y) = s^{p} \in (0, +\infty). 
$$
Since $\pi_{T}$ is $\ell^{p}$-cyclically monotone and $\pi_{T}(\{\tau>0\})=1$,
Proposition \ref{P:OptiffMon} ensures  it is an $\ell_{p}$-optimal coupling. 
Up to further restricting $\pi$, we can assume that $\supp \,\pi \Subset \{\tau >0\}$.
By Corollary \ref{C:MCP}, there is a unique $\ell_{p}$-geodesic $(\mu_{t})_{t\in [0,1]}$ between $\mu_{0}$ and $\mu_{1}$, and
$\mu_{t} \ll \mm$ for all $t \in [0,1)$. $\mathcal{K}$-global hyperbolicity implies that $\bigcup_{t\in [0,1]} \supp\,\mu_{t} \Subset X$.

Since $T$ is a translation of length $s$, 
it follows that $\mu_{t}$ is concentrated
inside $B_{ts}\subset A_{ts}$; being absolutely continuous, it implies that 
$$
\mm(A_{ts})> \mm(B_{ts}) > 0, \quad \forall t \in [0,1),
$$
proving the claim.
\end{proof}

\begin{corollary}\label{cor:maTv=0}
Under the same assumptions of Proposition \ref{P:translation}, it holds $\mm(\fa(\T_{V}^{e}))=0$.
\end{corollary}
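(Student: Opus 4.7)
The plan is to argue by contradiction using Proposition~\ref{P:translation}. First I would suppose $\mm(\fa(\T_V^e)) > 0$; since $\fa(\T_V^e)$ is co-Suslin and hence universally $\mm$-measurable, and $\mm$ is Radon, inner regularity produces a compact set $A \subset \fa(\T_V^e)$ with $\mm(A) > 0$. Any $x \in \T_V^e$ admits some $y \neq x$ with $(x,y) \in \Gamma_V$ or $(y,x) \in \Gamma_V$, so $\fa(\T_V^e) \cap \fb(\T_V^e) = \emptyset$ and consequently $A \subset \T_V^e \setminus \fb(\T_V^e)$. Proposition~\ref{P:translation} then supplies a compact set $B \subset A$ and $s > 0$ such that $\bigcup_{t\in[0,s]} B_t \Subset X$ and $\mm(B_t) > 0$ for every $t \in [0,s)$.

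The core of the argument is to establish pairwise disjointness of the family $\{B_t\}_{t\in[0,s)}$. Suppose $0 \leq t < t' < s$ and $y \in B_t \cap B_{t'}$; then there exist $x,x' \in B$ with $(x,y),(x',y) \in \Gamma_V$ and $\tau(x,y)=t$, $\tau(x',y)=t'$. Applying Lemma~\ref{L:geodesicandV} to each pair produces two timelike geodesics, prolonged by initial segments reaching $V$, that both terminate at $y$ and whose restrictions to the maximal-time portion parametrize the signed distance $\tau_V$ isometrically. Backward timelike non-branching forces these geodesics to coincide, so $x,x'$ lie on a single backward ray from $y$ parametrised by $\tau$-distance; the strict inequality $\tau(x',y) > \tau(x,y)$ then yields $(x',x) \in \Gamma_V$ with $x' \neq x$ and $x' \in \T_V^e$. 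Since $x \in B \subset \fa(\T_V^e)$, this contradicts the very definition of initial points.

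The proof is then closed by a standard counting argument. The sets $B_t$ are Suslin, hence $\mm$-measurable, pairwise disjoint, and all contained in $\bigcup_{t\in[0,s]} B_t$, which is precompact and therefore of finite $\mm$-measure by Radonness. Partitioning according to the level sets $\{t \in (0,s) : \mm(B_t) > 1/n\}$, each of which must be finite, shows that $\{t \in (0,s) : \mm(B_t) > 0\}$ is at most countable; this contradicts the fact, supplied by \eqref{eq:PropBtransl}, that $\mm(B_t) > 0$ holds for \emph{every} $t$ in the uncountable interval $(0,s)$.

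The main (mild) obstacle I anticipate is the disjointness step: one must exploit the precise definition of $\fa(\T_V^e)$ in terms of $\T_V^e$ (not $\T_V$) together with backward timelike non-branching to upgrade the existence of two distinct pre-images $x,x'$ of $y$ in $B$ to a genuine $\Gamma_V$-relation inside $\T_V^e$. All remaining ingredients are routine: inner regularity for Radon measures, the already proven translation result, and the observation that an uncountable disjoint family of positive-measure sets cannot sit inside a set of finite $\mm$-measure.
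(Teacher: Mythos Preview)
Your overall strategy matches the paper's: argue by contradiction, apply Proposition~\ref{P:translation}, show the translates $\{B_t\}$ are pairwise disjoint, and obtain a contradiction with the finite $\mm$-measure of the relatively compact set $\bigcup_{t} B_t$. The counting argument in your final paragraph is fine (and in fact cleaner than the paper's phrasing).

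There is, however, a genuine gap in your disjointness step. You invoke backward timelike non-branching on two geodesics (one from $x$ to $y$, one from $x'$ to $y$) that meet only at the common endpoint $y$. But Definition~\ref{def:TNB} requires the geodesics to coincide on a \emph{nondegenerate interval}, not merely at a point; two distinct timelike geodesics may well share only an endpoint without violating non-branching. Prolonging backward to $V$ via Lemma~\ref{L:geodesicandV} does not help either, since $x,x' \in \fa(\T_V^e) \subset V$ already (Lemma~\ref{lem:I+VTV}), so the two geodesics simply start at different points of $V$ and still meet only at $y$.

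The paper repairs this by extending \emph{forward}: since \eqref{eq:PropBtransl} gives $y \in B_{t_0} \subset \T_V^e \setminus \fb(\T_V^e)$, there exists $w \neq y$ with $(y,w) \in \Gamma_V$. Concatenating each of the two geodesics with a common geodesic from $y$ to $w$ produces timelike geodesics from $x$ (resp.\ $x'$) to $w$ that share the entire terminal segment from $y$ to $w$; backward non-branching now applies and forces one image to contain the other, yielding $(x',x) \in \Gamma_V$ with $x' \neq x$, contradicting $x \in \fa(\T_V^e)$. This is exactly Case~3 of Proposition~\ref{P:equivalence}, which the paper invokes directly.

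As an aside, for this particular corollary there is an even shorter route to disjointness that neither you nor the paper uses: since $B \subset \fa(\T_V^e) \subset V$ and $\tau_V \equiv 0$ on $V$, the defining equation of $\Gamma_V$ forces $\tau_V(y) = t$ for every $y \in B_t$ with $t > 0$, so the $B_t$ lie in distinct level sets of $\tau_V$.
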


\begin{proof}
Assume by contradiction $\mm(\fa(\T_{V}^{e}))>0$. Setting $A=\fa(\T_{V}^{e})$ in Proposition \ref{P:translation}, we obtain $B\subset A$ compact subset satisfying \eqref{eq:PropBtransl}.

\textbf{Step 1}.
With the same notation of Proposition \ref{P:translation}, we first claim that 
\begin{equation}\label{Atdisjoint}
B_{t_{0}}\cap B_{t_{1}}= \emptyset,  \quad\text{ for any }0<t_{0}< t_{1}<s.
\end{equation}
Indeed, if by  contradiction  there exists $y\in B_{t_{0}}\cap B_{t_{1}}$ then there exist $x,z \in  \fa(\T_{V}^{e})$ such that $\tau(x,y)=t_{0}$,  $\tau(z,y)=t_{1}$, $(x,y)\in \Gamma_{V}$ and $(z,y)\in \Gamma_{V}$. Since $y\notin \fb(\T_{V}^{e})$, we can repeat the argument in Case 3 of the proof of Proposition \ref{P:equivalence} and get that $(z,x)\in \Gamma_{V}$ contradicting that $x \in  \fa(\T_{V}^{e})$.
\\

\textbf{Step 2}. From Proposition \ref{P:translation} we have that there are uncountably many $t\in [0,s)$ satisfying $\mm(B_{t})>0$ and  \eqref{Atdisjoint}. Hence, on the one hand,
\begin{equation}\label{eq:mcupBtinfty}
\mm\left( \bigcup_{t\in (0,s)} B_{t} \right)=+\infty.
\end{equation} 
On the other hand, since by \eqref{eq:PropBtransl} $\bigcup_{t\in [0,s]} B_{t}$ is relatively compact and $\mm$ is by assumption a Radon measure, we have  $\mm\left( \bigcup_{t\in [0,s]} B_{t} \right)<\infty$ contradicting \eqref{eq:mcupBtinfty}.
\end{proof}

Of course, if we assume that $X$ endowed with the reversed causal structure satisfies the assumptions of Proposition \ref{P:translation}, then also $\mm(\fb(\T_{V}^{e}))=0$.

\begin{proposition}
Under the same assumptions of Proposition \ref{P:translation}, the conditional measure $\mm_{\alpha}$ (given in the Disintegration Theorem \ref{T:sigma-disint}) is absolutely continuous with respect to the Lebesgue  measure $\L^{1}\llcorner_{X_{\alpha}}$ along the ray $X_{\alpha}$, for $\qq$-a.e. $\alpha\in Q$.
\end{proposition}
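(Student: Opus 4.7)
The plan is to argue by contradiction, using Proposition \ref{P:translation} together with a Fubini-type computation. Recall from Lemma \ref{lem:XalphaI} that each ray $X_{\alpha}$ admits a bijective $\tau_{V}$-parametrization $g_{\alpha}:X_{\alpha}\to I_{\alpha}\subset \R$ with $\tau(g_{\alpha}^{-1}(t_{1}),g_{\alpha}^{-1}(t_{2}))=t_{2}-t_{1}$ for $t_{1}\leq t_{2}$. Define $\nu_{\alpha}:=(g_{\alpha})_{\sharp}\mm_{\alpha}$, a Radon measure on $I_{\alpha}$. The goal is thus to prove that $\nu_{\alpha}\ll \L^{1}$ for $\qq$-a.e. $\alpha\in Q$.

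Suppose the claim fails. Then by Lebesgue decomposition $\nu_{\alpha}=\nu_{\alpha}^{ac}+\nu_{\alpha}^{s}$ with $\nu_{\alpha}^{s}(I_{\alpha})>0$ for every $\alpha$ in some Borel subset $Q_{0}\subset Q$ with $\qq(Q_{0})>0$. The first technical step is a \emph{measurable selection}: via Sainte-Beuve's theorem applied to the jointly measurable dependence $\alpha \mapsto \nu_{\alpha}$ provided by Theorem \ref{T:sigma-disint} (together with the fact that $\QQ$ is $\mm$-measurable and $g_{\alpha}$ depends measurably on $\alpha$), one constructs a Borel set $N\subset \mathcal{T}_{V}$ such that: $N\cap X_{\alpha}=\emptyset$ for $\alpha\notin Q_{0}$; for every $\alpha\in Q_{0}$ the set $L_{\alpha}:=g_{\alpha}(N\cap X_{\alpha})\subset I_{\alpha}$ is $\L^{1}$-negligible; and $\mm_{\alpha}(N)=\nu_{\alpha}^{s}(I_{\alpha})>0$. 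By Theorem \ref{T:sigma-disint} it follows that $\mm(N)=\int_{Q_{0}}\mm_{\alpha}(N)\,\qq(d\alpha)>0$.

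Next, I would apply Proposition \ref{P:translation} with $A=N$: we obtain $s>0$ and a compact subset $B\subset N$ such that $\bigcup_{t\in[0,s]}B_{t}\Subset X$, $B_{t}\subset \mathcal{T}_{V}^{e}\setminus \fb(\mathcal{T}_{V}^{e})$ and $\mm(B_{t})>0$ for every $t\in[0,s)$. Crucially, $g_{\alpha}(B\cap X_{\alpha})\subset L_{\alpha}$ is still $\L^{1}$-negligible, and the forward timelike non-branching property together with the isometric parametrization of Lemma \ref{lem:XalphaI} yield the identity
\[
g_{\alpha}(B_{t}\cap X_{\alpha})=\bigl(g_{\alpha}(B\cap X_{\alpha})+t\bigr)\cap I_{\alpha},\qquad \forall\, t\in[0,s),\ \forall\,\alpha\in Q_{0}.
\]
Since $B$ is compact and the translation map $(x,t)\mapsto \Phi_{t}(x)$ is Borel, the function $t\mapsto \mm(B_{t})$ is Lebesgue measurable; being strictly positive on $[0,s)$ forces $\int_{0}^{s}\mm(B_{t})\,dt>0$.

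The contradiction will come from computing this integral via the disintegration and Tonelli's theorem. For every $\alpha\in Q_{0}$,
\[
\int_{0}^{s}\mm_{\alpha}(B_{t}\cap X_{\alpha})\,dt
=\int_{0}^{s}\nu_{\alpha}\bigl((L_{\alpha}+t)\cap I_{\alpha}\bigr)\,dt
=\int_{I_{\alpha}}\L^{1}\bigl((y-L_{\alpha})\cap[0,s]\bigr)\,\nu_{\alpha}(dy)=0,
\]
where the last identity uses $\L^{1}(L_{\alpha})=0$. Applying Theorem \ref{T:sigma-disint} and Tonelli once more yields $\int_{0}^{s}\mm(B_{t})\,dt=\int_{Q_{0}}0\,\qq(d\alpha)=0$, contradicting the previous paragraph. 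The main obstacle is the measurable selection step producing $N$: the jointly measurable choice of a Borel set $L_{\alpha}$ of full $\nu_{\alpha}^{s}$-mass and zero Lebesgue measure, depending measurably on $\alpha$, is where care is needed. Everything else is a clean reduction to Proposition \ref{P:translation} and Tonelli.
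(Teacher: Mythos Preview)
Your proposal is correct and follows essentially the same route as the paper: argue by contradiction, isolate a set of positive $\mm$-measure whose trace on $\qq$-a.e.\ ray has zero Lebesgue measure, apply Proposition \ref{P:translation}, and then use Fubini--Tonelli together with the translation-invariance of $\L^{1}$ to force the time-integral of the translated masses to vanish, contradicting the positivity conclusion of Proposition \ref{P:translation}.

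The only notable difference is one of rigor rather than strategy. The paper simply writes $A:=\bigcup_{\alpha\in \hat Q} A^{\alpha}$ and proceeds, leaving the measurability of this union and the joint measurability of $(\alpha,t)\mapsto \mm_{\alpha}(A_{t})$ implicit; you correctly flag the measurable-selection issue as the one place requiring care and propose Sainte-Beuve's theorem to resolve it. Your passage through the compact subset $B$ of Proposition \ref{P:translation} (rather than working with $A$ directly) and your explicit justification that $t\mapsto \mm(B_{t})$ is measurable are likewise refinements of the same argument rather than a different idea.
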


\begin{proof}
Assume by contradiction there is a Borel subset $\hat{Q}\subset Q$ with $\qq(\hat{Q})>0$ such that $\mm_{\alpha} \not\ll \L^{1}\llcorner_{X_{\alpha}}$ for each $\alpha\in \hat{Q}$.
\\Let $\mm_{\alpha}=h_{\alpha} \L^{1}\llcorner_{X_{\alpha}} + \mm_{\alpha}^{\perp}$ be the Lebesgue decomposition of $\mm_{\alpha}$ with respect to  $\L^{1}\llcorner_{X_{\alpha}}$, with $\mm_{\alpha}^{\perp} \perp\L^{1}\llcorner_{X_{\alpha}}$. Then, for every $\alpha \in \hat{Q}$ there exists a Borel subset $A^{\alpha}\subset X_{\alpha}$ such that 
\begin{equation}\label{eq:PropAalpha}
\L^{1}(A^{\alpha})=0 \quad \text{ and } \quad \mm_{\alpha}^{\perp}=\mm_{\alpha}^{\perp}\llcorner_{A^{\alpha}}.
\end{equation}
 Define $A:=\bigcup_{\alpha\in \hat{Q}} A^{\alpha}\subset {\mathcal T}_{V}$ and observe that the Disintegration Theorem \ref{T:sigma-disint} gives
$$
\mm(A)=\int_{Q} \mm_{\alpha}(A) \, \qq(\dd \alpha)= \int_{\hat{Q}} \mm_{\alpha}^{+}(A^{\alpha}) \, \qq(\dd \alpha)>0.
$$
Proposition \ref{P:translation} implies
\begin{equation}\label{eq:intR+mAt>0}
0< \int_{\R^{+}} \mm(A_{t}) \, \dd t =  \int_{\R^{+}}  \left( \int_{Q} \mm_{\alpha}(A_{t}) \, \qq(\dd \alpha) \right)  \dd t =   \int_{Q}  \left( \int_{\R^{+}}  \mm_{\alpha}(A_{t}) \, \dd t  \right)\qq(\dd \alpha) ,
\end{equation}
where in the second equality we used the Disintegration Theorem  \ref{T:sigma-disint}, and the third equality follows by Fubini-Tonelli's Theorem. In order to simplify the notation, for the rest of the proof  we identify $X_{\alpha}$ with an interval in the Real line (see Lemma \ref{lem:XalphaI}). Observe that
\begin{align}
 \int_{\R^{+}}  \mm_{\alpha}(A_{t}) \, \dd t &= \L^{1} \otimes \mm_{\alpha} \left\{(t,x) \,:\, t>0, \,x\in X_{\alpha}, \, x-t\in A^{\alpha} \right\} \nonumber\\
 &=\int_{X_{\alpha}} \L^{1} (\left\{t>0\,:\,  x-t\in A^{\alpha} \right\}) \, \mm_{\alpha}(\dd x)=0, \label{intmAalpha=0}
\end{align}
where in the last equality we used that 
$$
\L^{1} (\left\{t>0\,:\,  x-t\in A^{\alpha} \right\})=\L^{1}(A_{\alpha})=0,
$$
by the invariance properties of the Lebesgue measure and \eqref{eq:PropAalpha}.
\\Plugging \eqref{intmAalpha=0} into \eqref{eq:intR+mAt>0} gives the contradiction $0<0$.
\end{proof}

We summarise the content of this subsection, combined with Lemma \ref{lem:I+VTV} and  the Disintegration Theorem  \ref{T:sigma-disint},  in the next statement.

\begin{theorem}\label{P:nointialpoints}
Let  $(X,\sfd, \mm, \ll, \leq, \tau)$ be a timelike non-branching,  globally hyperbolic, Lorentzian geodesic space satisfying $\mathsf{TMCP}^{e}(K,N)$  for some $p\in (0,1), K\in \R, N\in [1,\infty)$,  and assume that the causally-reversed structure satisfies the same conditions. 
Let $V\subset X$ be a Borel achronal FTC subset, $\T_{V}^{e}, \fa(\T_{V}^{e}), \fb(\T_{V}^{e})$ and $\T_{V}$ be defined in \eqref{E:transport}, \eqref{eq:defendpoints}, \eqref{E:nbtransport}.

Then $\mm(\fa(\T_{V}^{e}))=\mm(\fb(\T_{V}^{e})=0$ and the following disintegration formula holds true: 
\begin{equation}\label{E:disintegration}
\mm\llcorner_{I^{+}(V)}= \mm\llcorner_{\T^{e}_{V}} = 
\mm\llcorner_{\T_{V}} 
= \int_{Q} \mm_{\alpha}\, \qq(\dd \alpha)= \int_{Q} h(\alpha,\cdot) \, \L^{1}\llcorner_{X_{\alpha}}\, \qq(\dd \alpha),
\end{equation}
where
\begin{itemize}
\item $\qq$ is a probability measure over the Borel quotient set $Q \subset \T_{V}$;
\item  $h(\alpha,\cdot)\in L^{1}_{loc}(X_{\alpha}, \L^{1}\llcorner_{X_{\alpha}})$ for $\qq$-a.e. $\alpha\in Q$;
\item  the map 
$\alpha \mapsto \mm_{\alpha}(A)= h(\alpha,\cdot)\L^{1}\llcorner_{X_{\alpha}}(A)$ is 
$\qq$-measurable for every Borel set $A \subset \T_{V}$.
\end{itemize}
\end{theorem}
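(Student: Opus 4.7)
The statement is a packaging of the three facts established in the preceding subsection, so the plan is essentially to assemble them cleanly. First I would record that $\mm(\fa(\T_V^{e}))=0$ is an immediate application of Corollary \ref{cor:maTv=0} to the given $V$. To handle $\fb$, I would invoke the symmetry built into Section \ref{Sec:LocTMCP}: running the whole construction on the causally-reversed space $(X,\sfd,\mm,\tilde\ll,\tilde\leq,\tilde\tau)$ (which satisfies the same hypotheses by assumption, with $V$ still achronal), one checks that $\tilde\tau_V=-\tau_V$ on $I^{-}(V)$, so that the reversed transport relation $\tilde\Gamma_V$ coincides with $\Gamma_V^{-1}$ on $I^{-}(V)\cup V$, while on $I^{+}(V)$ the roles of initial and final points are swapped; in particular $\tilde\fa(\tilde\T_V^{e})=\fb(\T_V^{e})$. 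Applying Corollary \ref{cor:maTv=0} on the reversed structure then yields $\mm(\fb(\T_V^{e}))=0$.

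Next I would identify the three sets appearing in \eqref{E:disintegration} up to $\mm$-null sets. By Lemma \ref{lem:I+VTV} and the achronality of $V$ (which gives $V\cap I^{+}(V)=\emptyset$ and, since points of $V$ have $\tau_V=0$ and can thus only appear as initial points, $V\cap\T_V=\emptyset=V\cap\fb(\T_V^{e})$), one has the exact equality $I^{+}(V)=\T_V\cup\fb(\T_V^{e})$. Combined with the decomposition $\T_V^{e}=\T_V\cup\fa(\T_V^{e})\cup\fb(\T_V^{e})$ and the vanishing of the endpoint sets from Step 1, this delivers
\[
\mm\llcorner_{I^{+}(V)}=\mm\llcorner_{\T_V^{e}}=\mm\llcorner_{\T_V}.
\]

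For the remaining equality I would invoke the Disintegration Theorem \ref{T:sigma-disint}, which provides the Borel probability $\qq$ on the quotient $Q\subset\T_V$, the $\qq$-measurable family $\alpha\mapsto\mm_\alpha\in\mathcal{M}_+(X)$ with $\mm_\alpha$ concentrated on the ray $X_\alpha$, and the local-finiteness bound in item (4). The preceding proposition shows $\mm_\alpha\ll\L^{1}\llcorner_{X_\alpha}$ for $\qq$-a.e. $\alpha$, so I can define the density $h(\alpha,\cdot)$ as the Radon--Nikodym derivative $d\mm_\alpha/d\L^{1}\llcorner_{X_\alpha}$; local integrability of $h(\alpha,\cdot)$ against $\L^{1}\llcorner_{X_\alpha}$ follows directly from $\mm_\alpha(\cK)<\infty$ for every bounded $\cK\subset X$. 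Joint $\qq$-measurability of $\alpha\mapsto\mm_\alpha(A)=\int_{A}h(\alpha,\cdot)\,d\L^{1}\llcorner_{X_\alpha}$ is inherited from property (1) of Theorem \ref{T:sigma-disint}.

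The only genuinely delicate point is the reversal argument in Step 1, i.e. making sure that the translation method of Proposition \ref{P:translation} carries over to the reversed structure in such a way that the resulting "initial endpoints in the reversed sense" correspond to the original $\fb(\T_V^{e})$; once this matching is spelled out, everything else is bookkeeping. I expect no surprises in Steps 3--4, since the measurability, local finiteness and disintegration ingredients have already been organised into the Disintegration Theorem \ref{T:sigma-disint} in exactly the form needed here.
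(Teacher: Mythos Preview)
Your overall plan matches the paper's: Theorem~\ref{P:nointialpoints} is indeed a summary statement, and the paper introduces it with exactly the sentence ``We summarise the content of this subsection, combined with Lemma~\ref{lem:I+VTV} and the Disintegration Theorem~\ref{T:sigma-disint}''. Your Steps~2--4 are fine.

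The reversal argument in Step~1, however, is not right as written. If you literally rerun Section~\ref{Sec:LocTMCP} on the causally-reversed space with the \emph{same} $V$, two things go wrong. First, the construction there needs $V$ to be FTC; in the reversed structure this means $V$ is PTC in the original sense, which the theorem does \emph{not} assume. Second, even granting PTC, the reversed transport set $\tilde{\T}_V^{e}$ lives in $\tilde I^{+}(V)\cup V = I^{-}(V)\cup V$, so $\tilde{\fa}(\tilde{\T}_V^{e})\subset I^{-}(V)\cup V$ while $\fb(\T_V^{e})\subset I^{+}(V)$; these are disjoint away from $V$, and your claimed identity $\tilde{\fa}(\tilde{\T}_V^{e})=\fb(\T_V^{e})$ cannot hold.

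What the paper's one-line remark after Corollary~\ref{cor:maTv=0} actually intends is more direct and avoids both issues: keep the \emph{same} $\Gamma_V$ on $I^{+}(V)\cup V$ and simply reverse the direction of translation. Observe that $\Gamma_V^{-1}$ is $\tilde\ell$-cyclically monotone in the reversed structure (repeat the computation of Lemma~\ref{lem:GammaVell1CM} reindexing over the $z_i$'s instead of the $x_i$'s), hence Proposition~\ref{P:translationellpMonot} applied in the reversed structure shows that the backward-translation sets
\[
\Lambda_{-s}:=\{(y,x)\in (A\times \T_V^{e})\cap \Gamma_V^{-1}:\ \tilde\tau(y,x)=\tau(x,y)=s\}
\]
are $\tilde\ell^{\,p}$-cyclically monotone for $A\subset \T_V^{e}\setminus\fa(\T_V^{e})$. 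Now rerun the proof of Proposition~\ref{P:translation} verbatim using the reversed $\mathsf{TMCP}^{e}_{p}(K,N)$ to obtain $\mm(A_{-t})>0$ for small $t>0$, and then repeat Corollary~\ref{cor:maTv=0} with the roles of $\fa$ and $\fb$ swapped to conclude $\mm(\fb(\T_V^{e}))=0$. This uses only the assumption on the causally-reversed \emph{ambient} structure and never touches $I^{-}(V)$ or any completeness property of $V$ beyond FTC.
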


%%%%%%%%%%%%%%%%%%%%%%%%%%%%%%%%%%%%%%%%%%%%%%%%
%%%%%%%%%%%%%%%%%%%%%%%%%%%%%%%%%%%%%%%%%%%%%%%%

\subsection{Localization of $\TMCP^{e}(K,N)$}

In this section we localize the curvature condition $\TMCP^{e}(K,N)$ to the one dimensional metric measures spaces $(X_{\alpha},|\cdot|, \mm_{\alpha})$ decomposing $\T_{V}$, in the sense of the Disintegration Theorem 
\ref{P:nointialpoints} (cf. \cite{biacava:streconv, CaMoLap}).

\begin{theorem}\label{P:localKant}
Let $(X,\sfd, \mm, \ll, \leq, \tau)$ and $V\subset X$ be as in  Theorem \ref{P:nointialpoints} with $N\in (1,\infty)$, and recall the Disintegration formula \eqref{E:disintegration}.

Then,  for $\qq$-a.e. 
$\alpha \in Q$, the density $h(\alpha, \cdot)$ 
has an almost everywhere 
representative that is locally Lipschitz and strictly positive 
in the interior of $X_{\alpha}$, continuous on its closure, and satisfying 
\begin{equation}\label{E:MCP0N1d}
\left(\frac{\fs_{K/(N-1)}(b-\tau_{V}(x_{1}))}{\fs_{K/(N-1)}(b-\tau_{V}(x_{0}))}\right)^{N-1}
\leq \frac{h(\alpha, x_{1} ) }{h (\alpha, x_{0})} \leq 
\left( \frac{\fs_{K/(N-1)}(\tau_{V}(x_{1}) - a) }{\fs_{K/(N-1)}(\tau_{V}(x_{0}) -a)} \right)^{N-1},  
\end{equation}
for all $x_{0},x_{1}\in X_{\alpha}$, with $0\leq  a< \tau_{V}(x_{0})<\tau_{V}(x_{1})<b<\pi\sqrt{(N-1)/(K\vee 0)}$.  

In other words, for $\qq$-a.e. $\alpha\in Q$, the one-dimensional metric measure space 
$(X_{\alpha},|\cdot|, \mm_{\alpha})$ satisfies $\MCP(K,N)$.
\end{theorem}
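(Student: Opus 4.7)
The plan is to reduce the stated density bound \eqref{E:MCP0N1d} together with the regularity claims to the one-dimensional assertion that the metric measure space $(X_{\alpha},|\cdot|,\mm_{\alpha})$ satisfies $\MCP(K,N)$ for $\qq$-a.e.\ $\alpha\in Q$. Indeed, the combination of \eqref{E:MCP0N1d} with the asserted regularity of $h(\alpha,\cdot)$ is the classical characterization of one-dimensional $\MCP(K,N)$ densities on intervals (see the metric counterpart in \cite{CaMoLap}); once the 1D $\MCP$ is established, local Lipschitz regularity, strict positivity in the interior, and continuity up to the boundary follow by standard arguments.

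To prove 1D $\MCP(K,N)$ for $\mm_{\alpha}$, I would fix a generic $\bar\alpha\in Q$ and a point $\bar x\in X_{\bar\alpha}$ in the interior of the ray (parametrized by $\tau_V$ via Lemma \ref{lem:XalphaI}). For small $\delta>0$, form a compact ``ray-tube'' $\tilde A_{\delta}:=\{x\in\T_V : \QQ(x)\in Q_{\delta},\ \tau_V(x)\in J_{\delta}\}$, where $Q_{\delta}\subset Q$ is a small compact neighborhood of $\bar\alpha$ and $J_{\delta}$ a short $\tau_V$-interval strictly below $\tau_V(\bar x)$; global hyperbolicity and continuity of $\tau$ ensure $\tilde A_{\delta}\Subset I^{-}(\bar x)\cap\T_V$ for $\delta$ small. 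By Corollary \ref{cor:STDualDelta}, the pair $(\mu_0:=\mm\llcorner\tilde A_{\delta}/\mm(\tilde A_{\delta}),\,\mu_1:=\delta_{\bar x})$ is strongly timelike $p$-dualisable, and the $\TMCP^{e}_{p}(K,N)$ condition produces an $\ell_{p}$-geodesic $(\mu_t)_{t\in[0,1]}$ satisfying
\[
U_{N}(\mu_t|\mm)\ \geq\ \sigma_{K/N}^{(1-t)}\!\bigl(\|\tau(\cdot,\bar x)\|_{L^{2}(\mu_0)}\bigr)\, U_{N}(\mu_0|\mm).
\]
By timelike non-branching and Theorem \ref{T:2}, this geodesic is induced by a unique transport map $F_t$, and for $x\in X_{\bar\alpha}\cap\tilde A_{\delta}$ the map $F_t$ coincides with the along-ray contraction $x\mapsto\gamma_{\bar\alpha}\bigl((1-t)\tau_V(x)+t\tau_V(\bar x)\bigr)$ (since the unique timelike geodesic from $x$ to $\bar x$ is the sub-segment of the ray).

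The next step is to localize the ambient $U_{N}$-estimate via the disintegration \eqref{E:disintegration}: writing $\mm_{\alpha}=h(\alpha,\cdot)\L^{1}$ in the $\tau_V$-parametrization and performing the change of variables along each ray, Jensen's inequality (as in \eqref{eq:twiceJensen}) converts the ambient bound on $U_{N}(\mu_t|\mm)$ into an integral inequality involving the 1D densities $h(\alpha,\cdot)$ for rays $\alpha\in Q_{\delta}$. Passing to the limit $\delta\to 0$ via Lebesgue differentiation on both $(Q,\qq)$ and the ray parameter, the integral inequality becomes a pointwise comparison between $h(\bar\alpha,r)$ and $h(\bar\alpha,s)$ for $s<r<\tau_V(\bar x)$; varying $\bar x$ along $X_{\bar\alpha}$ then yields the lower bound in \eqref{E:MCP0N1d}. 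The complementary upper bound is obtained by symmetry, applying the analogous argument to the causally-reversed structure, which satisfies $\TMCP^{e}_{p}(K,N)$ by hypothesis.

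The central technical difficulty lies in recovering the sharp coefficient $\fs_{K/(N-1)}$ in \eqref{E:MCP0N1d} rather than the naively-expected $\fs_{K/N}$. A direct pairing of the entropic $\TMCP$-inequality with Jensen along rays produces the factor $[\sigma_{K/N}^{(1-t)}]^{N}/(1-t)$, which coincides with the sharp $[\sigma_{K/(N-1)}^{(1-t)}]^{N-1}$ when $K=0$ but is strictly weaker for $K\neq 0$. Bridging this gap requires upgrading the entropic form of $\TMCP$ to its displacement (pointwise) form, which in the timelike non-branching setting is accomplished through a Jacobian computation on each $\ell_{p}$-optimal ray-contraction $F_t$, exploiting the absolute-continuity estimates of Proposition \ref{prop:AbsContmut}. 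This refinement mirrors the 1D dimensional reduction in the metric $\MCP(K,N)$ theory of Cavalletti and Mondino, and adapting it to the present Lorentzian setting is the main obstacle to completing the full proof.
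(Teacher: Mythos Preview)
Your overall strategy is close in spirit to the paper's, but there is a structural gap in your transport construction, and your proposed resolution of the $\fs_{K/N}$ versus $\fs_{K/(N-1)}$ discrepancy is not the route the paper takes.

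\textbf{The transport should target a level set, not a point.} You contract the tube $\tilde A_{\delta}$ to a single Dirac mass $\delta_{\bar x}$. For rays $\alpha\neq\bar\alpha$ in $Q_{\delta}$, the unique $\ell_{p}$-geodesic from a point on $X_{\alpha}$ to $\bar x$ is \emph{not} a segment of $X_{\alpha}$; the dynamical plan therefore leaves the ray structure, and the disintegration \eqref{E:disintegration} does not interact cleanly with $\mu_{t}$. Your Lebesgue-differentiation step would have to absorb this mismatch, which it cannot do without additional quantitative control. The paper avoids this entirely: it chooses $\mu_{1}$ supported on the level set $\{\tau_{V}=a_{1}\}$ via the ray-projection $T_{a_{1}}(x)=R(x)\cap\{\tau_{V}=a_{1}\}$, so that the transport is along each ray simultaneously. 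The key input making this work is Proposition~\ref{P:cpgeod}, which shows that such ``monotone-along-$\tau_{V}$'' couplings are $\ell^{p}$-cyclically monotone (hence $\ell_{p}$-optimal by Proposition~\ref{P:OptiffMon}); this is precisely the mechanism that embeds the $\ell_{p}$-optimal transport inside the $\ell$-transport relation $\Gamma_{V}$. With this choice, the $U_{N}$-bound from Corollary~\ref{C:MCP} disintegrates ray-by-ray without any limiting procedure, and the arbitrariness of $\bar Q$, $a_{1}$, $A_{0}$, $L_{0}$ yields the pointwise inequality \eqref{E:firstMCP} directly.

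\textbf{The sharpening from $K/N$ to $K/(N-1)$.} You correctly flag that the raw estimate gives $\sigma_{K/N}$ rather than $\sigma_{K/(N-1)}$, but the fix you propose (upgrading to a displacement/Jacobian form of $\TMCP$) is not what the paper does and would require substantial new machinery. The paper's argument is a short local-to-global trick: for any $K'<K$ one has $\tau_{K',N}^{(t)}(\vartheta)\leq\sigma_{K/N}^{(t)}(\vartheta)$ for all sufficiently small $\vartheta$, so \eqref{E:firstMCP} implies the $\MCP(K',N)$ density inequality \emph{locally} on each ray; one then invokes the one-dimensional local-to-global property of $\MCP$ (as in \cite{BS10,cavasturm:MCP}) to globalise, and finally lets $K'\uparrow K$. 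No pointwise Jacobian estimate is needed.
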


\begin{proof}
For $x\in \T_{V}$ we will write $R(x)$ to denote its equivalence class in $(\T_{V}, R_{V})$, i.e. the ``ray passing through $x$'' (recall Proposition \ref{P:equivalence}). For a subset $B\subset \T_{V}$, we denote $R(B):=\bigcup_{x\in B} R(x)$.
\\Let $\bar Q \subset Q$ be an arbitrary compact subset of positive $\qq$-measure for which there
exist $\ve >0$ and $0<a_{0} < a_{1}$ such that   
\begin{equation*}
\begin{split}
\sup_{x,y \in X_{\alpha}} \tau(x,y) > \ve,
\qquad 
X_{\alpha} \cap \{\tau_{V}= a_{0}\} \neq \emptyset, 
\qquad X_{\alpha} \cap \{\tau_{V}= a_{1}\} \neq \emptyset \qquad \forall  \alpha \in \bar Q,\\
R(\bar{Q})\cap \tau_{V}^{-1}([a_{0}, a_{1}]) \Subset X, \quad \{(x,y)\in \Gamma_{V}\,:\, x,y\in R(\bar{Q}), \,  \tau_{V}(x)=a_{0}, \,  \tau_{V}(y)=a_{1} \} \Subset \{\tau>0\}.
\end{split}
\end{equation*}
For any $A_{0}\in (a_{0}, a_{1})$  
and $L_{0} > 0$ satisfying 
$A_{0} + L_{0} < a_{1}$,  consider the probability measure 
$$
\mu_{0} : = c_{\bar{Q},A_{0},L_{0}} \cdot \mm\llcorner_{\tau_{V}^{-1}(A_{0, }A_{0}+L_{0}) \cap R(\bar Q)}, 
$$
where $c_{\bar{Q},A_{0},L_{0}}$ is the normalization constant so that $\mu_{0}\in \Prob_{c}(X)$. 
\\Let $T_{a_{1}}:R(\bar{Q})\to R(\bar{Q})\cap \tau_{V}^{-1}(a_{1})$ be the ``ray-projection map''  defined by  $T_{a_{1}}(x) = \tau_{V}^{-1}(a_{1}) \cap R(x)$ and set
 $\mu_{1} : = (T_{a_{1}})_{\sharp} \mu_{0}$. Notice that $\{(x,T_{a_{1}}(x))\,:\, x\in \supp\,\mu_{0} \}\Subset \{\tau>0\}$.
 Moreover, Proposition \ref{P:cpgeod} implies that the associated coupling $\pi_{T_{a_{1}}}=(\id, T_{a_{1}})_{\sharp} \mu_{0}$ is $\ell^{p}$-cyclically monotone and thus, by  Proposition \ref{P:OptiffMon}, $\ell_{p}$-optimal. Analogously, setting $T^{t}(x):= \tau_{V}^{-1}((1-t)\tau_{V}(x)+ta_{1}) \cap R(x)$, it follows that 
  the curve of probability measures $\bar \mu_{t}=T^{t}_{\sharp} \mu_{0}$ is an $\ell_{p}$-geodesic.
Notice that  
\begin{equation}\label{eq:barmutconcentr}
\bar{\mu}_{t} \left(\tau_{V}^{-1}(A_{t, }A_{t} + L_{t}) \cap R(\bar Q) \right)=1,
\end{equation}
 where $A_{t} : = (1-t)A_{0} + t a_{1}$ and 
$L_{t}: = (1-t)L_{0} $.
\\Since by Corollary \ref{C:MCP}  there is a unique $\ell_{p}$-geodesic $(\mu_{t})_{t\in [0,1]}$ between $\mu_{0}$ and $\mu_{1}$, it must be $(\mu_{t})_{t\in [0,1]}=(\bar{\mu}_{t})_{t\in [0,1]}$. Thus,  combining \eqref{eq:barmutconcentr} with  \eqref{E:MCP}, we get 
$$
\mm(\tau_{V}^{-1}(A_{t}, A_{t} + L_{t}) \cap R(\bar Q))
\geq \sigma_{K/N}^{(1-t)}(\|\tau \|_{L^{2}(\pi_{T_{a_{1}}})})^{N} \mm(\tau_{V}^{-1}(A_{0} , A_{0} + L_{0}) \cap R(\bar Q)),
$$
that can be rewritten using the Disintegration formula \eqref{E:disintegration} as
\begin{equation*}
\int_{\bar{Q}} \mm_{\alpha}(\tau_{V}^{-1}(A_{t}, A_{t} + L_{t}) \, \qq(\dd \alpha)
\geq  \sigma_{K/N}^{(1-t)}(\|\tau \|_{L^{2}(\pi_{T_{a_{1}}})})^{N}
\int_{\bar{Q}}   \mm_{\alpha}(\tau_{V}^{-1}(A_{0}, A_{0} + L_{0})) \, \qq(\dd \alpha).
\end{equation*}
Recalling that $\mm_{\alpha}=h(\alpha, \cdot) \L^{1}$,  the arbitrariness of $\bar Q, a_{0}, a_{1}, A_{0}, L_{0}$ (letting $L_{0}\downarrow 0$) implies that
$$
(1-t)h_{\alpha}( (1-t)A_{0}+ t a_{1})\geq \sigma_{K/N}^{(1-t)}(a_{1} - A_{0})^{N} h_{\alpha}(A_{0})
$$
for $\qq$-a.e. $\alpha \in Q$, $\L^{1}$-a.e. $t\in (0,1)$,
that can be rewritten as 
\begin{equation}\label{E:firstMCP}
\frac{b-s}{b-a} h_{\alpha}( s)\geq \sigma_{K/N}^{(\frac{b-s}{b-a})}(b - a)^{N} h_{\alpha}(a),\quad \text{for $\qq$-a.e. $\alpha \in Q$, $\L^{1}$-a.e. $s\in (a,b)\subset X_{\alpha}$.}
\end{equation}
It is a standard trick to obtain the first inequality in \eqref{E:MCP0N1d} out of \eqref{E:firstMCP}.
We anyway include few details for the case $K>0$, the other one being completely analogous. 
Using the notation of \cite{sturm:II} and of \cite{BS10} 
we consider $\tau_{K,N}^{(t)}(\vartheta) : = t^{1/N}\sigma_{K/(N-1)}^{(t)}(\vartheta)^{\frac{N-1}{N}}$.
While $\tau_{K,N}^{(t)}(\vartheta)$ is always larger than $\sigma_{K/N}^{(t)}(\vartheta)$,
for $\vartheta\ll 1$ the two coefficients are almost identical: 
to be precise if $0< K'< \tilde K < K$ we can choose $\vartheta^{*}>0$ so that for all 
$0\leq \vartheta \leq \vartheta^{*}$ and all $t \in [0,1]$ the reverse inequality 
$\tau_{K',N}^{(t)}(\vartheta) \leq \sigma_{\tilde K/N}^{(t)}(\vartheta)$ is valid. 
Hence \eqref{E:firstMCP} becomes: 
$$
\frac{b-s}{b-a} h_{\alpha}( s)\geq \tau_{K',N}^{(\frac{b-s}{b-a})}(b-a)^{N} h_{\alpha}(a),
$$
provided $0<b-a <\vartheta^{*}$, that can be rewritten in the following form:
\begin{equation}\label{E:locMCP}
h_{\alpha}( s)\geq \sigma_{K'/(N-1)}^{(\frac{b-s}{b-a})}(b-a)^{N-1} h_{\alpha}(a), \qquad  \text{for $\qq$-a.e. $\alpha \in Q$, $\L^{1}$-a.e. $s\in (a,b)\subset X_{\alpha}$,   $ b-a <\vartheta^{*}$.}
\end{equation}
We have therefore proved that for each $K' <K$ the following is true: for any point $a$ there exists 
a neighborhood of $a$ where \eqref{E:locMCP} is valid. 
As shown for instance in \cite{BS10,cavasturm:MCP} this implies that the same inequality is valid
on the whole domain of $h_{\alpha}$ (local-to-global property).
Taking then the limit as $K' \to K$ from below we obtain the first inequality of 
\eqref{E:MCP0N1d}. 
\\Applying the analogous procedure to the causal-reversed structure we obtain the second inequality of \eqref{E:MCP0N1d}.
\end{proof}

\begin{remark} [The case $N=1$]\label{rem:MCPN01}
In case $N = 1$, under the same assumptions of  Theorem \ref{P:localKant} one can follow the proof up to \eqref{E:firstMCP} and obtain that 
$$
\frac{b-s}{b-a} h_{\alpha}( s)\geq \sigma_{K}^{(\frac{b-s}{b-a})}(b - a) 
h_{\alpha}(a),\quad \text{for $\qq$-a.e. $\alpha \in Q$, $\L^{1}$-a.e. $s\in (a,b)\subset X_{\alpha}$.}
$$
If $K \geq 0$, then $\sigma_{K}^{(\frac{b-s}{b-a})}(b - a) \geq \frac{b-s}{b-a}$ implying 
$h_{\alpha}( s) \geq h_{\alpha}(a)$; reversing the causal structure, it follows that 
$h_{\alpha}$ has to be constant.
For $K< 0$ we compute the Taylor expansion
$$
\sigma_{K}^{(t)}(\theta) = 
t\left[ \frac{1 + t^{2}\frac{\theta^{2}}{6} (-K) + o(\theta^{4}) }
{1 + \frac{\theta^{2}}{6} (-K) + o(\theta^{4})}\right] = t \left[ 1 - \frac{\theta^{2}}{6}(-K)(1-t) + o(\theta^{4}) \right].
$$
Hence we can conclude that $\liminf_{b\to a} (h_{\alpha}(b)-h_{\alpha}(a))/(b-a) \geq 0$. 
Again reversing the causal structure we obtain that $h_{\alpha}$ is locally Lipschitz and 
the reverse inequality holds, yielding $h_{\alpha}$ constant as well.
\end{remark}

%%%%%%%%%%%%%%%%%%%%%%%%%%%%%%%%%%%%%%%%%%%%%%%%%%%%%%%%%%%%%%%%%%%%%%%
%%%%%%%%%%%%%%%%%%%%%%%%%%%%%%%%%%%%%%%%%%%%%%%%%%%%%%%%%%%%%%%%%%%%%%%

\section{Applications}
\subsection{Synthetic mean curvature bounds for achronal FTC subsets}\label{SS:SyntMC}
In this section we will work under the standing assumptions of Theorem \ref{P:nointialpoints}.

Recall that, thanks to  Lemma \ref{L:initialpoint} and Lemma \ref{lem:I+VTV}, 
$\T^{e}_{V}\subset \{\tau_{V}>0\} \cup V$.
For each $t \geq 0$, we consider the map 
$f_{t}: \Dom(f_{t})\subset Q \to \T^{e}_{V}$, where 
\begin{equation}\label{eq:defft}
\Dom(f_{t}) : = \{\alpha \in Q \colon 
\bar{X}_{\alpha} \cap  \{\tau_{V}=t\}\setminus \fb(\T^{e}_{V})\neq \emptyset \},
\qquad 
f_{t}(\alpha) : = \bar{X}_{\alpha} \cap  
\{\tau_{V} = t\}\setminus \fb(\T^{e}_{V}),
\end{equation}
where $\bar{X}_{\alpha}$ denotes the closure of the ray $X_{\alpha} \subset X$. 
\\Proposition \ref{P:equivalence} ensures that  $f_{t}$ is single valued  for every $t\geq 0$ and injective for $t > 0$. Moreover  $f_{0}(\alpha)\in V$ for all $\alpha\in \dom(f_{0})$ (see Proposition \ref{eq:closureVSendpoints}). 
\\Thus, for each $\mm$-measurable subset $A\subset \T_{V}^{e}$ having $\mm(A) < \infty$ 
the next identities hold true:
\begin{equation}\label{eq:disintftsharp}
\mm(A) = \int_{Q} \int_{A\cap X_{\alpha}} h(\alpha,t)\dd t\,\qq(\dd \alpha) = \int_{[0,+\infty) } (f_{t})_{\sharp} (h(\cdot,t) \, \qq(\dd \alpha))(A)\,\dd t,
\end{equation}
where the first identity is the Disintegration formula \eqref{E:disintegration} and the second identity follows from Fubini-Tonelli's Theorem.
Define then
\begin{equation}\label{eq:defHt}
\H_{t} : = (f_{t})_{\sharp}\,h(\cdot,t) \qq, \quad \text{for all } t\geq 0.
\end{equation}
By definition, $\H_{t}$ is concentrated on the level set $\{\tau_{V} = t\}$. 
In particular $\H_{0}$ is concentrated on $V$.
An expert reader will recognise that $\H_{t}(\{\tau_{V} = t\})$ is a kind of $\tau$-Minkowski content  of the set $\{\tau_{V} = t\}$, with respect to $\mm$.
We summarize this construction in the following

\begin{proposition}\label{prop:coareaHt}
The following coarea-type formula holds true: 
$$
\mm\llcorner_{\T_{V}^{e}} = \int_{0}^{\infty} \H_{t} \,\dd t,
$$
meaning that for each measurable set $A \subset \T_{V}^{e}$ with 
$\mm(A) < \infty$, 
the map $[0, \infty) \ni t \mapsto \H_{t}(A)$ is measurable and 
$$
\mm(A) = \int_{0}^{\infty}\H_{t}(A)\dd t  = \int_{0}^{\infty}\H_{t}(A\cap\{\tau_{V} = t\})\dd t.
$$ 
\end{proposition}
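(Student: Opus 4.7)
The plan is to expand the two displayed identities already recorded in \eqref{eq:disintftsharp}, whose right-hand side is by definition $\int_0^\infty \H_t(A)\,dt$. The essence of the proof is therefore the application of Fubini--Tonelli after reading the disintegration of $\mm\llcorner_{\T_V^e}$ in the natural parameterization of each ray by the signed time-separation function $\tau_V$.

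First I would note that $\mm(\fa(\T_V^e))=\mm(\fb(\T_V^e))=0$ by Theorem \ref{P:nointialpoints}, so $\mm\llcorner_{\T_V^e}=\mm\llcorner_{\T_V}$ and the disintegration formula \eqref{E:disintegration} gives
\begin{equation*}
\mm(A)=\int_Q \int_{A\cap X_\alpha} h(\alpha,x)\,\L^1\llcorner_{X_\alpha}(dx)\,\qq(d\alpha)
\end{equation*}
for any $\mm$-measurable $A\subset \T_V^e$ with $\mm(A)<\infty$. Then Lemma \ref{lem:XalphaI}, combined with the definition \eqref{E:GammaV} of $\Gamma_V$, shows that $\tau_V$ is an isometric (in the $\tau$-sense) parameterization of each ray, with $f_t(\alpha)$ the unique point of $X_\alpha$ at which $\tau_V=t$; hence $\L^1\llcorner_{X_\alpha}$ is the push-forward of the standard Lebesgue measure on $\tau_V(X_\alpha)\subset[0,\infty)$ via the map $t\mapsto f_t(\alpha)$, and the inner integral rewrites as
\begin{equation*}
\int_0^\infty h(\alpha,t)\,\mathbf{1}_A(f_t(\alpha))\,dt.
\end{equation*}

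At this point Fubini--Tonelli swaps the two integrations to yield
\begin{equation*}
\mm(A)=\int_0^\infty\Big(\int_Q h(\alpha,t)\,\mathbf{1}_A(f_t(\alpha))\,\qq(d\alpha)\Big)\,dt=\int_0^\infty\H_t(A)\,dt
\end{equation*}
by the very definition \eqref{eq:defHt} of $\H_t$, and the measurability of $[0,\infty)\ni t\mapsto\H_t(A)$ is delivered as a byproduct. The second identity of the statement, $\H_t(A)=\H_t(A\cap\{\tau_V=t\})$, is immediate from the fact that $\H_t$ is concentrated on $\{\tau_V=t\}$, apparent from the definition of $f_t$ in \eqref{eq:defft}.

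The only non-routine ingredient is the joint measurability required to apply Fubini--Tonelli, namely that $(\alpha,t)\mapsto h(\alpha,t)\mathbf{1}_A(f_t(\alpha))$ be measurable on the product space (completed, if needed, with respect to the $\sigma$-algebra generated by Suslin sets, as is customary in disintegration arguments). The joint measurability of $h(\alpha,t)$ is built into Theorem \ref{P:nointialpoints}, while measurability of $(\alpha,t)\mapsto f_t(\alpha)$ follows because its graph is Suslin --- being cut out of $Q\times[0,\infty)\times X$ by the conditions $(f_0(\alpha),x)\in\Gamma_V$ and $\tau_V(x)=t$ --- and single-valued thanks to the timelike non-branching assumption (Proposition \ref{P:equivalence} together with Lemma \ref{lem:XalphaI}). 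This step is the most delicate point of the proof, yet by now it is standard in the ray-decomposition literature and is not expected to pose a real obstacle.
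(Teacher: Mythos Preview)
Your proposal is correct and follows exactly the paper's approach: the proposition is stated as a summary of the two displayed identities in \eqref{eq:disintftsharp} (disintegration plus Fubini--Tonelli) together with the definition \eqref{eq:defHt} of $\H_t$, and the concentration of $\H_t$ on $\{\tau_V=t\}$. Your added discussion of the joint measurability needed for Fubini--Tonelli is more explicit than what the paper provides, but the strategy is identical.
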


We use the previous codimension-one measures to  propose the following weak notion 
of upper bound on the mean curvature of $V$.   Notice that, even if $\cH_0$ (as well as $\cH_t$ for every $t\geq 0$) is a well defined measure, in general it is not finite (even locally). Since from a geometric point of view the mean curvature is the first variation of the area, in order to speak of the former it is natural to assume that the latter is locally finite. In what follows, we will thus assume that $\cH_0$ is a non-negative Radon measure.

In the next definition we use the ``initial-point projection map'' 
$\fa: \T_{V}\to V,\, \fa:=f_{0} \circ \QQ$. 
It is not hard to check it is  
Borel measurable: notice indeed that 
$$
\gr (\fa) = \{ (x,y) \in \T_{V} \times V \colon \tau_{V}(x) = \tau(y,x) \},
$$
showing that $\gr (\fa)$ is analytic (recall that $\T_V$ is analytic). Then by \cite[Theorem 4.5.2]{Srivastava}
the map $\fa : \T_V \to V$ is Borel. 
\begin{definition}\label{D:MeanCurv}
The Borel achronal FTC subset $V\subset X$ has \emph{forward mean curvature bounded below by 
$H_{0} \in \R$} if $\cH_0$ is a non-negative Radon measure  with $(f_{0})_{\sharp} \qq\ll \cH_{0}$ and such that for any normal variation 
$$
V_{t,\phi} : = \{ x \in \T_{V} \colon 0 \leq \tau_{V}(x) \leq t \phi(\fa(x)) \},
$$
the following inequality holds true: 
$$
\limsup_{t \to 0} \frac{\mm(V_{t,\phi}) - t 
\int_{V} \phi \H_{0}}{t^{2}/2} \geq H_{0} \int_{V}\phi^{2} \H_{0}, 
$$
for any bounded Borel function $\phi:V\to [0,\infty)$ with compact support.
Analogously $V$ has \emph{forward mean curvature bounded above by  
$H_{0} \in \R$} if  $\cH_0$ is a non-negative Radon measure and for any normal variation 
$V_{t,\phi}$ as above 
the following inequality holds true: 
\begin{equation}\label{eq:2ndvarVol}
\liminf_{t \to 0} \frac{\mm(V_{t,\phi}) - t 
\int_{V} \phi \H_{0}}{t^{2}/2} \leq H_{0} \int_{V}\phi^{2} \H_{0}, 
\end{equation}
for any bounded  Borel  function $\phi:V\to [0,\infty)$ with compact support.
\end{definition}

\begin{remark}[The condition $(f_{0})_{\sharp} \qq\ll \cH_{0}$]
The condition 
\begin{equation}\label{eq:f0sharpqACH0}
(f_{0})_{\sharp} \qq\ll \cH_{0}\; \text{or, equivalently, } h(\alpha,0)>0 \; \text{for $\qq$-a.e. }\alpha\in Q,
\end{equation}
 should be read as a natural ``codimension one'' assumption on $V$, in the sense of $\tau$-Minkowski content. More precisely, \eqref{eq:f0sharpqACH0} is equivalent to require that 
 \begin{equation}\label{H0CE}
\forall E\subset V \text{ Borel  with $\qq(f_{0}^{-1}(E))>0$ it holds } \;  \liminf_{t\downarrow 0} \frac{1}{t} \; \mm(V_{t, \chi_{E}})>0.
\end{equation}
Indeed, from Theorem \ref{P:localKant}, it follows that there exists a continuous function $\omega_{K,N}:[0,\infty)\to [0,\infty)$ with $\omega_{K,N}(0)=0$ such that for $\qq$-a.e.  $\alpha\in Q$ it holds $|h(\alpha,s)- h(\alpha,0)|\leq \omega_{K,N}(s)$. Therefore
\begin{align*}
\mm(V_{t, \chi_{E}}) &= \int_{0}^{t} \int_{E} h(\alpha,s) \, \qq(\dd \alpha) \, \dd s= t \int_{E} h (\alpha,0) \, \qq(\dd \alpha)  +o(t) = t \cH_{0}(E)+o(t),
\end{align*}
and \eqref{H0CE} is easily seen to be equivalent to \eqref{eq:f0sharpqACH0}.
\\

\noindent
Let us also mention the following \emph{sufficient} condition implying \eqref{eq:f0sharpqACH0}: 
\begin{equation}\label{eq:extXalphaPastV}
\text{For $\qq$-a.e.} \; \alpha\in Q \; \, \exists x\in I^{-}(V), \, y\in X_{\alpha} \subset \T_{V} \text{ such that } \tau(x,y)=\tau(x, \fa(y))+\tau(\fa(y), y).
\end{equation}
Recalling that $\fa(y)\in V$, \eqref{eq:extXalphaPastV} amounts to ask that the ray $X_{\alpha}$ can be slightly extended past $V$, for $\qq$-a.e. $\alpha\in Q$; in this way, the point $\fa(y)=f_{0}(\alpha)\in V$ becomes an \emph{interior} point of the extended ray $(\tilde{X}_{\alpha}, |\cdot|, \tilde{\mm}_{\alpha})$, i.e.  $X_{\alpha}=\tilde{X}_{\alpha}\cap \{\tau_{V}>0\}, \quad \mm_{\alpha}= \tilde{\mm}_{\alpha}\llcorner  \{\tau_{V}>0\}$. 

We next briefly discuss why \eqref{eq:extXalphaPastV} implies \eqref{eq:f0sharpqACH0}. It is not hard to check that the family of (maximally) extended rays $\{(\tilde{X}_{\alpha}, |\cdot|, \tilde{\mm}_{\alpha})\}_{\alpha\in Q}$  corresponds to  (a part of) the disintegration of $\mm\llcorner (I^{+}(V)\cup I^{-}(V))$ associated to the signed time separation function $\tau_{V}$. Following verbatim the proof of Theorem \ref{P:localKant}, we get that $(\tilde{X}_{\alpha}, |\cdot|, \tilde{\mm}_{\alpha})$ satisfies $\MCP(K,N)$ for $\qq$-a.e. $\alpha\in Q$. In particular, \eqref{E:MCP0N1d} ensures that the density $h(\alpha, \cdot)$ does not vanish in the interior of $\tilde{X}_{\alpha}$ and thus $h(\alpha, 0)>0$ for $\qq$-a.e. $\alpha\in Q$.

\end{remark}

\begin{remark}[The disintegration formula, the measures $\cH_{t}$ and the mean curvature bounds in the smooth setting]
Let  $(M^{n}, g)$ be a $2\leq n$-dimensional smooth globally hyperbolic space-time and  $V\subset M$ be a smooth compact achronal spacelike hypersurface without boundary.  Then, the signed time-separation function $\tau_{V}$ from $V$  is smooth on a neighbourhood $U$ of $V$ and $\nabla \tau_{V}$ is the smooth timelike past-pointing unit normal vector field along $V$. More precisely, 
$$\nabla \tau_{V}(x) \perp T_{x} V, \quad g(\nabla \tau_{V}(x), \nabla \tau_{V}(x))=-1, \quad \forall x\in V.$$
Denote with $\vol_{g}$ the volume measure of $(M^{n},g)$ and with  $\vol_{V}$ the induced $(n-1)$-dimensional volume measure on $V$. By compactness of $V$, there exists $\delta>0$ such that the $g$-geodesic $[0, \delta]\ni t\mapsto \exp_{x}(-t \nabla \tau_{V}(x))$ is a future pointing maximal geodesic, for every $x\in V$. Define
$${\mathcal U}:=V\times[0, \delta]\subset V\times \R, \quad \Phi:{\mathcal U}\to M, \; \Phi(x,t):=\exp_{x}(-t \nabla \tau_{V}(x)).$$
For $\delta>0$ small enough it is a standard fact (tubular neighbourhood theorem) that $\Phi$ is a diffeomorphism onto its image and that the following integration formula holds true:
\begin{equation}\label{eq:intMvolV}
\int_{M} \varphi\, d\vol_{g}=\int_{V}\int_{0}^{\delta} \varphi \circ \Phi(x,t) \, \det D\Phi_{(x,t)}|_{T_{x}V} \, \dd t \, \vol_{V}(\dd x), \quad \forall \varphi\in C_{c}(\Phi({\mathcal U})).
\end{equation}
Consider also the map $\QQ:\Phi({\mathcal U})\to V$ given by $\QQ:=P_{1}\circ \Phi^{-1}$. Notice that, for every $x\in V$, it holds
$$\QQ^{-1}(V)=\T^{e}_{V}\cap \Phi({\mathcal U})= \T_{V}\cap \Phi({\mathcal U}), \quad \QQ^{-1}(x)=R(x)\cap \Phi({\mathcal U}) =[x]_{(\T_{V}, R_{V})}\cap \Phi({\mathcal U}),$$
i.e.  $\QQ^{-1}(x)$ is the transport ray associated to $\tau_{V}$ intersected with $\Phi({\mathcal U})$. Moreover,
$$
\qq:=\QQ_{\sharp} (\vol_{g}\llcorner_{\Phi({\mathcal U})})=  \psi \,\vol_{V} \ll \vol_{V}, \quad \text{where } \psi(x):= \left(\int_{0}^{\delta}   \det D\Phi_{(x,t)}|_{T_{x}V} \, \dd t \right), \, \forall x\in V.
$$
Hence, we can identify $Q$ with $V$, and the quotient measure $\qq$ with  $\psi \, \vol_{V}$. The integration formula \eqref{eq:intMvolV} can be thus rewritten as
\begin{equation}\label{eq:intMqq}
\int_{M} \varphi\, d\vol_{g}=\int_{V} \frac{1}{\psi(x)} \int_{0}^{\delta} \varphi \circ \Phi(x,t) \, \det D\Phi_{(x,t)}|_{T_{x}V} \, \dd t \, \qq(\dd x), \quad \forall \varphi\in C_{c}(\Phi({\mathcal U})).
\end{equation}
The uniqueness statement \eqref{eq:UniqDisint}  in the disintegration formula combined with \eqref{E:disintegration} and  \eqref{eq:intMqq} gives:
$$
h_{\alpha}(t)=\frac{1}{\psi(\alpha)}  \, \det D\Phi_{(\alpha,t)}|_{T_{\alpha}V}, \quad  h_{\alpha}(0)=\frac{1}{\psi(\alpha)}, \quad \forall \alpha\in V, \; \forall t\in [0,\delta].
$$
Moreover,  observing that $\Phi(\alpha,t)=f_{t}(\alpha)$ where the latter was defined in \eqref{eq:defft}, it follows that the measure $\H_{t}$ defined in \eqref{eq:defHt} can written as
$$\H_{t} : = (f_{t})_{\sharp}\,h(\cdot,t) \qq = \Phi(\cdot, t)_{\sharp} \left( \, \det D\Phi_{(\alpha,t)}|_{T_{\alpha}V}\,  \vol_{V}(\dd \alpha) \right), \quad \text{for all } t\geq 0, $$ 
in particular, $\H_{0}=\vol_{V}$, $\H_{t}$ is the $(n-1)$-volume measure on the hypersurface $\{\Phi(x,t):\, x\in V\}$ and Proposition \ref{prop:coareaHt} reduces to the standard co-area formula.
The definition \ref{D:MeanCurv} of mean curvature bounds also reduces to the classical notions. Indeed, for $\phi\in C^{\infty}(V;\R_{\geq 0})$, the region $V_{t,\phi}$ is the domain trapped between $V$ and the normal graph of $\phi$. The first variation of the volume is thus $\frac{\dd}{\dd t} \vol_{g}(V_{t,\phi})=\cH^{n-1}(\{\Phi(x, t\phi(x)):x\in V\})$, where  $\cH^{n-1}$ is the standard $(n-1)$-volume of the hypersurface $\{\Phi(x, t\phi(x)):x\in V\}$; in particular,  $\left.\frac{\dd}{\dd t}\right|_{t=0} \vol_{g}(V_{t,\phi})=\vol_{V}(V)=\cH_{0}(V)$.  The left hand side in \eqref{eq:2ndvarVol}, corresponding to the second variation of volume, is thus the first variation of the area which gives the mean curvature $\vec{H}_{V}$ of $V$:
\begin{align*}
\lim_{t \downarrow 0} \frac{\mm(V_{t,\phi}) - t 
\int_{V} \phi \H_{0}}{t^{2}/2}&= \left. \frac{\dd}{\dd t^{2}}\right|_{t=0}  \vol_{g}(V_{t,\phi})= \int_{V} \phi^{2}\, g(\vec{H}_{V}, \nabla \tau_{V})   \, \vol_{V}.
\end{align*}
\end{remark}

\begin{remark}[Example of a surface with a conical singularity]
The notion of   forward mean curvature bound should be compared with the recent related definition proposed by Ketterer \cite{Ket:HK}. In the notation of \cite{Ket:HK}, in order to have finite bound $H_0$ one needs that the rays $X_{\alpha}$ are extendable passing through $V$, which corresponds to have an interior \& exterior ball condition (equivalent, in the smooth setting, to a  local $L^{\infty}$ bound on the full second fundamental form), see \cite[Remark 5.9]{Ket:HK}. The notion proposed above in Definition \ref{D:MeanCurv} instead works well even if the set $V$ has corners or conical singularities. Indeed, for instance, it is not hard to see  that the set 
$$V=\{(x,t)\subset \R^{n,1}: t=\alpha|x|\}, \quad \alpha\in (0,1), $$
in the $(n+1)$-dimensional Minkowski space-time $\R^{n,1}$ is an achronal topological hypersurface, smooth outside the origin (where it is Lipschitz) and having forward mean curvature bounded above by  
$H_{0}=0$ in the sense of Definition \ref{D:MeanCurv}. Notice that for any compact subset, one could choose the upper bound on the mean curvature to be strictly negative, but such an upper bound approaches zero as $|x|\to \infty$.
\end{remark}

%%%%%%%%%%%%%%%%%%%%%%%%%%%%%%%%%%%%%%%%%%%%%%%%%%%%%%%%%%%%%%%%%%%%%%%% 
%%%%%%%%%%%%%%%%%%%%%%%%%%%%%%%%%%%%%%%%%%%%%%%%%%%%%%%%%%%%%%%%%%%%%%%%
\subsection{Hawking Singularity theorem in a synthetic framework}

Let us define $D_{H_{0}, K,N}>0$ as follows:
\begin{equation}
D_{H_{0},K,N}:=
\begin{cases}
 \frac{\pi}{2} \sqrt{  \frac{N-1}{K}} & \quad  \text{if $K>0$, $N>1$, $H_{0}=0$}\\
 \sqrt{ \frac{N-1}{K}} \cot^{-1} \left(\frac{-H_{0}}{ \sqrt{K(N-1)}} \right) & \quad  \text{if $K>0$, $N>1$, $H_{0}\in \R\setminus\{0\}$}\\
-\frac{N-1}{H_{0}}  &\quad  \text{if $K=0$, $N>1$, $H_{0}<0$}\\
\sqrt{- \frac{N-1}{K}} \coth^{-1} \left(\frac{-H_{0}}{ \sqrt{-K(N-1)}} \right) &\quad  \text{if $K<0$, $N>1$, $H_{0}<-\sqrt{-K(N-1)}$.}
\end{cases}
\end{equation}

\begin{theorem}[Hawking Singularity Theorem for $\TMCP^{e}(K,N)$ spaces]\label{thm:HawkSingSint}
Let  $(X,\sfd, \mm, \ll, \leq, \tau)$ be a timelike non-branching,  globally hyperbolic, Lorentzian geodesic space satisfying $\mathsf{TMCP}^{e}(K,N)$  for some $p\in (0,1),\, K\in \R,\,  N\in [1,\infty)$  and assume that the causally-reversed structure satisfies the same conditions. 
\\ Let $V\subset X$ be a Borel achronal FTC subset having forward mean curvature bounded above by $H_{0}$ in the sense of Definition \ref{D:MeanCurv}.  If
\begin{enumerate}
\item $K>0$, $N>1$ and $H_{0}\in \R$, or 
\item $K=0$, $N>1$ and $H_{0}<0$, or
\item $K<0$, $N>1$ and $H_{0}<-\sqrt{-K(N-1)}<0$,
\end{enumerate}
\noindent
then for  every $x\in I^{+}(V)$ it holds $\tau_{V}(x)\leq D_{H_{0},K,N}$. 
In particular, for every timelike geodesic $\gamma\in \TGeo(X)$ with $\gamma_{0}\in V$, the maximal (on the right) domain of definition is contained in $\big[0, D_{H_{0},K,N}\big]$. In case $N=1, \, H_{0}<0$, it holds that $I^{+}(V)=\emptyset$.
\end{theorem}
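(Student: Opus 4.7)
I would reduce the statement to a one-dimensional Sturm comparison along the transport rays emanating from $V$, based on the disintegration obtained in Section~\ref{Sec:LocTMCP}. Applying Theorem~\ref{P:nointialpoints} (to both the given and the reversed structures) yields $\mm(\fa(\T_V^{e}))=\mm(\fb(\T_V^{e}))=0$ together with
\[
\mm\llcorner_{I^{+}(V)}=\int_{Q}h(\alpha,\cdot)\,\L^{1}\llcorner_{X_\alpha}\,\qq(d\alpha),
\]
each ray $X_\alpha$ being identified with a subinterval of $\R$ via the parameter $\tau_V$ (Lemma~\ref{lem:XalphaI}). Set $L_\alpha:=\sup_{X_\alpha}\tau_V$ and, when $N>1$, $u_\alpha(t):=h(\alpha,t)^{1/(N-1)}$. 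By Theorem~\ref{P:localKant} the density $h(\alpha,\cdot)$ admits a continuous representative, strictly positive and locally Lipschitz in the interior of $X_\alpha$, satisfying the $\MCP(K,N)$-type estimates \eqref{E:MCP0N1d}.

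\textbf{Step 1: localising the mean curvature bound.} Using the disintegration, Fubini, and $\cH_0=(f_0)_\sharp(h(\cdot,0)\qq)$, one rewrites
\[
\mm(V_{t,\phi})-t\int_V\phi\,\cH_0 = \int_Q\int_0^{t\phi(f_0(\alpha))}\bigl(h(\alpha,s)-h(\alpha,0)\bigr)\,ds\,\qq(d\alpha).
\]
The $\MCP$-type estimates \eqref{E:MCP0N1d} force, when $N>1$, the one-sided derivative $u_\alpha'(0^{+})$ to exist in $(-\infty,+\infty]$ and provide a uniform upper bound on $h(\alpha,s)/h(\alpha,0)$ for $s$ small. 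Combined with Fatou's lemma and the absolute continuity $(f_0)_\sharp\qq\ll\cH_0$, testing the liminf inequality of Definition~\ref{D:MeanCurv} against indicators $\phi=\chi_E$ for $E$ shrinking to a generic Lebesgue point localises the bound to
\[
u_\alpha'(0^{+})\leq \frac{H_0}{N-1}\,u_\alpha(0),\qquad\text{for }\qq\text{-a.e.\ }\alpha\in Q
\]
(equivalently, $h'(\alpha,0^{+})\leq H_0\,h(\alpha,0)$, which is the form used when $N=1$). I expect this localisation to be the most delicate step, because it requires passing from an integrated liminf inequality to a pointwise bound on each ray without any a priori regularity of $\alpha\mapsto h(\alpha,0)$ beyond $\qq$-measurability.

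\textbf{Step 2: Sturm comparison on each ray.} For $N>1$, the two monotonicities encoded in \eqref{E:MCP0N1d}, upon letting the auxiliary parameters $a,b$ approach the endpoints of $X_\alpha$, yield the distributional inequality
\[
u_\alpha''+\frac{K}{N-1}u_\alpha\leq 0\qquad\text{on }(0,L_\alpha).
\]
Let $\bar u_\alpha$ solve the ODE $\bar u''+\tfrac{K}{N-1}\bar u=0$ with initial data $\bar u_\alpha(0)=u_\alpha(0)>0$ and $\bar u_\alpha'(0)=\tfrac{H_0}{N-1}u_\alpha(0)$. A direct computation gives $(\bar u_\alpha^{2}(u_\alpha/\bar u_\alpha)')'\leq 0$ on $\{\bar u_\alpha>0\}$, which combined with Step~1 forces $(u_\alpha/\bar u_\alpha)'\leq 0$ there, hence $u_\alpha\leq\bar u_\alpha$ on $\{\bar u_\alpha>0\}$. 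Under each of the three hypotheses on $(K,N,H_0)$, an explicit computation locates the first positive zero of $\bar u_\alpha$ at $t^{*}=D_{H_0,K,N}$; together with the strict positivity of $u_\alpha$ on the open ray (Theorem~\ref{P:localKant}), this forces $L_\alpha\leq D_{H_0,K,N}$ for $\qq$-a.e.\ $\alpha$.

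\textbf{Step 3: globalisation and the case $N=1$.} Step~2 together with the disintegration yield $\tau_V(x)\leq D_{H_0,K,N}$ for $\mm$-a.e.\ $x\in I^{+}(V)$. Since $\tau_V$ is lower semicontinuous and $I^{+}(V)$ is open, the exceptional set $\{\tau_V>D_{H_0,K,N}\}\cap I^{+}(V)$ is open, has zero $\mm$-measure, and is therefore empty by $\supp\mm=X$. The bound on the maximal domain of every future-directed timelike geodesic $\gamma$ with $\gamma_0\in V$ follows from $\tau_V(\gamma_t)\geq \tau(\gamma_0,\gamma_t)$. In the case $N=1$ with $H_0<0$, Remark~\ref{rem:MCPN01} forces $h(\alpha,\cdot)$ to be constant on each ray, so $h'(\alpha,0^{+})=0$; the $h$-formulation of Step~1 then reads $0\leq H_0\,h(\alpha,0)<0$, a contradiction, so no ray can exist and $I^{+}(V)=\emptyset$.
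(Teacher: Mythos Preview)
Your Step~2 contains a genuine gap: the two-sided $\MCP(K,N)$ estimates \eqref{E:MCP0N1d} do \emph{not} yield the distributional inequality $u_\alpha''+\tfrac{K}{N-1}u_\alpha\le 0$. That differential inequality is the one-dimensional $\CD(K,N)$ condition, which is strictly stronger than $\MCP(K,N)$. For a concrete counterexample with $K=0$: on $[0,1]$, take $u(t)=1+t^2$. One checks directly that both inequalities in \eqref{E:MCP0N1d} hold (the tightest lower bound occurs at $b=1$ and the tightest upper bound at $a=0$, and both reduce to elementary polynomial inequalities valid on $(0,1)$), yet $u''=2>0$. So the Sturm comparison you invoke is not available from the hypotheses.

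What the $\MCP$ lower bound does give is the monotonicity of $t\mapsto u_\alpha(t)/\fs_\kappa(b-t)$ for each fixed $b\in I_\alpha$; differentiating at $t=0$ yields
\[
\frac{u_\alpha'(0^+)}{u_\alpha(0)} \;\ge\; -\,\frac{\fc_\kappa(b)}{\fs_\kappa(b)}\qquad\text{for every }b\in(0,L_\alpha].
\]
Combined with your Step~1 bound $u_\alpha'(0^+)/u_\alpha(0)\le H_0/(N-1)$, this forces $H_0/(N-1)\ge -\fc_\kappa(L_\alpha)/\fs_\kappa(L_\alpha)$ and hence $L_\alpha\le D_{H_0,K,N}$. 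This repaired argument is correct, but notice it is now essentially the paper's own route: the paper never extracts a pointwise bound on $u_\alpha'(0^+)$, instead plugging the $\MCP$ lower bound \eqref{E:MCP0N1dpf} (with free parameter $b_\alpha\in I_\alpha$) directly into the second variation $\mm(V_{t,\phi})-t\int_V\phi\,\cH_0$, Taylor-expanding, applying Fatou, and reading off the constraint $H_0\ge -(N-1)\fc_\kappa(b_\alpha)/\fs_\kappa(b_\alpha)$ for $\qq$-a.e.\ $\alpha$ and every admissible $b_\alpha$. Keeping the argument in integral form throughout spares the delicate localisation you flag in your Step~1 (the Fatou bound, the existence of $u_\alpha'(0^+)$, the Lebesgue-point argument), and the freedom in $b_\alpha$ replaces the ODE comparison entirely. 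Your Step~3 and the $N=1$ case match the paper.
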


\begin{proof}

\textbf{Step 1}: we show that  $\sup_{x\in I^{+}(V)}  \tau_{V}(x)\leq  D_{H_{0},K,N}$, case $N\in (1,\infty)$.
\\Recall that, from Lemma \ref{lem:I+VTV}, it holds  $I^{+}(V)=  \mathcal{T}^{e}_{V}\setminus V$. Moreover, from Theorem \ref{P:nointialpoints} we have the disintegration formula
\begin{equation}\label{eq:dinsintmhalphapf}
\mm\llcorner_{I^{+}(V)}= \mm\llcorner_{\T^{e}_{V}} =  \int_{Q} h(\alpha,\cdot) \, \L^{1}\llcorner_{X_{\alpha}}\, \qq(\dd \alpha),
\end{equation}
where the closure $\bar{X}_{\alpha}$ of each $X_{\alpha}$ is a timelike geodesic starting at a point $\fa_{\alpha}\in V$ and parametrized by arclength on a  (apriori possibly unbounded) closed Real interval $I_{\alpha}:=[0, d_{\alpha}]\subset [0,\infty)$ in terms of  $\tau_{V}(\cdot)=\tau(\fa_{\alpha}, \cdot)$, see  Lemma \ref{lem:XalphaI} . For simplicity of notation, in the rest of the proof we will identify  $\bar{X}_{\alpha}$ with the closed  Real  interval  $I_{\alpha}\subset [0,\infty)$.
\\From  Theorem \ref{P:localKant},   for $\qq$-a.e. 
$\alpha \in Q$, the density $h(\alpha, \cdot)$  in \eqref{eq:dinsintmhalphapf}, has an almost everywhere 
representative that is locally Lipschitz and strictly positive 
in the interior of $I_{\alpha}$  and continuous on $I_{\alpha}$ satisfying 
\begin{equation}\label{E:MCP0N1dpf}
h(\alpha, t)\geq h(\alpha, 0) \left(\frac{\fs_{K/(N-1)}(b_{\alpha}- t)}{\fs_{K/(N-1)}(b_{\alpha})}\right)^{N-1}  \text{for all } t\in [0,b_{\alpha}], \, b_{\alpha}\in I_{\alpha}.
\end{equation}
Recalling the notation of Definition \ref{D:MeanCurv} and using \eqref{eq:dinsintmhalphapf}-\eqref{E:MCP0N1dpf},  for every  bounded  Borel  function $\phi:V\to [0,\infty)$ with compact support satisfying $\phi(f_{0}(\alpha))\in I_{\alpha}$ for every $\alpha\in Q$, and for any $\qq$-measurable assignment $Q\ni \alpha\mapsto b_{\alpha}\in I_{\alpha}$ with $b_{\alpha}\geq \phi(f_{0}(\alpha)), \, b_{\alpha}>0$ for every $\alpha \in Q$ it holds: 
\begin{align*}
\mm(V_{t,\phi}) - t \int_{V}\phi \H_{0} 
&~= \int_{Q}
\left(\int_{[0,t\phi(f_{0}(\alpha))]} h(\alpha,x)\dd x \right)\, \qq(\dd \alpha) 
- t \int_{Q}\phi(f_0(\alpha)) h(\alpha,0)  \, \qq(\dd \alpha) 
\\
&~= \int_{Q} 
\left(\int_{[0,t]} h(\alpha,s\phi(f_0(\alpha))) \phi(f_0(\alpha))\dd s \right)\, \qq(\dd \alpha) 
- t \int_{Q}\phi(f_0(\alpha)) h(\alpha,0)  \, \qq(\dd \alpha) 
\\
&~ = \int_{Q} \left( \int_{[0,t]} 
(h(\alpha,s\phi(f_0(\alpha))) - h(\alpha,0) )\dd s
\right) \phi(f_0(\alpha))\,\qq(\dd \alpha)
\\
&~ \geq  
\int_{Q} \int_{[0,t]} 
\left( \left(\frac{\fs_{K/(N-1)}( b_{\alpha}-s\phi(f_0(\alpha)) )}{ \fs_{K/(N-1)}(b_{\alpha})}\right)^{N-1} -1\right) 
\dd s\,
 \phi(f_0(\alpha))\,h(\alpha,0)\, \qq(\dd \alpha)
\\
&~ 
=\int_{Q} \int_{[0,t]} 
\left( -   \sqrt{|K|(N-1)} \frac{\fc_{K/(N-1)}(b_{\alpha})} {\fs_{K/(N-1)} (b_{\alpha})} s\phi(f_0(\alpha)) + o(s) \right) 
\dd s\,
 \phi(f_0(\alpha))\,h(\alpha,0)\, \qq(\dd \alpha)
\\
&~ 
=\int_{Q}
\left( -   \sqrt{|K|(N-1)} \frac{\fc_{K/(N-1)}(b_{\alpha})} {\fs_{K/(N-1)} (b_{\alpha})}  \frac{t^{2}}{2} + o(t^{2}) \right) \phi(f_0(\alpha))^{2}\,h(\alpha,0)\qq(\dd \alpha), \quad \forall t\in (0,1).
\end{align*}
Taking $\liminf$ of both sides of the last inequality, using Fatou's Lemma and the assumption that the forward mean curvature of $V$ is bounded above by $H_{0}$ we deduce that
$$
H_{0} \int_{V}\phi^{2} \H_{0} 
\geq 
\liminf_{t \to 0} \frac{\mm(V_{t,\phi}) - t 
\int_{V} \phi \H_{0}}{t^{2}/2}   
\geq 
\int_{V}    -   \sqrt{|K|(N-1)} \frac{\fc_{K/(N-1)}(b_{\alpha})} {\fs_{K/(N-1)} (b_{\alpha})}     \phi^{2}\H_{0},
$$
implying
$$
 b_{\alpha} \leq D_{H_{0},K,N}  \quad \qq\text{-a.e. } \alpha\in Q.
$$
By the arbitrariness of the assignment  $Q\ni \alpha\mapsto b_{\alpha}\in I_{\alpha}=[0,d_{\alpha}]\subset [0,\infty)$, it follows that 
\begin{equation}\label{eq:ubdalpha}
 d_{\alpha} \leq D_{H_{0},K,N}   \quad \qq\text{-a.e. } \alpha\in Q.
\end{equation}
Since by construction $d_{\alpha}=\sup_{x\in X_{\alpha}} \tau_{V}(x)$, the combination of \eqref{eq:ubdalpha} and the disintegration  formula \eqref{eq:dinsintmhalphapf} yields
\begin{equation}\label{eq:ubtauV}
 \tau_{V}(x)  \leq D_{H_{0},K,N}, \quad \mm\text{-a.e. } x\in I^{+}(V).
\end{equation}
The lower semi-continuity of $\tau_{V}$ on $I^+(V)$ permits to promote \eqref{eq:ubtauV} to every $x\in I^{+}(V)$.
\smallskip

\textbf{Step 2.}
Consider any timelike geodesic $\gamma$ parametrized by arclength and defined on a maximal (on the right) interval $[0, a)\subset [0,\infty)$  such that  
$\gamma_{0} \in V$. We claim that $a\leq D_{H_{0}, K,N}$.
Indeed, if by contradiction for some $s_{0}\in [0,a)$
$$
\tau(\gamma_{0}, \gamma_{s_{0}}) =s_{0} >  D_{H_{0}, K,N},
$$
the very  definition \eqref{eq:deftauV} of $\tau_{V}$ would imply $\tau_{V} (\gamma_{s_{0}})>  D_{H_{0}, K,N}$ contradicting Step 1.
\smallskip

 \textbf{Step 3}: The case $N=1, H_0<0$.
\\Recalling Remark \ref{rem:MCPN01}, in case $N=1$ the density $h_{\alpha}(\cdot)$ is costant on $I_{\alpha}$. Thus, arguing along the lines of Step 1, we get that $H_{0} \int_{V}\phi^{2} \H_{0}\geq 0$ which gives a contradiction unless $I^{+}(V)=\emptyset$.
\end{proof}

\subsection{Timelike  Bishop-Gromov, Bonnet-Myers and Poincar\'e inequalities  for $\TMCP^{e}(K,N)$}\label{Subsec:TBGBM}

In order to state the next result we need to introduce a bit of notation. Given a Borel achronal FTC subset  $V\subset X$,  we say that a subset $E\subset I^{+}(V)\cup V$ is \emph{$(\tau_{V}, R_{0})$-conically shaped} if 
$$E=\{x\in  I^{+}(V)\cup V: \tau_{V}(x)\leq R_{0}, \, y_{x}\in E\text{ for all $y_{x}\in V$ with  $\tau(y_{x}, x)=\tau_{V}(x)$} \}.$$
Note that, for a closed subset $E$, the condition is equivalent to ask that for every $x\in E\cap \T_{V}$ the intersection $E\cap [x]_{(\T_{V}, R_{V})}$ corresponds to the interval $[0,R_{0}]$ via the map $F$ of Lemma \ref{lem:XalphaI}.

\begin{proposition}[A Bishop-Gromov type inequality for achronal FTC sets in $\TMCP^{e}(K,N)$ spaces]\label{thm:BGAchronal}
Let  $(X,\sfd, \mm, \ll, \leq, \tau)$ be a timelike non-branching, globally hyperbolic, Lorentzian geodesic space satisfying $\mathsf{TMCP}^{e}(N,N)$ for some $p\in (0,1), K\in \R,  N\in (1,\infty)$ and assume that the causally-reversed
structure satisfies the same conditions. 
\\ Let $V\subset X$ be a Borel achronal FTC subset.  Then, for every compact $(\tau_{V}, R_{0})$-conically shaped subset $E\subset I^{+}(V)\cup V$ it holds (recall the definition \eqref{eq:defHt} of $\cH_{t}$):
\begin{align}
\frac{\cH_{r}(\{\tau_{V}= r\} \cap E)}{\cH_{R}(\{\tau_{V}= R\} \cap E)} \geq  \left( \frac{  \fs_{K/(N-1)} (r)}{  \fs_{K/(N-1)} (R)} \right)^{N-1},& \quad \text{for all } 0\leq r\leq R\leq R_{0}  \label{eq:BGHnRr}   \\
\frac{\mm(\{\tau_{V}\leq r\} \cap E)}{\mm(\{\tau_{V}\leq R\} \cap E)} \geq \frac{ \int_{0}^{r} \left( \fs_{K/(N-1)} (t) \right)^{N-1}\, \dd t}{ \int_{0}^{R} \left( \fs_{K/(N-1)} (t) \right)^{N-1}\, \dd t},&  \quad \text{for all }   0\leq r\leq R\leq R_{0}. \label{eq:BGmmRr}
\end{align} 
\end{proposition}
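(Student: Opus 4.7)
The plan is to reduce both inequalities to the one-dimensional density estimate of Theorem \ref{P:localKant} via the disintegration formula of Theorem \ref{P:nointialpoints}.

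First, I would rewrite the numerator and denominator of \eqref{eq:BGHnRr} through the definition \eqref{eq:defHt}. Setting $Q_E^t := \{\alpha \in \dom(f_t) : f_t(\alpha) \in E\}$, the formula $\cH_t = (f_t)_\sharp(h(\cdot,t)\,\qq)$ gives
\begin{equation*}
\cH_t(\{\tau_V = t\}\cap E) = \int_{Q_E^t} h(\alpha, t)\,\qq(d\alpha).
\end{equation*}
Here I would unpack the $(\tau_V, R_0)$-conically shaped hypothesis together with compactness (hence closedness) of $E$: via the identification of each $\bar X_\alpha$ with the interval $[0, d_\alpha]$ supplied by Lemma \ref{lem:XalphaI}, the characterization of closed $(\tau_V,R_0)$-conically shaped sets recalled before the statement yields $Q_E^R \subset Q_E^r$ whenever $0 \le r \le R \le R_0$.

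Second, I would apply the right-hand (upper) inequality in \eqref{E:MCP0N1d} with $a = 0$, $x_0 = f_r(\alpha)$, $x_1 = f_R(\alpha)$, which after inverting reads
\begin{equation*}
h(\alpha, r) \ge \left(\frac{\fs_{K/(N-1)}(r)}{\fs_{K/(N-1)}(R)}\right)^{N-1} h(\alpha, R), \qquad \qq\text{-a.e. } \alpha \in Q_E^R.
\end{equation*}
Integrating over $Q_E^R$, using $Q_E^R \subset Q_E^r$ and non-negativity of $h$, then gives \eqref{eq:BGHnRr}.

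For \eqref{eq:BGmmRr}, I would apply the coarea formula of Proposition \ref{prop:coareaHt} to $A = \{\tau_V \le r\}\cap E$:
\begin{equation*}
\mm(\{\tau_V \le r\}\cap E) = \int_0^r A(t)\,dt, \qquad A(t) := \cH_t(\{\tau_V = t\}\cap E).
\end{equation*}
Step one tells us exactly that $t \mapsto A(t)/\fs_{K/(N-1)}(t)^{N-1}$ is non-increasing on $(0, R_0]$. The conclusion then follows from a standard Chebyshev-type argument: if $f \ge 0$ is non-increasing and $s \ge 0$, then splitting $\int_0^R = \int_0^r + \int_r^R$ and comparing products yields $\int_0^r f s\, dt \cdot \int_r^R s\, dt \ge \int_0^r s\, dt \cdot \int_r^R f s\, dt$, which rearranges to the desired volume comparison with $f = A/\fs^{N-1}$ and $s = \fs^{N-1}$.

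The only delicate point I anticipate is step one, specifically verifying the set inclusion $Q_E^R \subset Q_E^r$ modulo $\qq$-null sets and confirming that the MCP density bound from Theorem \ref{P:localKant} can be applied pointwise on the relevant rays (recalling that the continuous, strictly positive representative of $h(\alpha,\cdot)$ extends to the closure of $X_\alpha$). Once this bookkeeping is in place, both inequalities are essentially one-dimensional facts about monotone densities.
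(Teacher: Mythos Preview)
Your proposal is correct and follows essentially the same approach as the paper: express $\cH_t$ via the definition \eqref{eq:defHt}, apply the one-dimensional $\MCP$ density bound \eqref{E:MCP0N1d} from Theorem \ref{P:localKant} on each ray, and then deduce the volume inequality from the area inequality. The only cosmetic differences are that the paper integrates over ``$V\cap E$'' (implicitly identifying the quotient with initial points and using that the conically shaped hypothesis keeps this domain fixed), whereas you track the sets $Q_E^t$ explicitly and note $Q_E^R\subset Q_E^r$; and for \eqref{eq:BGmmRr} the paper simply invokes Gromov's Lemma \cite[Lemma III.4.1]{Chav06}, which is exactly the Chebyshev-type inequality you spell out.
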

\begin{proof}
In order to show \eqref{eq:BGHnRr} observe that the combination of \eqref{eq:disintftsharp}, \eqref{eq:defHt} and Theorem \ref{P:localKant}  gives
\begin{align*}
\cH_{r}(\{\tau_{V}= r\} \cap E)&=\int_{V\cap E} h(\alpha,r) \, \qq(\dd \alpha) \\
&\geq \left( \frac{  \fs_{K/(N-1)} (r)}{  \fs_{K/(N-1)} (R)} \right)^{N-1} \int_{V\cap E} h(\alpha,R) \, \qq(\dd \alpha) \\
&= \left( \frac{  \fs_{K/(N-1)} (r)}{  \fs_{K/(N-1)} (R)} \right)^{N-1} \cH_{r}(\{\tau_{V}= R\} \cap E).
\end{align*}
The claim \eqref{eq:BGmmRr} follows from  \eqref{eq:BGHnRr} by recalling \eqref{eq:disintftsharp}, \eqref{eq:defHt} and  the classical Gromov's Lemma (see for instance \cite[Lemma III.4.1]{Chav06}).
\end{proof}
Notice that, in particular, if $\{\tau_{V}\leq R_{0}\}\subset X$ is a compact subset then  \eqref{eq:BGHnRr} and \eqref{eq:BGmmRr} remain valid without capping with the cutoff set $E$ in the left hand side.

Let us introduce some notation for the next result.   For $u:X\to \R$ we will use the short-hand notation $u(\alpha,t)$ to denote $u(\bar{X}_{\alpha}\cap \{\tau_{V} =t\})$. Notice that if $u$ is Lipschitz then,  for every $\alpha\in Q$, the function $t\mapsto u(\alpha,t)$ is locally Lipschitz and thus $\L^{1}$-a.e. differentiable with derivative denoted as  $ \frac{\partial}{\partial t} u(\alpha, t)$. For $u$ with compact support, we will also use the notation  
$$u_{\alpha}:=\frac{1}{\mm_{\alpha}(\supp\, u)}  \int_{X_{\alpha}} u\,  \mm_{\alpha} \quad \text{if $\mm_{\alpha}(\supp\, u)\neq 0$,  and $u_{\alpha}:=0$ otherwise,}$$
to denote the average of $u$ on $(X_{\alpha}, \mm_{\alpha})$.

\begin{proposition}[A timelike Poincar\'e inequality  for $\TMCP^{e}(K,N)$]\label{prop:Poincare}
For every $(K,N,D)\in \R\times (1,\infty) \times (0,\infty)$,  there exists a constant $\lambda_{\mathsf{MCP}_{K,N,D}}>0$ with the following property.
\\Let  $(X,\sfd, \mm, \ll, \leq, \tau)$  and $V\subset X$ be as in Proposition  \ref{thm:BGAchronal}. Then, for every $u:X\to \R$ Lipschitz with compact support contained in $I^{+}(V)$ it holds
\begin{equation}\label{eq:TimelikePoinc}
\int_{X} \left| u- u_{\alpha} \right|^{2} \mm \leq \lambda_{\mathsf{MCP}_{K,N,D}} \int_{X} \left| \frac{\partial}{\partial t} u(\alpha, t)  \right|^{2} \mm,  
\end{equation}
where $D:=\sup_{\alpha\in Q}  \sup_{x,y\in X_{\alpha}\cap \supp \, u} \tau(x,y)\leq \sup_{x,y\in \supp \, u} \tau(x,y)<\infty$.
\end{proposition}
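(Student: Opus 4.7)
The plan is to reduce the inequality to a one-parameter family of one-dimensional Poincar\'e inequalities on the transport rays $X_{\alpha}$, using the localization (disintegration) developed in Section \ref{Sec:LocTMCP}. The crucial ingredients are that $\mm\llcorner_{I^{+}(V)}$ admits a disintegration whose conditionals $\mm_{\alpha}$ are supported on one-dimensional timelike rays (Theorem \ref{P:nointialpoints}), and that each weighted interval $(X_{\alpha},|\cdot|,\mm_{\alpha})$ satisfies $\MCP(K,N)$ (Theorem \ref{P:localKant}).

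First, since $\supp u\Subset I^{+}(V)$ is compact and $X$ is $\cK$-globally hyperbolic, the $\tau$-diameter of $\supp u$ is finite (indeed $\supp u\times \supp u$ is compact and $\tau$ is continuous on the compact set $J^{+}(\supp u)\cap J^{-}(\supp u)$, hence bounded), so $D\in (0,\infty)$. By Lemma \ref{lem:I+VTV} and Corollary \ref{cor:maTv=0}, $\mm(I^{+}(V)\setminus \T_{V})=0$, so we may integrate freely on $\T_{V}$. Parametrize each ray $X_{\alpha}$ by $\tau_{V}$ via the map $F_{\alpha}$ of Lemma \ref{lem:XalphaI}; then $F_{\alpha}^{-1}(\supp u\cap X_{\alpha})$ is contained in an interval of $\L^{1}$-length at most $D$, and $t\mapsto u(\alpha,t):=u(F_{\alpha}(t))$ is Lipschitz (since $F_{\alpha}$ is an isometry from an interval equipped with $|\cdot|$ onto $(X_{\alpha},\tau|_{X_{\alpha}})$ and $\tau\leq \sfd$-bounded perturbation on compact sets). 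In particular $t\mapsto u(\alpha,t)$ is $\L^{1}$-a.e.\ differentiable and the partial derivative $\partial_{t}u(\alpha,\cdot)$ is bounded.

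The core step is a one-dimensional Poincar\'e inequality: for $\qq$-a.e.\ $\alpha\in Q$, the space $(X_{\alpha},|\cdot|,\mm_{\alpha})$ is a one-dimensional $\MCP(K,N)$ space of diameter (restricted to $\supp u\cap X_{\alpha}$) at most $D$, hence there exists a constant $\lambda_{\mathsf{MCP}_{K,N,D}}>0$, depending only on $(K,N,D)$, such that
\begin{equation*}
\int_{X_{\alpha}}|u(\alpha,\cdot)-u_{\alpha}|^{2}\,\mm_{\alpha}
\;\leq\; \lambda_{\mathsf{MCP}_{K,N,D}}\int_{X_{\alpha}}|\partial_{t}u(\alpha,t)|^{2}\,\mm_{\alpha}(dt).
\end{equation*}
This one-dimensional spectral-gap/Poincar\'e estimate for $\MCP(K,N)$ weighted intervals with density satisfying \eqref{E:MCP0N1d} is classical (it is an instance of the sharp Poincar\'e inequalities on needles; see e.g.\ Cavalletti--Mondino and Klartag for the $\CD$-setting, and the $\MCP$-variant, whose constant is uniform on intervals of diameter $\leq D$).

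To conclude, apply the one-dimensional inequality for each $\alpha$ to the Lipschitz function $u(\alpha,\cdot)$ and integrate against $\qq$, using the disintegration formula \eqref{E:disintegration} and Fubini--Tonelli:
\begin{equation*}
\int_{X}|u-u_{\alpha}|^{2}\,\mm
= \int_{Q}\!\int_{X_{\alpha}}|u-u_{\alpha}|^{2}\,\mm_{\alpha}\,\qq(d\alpha)
\leq \lambda_{\mathsf{MCP}_{K,N,D}}\int_{Q}\!\int_{X_{\alpha}}|\partial_{t}u(\alpha,t)|^{2}\,\mm_{\alpha}\,\qq(d\alpha)
= \lambda_{\mathsf{MCP}_{K,N,D}}\int_{X}|\partial_{t}u|^{2}\,\mm.
\end{equation*}
The main obstacle is the one-dimensional Poincar\'e inequality for $\MCP(K,N)$ weighted intervals with a constant that is \emph{uniform} for all intervals of diameter $\leq D$; although this is by now classical, a careful statement (with an explicit or at least existence-of-constant proof) must be invoked, since the density $h(\alpha,\cdot)$ may degenerate at the endpoints (recall that $h(\alpha,0)>0$ is not automatic, only that $h$ is locally Lipschitz and strictly positive in the interior, by Theorem \ref{P:localKant}). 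The $\MCP(K,N)$ bounds \eqref{E:MCP0N1d} are precisely what is needed to quantitatively control this degeneration uniformly in $\alpha$.
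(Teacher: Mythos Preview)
Your proposal is correct and follows essentially the same approach as the paper: invoke Theorem \ref{P:localKant} to get that each ray $(X_{\alpha},|\cdot|,\mm_{\alpha})$ is $\MCP(K,N)$, apply the one-dimensional Poincar\'e inequality on each ray, and integrate against $\qq$ via the disintegration \eqref{E:disintegration}. The paper's proof is terser and resolves what you flag as the ``main obstacle'' by citing Han--Milman \cite{HM} directly for the sharp Poincar\'e inequality on $\MCP(K,N)$ intervals with uniform constant depending only on $(K,N,D)$.
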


\begin{proof}
Since from Theorem \ref{P:localKant} each ray $(X_{\alpha}, \mm_{\alpha})$ is an $\MCP(K,N)$ space, from \cite{HM} we know that
$$
\int_{X_{\alpha}} \left| u(\alpha,t)- u_{\alpha} \right|^{2} \,  \mm_{\alpha}(\dd t) \leq \lambda_{\mathsf{MCP}_{K,N,D}} \int_{X_{\alpha}}  \left| \frac{\partial}{\partial t} u(\alpha, t)  \right|^{2}  \,  \mm_{\alpha}(\dd t).
$$
The claimed \eqref{eq:TimelikePoinc} follows then from the disintegration formula \eqref{E:disintegration}.
\end{proof}
It is possible to give quite   precise  estimates on the constan $\lambda_{\mathsf{MCP}_{K,N,D}}$, the interested reader is referred to \cite{HM}.

Finally we take advantage of the techniques developed in the second part of the paper to sharpen, for timelike non-branching spaces,  the timelike Bishop-Gromov  inequality obtained in Proposition  \ref{prop:BisGro} and the timelike Bonnet-Myers  inequality obtained in Proposition \ref{prop:BonMy}.

\begin{proposition}[A  timelike Bishop-Gromov inequality for timelike non-branching $\TMCP^{e}(K,N)$]\label{prop:BisGro2}
Let $(X,\sfd, \mm, \ll, \leq, \tau)$  be as in Proposition  \ref{thm:BGAchronal}. Then, for each $x_{0}\in X$, each compact subset  $E\subset I^{+}(x_{0})\cup\{x_{0}\}$ $\tau$-star-shaped with respect to $x_{0}$, and each $0<r<R\leq \pi \sqrt{(N-1)/(K \vee 0)}$, it holds:
\begin{equation}\label{eq:BS2}
\frac{s(E, r)}{s(E,R)} \geq \left(\frac{\fs_{K/(N-1)}(r)}  {\fs_{K/(N-1)}(R)} \right)^{N-1}, \quad  \frac{v(E, r)}{v(E,R)} \geq \frac{\int_{0}^{r} \fs_{K/(N-1)} (t)^{N-1} \dd t}   {\int_{0}^{R}\fs_{K/(N-1)} (t)^{N-1} \dd t }.
\end{equation}
\end{proposition}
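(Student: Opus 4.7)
The strategy is to reduce the statement to the one-dimensional density estimate \eqref{E:MCP0N1d} by applying the localization machinery of Section \ref{Sec:LocTMCP} to the singleton $V:=\{x_{0}\}$. First, I would verify that $V=\{x_{0}\}$ is a Borel achronal FTC subset: achronality is trivial, and the FTC condition holds since $J^{-}(x)\cap \{x_{0}\}$ is either empty or $\{x_{0}\}$ and hence compact in $V$. Observe that in this case $\tau_{V}(x)=\tau(x_{0},x)$ for every $x\in I^{+}(x_{0})$, so the level sets $\{\tau_{V}=r\}$ agree with the $\tau$-spheres of radius $r$ centred at $x_{0}$.

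Next, I would invoke Theorem \ref{P:nointialpoints} to obtain the disintegration
\[
\mm\llcorner_{I^{+}(x_{0})}=\int_{Q}h(\alpha,\cdot)\,\L^{1}\llcorner_{X_{\alpha}}\,\qq(d\alpha),
\]
and Theorem \ref{P:localKant}, which guarantees that for $\qq$-a.e.\ $\alpha\in Q$ the density $h(\alpha,\cdot)$ admits a continuous representative satisfying
\[
\frac{h(\alpha,r)}{h(\alpha,R)}\geq\left(\frac{\fs_{K/(N-1)}(r)}{\fs_{K/(N-1)}(R)}\right)^{N-1},\qquad 0<r<R<\pi\sqrt{(N-1)/(K\vee 0)},
\]
by choosing $a=0$ in \eqref{E:MCP0N1d} (which is allowed since the rays emanate from $x_{0}$ where $\tau_{V}=0$). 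The timelike non-branching hypothesis together with the star-shapedness of $E$ then forces $X_{\alpha}\cap E$ to be an initial segment of the ray parametrised by $\tau_{V}$, i.e.\ $X_{\alpha}\cap E=\{x\in X_{\alpha}:\tau_{V}(x)\leq r_{\alpha}\}$ for some $r_{\alpha}\in[0,+\infty]$. This is the step where non-branching is essential: it ensures that a single ray parametrises uniquely the geodesics from $x_{0}$ so that $\fI(x_{0},x,t)\subset X_{\alpha}$ whenever $x\in X_{\alpha}$.

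With these ingredients, Fubini's theorem yields for $\qq$-a.e.\ $\alpha$ (and with the continuous representative of $h(\alpha,\cdot)$) the formulas
\[
v(E,r)=\int_{Q}\int_{0}^{r\wedge r_{\alpha}}h(\alpha,t)\,dt\,\qq(d\alpha),\qquad s(E,r)=\int_{\{r_{\alpha}\geq r\}}h(\alpha,r)\,\qq(d\alpha),
\]
the latter for $\L^{1}$-a.e.\ $r\in(0,\pi\sqrt{(N-1)/(K\vee 0)})$. The sphere inequality in \eqref{eq:BS2} then follows by integrating the ray-wise estimate over the smaller set $\{r_{\alpha}\geq R\}\subset\{r_{\alpha}\geq r\}$. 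The ball inequality is then obtained by applying the classical Gromov comparison lemma (e.g.\ \cite[Lemma III.4.1]{Chav06}) ray by ray: for each $\alpha$, the function $r\mapsto h(\alpha,r)\chi_{[0,r_{\alpha}]}(r)$ is dominated in the appropriate ratio-monotone sense by $\fs_{K/(N-1)}(r)^{N-1}$, and integrating against $\qq$ preserves the inequality.

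The main technical obstacle will be handling the $\limsup$ in the definition of $s(E,r)$ carefully. The cleanest route is to first establish the ball (i.e.\ $v$) inequality, which only uses the disintegration formula and ray-wise monotonicity without any differentiation, and then to differentiate using the continuous representative of $h(\alpha,\cdot)$ to recover the sphere inequality on the full-measure set of differentiability points; alternatively one may directly apply reverse Fatou to the normalised Minkowski increments $\tfrac{1}{\delta}\int_{r}^{r+\delta}h(\alpha,t)\chi_{\{t\leq r_{\alpha}\}}\,dt$, which are uniformly bounded on compact subsets thanks to the local bound from Theorem \ref{T:sigma-disint}(4) combined with the continuity of $h(\alpha,\cdot)$.
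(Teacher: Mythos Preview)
Your proposal is correct and follows essentially the same route as the paper: the paper's proof also applies the localization machinery of Section~\ref{Sec:LocTMCP} with $\tau_{x_0}(\cdot)=\tau(x_0,\cdot)$ in place of $\tau_V$, obtains the ray-wise $\MCP(K,N)$ density estimate, and then concludes exactly as in Proposition~\ref{thm:BGAchronal}. Your write-up is in fact more detailed than the paper's (which simply says the constructions ``can be repeated verbatim'' and refers back to the proof of Proposition~\ref{thm:BGAchronal}); one small remark is that the star-shapedness of $E$ alone already guarantees that $X_\alpha\cap E$ is an initial segment of the ray---you do not need non-branching for that particular step, since the ray $X_\alpha$ itself is a geodesic from $x_0$ to $x$ and hence its intermediate points lie in $\fI(x_0,x,t)\subset E$.
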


\begin{proof}
Consider $\tau_{x_{0}}(\cdot):=\tau(x_{0},\cdot):I^{+}(x_{0})\to\R$. One can repeat verbatim (actually here it would be slightly easier) the constructions of Section \ref{Sec:LocTMCP} replacing $\tau_{V}$ by $\tau_{x_{0}}$ and obtain a partition (up to a set of $\mm$-measure zero) of $I^{+}(x_{0})$ into transport rays $\{X_{\alpha}\}_{\alpha\in Q}$ associated to $\tau_{x_{0}}$,  i.e. each $X_{\alpha}$ is a future pointing radial $\tau$-geodesic emanating from $x_{0}$.
One can disintegrate $\mm\llcorner_{I^{+}(x_{0})}$ accordingly as
$\mm\llcorner_{I^{+}(x_{0})} =\int_{Q} \mm_{\alpha} \, \qq(\dd \alpha)$ where each $\mm_{\alpha}$ is concentrated on  $X_{\alpha}$, and $(X_{\alpha}, |\cdot|, \mm_{\alpha})$ is a 1-dim. $\MCP(K,N)$ m.m.s..
One can now prove \eqref{eq:BS2}  along the same lines of the proof of Proposition \ref{thm:BGAchronal}.
\end{proof}

\begin{proposition}[A  timelike Bonnet-Myers  inequality for timelike non-branching $\TMCP^{e}(K,N)$]\label{prop:BonMy2}
Let $(X,\sfd, \mm, \ll, \leq, \tau)$  be as in Proposition  \ref{thm:BGAchronal}, with $K>0$.
Then
\begin{equation}\label{eq:BonMy2}
\sup_{x,y\in X} \tau(x,y) \leq \pi \sqrt{\frac{N-1}{K}}.
\end{equation}
\end{proposition}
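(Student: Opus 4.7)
The plan is to argue by contradiction: I would assume there exist $x_{0}, x_{1} \in X$ with $\tau(x_{0}, x_{1}) > \pi\sqrt{(N-1)/K}$. Using continuity of $\tau$ (granted by global hyperbolicity) together with $\supp\,\mm = X$, I would first produce an open neighbourhood $U$ of $x_{1}$ with $\mm(U) > 0$ such that $\tau(x_{0}, y) > \pi\sqrt{(N-1)/K}$ for every $y \in U$.

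Next, mimicking the strategy already used in the proof of Proposition \ref{prop:BisGro2}, I would apply the localisation machinery of Section \ref{Sec:LocTMCP} with the degenerate ``achronal FTC hypersurface'' $V := \{x_{0}\}$ (both achronality and FTC are trivially satisfied by a single timelike point). Considering the signed time-separation $\tau_{V} = \tau(x_{0}, \cdot)$, Theorems \ref{T:sigma-disint} and \ref{P:nointialpoints} would yield a disintegration
\[
\mm\llcorner_{I^{+}(x_{0})} = \int_{Q} h(\alpha,\cdot)\,\L^{1}\llcorner_{X_{\alpha}}\,\qq(d\alpha),
\]
where each equivalence class $X_{\alpha}$ is the interior of a unit-$\tau$-speed timelike geodesic emanating from $x_{0}$ (via Lemma \ref{lem:XalphaI}). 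Applying Theorem \ref{P:localKant} both to $X$ and to its causally reversed structure (legitimate by hypothesis), I would get that $(X_{\alpha}, |\cdot|, \mm_{\alpha})$ is a one-dimensional $\MCP(K,N)$ metric measure space for $\qq$-a.e.\ $\alpha \in Q$.

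At this point I would invoke the classical Bonnet--Myers theorem for $\MCP(K,N)$ spaces with $K > 0$, $N > 1$ (see Sturm \cite{sturm:II} and Ohta \cite{Ohta1}), which gives diameter bound $\pi\sqrt{(N-1)/K}$ for each such $X_{\alpha}$; alternatively, the same conclusion can be obtained self-containedly by letting $b \uparrow \pi\sqrt{(N-1)/K}$ in the first inequality of \eqref{E:MCP0N1d} on any ray of length $> \pi\sqrt{(N-1)/K}$, since the right-hand side diverges while $h(\alpha,\cdot)$ must remain locally finite. Hence $\tau_{x_{0}}(y) \leq \pi\sqrt{(N-1)/K}$ for every $y \in X_{\alpha}$ and $\qq$-a.e.\ $\alpha$; combining with Corollary \ref{cor:maTv=0} applied in both time directions (so that $\mm(I^{+}(x_{0}) \setminus \T_{V}) = 0$), I would conclude $\tau_{x_{0}} \leq \pi\sqrt{(N-1)/K}$ $\mm$-almost everywhere on $I^{+}(x_{0})$, directly contradicting $\tau_{x_{0}} \geq \tau(x_{0}, \cdot) > \pi\sqrt{(N-1)/K}$ on the positive-measure set $U$. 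The only mildly delicate point (which will require no serious work) is to verify that the Section \ref{Sec:LocTMCP} constructions go through verbatim when $V$ degenerates to a single timelike point, but this is immediate.
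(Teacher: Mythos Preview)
Your proposal is correct and follows essentially the same approach as the paper's proof: both argue by contradiction, find a positive-measure neighbourhood of $x_{1}$ where $\tau(x_{0},\cdot)$ exceeds $\pi\sqrt{(N-1)/K}$, apply the localisation of Section~\ref{Sec:LocTMCP} with $V=\{x_{0}\}$ to obtain a disintegration into rays, and then invoke the one-dimensional $\MCP(K,N)$ diameter bound (from \eqref{E:MCP0N1d}) on each ray to derive the contradiction. The paper's proof is slightly terser but structurally identical.
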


\begin{proof}
Assume by contradiction that there exist $x_{0},x_{1}\in X$ with $\tau(x_{0},x_{1})\geq \pi \sqrt{(N-1)/K}+2\ve$, for some  $\ve>0$. Let $\delta>0$ be such that   
$$ \inf\{ \tau(x_{0},y):  y\in B^{\sfd}(x_{1},\delta)\} \geq \pi \sqrt{(N-1)/K}+\ve.$$
Consider the disintegration $\mm\llcorner_{I^{+}(x_{0})} =\int_{Q} \mm_{\alpha} \, \qq(\dd \alpha)$   associated to $\tau_{x_{0}}$, as outlined in the proof of Proposition \ref{prop:BisGro2}. Since $\mm(B^{\sfd}(x_{1},\delta))>0,$ it follows that $L_{\tau}(X_{\alpha}) \geq \pi \sqrt{(N-1)/K}+\ve$ for a $\qq$-non negligible subset of rays. But since every $(X_{\alpha}, |\cdot|, \mm_{\alpha})$ is a 1-dim. $\MCP(K,N)$ m.m.s. with full support, its diameter is at most  $\pi\sqrt{(N-1)/K}$ (as it's easily seen from \eqref{E:MCP0N1d}). Contradiction.
\end{proof}

\begin{remark}[Sharpness]\label{rem:sharpness}
The Lorentzian model spaces are: for $K<0$  (scaled) de Sitter space, $K=0$ Minkowski space, $K>0$ (scaled) anti-de Sitter space. Recall that the standard de Sitter space $(M^{n}, g_{dS})$ has constant sectional curvature equal to $1$, thus $\Ric_{g_{dS}}(v,v)=-(n-1)g_{dS}(v,v)$ for $v$ timelike, and hence it is the model space for $K=-(n-1)$. The Minkowski space has null sectional (and thus Ricci) curvatures, thus is the model space for $K=0$. The  anti de-Sitter space  $(M^{n}, g_{adS})$ has constant sectional curvature equal to $-1$, thus $\Ric_{g_{adS}}(v,v)=(n-1)g_{dS}(v,v)$ for $v$ timelike, and hence it is the model space for $K=n-1$. It is well known that any globally hyperbolic subset of $(M^{n}, g_{adS})$ has timelike diameter at most $\pi$, with sharp bound; this shows the sharpeness of  Proposition \ref{prop:BonMy2}.  Using that in the model spaces the sectional curvature is constant,  direct volume computations via Jacobi fields show that equality is achieved in \eqref{eq:BS2}; thus also Proposition \ref{prop:BisGro2} is sharp. Choosing $V$ to be a level set of the natural time-function in the model spaces, it is possible to check that equality is achieved  in \eqref{eq:BGHnRr} and  \eqref{eq:BGmmRr} as well.
\end{remark}

\subsection{The case of a spacetime with continuous metric}\label{SS:C0metric}
Next we specialise Theorem  \ref{thm:HawkSingSint} to the case of a spacetime with a continuous metric.  As observed in \cite{CG}, spacetimes with  continuous metrics may present pathological causal behaviour. For instance \cite{CG} (see also \cite[Section 5.1]{KS}) gives examples of spacetimes with H\"older-continous metrics where the null curves emanating from a point cover a set with non-empty interior, a phenomenon called ``bubbling''. In order to prevent such a pathological behavior, \cite{CG} proposed the notion of ``causally plain'' metric. Let us briefly recall it together with the needed notation.
\\

\noindent
\textbf{The notion of causally plain Lorentzian metric}. Let $\check g,g$ be two Lorentzian metrics. We write $\check g \prec g$ if $\{v\neq 0 : \check g(v,v)\leq 0\}\subset \{v : g(v,v)< 0\}$. For a neighbourhood $U$ of $x\in M$, set 
\begin{align*}
\check I^{+}_{g}(x,U):= \{y\in U: \, &\exists \text{ a smooth Lorentzian metric $\check g\prec g$ and a future pointing $\check g$-timelike curve }\\
&\text{$\gamma:[0,1]\to U$, with  $\gamma_{0}=x$, $\gamma_{1}=y$ and $\check g(\dot \gamma, \dot \gamma)<0$}\}.
\end{align*}
The set $\check I^{-}_{g}(x,U)$ is defined analogously.  It is clear that $\check I_{g}^{\pm}\subset I_{g}^{\pm}$ and equality holds for smooth metrics.
Let us also recall that a \emph{cylindrical neighbourhood} of a  point $x\in X$ with respect to $g$, is a relatively compact chart domain containing $x$ such that, in this chart, $g$ equals the Minkowski
metric at $x$ and the slopes of the light cones of $g$ stay close to 1 (for the precise definition see \cite[Def. 1.8]{CG}).

A spacetime $(M,g)$ is said to be \emph{causally plain} if every $x\in M$ admits a cylindrical neighbourhood $U$  such that $\partial \check I_{g}^{\pm}(x,U)=  \partial  J^{\pm}(x,U)$; otherwise $(M,g)$ is said to be \emph{bubbling} \cite[Def. 1.16]{CG}.
\\The rough idea is that $(M,g)$ is causally plain provided, for every $x\in M$,  the span of all null curves emanating from  $x$ has  empty interior. 
\\It was proved in \cite[Corollary 1.17]{CG} that a spacetime with locally Lipschitz continuous Lorentzian metric is causally plain. In the same paper \cite[Section 1.1]{CG}  (see also \cite[Section 5.1]{KS}) examples of H\"older-continuos bubbling Lorentzian metrics are discussed.
\\

Let $(M,g)$ be a spacetime with a $C^{0}$-Lorentzian metric. Recall that any Cauchy hypersurface is causally complete. 
\\It is then clear that Proposition \ref{prop:BrunnMnk},   Proposition \ref{prop:BisGro}, Proposition \ref{prop:BonMy} and Remark \ref{rem:GeomPropTMCP} give the following:

\begin{corollary}[Geometric properties of a  globally hyperbolic causally plain spacetime with $C^{0}$ metric, with synthetic timelike Ricci bounded below]\label{cor:GeomPropTMCPC0}
Let $(M,g)$ be a $2\leq n$-dimensional globally hyperbolic, causally plain spacetime with a  $C^{0}$-Lorentzian metric. 
\begin{itemize}
\item \emph{Timelike Bishop-Gromov:}  If $(M,g)$ satisfies $\mathsf{TMCP}^{e}(K,N)$ for some $p\in (0,1), K\in \R,  N\in [1,\infty)$, then for each $x_{0}\in M$, each compact subset  $E\subset I^{+}(x_{0})\cup\{x_{0}\}$ $\tau$-star-shaped with respect to $x_{0}$, and each $0<r<R\leq \pi \sqrt{N/(K \vee 0)}$,  inequality \eqref{eq:BS} holds.
\item \emph{Timelike Bonnet-Myers:}  If $(M,g)$ satisfies $\mathsf{TMCP}^{e}(K,N)$ for some $p\in (0,1), K>0,  N\in [1,\infty)$, then  \eqref{eq:BonMy} holds. In particular $(M,g)$ is not timelike geodesically complete.
\item \emph{Timelike Brunn-Minkowski:} If $(M,g)$ satisfies  $\mathsf{wTCD}^{e}_{p}(K,N)$ for some $p\in (0,1), K\in \R,  N\in [1,\infty)$, then \eqref{eq:BrunnMink} holds.
\end{itemize}
\end{corollary}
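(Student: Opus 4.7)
The plan is essentially a verification that, for a globally hyperbolic causally plain spacetime with $C^{0}$ Lorentzian metric, the synthetic hypotheses required by the three geometric results invoked (Propositions \ref{prop:BisGro}, \ref{prop:BonMy}, \ref{prop:BrunnMnk}, together with Remark \ref{rem:GeomPropTMCP}) are all met, and then to appeal to those results verbatim.

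First I would recall the content of Remark \ref{rem:C0metrics}: if $(M,g)$ is a globally hyperbolic and causally plain spacetime with $g\in C^{0}$, then the canonically associated synthetic structure $(M,\sfd^{h},\ll,\leq,\tau)$ (for any auxiliary complete Riemannian metric $h$) is a causally closed, $\mathcal{K}$-globally hyperbolic Lorentzian geodesic space; in particular it is also locally causally closed. Endowing $M$ with the Lorentzian volume measure $\vol_{g}$ (which is a Radon measure of full support) turns it into a measured Lorentzian pre-length space to which all the synthetic results apply.

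Next I would invoke the three synthetic statements bullet-by-bullet. The timelike Bishop--Gromov bullet follows from the $\mathsf{TMCP}$-version of Proposition \ref{prop:BisGro} (valid for $\mathsf{TMCP}^{e}_{p}(K,N)$ spaces by Remark \ref{rem:GeomPropTMCP}), whose hypotheses—measured globally hyperbolic, locally causally closed Lorentzian geodesic space satisfying $\mathsf{TMCP}^{e}_{p}(K,N)$—are granted by the previous paragraph; this gives \eqref{eq:BS} on any compact $\tau$-star-shaped set $E\subset I^{+}(x_{0})\cup\{x_{0}\}$ and all $0<r<R\leq \pi\sqrt{N/(K\vee 0)}$. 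For the Bonnet--Myers bullet, one applies again Remark \ref{rem:GeomPropTMCP} (extending Proposition \ref{prop:BonMy}) to obtain the diameter bound \eqref{eq:BonMy}; the failure of timelike geodesic completeness is then immediate, since any future-inextendible future-directed timelike geodesic starting at a point $x\in M$ would, by reverse triangle inequality, produce points $y$ with arbitrarily large $\tau(x,y)$, contradicting \eqref{eq:BonMy}. For the Brunn--Minkowski bullet, one applies Proposition \ref{prop:BrunnMnk} directly under the $\mathsf{wTCD}^{e}_{p}(K,N)$ assumption.

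The main (and essentially only) thing to watch is the compatibility of assumptions at the interface between the smooth/continuous setting and the synthetic framework: one must check that ``$C^{0}$ causally plain + globally hyperbolic'' really does feed into ``causally closed + $\mathcal{K}$-globally hyperbolic + locally causally closed + Lorentzian geodesic space'' as required by the synthetic propositions. This is exactly what Remark \ref{rem:C0metrics} delivers (building on \cite{KS,SaC0}), so no genuine work beyond citation is needed. Consequently the corollary is a direct translation of the abstract synthetic consequences into the $C^{0}$-spacetime language.
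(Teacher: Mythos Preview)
Your proposal is correct and matches the paper's approach essentially verbatim: the paper simply observes (in the sentence preceding the corollary) that Remark \ref{rem:C0metrics} guarantees the associated synthetic structure is causally closed, $\cK$-globally hyperbolic and geodesic, whence Propositions \ref{prop:BrunnMnk}, \ref{prop:BisGro}, \ref{prop:BonMy} and Remark \ref{rem:GeomPropTMCP} apply directly. Your added justification for the failure of timelike geodesic completeness is a reasonable spell-out of the ``in particular'' clause that the paper leaves implicit.
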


In Corollary \ref{Cor:GeomPropTMCPNomBranch} below, taking advantage  of the techniques developed in Section \ref{Ss:Nonbranching}  and  Section \ref{Sec:LocTMCP},     the results of Corollary \ref{cor:GeomPropTMCPC0} will be improved to sharp forms in case of  timelike non-branching structures. 
\\

It is clear that Theorem \ref{thm:HawkSingSint} implies the following result for  a spacetime with $C^{0}$-Lorentzian metric.

\begin{corollary}[Hawking Singularity Theorem for a spacetime with a $C^{0}$-Lorentzian metric]\label{cor:HawSingLCont}
Let $(M,g)$ be a $2\leq n$-dimensional timelike non-branching, globally hyperbolic, causally plain spacetime with a   $C^{0}$-Lorentzian metric satisfying $\mathsf{TMCP}^{e}(K,N)$ for some $p\in (0,1), K\in \R, N\in (1,\infty)$ and assume that the causally-reversed structure satisfies the same conditions. 
\\ Let $V\subset M$ be a Borel achronal FTC subset (or, more strongly, let $V$ be a Cauchy hypersurface) having forward mean curvature bounded above by $H_{0}<0$ in the sense of Definition \ref{D:MeanCurv}. 

Then for  every $x\in I^{+}(V)$ it holds $\tau_{V}(x)\leq D_{H_{0},K,N}$, provided $H_0, K,N$ fall in the range specified in Theorem \ref{thm:HawkSingSint} . 
In particular, for every timelike geodesic $\gamma\in \TGeo(M)$ with $\gamma_{0}\in V$, the maximal (on the right) domain of definition is contained in $\big[0, D_{H_{0},K,N}\big]$; in particular  
 $(M,g)$ is not timelike geodesically complete.
\end{corollary}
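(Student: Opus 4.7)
The plan is to verify that the hypotheses of the abstract Hawking Singularity Theorem~\ref{thm:HawkSingSint} are met by the synthetic structure canonically associated to $(M,g)$, and then directly apply it. First, I would invoke Remark~\ref{rem:C0metrics}: for a globally hyperbolic, causally plain $C^0$-Lorentzian metric on $M$, the associated Lorentzian synthetic space is automatically causally closed, $\cK$-globally hyperbolic and geodesic. Timelike non-branching and the $\TMCP^{e}_{p}(K,N)$ condition (together with the analogous properties of the causally reversed structure) are assumed outright in the statement. Thus the entire set of structural hypotheses of Theorem~\ref{thm:HawkSingSint} is available.

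Next I would handle the set $V$. If $V$ is already a Borel achronal FTC subset there is nothing to do. If instead $V$ is a Cauchy hypersurface, then by the points collected in Remark~\ref{rem:C0metrics} (\cite[Prop.~5.2]{SaC0} for the topological hypersurface and acausal structure, together with the extension of \cite[Lemma 14.40]{O'Neill}/\cite[Thm.~8.3.12]{Wald} to $C^0$-metrics along the lines of \cite[Thm.~5.7]{SaC0}) the set $V$ is achronal, Borel, and $J^{-}(x)\cap J^{+}(V)$ is compact for every $x\in J^{+}(V)$; in particular $V$ is timelike complete (hence FTC). The forward mean-curvature bound is assumed in the sense of Definition~\ref{D:MeanCurv}.

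With every hypothesis verified, Theorem~\ref{thm:HawkSingSint} applies verbatim and gives $\tau_{V}(x)\le D_{H_{0},K,N}$ for every $x\in I^{+}(V)$, under the stated ranges of $(H_{0},K,N)$. For the geodesic incompleteness statement, let $\gamma:[0,a)\to M$ be any future-directed timelike geodesic parametrized by $\tau$-arc-length with $\gamma_{0}\in V$. For every $s\in[0,a)$, the very definition~\eqref{eq:deftauV} of $\tau_{V}$ yields $\tau_{V}(\gamma_{s})\ge\tau(\gamma_{0},\gamma_{s})=s$; combined with the bound $\tau_{V}\le D_{H_{0},K,N}$ on $I^{+}(V)$ this forces $a\le D_{H_{0},K,N}$, so $\gamma$ cannot be extended past parameter value $D_{H_{0},K,N}$. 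Hence $(M,g)$ is timelike geodesically incomplete. The only mildly delicate point is the FTC verification for Cauchy hypersurfaces in the $C^0$-setting, but this is a direct adaptation of the classical smooth argument already indicated in Remark~\ref{rem:C0metrics}; all other steps are essentially bookkeeping.
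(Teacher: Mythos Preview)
Your proposal is correct and follows essentially the same approach as the paper, which simply states that the corollary is clear from Theorem~\ref{thm:HawkSingSint} once one observes (via Remark~\ref{rem:C0metrics}) that a globally hyperbolic, causally plain $C^{0}$-spacetime yields a causally closed, $\cK$-globally hyperbolic Lorentzian geodesic space and that Cauchy hypersurfaces are causally complete. Your explicit verification of the FTC property for Cauchy hypersurfaces and the re-derivation of the geodesic bound are more detailed than the paper's one-line justification, but the logic is identical.
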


\begin{remark}\label{rem:litHS}[Literature about Hawking singularity Theorem]
Hawking singularity Theorem was proved in \cite[Theorem 4, p. 272]{HawEll} for smooth space-times (the proof works for $C^{2}$ metrics) assuming that $V$ is a \emph{compact} spacelike slice. The result was extended to  $C^{1,1}$ metrics in \cite{Kunz:Haw}  and to $C^1$ metrics in \cite{Graf}, by approximating the metric of low regularity with smoother metrics. The extension to non-compact future causally complete $V$ was established in \cite[Theorem 3.1]{Ga} (see also \cite{GrTr}) in the smooth setting, and extended to $C^{1,1}$ metrics in \cite{Graf1}. Theorem  \ref{thm:HawkSingSint} and Corollary \ref{cor:HawSingLCont}, already in the smooth setting, relax the future causal completeness with the weaker future timelike completeness (in addition to extend the results to a  synthetic framework, including $C^{0}$ metrics with timelike non-branching geodesics).

Hawking (as well as Penrose and Hawking-Penrose) singularity Theorem was also extended to (smooth) closed cone structures \cite{Min} and smooth weighted Lorentz-Finsler manifolds \cite{LMO}.
Let us  mention that a first synthetic singularity theorem was recently shown in \cite{AGKS} under the stronger assumptions that the space is a synthetic warped product with lower bound on sectional curvature in the sense of comparison triangles (\'a la Alexandrov).

Few weeks after we announced the present work, we learnt of \cite{BKMW}, proving a \emph{Riemannian} version of Hawking's singularity theorem in the framework of \emph{metric measure spaces} with Ricci curvature bounded below in a synthetic sense via optimal transport.
\end{remark}

\begin{remark}[On the timelike non-branching assumption]
\begin{itemize}
\item The validity of the non-branching property for timelike geodesics (assumed in Theorem  \ref{thm:HawkSingSint} and Corollary \ref{cor:HawSingLCont}) is verified for $C^{1,1}_{\rm{loc}}$-Lorentzian metrics by standard Cauchy-Lipschitz theory of ODEs.
\item In the metric theory, it was recently proved \cite{QDeng} that infinitesimally Hilbertian $\CD(K,N)$ spaces are non-branching. It is natural to expect that an analogous result holds also in the Lorentzian setting, namely that infinitesimally Minkowskian (to be properly defined) $\TCD^e_p(K,N)$ spaces are timelike non-branching.
\end{itemize}
\end{remark}

Specialising  Proposition \ref{thm:BGAchronal}, Proposition \ref{prop:Poincare}, Proposition \ref{prop:BisGro2}, Proposition \ref{prop:BonMy2} to the case of a  spacetime with a $C^{0}$-Lorentzian metric  give:

\begin{corollary}[Timelike Bishop-Gromov, Bonnet-Myers and Poincare' inequalities]\label{Cor:GeomPropTMCPNomBranch}
Let $(M,g)$ be a $2\leq n$-dimensional timelike non-branching, globally hyperbolic, causally plain spacetime with a   $C^{0}$-Lorentzian metric satisfying $\mathsf{TMCP}^{e}(K,N)$ for some $p\in (0,1), K\in\R,  N\in (1,\infty)$ and assume that the causally-reversed structure satisfies the same conditions.  \\ Let $V\subset M$ be a Borel achronal FTC subset (or, more strongly, let $V$ be a Cauchy hypersurface). Then:
\begin{itemize}
\item \emph{Timelike Bishop-Gromov I:}  For every compact $(\tau_{V}, R_{0})$-conically shaped subset $E\subset I^{+}(V)\cup V$ the inequalities \eqref{eq:BGHnRr} and  \eqref{eq:BGmmRr} hold.
In particular, if $\{\tau_{V}\leq R_{0}\}\subset X$ is a compact subset then  \eqref{eq:BGHnRr} and \eqref{eq:BGmmRr} remain valid without capping with the cutoff set $E$ in the left hand side. 
\item \emph{Timelike Bishop-Gromov II:}  For each $x_{0}\in M$, each compact subset  $E\subset I^{+}(x_{0})\cup\{x_{0}\}$ $\tau$-star-shaped with respect to $x_{0}$, and each $0<r<R\leq \pi \sqrt{(N-1)/(K \vee 0)}$, the inequalities \eqref{eq:BS2} hold.
\item \emph{Timelike Poincar\'e:} For every $u:M\to \R$ Lipschitz with compact support contained in $I^{+}(V)$, the inequality \eqref{eq:TimelikePoinc} holds.
\item \emph{Timelike Bonnet-Myers:} If $K>0$, then  inequality \eqref{eq:BonMy2} holds.
\end{itemize}
\end{corollary}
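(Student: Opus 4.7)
The plan is to verify that, under the stated hypotheses, the space $(M, \sfd^h, \mm_g, \ll, \leq, \tau)$ associated to a globally hyperbolic, causally plain spacetime $(M,g)$ with a continuous Lorentzian metric falls precisely within the framework of the general propositions invoked in Section~\ref{Subsec:TBGBM} and Section~\ref{SS:SyntMC}. Once this is checked, each of the four bullet points is then just the specialisation of the corresponding abstract statement.

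First, I would recall from Remark~\ref{rem:C0metrics} that, for a causally plain Lorentzian $C^{0}$-metric, the associated synthetic structure is a \emph{pre-length} Lorentzian space (by \cite[Prop.~5.8]{KS}); that global hyperbolicity implies both causal closedness and $\cK$-global hyperbolicity (by \cite[Prop.~3.3, Cor.~3.4]{SaC0}); and that \cite[Thm.~3.30, Thm.~5.12]{KS} promotes it to a causally closed, $\cK$-globally hyperbolic Lorentzian \emph{geodesic} space. In particular, locally causally closed, $\cK$-globally hyperbolic, and geodesic, which are precisely the structural hypotheses of Propositions~\ref{thm:BGAchronal}, \ref{prop:Poincare}, \ref{prop:BisGro2}, \ref{prop:BonMy2}. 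The timelike non-branching hypothesis and the $\TMCP^{e}_{p}(K,N)$ assumption (in both time directions) are directly assumed. So the general framework applies to $(M,g)$ with no further work.

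Next, for the achronal FTC requirement on $V$, I would note that any Borel achronal FTC subset is admissible by definition; for the strengthening when $V$ is a Cauchy hypersurface, one invokes the last bullet of Remark~\ref{rem:C0metrics}, stating that any Cauchy hypersurface in a $C^{0}$-spacetime is causally complete, hence in particular both FTC and PTC, and is an achronal (indeed acausal) topological hypersurface by \cite[Prop.~5.2]{SaC0}. With this, the hypotheses of Proposition~\ref{thm:BGAchronal} and Proposition~\ref{prop:Poincare} are met for such $V$, yielding the first bullet (timelike Bishop--Gromov I, including the remark that compactness of $\{\tau_V\leq R_0\}$ allows one to remove the cutoff set $E$, which follows verbatim from the proof of Proposition~\ref{thm:BGAchronal}) and the third bullet (timelike Poincar\'e inequality \eqref{eq:TimelikePoinc}).

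Finally, for the second bullet (timelike Bishop--Gromov II), I would apply Proposition~\ref{prop:BisGro2} directly to the point $x_{0}\in M$ and the $\tau$-star-shaped compact $E\subset I^{+}(x_{0})\cup\{x_{0}\}$, using the disintegration of $\mm\llcorner I^{+}(x_{0})$ along radial $\tau$-geodesics emanating from $x_{0}$ obtained there; and for the fourth bullet (timelike Bonnet--Myers \eqref{eq:BonMy2}) I would simply quote Proposition~\ref{prop:BonMy2}. There is no genuine obstacle here: the only conceptual step is the translation between the smooth-looking causality assumptions on $(M,g)$ and the synthetic axioms, and this is entirely covered by the combination of \cite[Prop.~3.3, Cor.~3.4, Prop.~5.2]{SaC0} and \cite[Prop.~5.8, Thm.~3.30, Thm.~5.12]{KS} collected in Remark~\ref{rem:C0metrics}. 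The rest is simply invoking the already-proven abstract results.
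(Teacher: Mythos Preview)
Your proposal is correct and matches the paper's approach exactly: the paper gives no separate proof of this corollary, merely prefacing it with the sentence ``Specialising Proposition~\ref{thm:BGAchronal}, Proposition~\ref{prop:Poincare}, Proposition~\ref{prop:BisGro2}, Proposition~\ref{prop:BonMy2} to the case of a spacetime with a $C^{0}$-Lorentzian metric give:'' and relying on the structural facts from Remark~\ref{rem:C0metrics} (causal closedness, $\cK$-global hyperbolicity, geodesic property, and causal completeness of Cauchy hypersurfaces) already recalled earlier in Section~\ref{SS:C0metric}. Your write-up simply makes explicit the verification that the paper leaves implicit.
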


%%%%%%%%%%%%%%%%%%%%%%%%%%%%%%%%%%%%%%%%%%%%%%%%%%%%%%%%%%%%%%%%%%%%%%%%%%%%%%%%%%%%%%%%%%%%%%%%%%
%%%%%%%%%%%%%%%%%%%%%%%%%%%%%%%%%%%%%%%%%%%%%%%%%%%%%%%%%%%%%%%%%%%%%%%%%%%%%%%%%%%%%%%%%%%%%%%%%%
%%%%%%%%%%%%%%%%%%%%%%%%%%%%%%%%%%%%%%%%%%%%%%%%%%%%%%%%%%%%%%%%%%%%%%%%%%%%%%%%%%%%%%%%%%%%%%%%%%

\renewcommand{\thesection}{A}
\section{Appendix - $\TMCP^{e}(K,N)$ on smooth Lorentzian manifolds}\label{sec:AppTMCPsmooth}

\setcounter{lemma}{0}
\setcounter{equation}{0}  
\setcounter{subsection}{0}

\begin{theorem}\label{thm:TMCPsmooth}
Let $(M^{n}, g)$ be a  globally hyperbolic smooth spacetime of dimension $n\geq 2$ without boundary. Then the associated Lorentzian geodesic space satisfies $\TMCP^{e}(K,n)$ if only if $\Ric_{g}(v,v)\geq -K g(v,v)$ for every timelike vector $v\in TM$.
\end{theorem}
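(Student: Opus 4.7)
The plan is to prove the two implications separately, with the forward direction reducing to previously established results and the reverse direction requiring an asymptotic Jacobi-field analysis at a point.

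For the ($\Leftarrow$) direction, assuming $\Ric_g(v,v)\geq -K g(v,v)$ for every timelike $v$, I would invoke Theorem \ref{thm:CharLorRic}, which gives $\TCD^{e}_{p}(K,n)$. Since $\TCD^{e}_{p}(K,n)$ trivially implies $\wTCD^{e}_{p}(K,n)$, and since a smooth globally hyperbolic spacetime is automatically causally closed, locally causally closed, and $\cK$-globally hyperbolic (cf.\ Remark \ref{rem:C0metrics} applied to smooth metrics), I may apply Proposition \ref{prop:CD->MCP} to conclude $\TMCP^{e}_{p}(K,n)$.

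For the ($\Rightarrow$) direction I argue by contradiction through a localization at a point. Fix $x_0 \in M$ and a timelike vector $v_0 \in T_{x_0}M$ with $g(v_0, v_0) = -1$, and set $x_1 := \exp_{x_0}(r v_0)$ for a small parameter $r > 0$ chosen so that the geodesic $s\mapsto \exp_{x_0}(sv_0)$ is maximizing on $[0,r]$ and so $\tau(x_0,x_1)=r$. For $\epsilon \in (0,r)$ small, consider the probability $\mu_0^\epsilon := \mm\llcorner_{B_\epsilon(x_0)}/\mm(B_\epsilon(x_0))$ (with $B_\epsilon$ an auxiliary Riemannian ball). For $\epsilon$ small enough we have $\supp\mu_0^\epsilon \subset I^-(x_1)$, and smoothness of $g$ together with Theorem \ref{T:2} (since $(M^n,g)$ is timelike non-branching, locally causally closed and $\cK$-globally hyperbolic) ensures that the unique $\ell_p$-optimal dynamical plan from $\mu_0^\epsilon$ to $\delta_{x_1}$ is induced by the smooth map $T_t^\epsilon(x) := \eta^{x,x_1}(t)$ picking out the $t$-intermediate point of the unique maximizing timelike geodesic from $x$ to $x_1$. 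Using the change-of-variables formula $\rho_t^\epsilon(T_t^\epsilon(x)) = \rho_0^\epsilon(x)/|\det DT_t^\epsilon(x)|$, the entropy formulation \eqref{eq:MCPEnt} of the $\TMCP^{e}_{p}(K,n)$ inequality rewrites as
\begin{equation*}
\int_X \log|\det DT_t^\epsilon(x)|\,d\mu_0^\epsilon(x) \;\geq\; n\,\log \sigma^{(1-t)}_{K/n}\bigl(\|\tau(\cdot,x_1)\|_{L^2(\mu_0^\epsilon)}\bigr).
\end{equation*}
Letting $\epsilon\downarrow 0$ and using continuity in the smooth setting, one obtains the pointwise inequality
\begin{equation*}
\log|\det DT_t(x_0)| \;\geq\; n\,\log \sigma^{(1-t)}_{K/n}(r), \quad \forall\, t\in[0,1),\ r>0 \text{ small}.
\end{equation*}

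To conclude I would expand both sides in $r$. For the right-hand side, the Taylor expansion of $\fs_{K/n}$ gives $n\log\sigma^{(1-t)}_{K/n}(r) = n\log(1-t) - \tfrac{K}{6}\, t(1-t)(2-t)\, r^2 + O(r^4)$. For the left-hand side, $DT_t(x_0)$ sends $w\in T_{x_0}M$ to $J_w(t)$, where $J_w$ is the unique Jacobi field along $\eta(t):=\exp_{x_0}(tr v_0)$ with $J_w(0)=w$ and $J_w(1)=0$; a standard Jacobi-field computation in Fermi coordinates adapted to $\eta$ (using the Jacobi equation $\nabla_{\dot\eta}\nabla_{\dot\eta}J + R(J,\dot\eta)\dot\eta = 0$ and solving perturbatively in $r$) yields the expansion $\log|\det DT_t(x_0)| = n\log(1-t) + \tfrac{1}{6}\, t(1-t)(2-t)\, r^2\, \Ric_g(v_0,v_0) + O(r^3)$. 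Matching the $r^2$ coefficients and using $c(t):=\tfrac{1}{6}t(1-t)(2-t)>0$ for $t\in(0,1)$ gives $\Ric_g(v_0,v_0) \geq -K = -K\,g(v_0,v_0)$; homogeneity in $v$ extends this to all timelike vectors, concluding the proof. The main obstacle is matching the Jacobi-determinant expansion coefficients on both sides with the correct Lorentzian sign conventions; once this is set up properly, the comparison is formal.
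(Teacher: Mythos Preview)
Your ``if'' direction is identical to the paper's. For the ``only if'' direction your strategy is valid but genuinely different from the paper's, and your explicit Taylor coefficients are not quite right.

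\textbf{Comparison of approaches.} The paper does \emph{not} expand the $\TMCP$ inequality directly. Instead it invokes the heavy localization machinery of Section~\ref{Sec:LocTMCP}: the rays emanating from $x$ are exactly the transport rays of $\tau_x:=\tau(x,\cdot)$, so Theorem~\ref{P:localKant} forces the area density $r\mapsto {\rm A}_x(r,v)$ (from $d\vol_g=(\exp^g_x)_\sharp({\rm A}_x(r,\xi)\,dr\,d\xi)$) to be a one-dimensional $\MCP(K,n)$ density. This gives the inequality ${\rm A}_x(r,v)/{\rm A}_x(2r,v)\geq \bigl(\fs_{K/(n-1)}(r)/\fs_{K/(n-1)}(2r)\bigr)^{n-1}$, and a direct Jacobi-field expansion of the left side yields $2^{-(n-1)}(1+\Ric_g(v,v)r^2)+O(r^3)$; matching the $r^2$ terms gives $\Ric_g(v,v)\geq K$. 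Your route---pushing shrinking balls toward $\delta_{x_1}$, invoking uniqueness of the $\ell_p$-geodesic (legitimate here via Theorem~\ref{T:2}), and expanding both sides of the pointwise Jacobian inequality---is the direct analogue of Ohta's argument for Riemannian $\MCP$ and of the computations in \cite{McCann,MoSu}; it avoids the disintegration theory entirely. Both approaches are sound; the paper's showcases the localization technique developed earlier, while yours is shorter and more self-contained in the smooth setting.

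\textbf{The coefficient errors.} Your stated expansions carry an extra factor of $(1-t)$ and a sign flip. The correct ones are
\[
n\log\sigma^{(1-t)}_{K/n}(r)=n\log(1-t)+\tfrac{K}{6}\,t(2-t)\,r^2+O(r^4),\qquad
\log|\det DT_t(x_0)|=n\log(1-t)+\tfrac{1}{6}\,t(2-t)\,r^2\,\Ric_g(v_0,v_0)+O(r^3),
\]
since $\det\bigl((1-t)I+\tfrac{r^2 t(1-t)(2-t)}{6}M\bigr)=(1-t)^n\bigl(1+\tfrac{r^2 t(2-t)}{6}\operatorname{tr}M\bigr)+O(r^3)$ with $\operatorname{tr}M=\Ric_g(v_0,v_0)$. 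Comparing gives $\Ric_g(v_0,v_0)\geq K$, which (because $g(v_0,v_0)=-1$) is exactly $\Ric_g(v_0,v_0)\geq -K\,g(v_0,v_0)$. Your final line ``$\Ric_g(v_0,v_0)\geq -K=-K\,g(v_0,v_0)$'' confuses $-K$ with $K$; with the corrected coefficients the argument closes.
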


\begin{proof} 
{\bf Step 1}: ``If'' implication. \\ 
From Theorem \ref{thm:CharLorRic}, the Lorentzian geodesic space associated to $(M^{n}, g)$ satisfies the $\TCD^{e}_{p}(K,n)$ condition, which in turn implies $\TMCP^{e}(K,n)$ by Proposition \ref{prop:CD->MCP} (see also Remark \ref{rem:C0metrics}). 

\smallskip
{\bf Step 2} : ``Only if'' implication. \\
Fix $x\in M$ and $v\in T_{p}M$ future pointing with $g(v,v)=-1$.
Let  $U$ be a compact subset  of $\{w\in T_{x}M: w \text{ is future pointing with } g(w,w)<0 \}$,  star-shaped with respect to $0$,  such that $rv\in U$ for $r>0$ small enough and such that  the exponential map $\exp_{x}^{g}:U\to M$ of $g$ based at $x$ is a diffeomorphism onto its image when restricted to $U$.
Calling $d\vol_{g}$ the volume density on $M$ associated to $g$, recall that it can be represented as
\begin{equation}\label{eq:dvolgA}
d\vol_{g}(y)= (\exp_{x}^{g})_{\sharp} \left(  {\rm A}_{x}(r,\xi) dr d\xi\right), \quad \text{ for all } y=\exp_{x}^{g}(r\xi)\in \exp_{x}^{g}(U),
\end{equation}
where ${\rm A}_{x}(r,\xi)$ denotes the volume density on $\{r\xi\in U: g(\xi,\xi)=-1\}$ induced by $g$.
  
 Fix  a $g$-orthonormal basis $e_{1}, e_{2},\ldots, e_{n}$ of $T_{p}M$ with $e_{1}=v$ and 
 denote by $\kappa_{i}$ the sectional curvature of the plane spanned by $e_{1}$ and $e_{i}$, 
 for $i=2,\dots, n$. 
 Recalling the definitions  \eqref{eq:deffsfc}, \eqref{eq:sigmakappa}   of  $\fs_{\kappa}(\vartheta)$ and $
\sigma_{\kappa}^{(t)}(\vartheta)$ respectively,
it is easy to check that for small $r>0$ it holds:
\begin{equation}\label{eq:expsigmakappar}
\sigma_{\kappa}^{(1/2)} (2r)=\frac{\fs_{\kappa}( r)}{\fs_{\kappa}(2 r)}=\frac{1}{2}\left(1+\frac{\kappa}{2} r^{2} + O(r^{4})\right).
\end{equation}
Standard Jacobi-fields computations (see for instance \cite{EhSa98} for the Lorentzian setting) give that
\begin{equation}\label{eq:ArvA2rv}
\frac{ {\rm A}_{x}(r,v)}{ {\rm A}_{x}(2r,v)}= \prod_{i=2}^{n} \frac{\fs_{\kappa_{i}}( r)}{\fs_{\kappa_{i}}(2 r)}+O(r^{3}).
\end{equation}
Plugging  \eqref{eq:expsigmakappar} into \eqref{eq:ArvA2rv} yields
\begin{align}
\frac{ {\rm A}_{x}(r,v)}{ {\rm A}_{x}(2r,v)}&= \frac{1}{2^{n-1}} \prod_{i=2}^{n}  \left(1+\frac{\kappa_{i}}{2} r^{2}\right) +O(r^{3}) =  \frac{1}{2^{n-1}} \left(1+\sum_{i=2}^{n} \kappa_{i} r^{2}\right) +O(r^{3}) \nonumber\\
&= \frac{1}{2^{n-1}} \left(1+\Ric_{g}(v,v) r^{2} \right)+O(r^{3}).\label{eq:ArvA2rvRic}
\end{align}
We next relate $ {\rm A}_{x}(r,v)/ {\rm A}_{x}(2r,v)$ with the $\TMCP^{e}(K,n)$ condition via localisation. 

Consider  $\tau_{x}(\cdot):=\tau(x,\cdot):I^{+}(x)\supset \exp_{x}^{g}(U)\to\R$. By the very definitions, we have $\tau_{x} \left(\exp_{x}^{g}(r\xi)\right)=r$ for every $r\xi\in U$, $g(\xi,\xi)=-1$.
In other terms, the partition of $\exp_{x}^{g}(U)\setminus \{x\}$ by future pointing $g$-geodesics emanating from $x$  coincides with the partition by transport rays induced by $\tau_{x}$.

Under this identification, the disintegration of $d\vol_{g}$ induced by $\tau_{x}$ is nothing but \eqref{eq:dvolgA}.  Theorem \ref{P:localKant}  then gives that $r\mapsto {\rm A}_{x}(r,v)$ is an $\MCP(K,n)$ density on an interval $(0, \ve_{v})$ (see for instance the proof of \cite[Theorem 3.2]{Ohta1}): it thus satisfies
\begin{equation}\label{eq:ArvMCP}
\frac{ {\rm A}_{x}(r,v)}{ {\rm A}_{x}(2r,v)}\geq \left( \frac{\fs_{K/(n-1)}(r)}{\fs_{K/(n-1)}(2r)}\right)^{n-1}.
\end{equation}
Combining \eqref{eq:ArvA2rvRic} with  \eqref{eq:ArvMCP}, we obtain
\begin{align*}
\Ric_{g}(v,v) r^{2}&\geq   \left( \frac{2\fs_{K/(n-1)}(r)}{\fs_{K/(n-1)}(2r)}\right)^{n-1} -1 +O(r^{3}) =   \left( 1+ \frac{K}{n-1} r^{2} \right)^{n-1} -1 +O(r^{3}) \\
&= K r^{2}+O(r^{3}).
\end{align*}
Dividing  both sides by $r^{2}$ and sending $r\downarrow 0$, we thus obtain $\Ric_{g}(v,v)\geq K=-K g(v,v)$. 
\\By the arbitrariness of $x$ and $v$, the proof is complete.
\end{proof}

\begin{corollary}\label{Cor:CompMCPsmooth}
Let $(M^{n}, g)$ be a  globally hyperbolic smooth spacetime of dimension $n\geq 2$ without boundary. 
\begin{enumerate}
\item If $\Ric_{g}(v,v)\geq -K g(v,v)$ for every timelike vector $v\in TM$, then the  associated Lorentzian geodesic space satisfies $\TMCP^{e}(K',N')$ for every $K'\leq K$ and $N'\geq N$.
\item If the  Lorentzian geodesic space associated to $(M^{n}, g)$ satisfies $\TMCP^{e}(K,N)$, then $n\leq N$.
\end{enumerate}
\end{corollary}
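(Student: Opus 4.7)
The plan is to split the two parts. Part 1 is a direct composition of Theorem \ref{thm:TMCPsmooth}, which under the Ricci bound yields $\TMCP^e_p(K,n)$, with the consistency statement of Lemma \ref{lem:TCDscaling}, which upgrades $\TMCP^e_p(K,n)$ to $\TMCP^e_p(K',N')$ for every $K' \leq K$ and $N' \geq n$ (the symbol $N$ in the stated inequality $N' \geq N$ should be read as $n = \dim M$).

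For part 2, the strategy is to re-run the Jacobi-field calculation from Step 2 of the proof of Theorem \ref{thm:TMCPsmooth}, but now feeding into it the unknown parameter $N$ coming from the hypothesis rather than the dimension $n$. Since $(M,g)$ is smooth and globally hyperbolic, the associated synthetic structure is causally closed, $\cK$-globally hyperbolic, Lorentzian geodesic (Remark \ref{rem:C0metrics}) and timelike non-branching (ODE uniqueness for the geodesic equation). Fix $x \in M$, a $g$-unit future-pointing $v \in T_xM$, and a compact star-shaped neighborhood $U$ of $0$ in the future timelike cone on which $\exp_x^g$ is a diffeomorphism. Applying Theorem \ref{P:localKant} to the (trivially achronal and FTC) singleton $V = \{x\}$ with signed time-separation $\tau_x = \tau(x,\cdot)$, the abstract disintegration \eqref{E:disintegration} along the transport rays of $\tau_x$ coincides, after identification via $\exp_x^g$, with the polar-like formula \eqref{eq:dvolgA}. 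Hence for $\qq$-a.e.\ ray the one-dimensional density $r \mapsto A_x(r,v)$ satisfies $\MCP(K,N)$:
\begin{equation*}
\frac{A_x(r,v)}{A_x(2r,v)} \;\geq\; \left(\frac{\fs_{K/(N-1)}(r)}{\fs_{K/(N-1)}(2r)}\right)^{N-1}, \qquad 0<r\ll 1,
\end{equation*}
provided $N \in (1,\infty)$.

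The conclusion then follows by matching short-distance asymptotics. Classical Jacobi-field expansion gives $A_x(r,v) = c_v\, r^{n-1}(1 + O(r^2))$ as $r \downarrow 0$, so the left-hand side equals $2^{-(n-1)}(1+O(r^2))$. Since $\fs_\kappa(r)/\fs_\kappa(2r) = \tfrac{1}{2}(1+O(r^2))$, the right-hand side equals $2^{-(N-1)}(1+O(r^2))$. Letting $r \downarrow 0$ forces $2^{-(n-1)} \geq 2^{-(N-1)}$, i.e.\ $n \leq N$. The remaining cases $N \in (0,1]$ are dispatched via the consistency in Lemma \ref{lem:TCDscaling}: if $\TMCP^e_p(K,N)$ held with $N \leq 1 < n$, then it would also hold with any $N' \in (1,n)$ (such $N'$ exists as $n \geq 2$), contradicting what was just proved in the range $N' > 1$. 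The only potentially delicate point is to confirm that the density furnished by the abstract disintegration is, up to a positive multiplicative constant, the Jacobi determinant $A_x(\cdot,v)$; but the $\MCP$ inequality is scale-invariant in the density, so only the ratio matters, and the leading behavior $A_x(r,v) \sim c_v r^{n-1}$ is intrinsic to $g$.
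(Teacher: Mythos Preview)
Your proof is correct. Part 1 is identical to the paper's. For Part 2 the paper takes a slightly different route: instead of comparing the density ratio $A_x(r,v)/A_x(2r,v)$ directly, it integrates over the star-shaped set $U$ to obtain $\vol_g(B^g_r(x,U)) = c\,r^n + O(r^{n+1})$, then uses the $\MCP(K,N)$ density property together with Gromov's Lemma to get that $r \mapsto \vol_g(B^g_r(x,U))\big/\int_0^r \fs_{K/(N-1)}(t)^{N-1}\,dt$ is non-increasing, which forces $n\leq N$ by matching powers of $r$. Both arguments rest on the same localization input (the one-dimensional $\MCP(K,N)$ density along rays from $x$, as in the proof of Theorem \ref{thm:TMCPsmooth}); yours is the pointwise version, the paper's is the integrated Bishop--Gromov version. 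Your approach is marginally more direct (no integration, no Gromov's Lemma), and you explicitly dispose of the range $N\in(0,1]$ via the consistency Lemma \ref{lem:TCDscaling}, a case the paper's argument does not separately address. One small remark: Theorem \ref{P:localKant} gives the $\MCP$ inequality only for $\qq$-a.e.\ ray, whereas you want it for a fixed direction $v$; this is harmless here because $A_x(r,\cdot)$ is continuous in the direction and the inequality is closed, but it is worth noting.
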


\begin{proof}
The first statement follows from Theorem \ref{thm:TMCPsmooth} and  
Lemma \ref{lem:TCDscaling} (or from  Theorem \ref{thm:CharLorRic} and  Proposition \ref{prop:CD->MCP}).
\\We now prove the second statement. We will build on the proof of Theorem \ref{thm:TMCPsmooth}. Fix $x\in M$ and let
 $$U\subset \{w\in T_{x}M: w \text{ is future pointing with } g(w,w)<0 \}$$
 be compact  star-shaped with respect to $0$, with non-empty interior,  such that  the exponential map $\exp_{x}^{g}:U\to M$ of $g$ based at $x$ is a diffeomorphism onto its image when restricted to $U$. 
Calling
$$B_{r}^{g}(x,U):=\exp_{x}^{g} (\{w\in U: |g(w,w)|\leq r\}), \quad r>0,$$
it is easy to see that there exists $c=c_{U}>0$ such that
\begin{equation}\label{eq:volgsmallr}
\vol_{g}(B_{r}^{g}(x,U))=c \, r^{n}+O(r^{n+1}), \quad \text{for small } r>0.
\end{equation}
On the other hand, using that $r\mapsto {\rm A}_{x}(r,v)$ is an $\MCP(K,N)$ density (see the discussion before \eqref{eq:ArvMCP}) and recalling \eqref{eq:dvolgA}, we obtain via the classical Gromov's Lemma (see for instance \cite[Lemma III.4.1]{Chav06}) that
\begin{equation}\label{eq:TimelikeBG}
(0,\ve)\ni r\mapsto \frac{\vol_{g}(B_{r}^{g}(x,U))} {\int_{0}^{r} \left[\fs_{K/(N-1)}(t) \right]^{N-1} \, \dd t} \quad \text{is monotone non-increasing}.
\end{equation}
Since $\fs_{K/(N-1)}(t)=O(t)$ for small $t>0$, it is easy to see  that the combination of \eqref{eq:volgsmallr} and  \eqref{eq:TimelikeBG} yields $n\leq N$.
\end{proof}

\begin{remark}
In general, $\TMCP^{e}(K,N)$ on a globally hyperbolic smooth spacetime \emph{does not} imply that  $\Ric_{g}(v,v)\geq -K g(v,v)$ for every timelike vector $v\in TM$. It follows  that $\TMCP^{e}(K,N)$ is  a \emph{strictly weaker}  condition than  $\mathsf{wTCD}^{e}_{p}(K,N)$. More precisely, the following holds: For each $N>1$ there exists a constant $c_{N}>0$ such that each  globally hyperbolic smooth spacetime
with  timelike Ricci curvature $\geq 0$, dimension $\leq N-1$ and $\tau$-diameter $\leq L$  satisfies $\TMCP^{e}(K,N)$ for each positive $K\leq c_{N}/L^{2}$ (compare with \cite[Remark 5.6]{sturm:II} for the Riemannian setting).  
\end{remark}

\begin{proof}
From Theorem \ref{thm:TMCPsmooth} we know that $\TMCP^{e}(0,N-1)$ holds.
Recalling that the $\TMCP^{e}(K,N)$ condition  is equivalent to \eqref{eq:MCPEnt}, it is sufficient to show that 
\begin{equation*}
t^{N-1}\geq \left(\sigma_{K/N}^{(t)}(\vartheta) \right)^{N}, \quad \forall t\in [0,1], \, \vartheta\in [0,L].
\end{equation*}
Now, for sufficiently small $c_{N}\in (0,1)$ and all $K\vartheta^{2}\leq c_{N}$, the right-hand side can be estimated from above by $t^{N}(1+(1-t^{2}) K\vartheta^{2})$. But clearly $
t^{N-1}\geq t^{N}\left( 1+(1-t^{2}) K\vartheta^{2} \right)$,  for all  $K\vartheta^{2}\leq c_{N}$.
\end{proof}

\end{document}